\theoremstyle{plain}
\newtheorem{thm}{Theorem}[section]
\newtheorem{cor}[thm]{Corollary}
\newtheorem{lem}[thm]{Lemma}
\newtheorem{prop}[thm]{Proposition}
\newtheorem{Def}[thm]{Definition}
\theoremstyle{remark}
\newtheorem{rem}[thm]{Remark}
\numberwithin{equation}{section}
\begin{document}
\title{Multivariable Wilson polynomials and degenerate Hecke algebras}
\author{Wolter Groenevelt}

\address{Technische Universiteit Delft\\
EWI-DIAM\\
Postbus 5031\\
2600 GA Delft\\
The Netherlands}
\email{w.g.m.groenevelt@tudelft.nl}

\thanks{This research is done at the Korteweg-De Vries Institute for Mathematics at the University of Amsterdam, while supported by the Netherlands Organization for Scientific Research (NWO) for the Vidi-project ``Symmetry and modularity in exactly solvable models.'' The author likes to thank Jasper Stokman for stimulating discussions.}

\date{October 16, 2007}

\keywords{(nonsymmetric) multivariable Wilson polynomials, degenerate double affine Hecke algebra, polynomial representation, difference-reflection operator, orthogonality relations, duality}

\begin{abstract}
We study a rational version of the double affine Hecke algebra associated to the nonreduced affine root system of type $(C^\vee_n,C_n)$. A certain representation in terms of difference-reflection operators naturally leads to the definition of nonsymmetric versions of the multivariable Wilson polynomials. Using the degenerate Hecke algebra we derive several properties, such as orthogonality relations and quadratic norms, for the nonsymmetric and symmetric multivariable Wilson polynomials
\end{abstract}

\maketitle

\section{Introduction}
The Macdonald theory of multivariable orthogonal polynomials associated to root systems can be understood using a specific representation of Cherednik's double affine Hecke algebra (DAHA) in terms of $q$-difference-reflection operators, see e.g.~Cherednik \cite{Ch2} and Macdonald \cite{M}. Koornwinder polynomials \cite{K} generalize the Macdonald polynomials associated to classical root systems. The algebraic structure underlying the Koornwinder polynomials is Sahi's DAHA associated to the nonreduced affine root system of type $(C_n^\vee,C_n)$, see Noumi \cite{N}, Sahi \cite{S1}, \cite{S2}, and Stokman \cite{St}. The Koornwinder polynomials depend, besides the base $q$, on five parameters, corresponding to the number of $W$-orbits in the affine root system of type $(C_n^\vee,C_n)$ with corresponding affine Weyl group $W$. Many families of multivariable orthogonal polynomials can be obtained as limits of the Koornwinder polynomials, see e.g.~\cite{D2} and \cite{SK}. Several of these families have been associated in the literature to degenerate versions of double affine Hecke algebras. For instance, the Heckman-Opdam polynomials \cite{H} are naturally associated to a trigonometric degenerate DAHA \cite{O}.

In this paper we study an algebra $\mathcal H$ which is a rational degeneration of the DAHA of type $(C^\vee_n,C_n)$, and we show that multivariable Wilson polynomials are associated to $\mathcal H$ in a natural way. Multivariable Wilson polynomials are limits of the Koornwinder polynomials (for $q\rightarrow 1$) depending on five limiting parameters, see Van Diejen \cite{D1},\cite{D2}. The one-variable Wilson polynomials \cite{W} are the most general orthogonal polynomials of hypergeometric type, i.e.~all hypergeometric orthogonal polynomials, for instance, the Jacobi polynomials, can be obtained as a limit of the one-variable Wilson polynomials. It may be expected that the multivariable Wilson polynomials play a similar role in the theory of orthogonal polynomials of hypergeometric type associated to root systems. Let us show that Wilson polynomials formally generalize the $BC$-type Heckman-Opdam polynomials. Under suitable conditions on the parameters the multivariable Wilson polynomials are orthogonal on $(\mathrm i \mathbb R_+)^n$ with respect to the weight function
\[
\begin{split}
\Delta_+(x) &=  \prod_{1 \leq j < k \leq n} \frac{ \Gamma(t \pm  x_j \pm x_k) }{ \Gamma(\pm x_j \pm x_k)}   \prod_{j=1}^n \frac{  \Gamma(a \pm x_j) \Gamma(b\pm x_j) \Gamma(c \pm x_j) \Gamma(d \pm x_j)}{ \Gamma(\pm 2x_j) }.
\end{split}
\]
Here we use the notation $\Gamma(\alpha \pm \beta) = \Gamma(\alpha+\beta)\Gamma(\alpha-\beta)$. The weight function $\Delta_+(x)$ may be considered as a generalization of the weight function for the $BC$-type Heckman-Opdam polynomials. Indeed, divide by a factor
\[
\prod_{j=1}^n \Gamma(a+c+(j-1)t)\Gamma(b+c+(j-1)t)\Gamma(a+d+(j-1)t)\Gamma(b+d+(j-1)t),
\]
and substitute
\[
a = b = \frac{ \alpha +1 }{2}, \qquad c=\frac{\beta+1}{2}+\mathrm{i}\gamma, \quad d = \frac{\beta+1}{2}-\mathrm{i}\gamma, \quad x_j = \mathrm i\gamma \sqrt{y_j},
\]
then after applying Stirling's formula we find that in the limit $\gamma \rightarrow \infty$,
\[
\Delta_+(x) \sim
 \frac{\sqrt{y_1\ldots y_n}}{\gamma^n} \prod_{1 \leq j < k \leq n}|y_j-y_k|^{2t} \prod_{j=1}^n y_j^\alpha (1-y_j)^\beta, \qquad y_1,\ldots, y_n \in (0,1),
\]
and $\Delta_+(x)$ vanishes exponentially if $y_j>1$ for any $j$. 

The rational DAHA $\mathcal H$ that we study has a faithful representation in terms of difference-reflection operators, much like the representation of the trigonometric DAHA from \cite{Ch1}. Nonsymmetric versions of the multivariable Wilson polynomials then appear in the representation theory of $\mathcal H$ as the polynomial eigenfunctions of the analogues of the Cherednik operators. Methods from double affine Hecke algebras, see e.g.~\cite{Ch2}, \cite{M}, can now be used to obtain properties of the nonsymmetric multivariable Wilson polynomials, such as orthogonality relations and a duality property. The rank one version of the algebra $\mathcal H$ and the corresponding one-variable Wilson polynomials have been studied in \cite{G}. Let us remark that a four-parameter subfamily of the (symmetric) multivariable Wilson polynomials have been obtained by Zhang \cite{Z} in the context of degenerate Hecke algebras as the Harish-Chandra transform of the $BC$-type Heckman-Opdam polynomials.

The algebra $\mathcal H$ has appeared earlier in the literature; it is isomorphic to a rational generalized DAHA defined by Etingof, Gan and Oblomkov \cite{EGO}. Results in \cite{EGO} then imply that $\mathcal H$ is also closely related to several other algebras appearing in the literature, e.g.~$\mathcal H$ is the spherical subalgebra of a certain deformed preprojective algebra by Gan and Ginzburg \cite{GG}. In this paper we obtain several new properties of the algebra $\mathcal H$. One remarkable property is that it contains a subalgebra $H$ that may be considered as a deformation of $\mathbb C[W]$ with deformed braid-relations. To be more precise, as an algebra $H$ is generated by $T_0,\ldots,T_n$, which satisfy the following braid-type relations:
\begin{align*}
T_i T_{i+1} T_i T_{i+1} + t T_{i} T_{i+1} &= T_{i+1} T_i T_{i+1} T_i  + tT_{i+1} T_{i}, && i=0,n-1,\\
T_i T_{i+1} T_i & = T_{i+1} T_i T_{i+1}, && i \in [1,n-2],\\
T_i T_j &= T_j T_i, && |i-j| \geq 2.
\end{align*}
Here $t$ is some complex parameter. For $t=0$ the above relations are the usual braid relations for $W$. Deformations like this, i.e.~deformations of a group algebra $\mathbb C[G]$, $G$ a Coxeter group, with deformed braid relations, have recently been studied by Etingof and Rains \cite{ER1},\cite{ER2}.

The paper is organized as follows. After some preliminaries on root systems, we define in section \ref{sec:DegAffHeckeAlg} a subalgebra $H$ of the trigonometric DAHA $\mathfrak H$, and it is shown that the subalgebra $H$ is a deformation of the group algebra $\mathbb C[W]$. Then in section \ref{sec:RatDAHA} we define the rational DAHA $\mathcal H$ as a subalgebra of $\mathfrak H$ that is generated by $H$ and a polynomial algebra. We derive several useful properties of $\mathcal H$, such as a PBW-theorem, and we define the analogues of Cherednik's $Y$-operators. In section \ref{sec:representation} we study a faithful representation $\pi$ of $\mathcal H$ in terms of difference-reflection operators on the vector space $\mathcal P$ consisting of polynomials in $n$ variables. The nonsymmetric multivariable Wilson polynomials are used to describe the decomposition of $\mathcal P$ into irreducible $\pi(H)$-modules. In section \ref{sec:nonsymm Wilson} we derive several properties of the nonsymmetric Wilson polynomials. Finally, in section \ref{sec:symm Wilson} we show how well-known properties of the (symmetric) multivariable Wilson polynomials can be obtained from the representation theory of $\mathcal H$.

\section{The degenerate affine Hecke algebra} \label{sec:DegAffHeckeAlg}

In this section we define an algebra $H$ that we consider as a degenerate affine Hecke algebra of type $\widetilde C_n$. Throughout the paper we use the following notation: for integers $m,n$ with $m<n$ we write $[m,n]$ for the set $\{m,m+1,\ldots,n\}$.

\subsection{The affine root system $(C^\vee_n,C_n)$}
We fix an integer $n \geq 2$. Let $\{\epsilon_j\}_{j=1}^n$ be the standard orthonormal basis for $V=\mathbb R^n$ with standard inner product $\langle \cdot,\cdot\rangle$. Let $\widehat V$ denote the vector space of affine linear transformations from $V$ to $\mathbb R$. We identify $\widehat V$ with $V \oplus \mathbb R \delta$ by writing $f \in \widehat V$ as
\[
f(u) = \langle v,u\rangle+ c\delta(u), \qquad u \in V,
\]
for some $v \in V$ and $c \in \mathbb R$, where $\delta:V \rightarrow \mathbb R$ is defined by $\delta(u)=1$ for all $u \in V$. The inner product on $V$ is extended to a bilinear form on $\widehat V$ by
\[
\langle\lambda_1+\mu_1 \delta, \lambda_2+\mu_2 \delta \rangle= \langle\lambda_1,\lambda_2\rangle, \qquad \lambda_1,\lambda_2 \in V, \quad \mu_1,\mu_2 \in \mathbb R.
\]
We define
\[
\begin{split}
a_0=\delta-2\epsilon_1, \qquad a_i = \epsilon_i - \epsilon_{i+1} \ (i=1,\ldots, n-1), \qquad a_n=2\epsilon_n.
\end{split}
\]
The elements $a_i \in \widehat V$ are the simple roots for the affine root system of type $\widetilde C_n$. Let $W$ be the subgroup of $\mathrm{GL}(\widehat V)$ generated by the simple reflections $s_{a_i}=s_i$, $i=0, \ldots, n$, where for $\beta \in \widehat V$ with $\langle \beta, \beta \rangle \neq 0$ the reflection $s_\beta$ is defined by
\[
s_\beta f = f- \langle f,\beta^\vee\rangle \beta, \qquad \beta^\vee= \frac{2\beta}{\langle \beta,\beta \rangle},
\]
for $f \in \widehat V$. The group $W$ is a Coxeter group with relations $s_i^2=1$, $i \in [0,n]$, and
\begin{align*}
s_{i}s_{i+1} s_i s_{i+1} &= s_{i+1} s_i s_{i+1}s_{i}, && i =0,n-1,\\
s_i s_{i+1} s_i &= s_{i+1} s_i s_{i+1}, && i \in [1,n-2],\\
s_i s_j & = s_j s_i, && i,j \in [0,n],\ |i-j| \geq 2.
\end{align*}
The set $\mathcal R_r=W\{a_0,\ldots,a_n\} \subset \widehat V$ is the (reduced) affine root system of type $\widetilde C_n$. The nonreduced affine root system of type $(C^\vee_n,C_n)$ is the root system $\mathcal R=\mathcal R_r^\vee \cup \mathcal R_r$. The root systems $\mathcal R, \mathcal R_r^\vee$ and $\mathcal R_r$ all have $W$ as corresponding affine Weyl group. We denote by $W_0 \subset W$ the (finite) Weyl group generated by $s_1,\ldots,s_n$. Note that the subgroup generated by $s_1,\ldots,s_{n-1}$ is isomorphic to the symmetric group $\mathcal S_n$. The set $\Sigma = W_0\{a_1,\ldots,a_n\}$ is the root system of type $C_n$. We write $\mathcal R^\pm, \mathcal R_r^\pm$, $\Sigma^\pm$ for the positive/negative roots in $\mathcal R$, $\mathcal R_r$, $\Sigma$, respectively, with respect to our choice of simple roots. In particular,
\[
\mathcal R_r^+= \Sigma^+ \cup \{ \alpha \in \mathcal R_r \ | \ \alpha(0)>0\}.
\]

The co-root lattice $Q^\vee$ and the weight lattice $P$ of $\Sigma$ in $V$ can both be identified with $\Lambda = \bigoplus_{i=1}^n \mathbb Z \epsilon_i$. We denote by $\Lambda^+$ the cone of dominant weights, i.e., $\Lambda^+ = \bigoplus_{i=1}^n \mathbb Z_{\geq 0} \omega_i$, where $\omega_i=\epsilon_1+\ldots+\epsilon_i$. There is an alternative description of the affine Weyl group $W$ as the semidirect product
\[
W = W_0 \ltimes \tau(\Lambda),
\]
where $\tau(\lambda)$, $\lambda \in \Lambda$, is the translation operator given by $\tau(\lambda)f = f + \langle \lambda, f \rangle \delta$ for $f \in \widehat V$. The translation operator $\tau(\epsilon_i)$, $i \in [1,n]$, can be expressed in terms of the simple reflections by
\begin{equation} \label{eq:tau}
\tau(\epsilon_i) = s_i \cdots s_{n-1} s_n s_{n-1}\cdots s_1s_0s_1\cdots s_{i-1}.
\end{equation}
This is a reduced expression.

\subsection{A degenerate affine Hecke algebra}

Let $V_{\mathbb C}=V \oplus \mathrm{i}V$ be the complexification of $V$. The finite Weyl group $W_0$ acts on $V_{\mathbb C}$ by reflections as defined in the previous subsection. We get a representation of the affine Weyl group $W$ on $V_{\mathbb C}$ by defining $\tau(\lambda) x = x-\lambda$ for $\lambda \in \Lambda$ and $x \in V_{\mathbb C}$. Let $\mathcal P$ be the algebra consisting of polynomials on $V_{\mathbb C}$, then $W$ acts on $\mathcal P$ by $(wp)(x) = p(w^{-1} x)$, in particular
\[
\begin{split}
(s_0p)(x) &= p(1-x_1,x_2,\ldots,x_n),\\
(s_ip)(x) &= p(x_1,\ldots,x_{i-1},x_{i+1},x_i, x_{i+2},\ldots,x_n),\qquad i \in [1,n-1],\\
(s_np)(x) &= p(x_1,\ldots,x_{n-1},-x_n),
\end{split}
\]
where $x = (x_1,\ldots,x_n)$. We denote by $\mathcal P^{W_0}$ the algebra consisting of polynomials $p \in \mathcal P$ that are invariant under the action of the finite Weyl group $W_0$, i.e., $s_ip = p $ for $i \in [1,n]$. The set of monomials  $\{x^\mu \}_{\mu  \in \mathbb Z_{\geq 0}^n}$ forms a linear basis for $\mathcal P$. Here we use the notation $x^\mu = x_1^{\mu_1} x_2^{\mu_2} \cdots x_n^{\mu_n}$ for $x=(x_1,\ldots,x_n)$ and $\mu_i=\langle \mu, \epsilon_i \rangle$. It will be convenient to label this basis with elements in $\Lambda$. For this we define a bijection $\phi:\mathbb Z \rightarrow \mathbb Z_{\geq 0}$ by
\[
\phi(m) =
\begin{cases}
2m, & m \geq 0,\\
-2m-1, & m <0.
\end{cases}
\]
Now for $\lambda = \sum \lambda_i \epsilon_i \in \Lambda$ we denote
\[
\phi(\lambda) = \sum_{i=1}^n \phi(\lambda_i) \epsilon_i.
\]
Then the set $\{x^{\phi(\lambda)}\}_{\lambda \in \Lambda}$ is a linear basis for $\mathcal P$.

Let $\mathbf k :\mathcal R_r \rightarrow \mathbb C$ be a multiplicity function on $\mathcal R_r$, i.e., $\mathbf k$ is constant on $W$-orbits in $\mathcal R_r$. The function $\mathbf k$ is uniquely determined by its values on $a_0,a_1$ and $a_n$, so we may identify $\mathbf k$ with the ordered $3$-tuple $(k_0, k_1, k_n)$, where $k_i=\mathbf k(a_i)$. Let $\mathcal P_X$ be the algebra $\mathbb C[X_1,\ldots,X_n]$. If $p(x)=\sum_\mu c_\mu x^\mu \in \mathcal P$ is a polynomial, we denote by $p(X)$ the element $\sum_\mu c_\mu X^\mu$ in $\mathcal P_X$, where $X^\mu=X_1^{\mu_1} \cdots X_n^{\mu_n}$ for $\mu_i=\langle \mu,\epsilon_i \rangle$. The degenerate (trigonometric) DAHA $\mathfrak H(\mathbf k)$ associated to the root system $\mathcal R_r$ is the complex associative algebra generated by $W$ (with generators $r_i$, $i=0,\ldots,n)$ and $\mathcal P_X$, with the cross-relations
\[
r_i p(X) - (s_ip)(X) r_i = \frac{k_i}{a_i(X)} \Big( (s_ip)(X) - p(X) \Big), \qquad i \in [0,n].
\]
If we say that an algebra $A$ is generated by the algebras $A'$ and $A''$ (both having a unit element) we mean the following:
\begin{itemize}
\item $A \simeq A' \otimes A''$ as vector spaces,
\item for $a' \in A'$ and $a'' \in A''$ the maps $a' \mapsto a'\otimes 1$ and $a'' \mapsto 1 \otimes a''$ are algebra homomorphisms,
\item $(a' \otimes 1)(1 \otimes a'') = a' \otimes a''$, for $a' \in A'$, $a''\in A''$.
\end{itemize}
Moreover, we will write $a'a''$ instead of $a' \otimes a''$.

The algebra $\mathfrak H(\mathbf k)$ has a linear basis, called the Poincar\'e-Birkhoff-Witt (PBW) basis,
\[
\{ wX^{\phi(\lambda)} \ | \ w \in W, \ \lambda \in \Lambda \}.
\]

Let $\mathbf t:\mathcal R \rightarrow \mathbb C$ be a multiplicity function on $\mathcal R$, that we identify with the ordered 5-tuple $(\mathbf t(a_0), \mathbf t(a_0^\vee), \mathbf t(a_1), \mathbf t(a_n), \mathbf t(a_n^\vee))=(t_0, u_0, t, t_n, u_n)$. We assume that the function $\mathbf t$ is related to the multiplicity function $\mathbf k$ by
\[
k_0 = 2t_0+2u_0, \quad k_1=t, \quad k_n = 2t_n+2u_n.
\]
We write $\mathbf t_{\mathcal R_r}=(t_0,t,t_n)$ and $\mathbf t_{\mathcal R_r^\vee} = (u_0,t,u_n)$.
We now define a subalgebra $H(\mathbf t_{\mathcal R_r})$ of $\mathfrak H(\mathbf k)$ that may be considered as a rational version of the affine Hecke algebra of type $\widetilde C_n$.
\begin{Def}
The algebra $H=H(\mathbf t_{\mathcal R_r}) \subset \mathfrak H(\mathbf k)$ is the subalgebra  generated by $r_1,\ldots,r_{n-1}$ and
\[
T_i = t_i + \frac12(t_i-u_i+a_i^\vee(X))(r_i-1), \qquad i=0,n.
\]
\end{Def}
Although it is not clear at this point, the algebra $H$ is an $(u_0,u_n)$-dependent subalgebra of $\mathfrak H(\mathbf k)$ that depends as an abstract algebra only on $\mathbf t_{\mathcal R_r}=(t_0,t,t_n)$. When working in the algebra $H$ we will write $T_i=r_i$ for $i \in [1,n-1]$, so $H$ is the algebra generated by $T_i$, $i\in [0,n]$. We also define the algebra $H_0\subset H$ to be the subalgebra generated by $T_i$, $i \in [1,n]$. This is the analogue of the finite Hecke algebra of type $C_n$.

\begin{prop} \label{prop:relations H}
We have the following relations in the algebra $H$:\\
Quadratic relations:
\begin{align*}
T_i^2&=t_i^2, && i=0,n,\\
T_i^2&=1, && i\in[1,n-1],
\end{align*}
Braid-type relations:
\begin{align*}
T_i T_{i+1} T_i T_{i+1} + t T_{i} T_{i+1} &= T_{i+1} T_i T_{i+1} T_i  + tT_{i+1} T_{i}, && i=0,n-1,\\
T_i T_{i+1} T_i & = T_{i+1} T_i T_{i+1}, && i \in [1,n-2],\\
T_i T_j &= T_j T_i, && i,j \in [0,n],\ |i-j| \geq 2.
\end{align*}
\end{prop}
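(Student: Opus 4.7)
The plan is to verify all the relations by direct computation inside $\mathfrak H(\mathbf k)$, using the Weyl-group relations for the $r_i$ and the cross-relations. Two observations make the calculation tractable: first, $T_i = r_i$ for $i \in [1,n-1]$, so only $T_0$ and $T_n$ are ``new''; second, $\langle a_i, a_i\rangle = 4$ for $i \in \{0,n\}$, giving the normalization $a_i^\vee(X) = \tfrac12 a_i(X)$. This causes the rational factor $k_i/a_i(X)$ appearing in the cross-relation for $r_i$ to collapse to a constant whenever it is multiplied against a quantity whose $s_i$-difference is proportional to $a_i(X)$, which is the source of every simplification below.

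For the quadratic relations, the case $i \in [1,n-1]$ is $r_i^2 = 1$. For $i \in \{0, n\}$, set $c_i = \tfrac12(t_i - u_i + a_i^\vee(X))$ so that $T_i = t_i + c_i(r_i-1)$. The cross-relation applied to $p = c_i$, combined with $s_i c_i - c_i = -a_i^\vee(X) = -\tfrac12 a_i(X)$, simplifies to $r_i c_i = (s_i c_i)\, r_i - (t_i+u_i)$; expanding $T_i^2$ using this together with the identity $c_i + s_i c_i = t_i - u_i$ yields $T_i^2 = t_i^2$ after a short cancellation. The ordinary braid relations $T_i T_{i+1} T_i = T_{i+1} T_i T_{i+1}$ for $i \in [1, n-2]$ are just the braid relations for $r_1,\ldots,r_{n-1}$ in $W_0$. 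The commutation relations $T_i T_j = T_j T_i$ for $|i-j|\geq 2$ are immediate when both indices lie in $[1,n-1]$; when $i \in \{0, n\}$ and $|i-j| \geq 2$, one notes that $T_0$ is built from $r_0$ and $X_1$, while $r_j$ for $j \geq 2$ commutes with $r_0$ by the $W$-braid relation and with $X_1$ by the cross-relation (since $s_j$ fixes $x_1$); the argument for $T_n$ is symmetric.

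The main obstacle is the deformed braid relation for $i \in \{0, n-1\}$. Since the relation $ABAB + tAB = BABA + tBA$ is symmetric under $A \leftrightarrow B$, the two cases are interchangeable, and it suffices to prove
\[
T_0 r_1 T_0 r_1 + t T_0 r_1 = r_1 T_0 r_1 T_0 + t r_1 T_0.
\]
Only $r_0, r_1$ and polynomials in $X_1, X_2$ appear, so this is essentially a rank-two computation. Writing $T_0 = A_0 + B_0 r_0$ with $A_0, B_0 \in \mathbb C[X_1]$, I would expand both sides, apply $r_1 c_0 = (s_1 c_0) r_1 + t/2$ (where the apparent singularity again collapses because $s_1 c_0 - c_0$ is a scalar multiple of $a_1(X)$) to push every $r_1$ to the right, and then invoke the $\widetilde C_n$ braid identity $r_0 r_1 r_0 r_1 = r_1 r_0 r_1 r_0$ to match leading terms. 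What remains is a polynomial identity in $X_1, X_2$ asserting that the ``extra'' scalar terms generated by the cross-relations on the two sides differ by exactly $t[r_1, T_0]$. The difficulty is bookkeeping rather than conceptual: tracking that every $1/a_1(X)$ produced by a cross-relation is absorbed by a compensating $a_1(X)$-factor, and that the polynomial coefficients of each rank-two Weyl-group element agree on both sides.
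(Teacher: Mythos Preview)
Your approach is correct and essentially the same as the paper's: both verify the relations by direct computation inside $\mathfrak H(\mathbf k)$, using the Weyl-group relations for the $r_i$ together with the cross-relations, and both observe that the rational factor $k_i/a_i(X)$ collapses to a constant because the relevant $s_i$-differences are linear in $a_i(X)$.

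The only notable organizational difference is in the deformed braid relation. Rather than writing $T_n=A_0+B_0r_n$ with the polynomial on the left and expanding both four-fold products, the paper writes $T_n=r_nf(X_n)-g(X_n)$ with the polynomial on the \emph{right} of $r_n$, computes the intermediate quantity $T_n r_{n-1} T_n + tT_n$ once, and observes that it already has a form symmetric under left- versus right-multiplication by $r_{n-1}$ (after invoking $r_{n-1}r_nr_{n-1}r_n=r_nr_{n-1}r_nr_{n-1}$). This cuts the bookkeeping roughly in half compared to your plan of expanding both sides separately and matching coefficients. Your remark that the $A\leftrightarrow B$ symmetry of the relation makes the $i=0$ and $i=n-1$ cases structurally identical is correct and matches the paper's ``checked in the same way'' for $T_0$.
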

\begin{proof}
We only need to verify the relations involving $T_0$ and $T_n$, the other ones follow directly from the definition of the algebra $H$. For the quadratic relations we find from the definition of $T_i$, $i=0,n$, and the relations in $\mathfrak H(\mathbf k)$,
\[
\begin{split}
[r_i-1][t_i-u_i+a_i^\vee(X)][r_i-1] &= [(t_i-u_i-a_i^\vee(X))r_i-3t_i-u_i-a_i^\vee(X)][r_i-1]\\
& =-4t_i(r_i-1)
\end{split}
\]
which gives
\[
\begin{split}
T_i^2=&t_i^2 + t_i[t_i-u_i+a_i^\vee(X)][r_i-1] \\
&+ \frac14[t_i-u_i+a_i^\vee(X)]\Big([r_i-1][t_i-u_i+a_i^\vee(X)][r_i-1]\Big)\\
 = &t_i^2.
\end{split}
\]

Next we check the braid-type relation for $T_n$. The braid-type relation for $T_0$ is checked in the same way. Let us write
\[
 f(y) = \frac12(t_n-u_n-y), \quad g(y) = \frac12(t_n+u_n+y),
\]
then it follows from the relations in $\mathfrak H(\mathbf k)$ that
\[
 T_n = t_n + (s_nf)(X_n)(r_n-1)= r_nf(X_n) - g(X_n).
\]
Note that we also have
\[
\begin{split}
f(X_n) r_{n-1} &= r_{n-1} f(X_{n-1}) -t/2,\\
g(X_n) r_{n-1} & = r_{n-1} g(X_{n-1}) + t/2.
\end{split}
\]
Now we find in $\mathfrak H(\mathbf k)$
\[
T_n r_{n-1} = r_n r_{n-1} f(X_{n-1}) -\frac12 t r_n - r_{n-1} g(X_{n-1}) -t/2.
\]
Multiplying this from the right by $T_n = r_n f(X_n) - g(X_n)$ we obtain, after some calculations
\begin{equation} \label{eq:h1}
\begin{split}
T_n r_{n-1} T_{n} +tT_n  =& r_n r_{n-1} r_n f(X_{n-1}) f(X_n) - r_n r_{n-1} f(X_{n-1}) g(X_n)\\
&- r_{n-1} r_n g(X_{n-1}) f(X_n) + r_{n-1} g(X_{n-1}) g(X_n) \\
&+ \frac12 t (r_n-1)[f(X_n)+g(X_n)].
\end{split}
\end{equation}
Note that $f(y)+g(y) = t_n$. Now we multiply \eqref{eq:h1} from the right by $r_{n-1}$ and bring $r_{n-1}$ to the left of $f$ and $g$, then we have after a few calculations
\[
\begin{split}
 T_n r_{n-1} T_{n}r_{n-1} &+tT_n r_{n-1}  = \\
&r_n r_{n-1} r_n r_{n-1} f(X_n) f(X_{n-1}) - r_{n-1} r_n r_{n-1} g(X_n) f(X_{n-1})\\
& + \frac12 t_n t r_{n-1} r_n -r_n f(X_n) g(X_{n-1})- \frac12 t_n t r_{n-1} +g(X_n) g(X_{n-1}) .
\end{split}
\]
By inspection, using the braid relations for $r_n$, this is the same as \eqref{eq:h1} multiplied by $r_{n-1}$ from the left.
\end{proof}
The previous proposition shows that $H$ can be considered as a deformation of the group algebra $\mathbb C[W]$, with deformed braid-relations for $i=0,n$.

In order to show that the relations from Proposition \ref{prop:relations H} characterize $H$ as an algebra we show that $H$ has a PBW-basis. We introduce the following notation. If $u$ is a word in the $s_i$'s, $u=s_{i_1}\cdots s_{i_r}$, then we write $T_u=T_{i_1} \cdots  T_{i_r}$.
\begin{prop}[PBW-property for $H$] \label{prop:basis T}
For every $w \in W$ let $\overline{w}$ be a fixed reduced expression for $w$, then the set $\{T_{\overline w} \ |\ w \in W \}$ is a linear basis for $H$.
\end{prop}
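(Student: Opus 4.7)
I would prove the two halves of the PBW assertion separately: first that $\{T_{\overline w}\}_{w \in W}$ spans $H$, and then that these elements are linearly independent in $\mathfrak H(\mathbf k)$.

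\emph{Spanning.} Since $H$ is generated by $T_0,\ldots,T_n$, it suffices to show by induction on $k$ that every product $T_{i_1}\cdots T_{i_k}$ is a linear combination of $T_{\overline w}$'s with $\ell(w)\le k$. If $s_{i_1}\cdots s_{i_k}$ is not reduced, I would apply a sequence of braid moves in $W$ to bring two equal letters $s_i,s_i$ next to each other (Matsumoto); the same sequence applied to the $T$-word either preserves the product (middle-braid and commutation relations) or alters it by terms of length $\le k-1$ coming from $t(T_{i+1}T_i - T_iT_{i+1})$ in the deformed braid relations. After the reduction $T_i^2 = t_i^2$ or $1$, the remaining word has length $k-2$, and the inductive hypothesis finishes the case. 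If on the other hand $w := s_{i_1}\cdots s_{i_k}$ is reduced of length $k$, I use Matsumoto to connect $s_{i_1}\cdots s_{i_k}$ to the fixed reduced word $\overline w$ by ordinary braid moves, again noting that each move either preserves the $T$-product exactly or alters it by terms of length $k-2$ handled inductively.

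\emph{Linear independence.} Here I exploit the embedding $H\subset\mathfrak H(\mathbf k)$ and the PBW basis $\{r_w X^{\phi(\lambda)}\}$ of $\mathfrak H(\mathbf k)$. The definition of $T_i$ can be rewritten as
\[
T_i = \tfrac12\bigl(t_i-u_i+a_i^\vee(X)\bigr)\,r_i + \tfrac12\bigl(t_i+u_i-a_i^\vee(X)\bigr), \qquad i=0,n,
\]
and $T_i=r_i$ for $i\in[1,n-1]$; in each case $T_i=\alpha_i(X)r_i+\beta_i(X)$ with $\alpha_i$ a nonzero polynomial. Expanding a product $T_{\overline w}=T_{i_1}\cdots T_{i_k}$ (reduced) and pushing all reflections to the right via the cross-relations of $\mathfrak H(\mathbf k)$ (each application $r_j p(X) = (s_j p)(X) r_j + \text{(correction with one fewer }r)$ strictly lowers the number of $r$'s in the correction), one obtains
\[
T_{\overline w} = p_w(X)\, r_w + \sum_{\ell(w')<\ell(w)} q_{w',w}(X)\, r_{w'},
\]
where $p_w(X) = \prod_{j=1}^k (s_{i_1}\cdots s_{i_{j-1}}\alpha_{i_j})$ is a product of nonzero linear polynomials, hence nonzero. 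Now suppose $\sum_w c_w T_{\overline w}=0$ and let $\ell_{\max}$ be the maximal length among $w$ with $c_w\ne 0$. For each such $w$, the coefficient of $r_w$ in the PBW expansion of the sum equals $c_w p_w(X)$; by the PBW basis property of $\mathfrak H(\mathbf k)$, this must vanish, forcing $c_w=0$, a contradiction.

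\emph{Main obstacle.} The only non-routine step is the spanning argument, because the deformed braid relations at $i=0,n-1$ mean a Matsumoto-style reduction does not literally preserve the $T$-product; one has to set up the induction carefully so that every invocation of the deformed braid is absorbed by lower-length words already covered by the hypothesis. The linear independence step is then clean once one has the triangular-leading-term formula, whose nonvanishing of $p_w$ is precisely where the linearity (and nonzero-ness) of the coroots $a_0^\vee,a_n^\vee$ enters.
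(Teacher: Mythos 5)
Your proof is correct and follows essentially the same route as the paper: the linear-independence step via the triangular expansion $T_{\overline w}=p_w(X)\,r_w+(\text{lower terms})$ in the PBW basis of $\mathfrak H(\mathbf k)$, with nonvanishing leading coefficient and a maximality argument, is exactly the paper's argument (the paper indexes the lower terms by the Bruhat order rather than by length, which is immaterial). The only divergence is that for the spanning step the paper simply cites \cite[Theorem 2.3]{ER1}, whereas you sketch the Matsumoto/Tits-style induction that underlies that result; your sketch is sound, since the deformed braid relations only contribute corrections that are products of two fewer generators.
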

\begin{proof}
The set $\{T_{\overline w} \ | \ w\in W \}$ spans $H$, see \cite[Theorem 2.3]{ER1}.
Now suppose that we have a relation $\sum_{u \in W} c_u T_{\overline{u}} =0 $ in $H$ for some coefficients $c_u \in \mathbb C$. Using the cross-relations in $\mathfrak H(\mathbf k)$, we may write
\[
T_{\overline{u}} = \sum_{\substack{w \in W\\w \leq u}} f_{u,w}(X)w,
\]
where $f_{u,w}(X) \in \mathcal P_X$ and $f_{u,u}$ is nonzero. Here $w \leq u$ is meant with respect to the Bruhat order on $W$. Now we have
\[
\sum_{\substack{u,w \in W\\w \leq u}} c_u f_{u,w}(X) w =0,
\]
which implies, by the PBW-property for $\mathfrak H$, that for any $w \in W$
\[
\sum_{u \geq w} c_u f_{u,w}(X) =0.
\]
Let $v$ be a maximal element (in the Bruhat order) in the set $\{u \in W \ | \ c_u \neq 0\}$, then $c_v f_{v,v}(X)=0$ in $\mathcal P_X$. But since $f_{v,v}$ is a nonzero polynomial, it follows that $c_v=0$. We conclude that $c_u=0$ for all $u \in W$.
\end{proof}
\begin{cor} \label{cor:basis H}
As an algebra $H$ is completely characterized by the relations from Proposition \ref{prop:relations H}.
\end{cor}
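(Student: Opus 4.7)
The plan is to identify $H$ with the abstract algebra $\widetilde H$ presented by generators $T_0,\ldots,T_n$ and the relations listed in Proposition \ref{prop:relations H}. Since $H\subset\mathfrak H(\mathbf k)$ satisfies those relations by Proposition \ref{prop:relations H}, there is a surjective algebra homomorphism
\[
\varphi:\widetilde H\longrightarrow H,\qquad T_i\longmapsto T_i,
\]
and it remains to show that $\varphi$ is injective.

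For each $w\in W$ fix the same reduced expression $\overline w=s_{i_1}\cdots s_{i_r}$ as in Proposition \ref{prop:basis T}, and set $T_{\overline w}=T_{i_1}\cdots T_{i_r}\in\widetilde H$. I would next argue, purely inside $\widetilde H$, that the elements $\{T_{\overline w} : w\in W\}$ linearly span $\widetilde H$. This is exactly the content of \cite[Theorem 2.3]{ER1}, which was already invoked in the proof of Proposition \ref{prop:basis T}: starting from an arbitrary word in the generators, one uses the quadratic relations to avoid repeated neighbouring indices $i\in[1,n-1]$ and to reduce $T_i^2=t_i^2$ for $i=0,n$; the two standard braid relations swap reduced subwords; and the deformed braid relations for $i=0,n-1$ rewrite
\[
T_iT_{i+1}T_iT_{i+1}=T_{i+1}T_iT_{i+1}T_i+t\bigl(T_{i+1}T_i-T_iT_{i+1}\bigr),
\]
trading one length-$4$ word for another plus strictly shorter words. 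An induction on total length therefore expresses any monomial as a linear combination of the chosen $T_{\overline w}$.

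Combining the two halves, $\varphi$ sends a spanning set of $\widetilde H$ to a linear basis of $H$ by Proposition \ref{prop:basis T}; consequently $\varphi$ must be a bijection, which is the claim of the corollary.

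The main obstacle is the spanning step carried out inside the abstract algebra $\widetilde H$: one needs to verify carefully that the extra terms produced by the deformed braid relations are all of strictly smaller length, so that the reduction procedure actually terminates and the Matsumoto-type argument goes through. This verification is the content of Etingof--Rains's result \cite[Theorem 2.3]{ER1}, which covers precisely the class of braid-like deformations to which our relations belong, so I would simply cite it rather than reproduce the inductive reduction here.
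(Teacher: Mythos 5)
Your argument is correct and is essentially the paper's own proof: both construct the surjection from the abstract presented algebra onto $H$, invoke \cite[Theorem 2.3]{ER1} to see that the elements $T_{\overline w}$, $w\in W$, span the abstract algebra, and then use the linear independence of their images from Proposition \ref{prop:basis T} to conclude injectivity. The additional sketch of the Matsumoto-type reduction is harmless extra detail that the paper simply delegates to the citation.
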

\begin{proof}
Suppose that we have a complex associative algebra $\mathcal V$ generated by $V_i$, $i=0,\ldots,n$, with the same relations as in Proposition \ref{prop:relations H} (with $T_i$ replaced by $V_i$). Then the assignments $V_i \mapsto T_i$ extend to a surjective homomorphism $\psi:\mathcal V \rightarrow H$. Now let $V \in \mathcal V$ satisfy $\psi(V)=0$. By \cite[Theorem 2.3]{ER1} the set $\{V_{\overline w} \ | \ w \in W\}$ spans $V$. Writing $V= \sum_{w \in W} c_w V_{\overline{w}}$ and applying $\psi$ shows that $\sum_{w \in W} c_w T_{\overline{w}}=0$, hence every $c_w$ is equal to zero by Proposition \ref{prop:basis T}. This shows that $\psi$ is also injective.
\end{proof}
We also have the PBW-property for the finite algebra $H_0$.
\begin{cor}
The set $\{T_{\overline w} \ |\ w \in W_0 \}$ is a linear basis for $H_0$.
\end{cor}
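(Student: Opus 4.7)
The plan is to leverage Proposition \ref{prop:basis T} directly, together with the analogue of the Etingof--Rains spanning result applied to the finite Coxeter subgroup $W_0 \subset W$.

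First, linear independence is essentially free. By definition $H_0 \subseteq H$, and $\{T_{\overline w} \mid w \in W_0\}$ is a subfamily of the linearly independent set $\{T_{\overline w} \mid w \in W\}$ provided by Proposition \ref{prop:basis T}. So there is nothing to prove on that side.

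The real content is spanning. I would argue as in the proof of Corollary \ref{cor:basis H}, but with the finite Coxeter group $W_0$ in place of $W$. The subalgebra $H_0$ is generated by $T_1, \ldots, T_n$, which satisfy the subset of the relations in Proposition \ref{prop:relations H} that do not involve the index $i=0$, namely the quadratic relations for $i \in [1,n]$, the ordinary braid relations among $T_1,\ldots,T_{n-1}$, the deformed braid relation between $T_{n-1}$ and $T_n$, and the commutation relations. These are exactly the defining relations for the finite deformed Hecke algebra associated to the Coxeter group $W_0$ of type $C_n$, so \cite[Theorem 2.3]{ER1} applies and yields that the products $T_{\overline w}$, $w \in W_0$, span $H_0$.

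The main (mild) obstacle is simply checking that \cite[Theorem 2.3]{ER1} indeed applies to $W_0$ with the deformed braid relation between $T_{n-1}$ and $T_n$; this is precisely the setting of the Etingof--Rains deformation of $\mathbb C[W_0]$ with deformed braid relations, so the hypothesis is met. Once the spanning is in place, combining it with the linear independence established in the first paragraph finishes the proof. No further computation is required.
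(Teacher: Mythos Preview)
Your proposal is correct and matches the paper's intended argument: the paper states this result as an immediate corollary without an explicit proof, and the natural way to read it is exactly what you wrote---linear independence is inherited from Proposition \ref{prop:basis T} via the inclusion $H_0 \subset H$, and spanning comes from \cite[Theorem 2.3]{ER1} applied to the finite Coxeter group $W_0$. The only point worth making explicit (and you essentially have) is that for $w \in W_0$ one may choose the fixed reduced expression $\overline{w}$ to use only $s_1,\ldots,s_n$, which is automatically reduced in $W$ since $W_0$ is a standard parabolic subgroup; this ensures $T_{\overline w} \in H_0$ and that the subfamily really sits inside the basis of Proposition \ref{prop:basis T}.
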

From Proposition \ref{prop:relations H} and Corollary \ref{cor:basis H} it is now clear that $H$ depends as an abstract algebra only on the parameters $t_0,t,t_n$. Naturally we also have an algebra $H(\mathbf t_{\mathcal R_r^\vee})$ related to the reduced root system $\mathcal R_r^\vee$ which is obtained from the algebra $H(\mathbf t_{\mathcal R_r})$ by replacing $(t_0,t_n)$ by $(u_0,u_n)$.

\section{The rational double affine Hecke algebra} \label{sec:RatDAHA}
We now come to the definition of the algebra $\mathcal H(\mathbf t)$ that may be considered as a rational degeneration of Sahi's double affine Hecke algebra for type $(C^\vee, C)$. This is explained in Remark \ref{rem:limits}. We call this algebra the rational double affine Hecke algebra. For rank 1 this algebra is studied in \cite{G}. Let us remark that $\mathcal H$ is not a rational Cherednik algebra as defined in \cite{EG}. We show in subsection \ref{ssec:rational GDAHA} that the algebra $\mathcal H(\mathbf t)$ is isomorphic to the rational generalized double affine Hecke algebra attached to an affine Dynkin diagram of type $\widetilde D_4$ as defined by Etingof, Gan and Oblomkov in \cite{EGO}.
\begin{Def}
The algebra $\mathcal H=\mathcal H(\mathbf t)\subset \mathfrak H(\mathbf k)$ is the subalgebra generated by $H(\mathbf t_{\mathcal R_r})$ and $\mathcal P_X$.
\end{Def}
From the relations in $\mathfrak H(\mathbf k)$ it follows that in $\mathcal H$ we have, for $p \in \mathcal P$,
\begin{equation} \label{eq:relTPX}
T_i p(X)-(s_ip)(X) T_i = d_i(X;\mathbf t)\big[(s_ip)(X)-p(X)\big],\qquad i \in [0,n],
\end{equation}
where
\[
d_i(X;\mathbf t) =
\begin{cases}
\dfrac{t_i^2-u_i^2+a_i^\vee(X)^2}{a_i(X)},& i=0,n,\\
\dfrac{t}{a_i(X)},& i \in [1,n-1].
\end{cases}
\]
Together with the PBW-Theorem for the algebra $H$ this leads to the PBW-Theorem for $\mathcal H$.
\begin{prop}[PBW property for $\mathcal H$]
The sets
\[
\{ X^{\phi(\lambda)} T_{\overline{w}} \ | \ \lambda\in \Lambda, \ w \in W\}, \qquad \{  T_{\overline{w}}X^{\phi(\lambda)} \ | \ \lambda\in \Lambda, \ w \in W\}
\]
form linear bases for $\mathcal H$.
\end{prop}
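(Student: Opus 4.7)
The plan is to follow the strategy used to prove Proposition~\ref{prop:basis T}. I would first establish spanning via a normal-ordering argument based on the cross-relation~\eqref{eq:relTPX}, and then obtain linear independence by projecting any vanishing combination into the PBW basis of the ambient algebra $\mathfrak H(\mathbf k)$.

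For spanning, since $\mathcal H$ is generated by $H$ and $\mathcal P_X$, every element is a sum of alternating products of $T_i$'s and monomials $X^\mu$. The identity~\eqref{eq:relTPX} rewrites $T_i p(X)$ as $(s_ip)(X)T_i$ plus the correction $d_i(X;\mathbf t)[(s_ip)(X)-p(X)]$, and the key observation is that this correction actually lies in $\mathcal P_X$: for $i\in[1,n-1]$ the factor $(s_ip-p)(X)$ is divisible by $a_i(X)=X_i-X_{i+1}$, cancelling the simple pole of $d_i$; for $i=0,n$ the same function vanishes on $\{a_i(X)=0\}$ and is again divisible by $a_i(X)$, while the numerator of $d_i$ is already polynomial. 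Using this, one can push each $X^{\phi(\lambda)}$ leftward through the Hecke generators: the first term on the right of~\eqref{eq:relTPX} keeps the total number of Hecke factors intact while sliding the polynomial one step to the left, and the correction removes one Hecke factor altogether. An induction on total Hecke degree, combined with Proposition~\ref{prop:basis T} to rewrite any product of $T_i$'s as a linear combination of $T_{\overline w}$'s, reduces any element to the required form. The right-handed variant of~\eqref{eq:relTPX} yields spanning for the second set in the same way.

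For linear independence, suppose $\sum_{\lambda,w}c_{\lambda,w}X^{\phi(\lambda)}T_{\overline w}=0$ inside $\mathcal H\subset\mathfrak H(\mathbf k)$. Exactly as in the proof of Proposition~\ref{prop:basis T}, the cross-relations of $\mathfrak H(\mathbf k)$ expand $T_{\overline w}=\sum_{u\leq w}f_{w,u}(X)\,u$ with $f_{w,u}\in\mathcal P_X$ and $f_{w,w}\neq 0$. Substituting and collecting the coefficient of each $u\in W$ in the PBW basis $\{X^{\phi(\nu)}\,u\}$ of $\mathfrak H(\mathbf k)$ yields, for every $u\in W$,
\[
\sum_{w\geq u}\Bigl(\sum_{\lambda}c_{\lambda,w}X^{\phi(\lambda)}\Bigr)f_{w,u}(X)=0\quad\text{in }\mathcal P_X.
\]
Taking $v$ maximal in $\{w:c_{\lambda,w}\neq 0\text{ for some }\lambda\}$ with respect to the Bruhat order and setting $u=v$ leaves only the summand $w=v$, giving $\bigl(\sum_\lambda c_{\lambda,v}X^{\phi(\lambda)}\bigr)f_{v,v}(X)=0$; since $\mathcal P_X$ is an integral domain and $\{X^{\phi(\lambda)}\}_{\lambda\in\Lambda}$ is a basis for $\mathcal P_X$, all $c_{\lambda,v}$ vanish, contradicting maximality. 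The second basis is handled identically using the dual expansion $T_{\overline w}=\sum_{u\leq w}u\,g_{w,u}(X)$ obtained from the right-hand form of the cross-relation.

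The main obstacle is verifying that every correction term produced by~\eqref{eq:relTPX} remains inside $\mathcal P_X$ rather than leaking into the fraction field; this is what keeps the normal-ordering process inside $\mathcal H$ and allows the spanning reduction to terminate in the desired form. Once this polynomiality is settled, the Bruhat-induction for linear independence is a direct transposition of the argument from the proof of Proposition~\ref{prop:basis T}.
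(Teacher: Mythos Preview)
Your proposal is correct and follows exactly the line the paper indicates: the text preceding the proposition states only that the cross-relations \eqref{eq:relTPX} ``together with the PBW-Theorem for the algebra $H$'' lead to the PBW-Theorem for $\mathcal H$, and no further proof is given. Your normal-ordering argument for spanning (with the crucial check that the correction term in \eqref{eq:relTPX} lies in $\mathcal P_X$) and your Bruhat-induction for linear independence, imported directly from the proof of Proposition~\ref{prop:basis T}, fill in precisely the details the paper omits.
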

\begin{cor} \label{cor:defrel H}
The algebra $\mathcal H$ is completely characterized as an algebra by the relations for $H$ from Proposition \ref{prop:relations H}, the relations $X_i X_j = X_j X_i$ for $i,j\in [1,n]$, and the cross relations \eqref{eq:relTPX}.
\end{cor}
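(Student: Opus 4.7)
The plan is to mimic the proof of Corollary \ref{cor:basis H}, but now using the PBW-Theorem for $\mathcal H$ stated just above the corollary. Let $\mathcal V$ be the abstract complex associative algebra on generators $V_0,\ldots,V_n$ and $Y_1,\ldots,Y_n$ subject to: the relations of Proposition \ref{prop:relations H} (with $T_i$ replaced by $V_i$), the commutativity $Y_iY_j=Y_jY_i$, and the cross relations obtained from \eqref{eq:relTPX} with $T_i,X$ replaced by $V_i,Y$. Since $T_0,\ldots,T_n$ and $X_1,\ldots,X_n$ satisfy all these relations in $\mathcal H$, the assignments $V_i\mapsto T_i$, $Y_j\mapsto X_j$ extend to a surjective algebra homomorphism $\psi:\mathcal V\rightarrow \mathcal H$. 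The task is to show $\psi$ is injective.

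First I would establish a spanning statement for $\mathcal V$: the set $\{Y^{\phi(\lambda)}V_{\overline{w}}\mid \lambda\in \Lambda, w\in W\}$ spans $\mathcal V$. Since $Y_iY_j=Y_jY_i$, any word in the $Y_j$'s collapses to a monomial $Y^{\phi(\lambda)}$. Thus every element of $\mathcal V$ is a sum of products alternating between monomials in $Y$ and words in the $V_i$'s. Using the cross relations one can move all $Y$-factors to the left: for any polynomial $p$ in the $Y_j$'s one has $V_i p(Y)=(s_ip)(Y)V_i + d_i(Y;\mathbf t)[(s_ip)(Y)-p(Y)]$, and the right-hand side is a polynomial in the $Y_j$'s times $V_i$ plus a lower-degree polynomial in the $Y_j$'s (the rational factor $d_i$ cancels with the zero of $(s_ip)-p$ on the hyperplane $a_i=0$, exactly as in $\mathfrak H(\mathbf k)$). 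By induction on the length of the $V$-word, every element of $\mathcal V$ is a linear combination of expressions $Y^{\phi(\lambda)}V_{i_1}\cdots V_{i_r}$. Applying the PBW-property for $H$ (Proposition \ref{prop:basis T}) to the algebra generated by $V_0,\ldots,V_n$ inside $\mathcal V$ (which is a quotient of $H$ via the obvious surjection, hence has a spanning set $\{V_{\overline{w}}\}$), we reduce each $V$-word to a linear combination of the chosen reduced expressions $V_{\overline{w}}$.

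Given this spanning statement, injectivity is immediate: if $V=\sum_{\lambda,w}c_{\lambda,w}Y^{\phi(\lambda)}V_{\overline{w}}$ lies in $\ker\psi$, then $\sum_{\lambda,w}c_{\lambda,w}X^{\phi(\lambda)}T_{\overline{w}}=0$ in $\mathcal H$, and the PBW-Theorem for $\mathcal H$ (the proposition preceding the corollary) forces every $c_{\lambda,w}=0$. Hence $\ker\psi=0$ and $\psi$ is an isomorphism, which is the desired characterization.

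The only real step that requires care is the reduction to the spanning form; once it is in place, injectivity follows formally from the PBW basis of $\mathcal H$. The main subtlety to watch is that the cross relation \eqref{eq:relTPX}, applied in the abstract algebra $\mathcal V$, truly yields a polynomial identity (not just a rational one): this hinges on the fact that $(s_ip)(Y)-p(Y)$ is divisible by $a_i(Y)$ for every $p\in\mathcal P$, so the coefficient $d_i(Y;\mathbf t)[(s_ip)(Y)-p(Y)]$ makes sense as an element of the polynomial subalgebra generated by the $Y_j$. This being a purely formal consequence of the definitions, no further calculation is required.
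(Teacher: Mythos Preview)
Your argument is correct and follows exactly the template the paper uses for Corollary~\ref{cor:basis H}; the paper itself states the present corollary without proof, as an immediate consequence of the PBW property for $\mathcal H$, and your write-up supplies precisely those details. One small slip: for $i=0,n$ the term $d_i(Y;\mathbf t)\big[(s_ip)(Y)-p(Y)\big]$ is \emph{not} of lower degree than $p$ (the numerator of $d_i$ is quadratic in $a_i^\vee(Y)$), but this is harmless since your induction is on the length of the $V$-word rather than on degree.
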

We will frequently use \eqref{eq:relTPX} with $p(X)=X_i$, $i \in [1,n]$. In fact, \eqref{eq:relTPX} can be recovered from these relations. We collect the identities in the following lemma.
\begin{lem} \label{lem:relations daha}
In $\mathcal H$ the following relations hold:
\begin{align*}
(X_1-\frac12-T_0)^2&=u_0^2, \\
(X_n+T_n)^2&=u_n^2,\\
T_i X_i T_i&= X_{i+1} -tT_i, && i \in [1,n-1],\\
X_i T_j &= T_j X_i, && i \in [1,n],\ j \in [0,n],\ |i-j| \geq 2,  \\
X_iT_{i+1} &= T_{i+1}X_i, && i \in [1,n-1].
\end{align*}
\end{lem}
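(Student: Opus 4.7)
The plan is to derive all five relations directly from the cross-relations \eqref{eq:relTPX} and the quadratic relations of Proposition \ref{prop:relations H}, taking $p(X) = X_i$ for appropriate $i$. First I would record the explicit affine forms needed: $a_0(X) = 1 - 2X_1$ with $a_0^\vee(X) = \tfrac12 - X_1$, and $a_n(X) = 2X_n$ with $a_n^\vee(X) = X_n$.

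For the first identity I would apply \eqref{eq:relTPX} at $i=0$ with $p(X)=X_1$. Since $(s_0 X_1)(X) = 1 - X_1$, the factor $a_0(X)$ cancels cleanly and one obtains
\[
T_0 X_1 - (1-X_1)T_0 = t_0^2 - u_0^2 + (X_1-\tfrac12)^2.
\]
Expanding $(X_1 - \tfrac12 - T_0)^2$ and using $T_0^2 = t_0^2$, the cross-terms collapse to exactly the quantity on the left above, leaving $u_0^2$. The second identity is proved in the same way at $i=n$ with $p(X)=X_n$, using $(s_n X_n)(X) = -X_n$ and $T_n^2 = t_n^2$; here the relevant intermediate relation is $T_n X_n + X_n T_n = u_n^2 - t_n^2 - X_n^2$.

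For the third identity, applying \eqref{eq:relTPX} at $i \in [1,n-1]$ with $p(X) = X_i$ gives, since $(s_i X_i)(X) = X_{i+1}$ and $a_i(X) = X_i - X_{i+1}$, the simple relation $T_i X_i = X_{i+1} T_i - t$. Multiplying on the right by $T_i$ and using $T_i^2 = 1$ produces $T_i X_i T_i = X_{i+1} - tT_i$. The fourth and fifth identities are immediate: whenever $s_j$ fixes the indeterminate $X_i$ (which happens precisely when $|i-j|\geq 2$, or when $j = i+1$ since $s_{i+1}$ permutes only $X_{i+1}$ and $X_{i+2}$), the right-hand side of \eqref{eq:relTPX} vanishes, giving $X_i T_j = T_j X_i$.

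There is no real obstacle beyond bookkeeping; the only point requiring a moment's care is the factor $d_i(X;\mathbf t)$ in the $i=0,n$ cases, where one must verify that multiplication by $a_i(X)$ produces the polynomial $t_i^2 - u_i^2 + a_i^\vee(X)^2$, so that the numerator of $d_i$ is indeed absorbed when clearing denominators against $(s_i p)(X) - p(X)$.
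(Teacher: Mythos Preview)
Your proposal is correct and is precisely the approach the paper indicates: the text introduces the lemma by remarking that the identities simply ``collect'' the cases $p(X)=X_i$ of the cross-relations \eqref{eq:relTPX}, without spelling out the computations. Your verification of the $i=0,n$ cases, the cancellation of $a_i(X)$ against $(s_ip)(X)-p(X)$, and the use of the quadratic relations $T_i^2=t_i^2$ fills in exactly the details the paper omits.
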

We define in $\mathcal H$
\[
T_0^\vee = X_1-\frac12-T_0, \qquad T_n^\vee = -X_n-T_n,
\]
then we have $(T_0^\vee)^2=u_0^2$ and $(T_n^\vee)^2=u_n^2$. For $i\in [1,n-1]$ we will sometimes denote $T_i^\vee = T_i$. Let us introduce some convenient notations. We write
for $i,j \in [1,n]$, $i< j$,
\[
\begin{split}
T_{i,j} &= T_i T_{i+1} \cdots T_{j-2} T_{j-1} T_{j-2}\cdots T_{i+1} T_i \\
& = T_{j-1} T_{j-2} \cdots T_{i+1} T_i T_{i+1} \cdots T_{j-2}  T_{j-1}.
\end{split}
\]
This notation corresponds to the familiar notation in the symmetric group $\mathcal S_n$ where $s_{ij}$ denotes the transposition $i \leftrightarrow j$. Recall here that the elements $T_i$, $i=1,\ldots,n-1$, generate a subalgebra isomorphic to $\mathcal S_n$. We also define
in $\mathcal H$
\begin{align*}
\Xi_{i,n}&=T_i T_{i+1} \cdots T_{n-1} T_n T_{n-1} \cdots T_{i+1} T_{i},\\
\Xi_{i,n}^\vee & = T_i T_{i+1}\cdots T_{n-1} T_n^\vee T_{n-1} \cdots T_{i+1} T_{i},\\
\Xi_{0,i} & = T_{i-1} T_{i-2}\cdots T_1 T_0 T_1 \cdots T_{i-2} T_{i-1},\\
\Xi_{0,i}^\vee &= T_{i-1} T_{i-2}\cdots T_1 T_0^\vee T_1 \cdots T_{i-2} T_{i-1},
\end{align*}
for $i \in [1,n]$.

We give a few useful relations in $\mathcal H$ involving $T_0^\vee$ and $T_n^\vee$.
\begin{lem} \label{lem:relations Tvee}
The following relations hold in $\mathcal H$:
\begin{subequations}
\begin{align}
T_0 T_{1} T_0^\vee T_{1} & = T_{1}T_0^\vee T_{1} T_0 \label{eq:rela}\\
T_n T_{n-1} T_n^\vee T_{n-1} &= T_{n-1}T_n^\vee T_{n-1} T_n \label{eq:relb}\\
T_0^\vee X_i &= X_i T_0^\vee, && i \in [2,n] \label{eq:relc}\\
T_n^\vee X_i &= X_i T_n^\vee, &&i \in [1,n-1] \label{eq:reld}\\
T_0T_n^\vee &= T_n^\vee T_0 \label{eq:rele}\\
T_0^\vee T_n &= T_n T_0^\vee \label{eq:relf}\\
T_0^\vee T_n^\vee &= T_n^\vee T_0^\vee \label{eq:relg}\\
\frac12+T_0 + T_0^\vee + \Xi_{1,n} &+ \Xi_{1,n}^\vee + t \sum_{j=2}^{n} T_{1,j} =0. \label{eq:relh} \
\end{align}
\end{subequations}
\end{lem}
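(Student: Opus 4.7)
The plan is to exploit the definitions $T_0^\vee = X_1 - \tfrac{1}{2} - T_0$ and $T_n^\vee = -X_n - T_n$, reducing every statement to the relations of Lemma~\ref{lem:relations daha}, Proposition~\ref{prop:relations H}, and the identity $T_iX_{i+1}T_i = X_i + tT_i$ (which follows from squaring the relation $T_iX_iT_i = X_{i+1}-tT_i$).

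Identities \eqref{eq:relc}--\eqref{eq:relg} are routine. For \eqref{eq:relc}, expanding $T_0^\vee X_i$ and $X_iT_0^\vee$ and using $T_0X_i = X_iT_0$ (valid since $|0-i|\ge 2$ for $i \in [2,n]$) gives the result; \eqref{eq:reld} is analogous, with the single "exceptional" case $i=n-1$ covered by the relation $X_{n-1}T_n = T_nX_{n-1}$ from Lemma~\ref{lem:relations daha}. Relations \eqref{eq:rele}--\eqref{eq:relg} follow in the same way because $T_0$ commutes with both $T_n$ and $X_n$, and $T_n$ commutes with both $T_0$ and $X_1$.

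The braid-type relations \eqref{eq:rela} and \eqref{eq:relb} require a little more. Substituting the definition of $T_0^\vee$ into \eqref{eq:rela} and using $T_1^2=1$, $T_1X_1T_1 = X_2-tT_1$, and $T_0X_2 = X_2T_0$, both sides reduce to the difference $t(T_0T_1 - T_1T_0) + (T_0T_1T_0T_1 - T_1T_0T_1T_0)$, which vanishes by the deformed braid relation of Proposition~\ref{prop:relations H} for $i=0$. Relation \eqref{eq:relb} is handled in mirror fashion, using $T_{n-1}X_nT_{n-1} = X_{n-1}+tT_{n-1}$ and the deformed braid relation for $i=n-1$.

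The main obstacle, and the only statement with real content, is \eqref{eq:relh}. Here the key intermediate step is the inductive identity
\[
T_1T_2\cdots T_{n-1}\,X_n\,T_{n-1}\cdots T_2T_1 \;=\; X_1 + t\sum_{j=2}^{n} T_{1,j}.
\]
I would prove this by reverse induction on $k$ for $A_k := T_kT_{k+1}\cdots T_{n-1}X_nT_{n-1}\cdots T_{k+1}T_k$, showing $A_k = X_k + t\sum_{j=k+1}^{n} T_{k,j}$. The base case $A_{n-1}=T_{n-1}X_nT_{n-1}=X_{n-1}+tT_{n-1}$ is immediate. The inductive step $A_{k} = T_k A_{k+1} T_k$ uses $T_kX_{k+1}T_k = X_k+tT_k$ on the $X_{k+1}$ term and the ordinary braid relations of the symmetric subalgebra to absorb the conjugation by $T_k$ into each $T_{k+1,j}$, giving $T_kT_{k+1,j}T_k = T_{k,j}$ for $j\ge k+2$ together with the new term $tT_{k,k+1} = tT_k$. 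Finally, since $T_n+T_n^\vee = -X_n$, we get $\Xi_{1,n}+\Xi_{1,n}^\vee = -A_1$, and substituting this and $T_0^\vee = X_1-\tfrac{1}{2}-T_0$ into the left-hand side of \eqref{eq:relh} produces a complete cancellation.
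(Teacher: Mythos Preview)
Your proof is correct and follows essentially the same route as the paper: relations \eqref{eq:relc}--\eqref{eq:relg} are dismissed as immediate, \eqref{eq:rela} and \eqref{eq:relb} are reduced via $T_{n-1}X_nT_{n-1}=X_{n-1}+tT_{n-1}$ (resp.\ $T_1X_1T_1=X_2-tT_1$) to the deformed braid relation of Proposition~\ref{prop:relations H}, and \eqref{eq:relh} is obtained from the backward-induction identity $T_1\cdots T_{n-1}X_nT_{n-1}\cdots T_1=X_1+t\sum_{j=2}^n T_{1,j}$ together with $T_n+T_n^\vee=-X_n$ and $T_0^\vee+T_0+\tfrac12=X_1$. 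The only cosmetic difference is that the paper writes the computation for \eqref{eq:relb} directly in terms of $X_n+T_n=-T_n^\vee$ rather than substituting and collecting as you do for \eqref{eq:rela}.
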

\begin{proof}
Relations \eqref{eq:relc}-\eqref{eq:relg} are obvious. For \eqref{eq:relb} use the relations $T_{n-1} X_n T_{n-1} = X_{n-1} + tT_{n-1}$, $T_n X_{n-1}= X_{n-1} T_n$ from Lemma \ref{lem:relations daha} and the braid-type relations for $T_n$;
\[
\begin{split}
T_n T_{n-1} (X_n+ T_n) T_{n-1} &= T_n (X_{n-1}+tT_{n-1}) + T_{n-1} T_n T_{n-1} T_n + tT_{n-1} T_n - tT_n T_{n-1}\\
&= (X_{n-1}+tT_{n-1}) T_n +T_{n-1} T_n T_{n-1} T_n \\
&= T_{n-1} (X_n+T_n) T_{n-1} T_n.
\end{split}
\]
Relation \eqref{eq:rela} follows in the same way.
In order to prove \eqref{eq:relh}, we observe that from $T_iX_{i+1}T_i = X_{i} +tT_i$ for $i\in [1,n-1]$, we find by backward induction
\[
T_1 \cdots T_{n-1} X_n T_{n-1} \cdots T_1 = X_1+t \sum_{j=2}^{n} T_{1,j}.
\]
Then \eqref{eq:relh} follows from the identities $T_0^\vee+T_0+\frac12=X_1 $ and $T_n+T_n^\vee = -X_n$.
\end{proof}
Next we show that $\mathcal H$ has a subalgebra isomorphic to the degenerate affine Hecke algebra $H(\mathbf t_{\mathcal R_r^\vee})$.
\begin{prop} \label{prop:algebra Hv}
The subalgebra of $\mathcal H$ generated by $T_j^\vee$, $j=0,\ldots,n$, is isomorphic to $H(\mathbf t_{\mathcal R_r^\vee})$, i.e., it is characterized by the following relations:\\

\noindent Quadratic relations:
\begin{align*}
(T_i^\vee)^2&=u_i^2, && i=0,n, \\
(T_i^\vee)^2&=1, &&i\in[1,n-1],
\end{align*}
Braid-type relations:
\begin{align*}
T^\vee_i T^\vee_{i+1} T^\vee_i T^\vee_{i+1} + t T^\vee_i T^\vee_{i+1} &= T^\vee_{i+1} T^\vee_i T^\vee_{i+1} T^\vee_i + t T^\vee_{i+1} T^\vee_j, && i=0,n-1,\\
T^\vee_i T^\vee_{i+1} T^\vee_i &= T^\vee_{i+1} T^\vee_i T^\vee_{i+1},&& i\in[1,n-2],\\
T^\vee_i T^\vee_j &= T^\vee_j T^\vee_i, &&  i,j\in[0,n],\ |i-j| \geq 2.\\
\end{align*}
\end{prop}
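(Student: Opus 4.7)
The plan is to proceed in two stages: first verify that the elements $T_j^\vee$, $j=0,\ldots,n$, satisfy the stated relations, and then upgrade this to an isomorphism via a PBW-type linear independence argument, combined with Corollary \ref{cor:basis H}.

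For the quadratic relations, $(T_0^\vee)^2=u_0^2$ and $(T_n^\vee)^2=u_n^2$ are the first two identities of Lemma \ref{lem:relations daha}, while $(T_i^\vee)^2=T_i^2=1$ for $i\in[1,n-1]$ is Proposition \ref{prop:relations H}. The braid commutation relations $T_i^\vee T_j^\vee=T_j^\vee T_i^\vee$ for $|i-j|\geq 2$ split by range: when $i,j\in[1,n-1]$ they are Proposition \ref{prop:relations H}; when $i\in\{0,n\}$ is paired with a far index $j\in[1,n-1]$, the linear expressions $T_0^\vee=X_1-\tfrac12-T_0$ and $T_n^\vee=-X_n-T_n$, combined with $X_kT_j=T_jX_k$ (Lemma \ref{lem:relations daha}) and the commutations of $T_0,T_n$ with far $T_j$'s (Proposition \ref{prop:relations H}), reduce the claim; and the remaining case $i=0,j=n$ is exactly \eqref{eq:relg}. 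The standard braid relations for $i\in[1,n-2]$ are trivially Proposition \ref{prop:relations H}.

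The main computation is the deformed braid relation at $i=0$ and $i=n-1$. I would start from \eqref{eq:rela}--\eqref{eq:relb}, namely $T_0T_1T_0^\vee T_1=T_1T_0^\vee T_1T_0$ and $T_nT_{n-1}T_n^\vee T_{n-1}=T_{n-1}T_n^\vee T_{n-1}T_n$, and substitute $T_0=X_1-\tfrac12-T_0^\vee$ (resp.\ $T_n=-X_n-T_n^\vee$). The quartic $T_0^\vee$-terms (resp.\ $T_n^\vee$-terms) land on opposite sides of the desired braid identity, and what remains is the commutator $[X_1,T_1T_0^\vee T_1]$ (resp.\ $[X_n,T_{n-1}T_n^\vee T_{n-1}]$). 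One evaluates this commutator by pulling $X_1$ through $T_1$ via the consequences $X_1T_1=T_1X_2-t$ and $X_2T_1=T_1X_1+t$ of $T_1X_1T_1=X_2-tT_1$ (Lemma \ref{lem:relations daha}), while passing $X_2$ through $T_0^\vee$ by \eqref{eq:relc}; the two $-t$'s picked up in succession are unequal and their difference produces precisely $t(T_1T_0^\vee-T_0^\vee T_1)$, which matches the correction demanded by the braid identity. The $T_n^\vee$ case is completely symmetric, using \eqref{eq:reld} in place of \eqref{eq:relc}. I expect this calculation to be the only nontrivial step, though it is an essentially mechanical manipulation of the same flavour as the braid check in Proposition \ref{prop:relations H}.

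Given the relations, Corollary \ref{cor:basis H} supplies a surjective homomorphism $\psi\colon H(\mathbf t_{\mathcal R_r^\vee})\twoheadrightarrow\langle T_0^\vee,\ldots,T_n^\vee\rangle$ sending each abstract generator to the corresponding $T_j^\vee$. Injectivity is obtained by adapting the proof of Proposition \ref{prop:basis T}: inside the ambient $\mathfrak H(\mathbf k)$, for any reduced expression $\overline{w}$ one expands $T_{\overline w}^\vee=\sum_{v\leq w}f_{w,v}(X)\,v$ in the PBW basis $\{wX^{\phi(\lambda)}\}$ of $\mathfrak H(\mathbf k)$. Since each $T_i^\vee$ is linear in $r_i$ with nonzero polynomial coefficient (namely $1$ for $i\in[1,n-1]$, and $-\tfrac12(t_i-u_i+a_i^\vee(X))$ for $i\in\{0,n\}$), multiplying out a reduced expression for $\overline{w}$ shows that the Bruhat-top coefficient $f_{w,w}(X)$ is a nonzero polynomial. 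A purported relation $\sum_w c_wT_{\overline w}^\vee=0$ then forces, by looking at a Bruhat-maximal $v$ in the support, $c_vf_{v,v}(X)=0$ and hence $c_v=0$; iterating gives $c_w=0$ for all $w$, proving that $\psi$ is an isomorphism.
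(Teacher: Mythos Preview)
Your proposal is correct and follows essentially the same approach as the paper: verify the relations, then invoke the PBW argument of Proposition~\ref{prop:basis T} for linear independence of the $T_{\overline w}^\vee$. The only stylistic difference is in the braid check: the paper expands $X_{n-1}X_n=X_nX_{n-1}$ (with $X_{n-1}=T_{n-1}X_nT_{n-1}+tT_{n-1}$ and $X_n=-T_n-T_n^\vee$) and then invokes both the $T_n$ braid relation and \eqref{eq:relb}, whereas you start directly from \eqref{eq:relb}, substitute $T_n=-X_n-T_n^\vee$, and reduce to the commutator $[X_n,T_{n-1}T_n^\vee T_{n-1}]$ using \eqref{eq:reld}; your route is slightly more direct but uses the same ingredients.
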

\begin{proof}
Let $H^\vee$ be the subalgebra generated by $T_i^\vee$, $i=0,\ldots,n$. Let us first verify that the relations mentioned in the proposition are satisfied in $H^\vee$. We only need to check the braid-type relations involving $T_0^\vee$ and $T_{n}^\vee$. Let us concentrate on the relations for $T_n^\vee$. We already know from Lemma \ref{lem:relations Tvee} that $T_i T_n^\vee = T_n^\vee T_i$ for $i\in[1,n-2]$. The relation involving $T_{n-1}$ follows from writing out $X_n X_{n-1} = X_{n-1} X_n$ if we write $X_{n-1}=T_{n-1}X_n T_{n-1} + tT_{n-1}$, $X_{n}=-T_n-T_n^\vee$, and we use the braid-type relation for $T_n$ and \eqref{eq:relb}. For $T_0^\vee$ the proof is similar.

In order to show that the $H^\vee$ is indeed isomorphic to $H(\mathbf t_{\mathcal R_r^\vee})$ it is now enough to show that $\{ T_{\overline w}^\vee \ | \ w \in W\}$ is a PBW-basis for $H^\vee$. This is done in the same way as in Proposition \ref{prop:basis T}.
\end{proof}
We now come to the following characterization of the rational DAHA $\mathcal H$, which is the analogue of \cite[Theorem 3.4]{St}.
\begin{thm} \label{thm:daha=H Hv}
The algebra $\mathcal H$ is isomorphic to the complex associative algebra $\mathcal V$ generated by $V_0,V_0^\vee, V_n,V_n^\vee, V_i=V_i^\vee$, $i=1,\ldots,n-1$, such that
\begin{enumerate}[(i)]
\item the subalgebra generated by $V_i$ ($i=0,\ldots,n$) is isomorphic to $H(\mathbf t_{\mathcal R_r})$,
\item the subalgebra generated by $V_i^\vee$ ($i=0,\ldots,n$) is isomorphic to $H(\mathbf t_{\mathcal R_r^\vee})$,
\item the following compatibility relations are satisfied:
 \begin{gather}
[V_0^\vee, V_n] = 0,  \quad [V_n^\vee, V_0] = 0, \nonumber \\
\frac12+V_0 + V_0^\vee + \Upsilon_{1,n} + \Upsilon_{1,n}^\vee + t \sum_{j=2}^{n} V_{1,j} =0,  \label{eq:compatibility}
\end{gather}
where
\[
\begin{split}
V_{1,j} &= V_1 V_2 \cdots V_{j-2} V_{j-1} V_{j-2} \cdots V_2 V_1, \qquad j \in [2,n],\\
\Upsilon_{1,n} &=V_1 V_2 \cdots V_{n-1} V_{n} V_{n-1} \cdots V_2 V_1, \\
\Upsilon_{1,n}^\vee &= V_1 V_2 \cdots V_{n-1} V_n^\vee V_{n-1} \cdots V_2 V_1.
\end{split}
\]
\end{enumerate}
The isomorphism $\varphi:\mathcal V \rightarrow \mathcal H$ is given on the generators of $\mathcal V$ by
\[
\varphi(V_i)= T_i, \quad \varphi(V_i^\vee)= T_i^\vee,
\]
for $i \in [0,n]$.
\end{thm}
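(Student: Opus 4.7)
The strategy is to show that $\varphi$ is a well-defined surjective homomorphism and then to prove injectivity by a PBW argument internal to $\mathcal V$.

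\textbf{Well-definedness and surjectivity.} All relations imposed in $\mathcal V$ already hold among the elements $T_i,T_i^\vee$ of $\mathcal H$: (i) is Proposition~\ref{prop:relations H}, (ii) is Proposition~\ref{prop:algebra Hv}, the two commutations in (iii) are \eqref{eq:rele}--\eqref{eq:relf}, and the sum identity is \eqref{eq:relh}. Hence $\varphi$ extends to an algebra map $\mathcal V\to\mathcal H$. For surjectivity, $\mathcal H$ is generated by $H$ and $\mathcal P_X$, and $H\subset\varphi(\mathcal V)$; it remains to hit each $X_i$. From Lemma~\ref{lem:relations daha} we have $X_1=\tfrac12+T_0+T_0^\vee=\varphi(\tfrac12+V_0+V_0^\vee)$, and the recursion $X_{i+1}=T_iX_iT_i+tT_i$ from the same lemma produces the remaining $X_i$.

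\textbf{Injectivity setup.} Set $Y_1:=\tfrac12+V_0+V_0^\vee$ in $\mathcal V$ and $Y_{i+1}:=V_iY_iV_i+tV_i$ for $i\in[1,n-1]$, so that $\varphi(Y_i)=X_i$. The aim is to show that
\[
\{\,Y^{\phi(\lambda)}V_{\overline{w}}\ :\ \lambda\in\Lambda,\ w\in W\,\}
\]
spans $\mathcal V$. Since $\varphi$ sends this set bijectively onto the PBW basis of $\mathcal H$ exhibited above, the spanning property forces $\varphi$ to be injective. To obtain it I need to verify, using only the defining relations of $\mathcal V$, three facts: (a) the $Y_i$ pairwise commute; (b) the cross relations
\[
V_iY_j-(s_iY_j)V_i=d_i(Y;\mathbf t)\bigl[(s_iY_j)-Y_j\bigr]
\]
hold for all $i\in[0,n]$, $j\in[1,n]$, whence, by Corollary~\ref{cor:defrel H} transferred to $\mathcal V$, the analogue of \eqref{eq:relTPX} holds for every polynomial in the $Y_j$; (c) $V_n^\vee=-Y_n-V_n$ in $\mathcal V$, so that $V_0^\vee=Y_1-\tfrac12-V_0$ and $V_n^\vee$ are redundant as generators.

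Fact (c) is routine: using only $V_i^2=1$ for $i\in[1,n-1]$ and the recursion defining $Y_i$, an easy backward induction yields $V_1\cdots V_{n-1}Y_nV_{n-1}\cdots V_1=Y_1+t\sum_{j=2}^nV_{1,j}$. Comparing with \eqref{eq:compatibility} rewritten as $Y_1+V_1\cdots V_{n-1}(V_n+V_n^\vee)V_{n-1}\cdots V_1+t\sum_{j=2}^nV_{1,j}=0$, and conjugating back by $V_{n-1}\cdots V_1$, gives $-Y_n=V_n+V_n^\vee$. Granted (a)--(c), any word in the generators of $\mathcal V$ can be rewritten via (b) as a polynomial in the $Y_i$ times a word in the $V_i$, and Proposition~\ref{prop:basis T} reduces the latter to $V_{\overline{w}}$, yielding the required spanning set.

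The principal obstacle is (a). For $i,j\in[1,n-1]$ with $|i-j|\geq 2$ the commutation is immediate from the braid relations of the $V_i$; in the remaining cases, and especially those involving $Y_1$ or $Y_n$, one must reproduce inside $\mathcal V$ the manipulations carried out in $\mathfrak H(\mathbf k)$, now feeding the $C_n$-type deformed braid relations together with the boundary commutations $[V_0^\vee,V_n]=[V_n^\vee,V_0]=0$ from (iii) into the inductive definition of the $Y_i$. This is the delicate bookkeeping step, parallel to the analogous computation of \cite[Theorem~3.4]{St} in the trigonometric setting, and it is the only genuinely hard part of the proof; without condition (iii) the algebra $\mathcal V$ would be strictly larger than $\mathcal H$.
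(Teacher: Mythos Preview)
Your approach matches the paper's: verify $\varphi$ is a surjective homomorphism, then prove injectivity by recovering inside $\mathcal V$ the presentation of $\mathcal H$ from Corollary~\ref{cor:defrel H} in terms of commuting elements $v_i$ with $\varphi(v_i)=X_i$. The paper takes the symmetric starting point $v_n:=-V_n-V_n^\vee$ and recurses downward via $v_i=V_iv_{i+1}V_i-tV_i$, whereas you start from $Y_1=\tfrac12+V_0+V_0^\vee$ and go upward; by your fact~(c) the two definitions coincide, so this is cosmetic.

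One correction and one clarification. Your claim that $[Y_i,Y_j]=0$ is ``immediate from the braid relations of the $V_i$'' for $i,j\in[1,n-1]$ with $|i-j|\ge2$ is not right: every $Y_i$ contains $V_0$ and $V_0^\vee$ (equivalently $V_n,V_n^\vee$), so no commutation among the $Y_i$ is automatic. What \emph{is} immediate once you have both closed forms \eqref{def:vi} and \eqref{def:vi2} are the cross relations $[V_j,v_i]=0$ of Lemma~\ref{lem:vV}: use the $V_n$-side expression for $j\le i-2$ and the $V_0$-side expression for $j\ge i+1$. The compatibility identity \eqref{eq:compatibility} is exactly what makes both closed forms available.

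For the commutativity itself the paper proceeds as follows: first derive $V_nV_{n-1}V_n^\vee V_{n-1}=V_{n-1}V_n^\vee V_{n-1}V_n$ from $[v_{n-1},V_n]=0$ (Lemma~\ref{lem:vV1}); combine this with the deformed braid relations for $V_n$ and for $V_n^\vee$ to check $v_{n-1}v_n=v_nv_{n-1}$ directly; then bootstrap to $[v_i,v_n]=0$ and finally to all $[v_i,v_j]=0$ by a short backward induction using the recursion $v_i=V_iv_{i+1}V_i-tV_i$ together with Lemma~\ref{lem:vV}. This is precisely the ``delicate bookkeeping'' you flag, and it fits in roughly half a page; the compatibility conditions in (iii) enter only through Lemma~\ref{lem:vV} and Lemma~\ref{lem:vV1}.
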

We prove the theorem in the next subsection.
\subsection{Proof of Theorem \ref{thm:daha=H Hv}}
First note that $\varphi$ preserves the defining relations for $\mathcal V$, and $\varphi$ maps generators of $\mathcal V$ to generators of $\mathcal H$, hence $\varphi$ is a surjective algebra homomorphism. In order to prove that $\varphi$ is also injective, we show that we can recover the defining relations for $\mathcal H$ in terms of $T_0,
\ldots, T_n$, $X_1,\ldots,X_n$, see Corollary \ref{cor:defrel H}, from the relations in $\mathcal V$. Since the elements $V_i$, $i=0,\ldots,n$, generate an algebra isomorphic to $H(\mathbf t_{\mathcal R_r})$, we need to find pairwise commuting elements $v_i \in \mathcal V$, $i=1,\ldots,n$, such that $\varphi(v_i)=X_i$, and the following cross-relations holds, see \eqref{eq:relTPX}:
\begin{equation} \label{eq:relVv}
V_i p(v)-(s_ip)(v) V_i = d_i(v;\mathbf t)\big[(s_ip)(v)-p(v)\big],\qquad i \in [0,n].
\end{equation}
Here we use the usual notation $p(v)=\sum_\mu c_\mu v^\mu$, if $p(x)$ is the polynomial $\sum_\mu c_\mu x^\mu$, and $v^\mu = v_1^{\mu_1} \cdots v_n^{\mu_n}$ for $\mu \in \mathbb Z_{\geq 0}^n$.\\

Let us first introduce the elements $v_i$. We define in $\mathcal V$
\[
v_n = -V_n - V_n^\vee,
\]
and we define $v_i$ recursively by
\begin{equation} \label{eq:recursionv}
v_i  = V_i v_{i+1} V_i - t V_i, \qquad i \in [1,n-1],
\end{equation}
then $\varphi(v_i)=X_i$ for $i \in [1,n]$. Note that we may write $v_i$ explicitly as
\begin{equation} \label{def:vi}
\begin{split}
v_i &= -\Big( \Upsilon_{i,n} + \Upsilon_{i,n}^\vee + t \sum_{j=i+1}^{n} V_{i,j}\Big),  \qquad i \in [1,n],
\end{split}
\end{equation}
where
\begin{align*}
V_{i,j}&=V_{i} V_{i+1} \cdots V_{j-2}V_{j-1}V_{j-2} \cdots V_{i+1}V_{i}, && 1 \leq i < j \leq n,\\
\Upsilon_{i,n} &= V_{i} V_{i+1} \cdots V_{n-1}V_{n}V_{n-1} \cdots V_{i+1}V_{i}, && i \in [1,n],\\
\Upsilon_{i,n}^\vee &= V_{i} V_{i+1} \cdots V_{n-1}V_{n}^\vee V_{n-1} \cdots V_{i+1}V_{i}, && i \in [1,n].\\
\end{align*}
For $v_1$ the compatibility relation \eqref{eq:compatibility} gives
\[
v_1 = \frac12 +V_0+V_0^\vee,
\]
so using the recursion relation for $v_i$ again we also have
\begin{equation} \label{def:vi2}
v_i = \frac12+\Upsilon_{0,i}+\Upsilon_{0,i}^\vee+t\sum_{j=2}^{i} V_{1,j}, \qquad i \in [1,n],
\end{equation}
where
\begin{align*}
\Upsilon_{0,i} &= V_{i-1} V_{i-2}\cdots V_{1} V_0 V_1 \cdots V_{i-2} V_{i-1}, && i \in [1,n],\\
\Upsilon_{0,i}^\vee &= V_{i-1} V_{i-2}\cdots V_{1} V_0^\vee V_1 \cdots V_{i-2} V_{i-1}, && i \in [1,n].
\end{align*}
We can now show that we have all the relations from Lemma \ref{lem:relations daha} in $\mathcal V$.
\begin{lem} \label{lem:vV}
The following relations hold in $\mathcal V$:
\begin{align*}
(v_1-\frac12-V_0)^2 &= u_0^2,\\
(v_n+V_n)^2&=u_n^2,\\
V_i v_i V_i &= v_{i+1}-tV_i, && i \in [1,n-1],\\
v_i V_j &= V_j v_i, && j \in [0,n], \ i \in [1,n], \ |i-j| \geq 2,\\
v_i V_{i+1} &= V_{i+1} v_i, && i \in [1,n-1].\\
\end{align*}
\end{lem}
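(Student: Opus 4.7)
The plan is to handle the five relations in order of increasing difficulty, all within the algebra $\mathcal V$.

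The first three are essentially formal. From the compatibility relation \eqref{eq:compatibility} one reads off $v_1 - \tfrac{1}{2} - V_0 = V_0^\vee$, so the first relation reduces to $(V_0^\vee)^2 = u_0^2$, which is a defining quadratic relation of $H(\mathbf t_{\mathcal R_r^\vee})$. The definition $v_n = -V_n - V_n^\vee$ gives $v_n + V_n = -V_n^\vee$, and the second relation then follows from $(V_n^\vee)^2 = u_n^2$. The third relation is the recursion \eqref{eq:recursionv} conjugated by $V_i$, using $V_i^2 = 1$ for $i \in [1, n-1]$.

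The bulk of the work is in the commutation relations 4 and 5. My strategy is to exploit the two parallel expansions \eqref{def:vi} and \eqref{def:vi2}: the former writes $v_i$ using only the generators $V_i, V_{i+1}, \ldots, V_n, V_n^\vee$, while the latter uses only $V_0, V_0^\vee, V_1, \ldots, V_{i-1}$. For each pair $(i,j)$ that must be checked, one of these two expansions separates $V_j$ from every generator appearing in $v_i$, so it suffices to verify that $V_j$ commutes individually with each factor.

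Concretely, for relation 4 with $j \leq i - 2$, I use \eqref{def:vi}: $V_j$ commutes with every $V_k$, $k \in [i, n]$, by the braid relations of $H(\mathbf t_{\mathcal R_r})$ (since $|j-k| \geq 2$), and $V_j$ commutes with $V_n^\vee$ either by the compatibility relation $[V_0, V_n^\vee] = 0$ (when $j = 0$) or via $V_j = V_j^\vee$ and the braid relations of $H(\mathbf t_{\mathcal R_r^\vee})$ (when $j \in [1, n-2]$). For relation 4 with $j \geq i + 2$ and for relation 5 (where $j = i + 1 \in [2, n]$), I use \eqref{def:vi2} and check the symmetric list: $V_j$ must commute with $V_0$, with $V_0^\vee$, and with each $V_k$, $k \in [1, i-1]$. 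The commutation with $V_0^\vee$ in the edge case $j = n$ uses $[V_n, V_0^\vee] = 0$; every other commutation is a braid relation in $H(\mathbf t_{\mathcal R_r})$ or $H(\mathbf t_{\mathcal R_r^\vee})$.

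The main obstacle is exactly these mixed commutations in the edge cases $j \in \{0, n\}$: they are not accessible through braid relations alone, since $V_0$ and $V_n^\vee$ (and likewise $V_n$ and $V_0^\vee$) sit in different Hecke subalgebras and cannot be identified with each other. Their commutativity depends precisely on the two compatibility relations built into the defining relations of $\mathcal V$. Once those relations are invoked, every remaining commutation reduces to a standard braid identity and the lemma follows.
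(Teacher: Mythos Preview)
Your proof is correct and follows essentially the same approach as the paper: the first three relations are immediate from the definitions, and for the commutation relations you use the expansion \eqref{def:vi} when $j\le i-2$ and the expansion \eqref{def:vi2} when $j\ge i+1$, reducing everything to commutation of $V_j$ with the individual generators that appear. Your explicit identification of the edge cases $j=0$ (requiring $[V_0,V_n^\vee]=0$) and $j=n$ (requiring $[V_n,V_0^\vee]=0$) matches the paper's use of the compatibility relations.
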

\begin{proof}
The first three relations follow directly from the definition of $v_i$.
Fix $i \in [2,n]$. In $\mathcal V$ we have
\[
\begin{split}
 [U_1,U_2] = 0, \qquad& U_1 \in \{V_0^\vee,V_0,V_1,\ldots,V_{i-2}\},\\
& U_2 \in \{V_{i,i+1}, V_{i,i+2},\ldots,\Upsilon_{i,n}, \Upsilon_{i,n}^\vee\}.
\end{split}
\]
These relations are easily verified using $[V_k,V_l]=[V_0^\vee,V_l]=[V_k,V_n^\vee]=0$ for $k \in [0,j]$, $l \in [i+1,n]$. Now from \eqref{def:vi} we find for $j \in [0,i-2]$
\[
\begin{split}
V_j v_i &= -\Big( V_j\Upsilon_{i,n} + V_j\Upsilon_{i,n}^\vee + t \sum_{k=i+1}^{n}V_j V_{i,k}\Big)\\
&= -\Big( \Upsilon_{i,n} V_j +  \Upsilon_{i,n}^\vee V_j+ t \sum_{k=i+1}^{n} V_{i,k}V_j\Big)\\
&= v_iV_j.
\end{split}
\]

Next we fix $i \in [1,n-1]$. Now we use the relations
\[
\begin{split}
[U_1,U_2]=0, \qquad &U_1 \in \{V_{i+1},V_{i+2},\ldots,V_{n},V_n^\vee\}, \\
& U_2 \in \{\Upsilon_{0,i}, \Upsilon_{0,i}^\vee, V_{1,2}, V_{1,3},\ldots, V_{1,i}\},
\end{split}
\]
then by \eqref{def:vi2} we have for $j \in [i+1,n]$
\[
\begin{split}
V_j v_i &= \frac12 V_j+V_j\Upsilon_{0,i}+V_j\Upsilon_{0,i}^\vee+t\sum_{k=2}^{i} V_j V_{1,k}\\
&= \frac12 V_j+\Upsilon_{0,i}V_j+\Upsilon_{0,i}^\vee V_j+t\sum_{k=2}^{i}  V_{1,k}V_j\\
&= v_iV_j. \qedhere
\end{split}
\]
\end{proof}
We still need to show that the subalgebra generated by $v_i$, $i=1,\ldots,n$, is commutative. Once the commutativity of the $v_i$'s is established, the cross-relations \eqref{eq:relVv} follow from Lemma \ref{lem:vV}. The following lemma will be useful for proving the commutativity of the $v_i$'s.
\begin{lem} \label{lem:vV1}
We have the following relations in $\mathcal V$:
\begin{align*}
v_i V_0^\vee &= V_0^\vee v_i,&& i \in[2,n],\\
v_i V_n^\vee &= V_n^\vee v_i,&& i \in[1,n-1], \\
V_n V_{n-1} V_n^\vee V_{n-1} &= V_{n-1} V_n^\vee V_{n-1}V_n
\end{align*}
\end{lem}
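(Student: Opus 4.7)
The plan is to handle the three claims separately, using the explicit formulas \eqref{def:vi} and \eqref{def:vi2} for the commutation statements and mimicking the proof of \eqref{eq:relb} for the braid-type identity.

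For the first relation, I would substitute the expression \eqref{def:vi} for $v_i$ with $i \in [2,n]$. Every ``letter'' appearing in $\Upsilon_{i,n}$, $\Upsilon_{i,n}^\vee$ and $V_{i,j}$ ($j \in [i+1,n]$) belongs to the set $\{V_i,V_{i+1},\ldots,V_{n-1},V_n,V_n^\vee\}$. I then check that $V_0^\vee$ commutes with each of these letters: for $V_k$ with $k \in [i,n-1]$ we have $V_k = V_k^\vee$ and $[V_0^\vee,V_k^\vee]=0$ by the commutation part of the braid-type relations in $H(\mathbf t_{\mathcal R_r^\vee})$ since $|0-k|\ge 2$; for $V_n$ this is exactly the compatibility relation $[V_0^\vee,V_n]=0$; and for $V_n^\vee$ it again follows from $H(\mathbf t_{\mathcal R_r^\vee})$ since $|n-0|\ge 2$. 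Hence $V_0^\vee$ commutes with each summand of \eqref{def:vi}, giving $v_iV_0^\vee=V_0^\vee v_i$.

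For the second relation I would do the mirror argument using \eqref{def:vi2}: for $i \in [1,n-1]$ the letters in $\Upsilon_{0,i}$, $\Upsilon_{0,i}^\vee$ and $V_{1,j}$ ($j \in [2,i]$) all lie in $\{V_0,V_0^\vee,V_1,\ldots,V_{i-1}\}\subset\{V_0,V_0^\vee,V_1,\ldots,V_{n-2}\}$. The element $V_n^\vee$ commutes with $V_k$ for $k\in[1,n-2]$ (via $V_k = V_k^\vee$ and $|n-k|\ge 2$ in $H(\mathbf t_{\mathcal R_r^\vee})$), with $V_0$ by the compatibility relation $[V_n^\vee,V_0]=0$, and with $V_0^\vee$ by the commutation in $H(\mathbf t_{\mathcal R_r^\vee})$. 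Adding the scalar $\tfrac12$ contributes nothing, so $v_iV_n^\vee = V_n^\vee v_i$.

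For the third relation I would copy the calculation used to establish \eqref{eq:relb} in Lemma \ref{lem:relations Tvee}, now taking place entirely inside $\mathcal V$. Write $V_n^\vee = -V_n - v_n$ and expand
\[
V_nV_{n-1}V_n^\vee V_{n-1} = -V_nV_{n-1}V_nV_{n-1} - V_nV_{n-1}v_nV_{n-1},
\]
\[
V_{n-1}V_n^\vee V_{n-1}V_n = -V_{n-1}V_nV_{n-1}V_n - V_{n-1}v_nV_{n-1}V_n.
\]
Using $V_{n-1}v_nV_{n-1} = v_{n-1} + tV_{n-1}$ (a rewriting of the recursion \eqref{eq:recursionv}) and $V_nv_{n-1} = v_{n-1}V_n$ (from Lemma \ref{lem:vV}), the difference of the two sides collapses to
\[
-\bigl(V_nV_{n-1}V_nV_{n-1} + tV_nV_{n-1}\bigr) + \bigl(V_{n-1}V_nV_{n-1}V_n + tV_{n-1}V_n\bigr),
\]
which vanishes thanks to the braid-type relation for $T_i,T_{i+1}$ at $i=n-1$ in $H(\mathbf t_{\mathcal R_r})$.

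The only mildly delicate step is the bookkeeping in the first two claims — making sure that for every index pair one really has the commutation available either from the $H^\vee$ relations or from the two scalar compatibility relations. Once that is laid out, the third identity is just an algebraic rerun of the proof of \eqref{eq:relb}, so no genuinely new obstacle arises.
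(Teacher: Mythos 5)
Your proposal is correct and follows essentially the same route as the paper: the first two commutation relations are exactly the letter-by-letter argument already used in the proof of Lemma \ref{lem:vV} (where $V_0^\vee$ and $V_n^\vee$ appear in the relevant commuting sets), and your verification of the third identity is the same computation the paper describes as ``writing out $[v_{n-1},V_n]=0$'' via the recursion $v_{n-1}=V_{n-1}v_nV_{n-1}-tV_{n-1}$ and the braid-type relation for $V_{n-1},V_n$.
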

\begin{proof}
For the first two relations see the proof of Lemma \ref{lem:vV}. The last relation follows from writing out $[v_{n-1},V_n]=0$ in terms of $V_{n-1},V_n$ and $V_n^\vee$, and using the braid-type relations for $V_n$ and $V_{n-1}$.
\end{proof}
We are now ready to prove the commutativity of the $v_i$'s, which completes the proof of Theorem \ref{thm:daha=H Hv}.
\begin{lem}
For $i,j \in [1,n]$, we have $v_i v_j = v_j v_i$.
\end{lem}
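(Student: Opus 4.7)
The plan is to choose, for each index, the formula for $v_i$ that makes its support of generators disjoint from that of $v_j$. Without loss of generality assume $i<j$. Using \eqref{def:vi2} for $v_i$ displays it as a polynomial in the ``left'' generators $\mathcal L_i = \{V_0, V_0^\vee, V_1, V_2, \ldots, V_{i-1}\}$ (where the list of $V_k$'s is empty when $i=1$), and using \eqref{def:vi} for $v_j$ displays it as a polynomial in the ``right'' generators $\mathcal R_j = \{V_j, V_{j+1}, \ldots, V_{n-1}, V_n, V_n^\vee\}$ (where the list of $V_k$'s with $k\in[j,n-1]$ is empty when $j=n$). Since $i<j$, every ``middle'' index $k$ appearing in $\mathcal L_i$ satisfies $k\leq i-1$ while every ``middle'' index $l$ appearing in $\mathcal R_j$ satisfies $l\geq j\geq i+1$, so $l-k\geq 2$.

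Next I would verify that every generator in $\mathcal L_i$ commutes with every generator in $\mathcal R_j$. The middle--middle commutations $V_kV_l = V_lV_k$ for $k\in[1,i-1]$, $l\in[j,n-1]$ follow from $|k-l|\geq 2$ via the braid-type relations of item (i) of Theorem \ref{thm:daha=H Hv}. The commutations $V_0V_l = V_lV_0$ and $V_0^\vee V_l = V_lV_0^\vee$ for $l\in[j,n-1]$ follow from $l\geq j\geq 2$ together with items (i) and (ii) (using $V_l=V_l^\vee$ for the second one). Similarly $V_kV_n = V_nV_k$ and $V_kV_n^\vee = V_n^\vee V_k$ for $k\in[1,i-1]$ follow from $|k-n|\geq 2$ since $k\leq i-1\leq n-2$. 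The remaining four commutations involving $V_0,V_0^\vee$ and $V_n,V_n^\vee$ are precisely the braid commutations $[V_0,V_n]=0$ and $[V_0^\vee,V_n^\vee]=0$ (items (i) and (ii)) together with the two compatibility relations $[V_0^\vee,V_n]=0$ and $[V_n^\vee,V_0]=0$ listed in (iii).

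Once all these pairwise commutations are in place, the subalgebras of $\mathcal V$ generated by $\mathcal L_i$ and by $\mathcal R_j$ commute elementwise, and since $v_i$ lies in the first and $v_j$ in the second, the identity $v_iv_j = v_jv_i$ follows immediately. The main obstacle is essentially bookkeeping: for a given pair $(i,j)$ one must pick the correct representative formula for each side so that the supports split cleanly along the Dynkin diagram. The compatibility relation \eqref{eq:compatibility}, which is the only thing reconciling \eqref{def:vi} and \eqref{def:vi2}, does all the substantive work, while the remaining verifications are direct consequences of the defining relations of $\mathcal V$.
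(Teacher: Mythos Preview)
Your argument is correct and is in fact cleaner than the paper's. The paper proceeds inductively: it first verifies $v_{n-1}v_n=v_nv_{n-1}$ by an explicit expansion in $V_{n-1},V_n,V_n^\vee$ that invokes the deformed braid relations for $V_n$ and $V_n^\vee$ together with the auxiliary identity $V_nV_{n-1}V_n^\vee V_{n-1}=V_{n-1}V_n^\vee V_{n-1}V_n$ from Lemma~\ref{lem:vV1}, and then propagates commutativity to all pairs via the recursion $v_i=V_iv_{i+1}V_i-tV_i$ and the relations $[v_n,V_i]=0$ of Lemma~\ref{lem:vV}. Your approach bypasses both the base-case computation and the induction by choosing, for $i<j$, the representation \eqref{def:vi2} for $v_i$ and \eqref{def:vi} for $v_j$, so that the two elements live in subalgebras generated by disjoint sets of generators lying on opposite sides of the affine Dynkin diagram; the required pairwise commutations are then immediate from items (i), (ii) and the two commutator relations in (iii) of Theorem~\ref{thm:daha=H Hv}. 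This makes transparent that the compatibility relation \eqref{eq:compatibility} (which is precisely what reconciles \eqref{def:vi} with \eqref{def:vi2}) is the sole nontrivial input, whereas in the paper's proof that role is somewhat hidden inside Lemma~\ref{lem:vV}. The paper's route has the minor advantage of exhibiting concretely how the deformed braid relations force commutativity at the ``hard'' end $i=n-1,\,j=n$, but yours is shorter and conceptually sharper.
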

\begin{proof}
We first show that $v_{n-1}v_{n} = v_{n}v_{n-1}$. By \eqref{eq:recursionv} and the definition of $v_n$ we have
\[
\begin{split}
v_{n-1} v_{n} &= V_{n-1} v_n V_{n-1} v_n - t V_{n-1} v_n \\
&= V_{n-1}(V_n+V_n^\vee)V_{n-1}(V_n+V_n^\vee)+t V_{n-1}V_n + tV_{n-1}V_n^\vee.
\end{split}
\]
By Lemmas \ref{lem:vV}, \ref{lem:vV1} and the braid-type relations for $V_n$ and $V_n^\vee$ we obtain from this
\[
\begin{split}
v_{n-1} v_{n} &= (V_n+V_n^\vee)V_{n-1}(V_n+V_n^\vee)V_{n-1}+t V_n V_{n-1}+ tV_n^\vee V_{n-1} \\
&= v_n V_{n-1} v_n V_{n-1} -t v_n V_{n-1} \\
&= v_n v_{n-1}.
\end{split}
\]

Next, we show that $v_n v_i = v_i v_n$ by backward induction on $i$. The statement is true for $i=n-1$. Suppose that $v_n v_{i+1}=v_{i+1}v_n$ for some $i \in [1,n-2]$, then it follows from \eqref{eq:recursionv} and Lemma \ref{lem:vV} that
\[
\begin{split}
v_n v_i &= v_n V_i v_{i+1} V_i - tv_n V_i \\
&= V_i v_n v_{i+1} V_i - t V_i v_n \\
&= V_i v_{i+1} v_n V_i - t V_i v_n \\
&= V_i v_{i+1} V_i v_n -t V_i v_n\\
&= v_i v_n.
\end{split}
\]
This proves the induction step.

Finally, let $i \in [1,n-1]$. A similar induction argument as above shows that $v_i v_j= v_j v_i$ for all $j \in [i+1,n]$.
\end{proof}

\subsection{The duality isomorphism}
We define an involution $\sigma$ on the multiplicity function $\mathbf t$ by
\[
\mathbf t^\sigma=(u_n, u_0, t, t_n, t_0),
\]
i.e., the values of $\mathbf t$ on the $a_0$-orbit and the $a_n^\vee$-orbit are interchanged. If an object depends on $\mathbf t$ we attach a superscript or subscript $\sigma$ to denote the same object depending on $\mathbf t^\sigma$, for example, $\mathcal H_\sigma=\mathcal H(\mathbf t^\sigma)$ and we denote the elements in this algebra by $Z^\sigma$ ($Z \in \mathcal H$).

For ease of notations, we will write
\[
U_0 = T_0^\vee, \qquad U_n= \Xi_{1,n}^\vee.
\]
Note that $U_i^2=u_i^2$, and $U_0,U_1,T_i$, $i \in [0,n]$, form a set of generators for the algebra $\mathcal H$.
\begin{prop} \label{prop:duality iso}
The assignments
\[
\begin{split}
U_0 \mapsto U_0^\sigma, \quad U_n \mapsto T_0^\sigma, \quad T_0 & \mapsto U_n^\sigma,  \quad T_i \mapsto T_i^\sigma\ (i \in [1,n]),
\end{split}
\]
extend uniquely to an isomorphism $\sigma : \mathcal H \rightarrow \mathcal H_\sigma$, respectively an anti-isomorphism $\psi:\mathcal H \rightarrow \mathcal H_\sigma$.
\end{prop}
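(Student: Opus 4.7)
The plan is to invoke the characterization of $\mathcal{H}$ from Theorem \ref{thm:daha=H Hv} and verify that the prescribed images satisfy the defining relations (i)--(iii) there. First I would extend the given data to the remaining generators of Theorem \ref{thm:daha=H Hv}. Trivially $\sigma(T_0^\vee)=\sigma(U_0)=T_0^{\vee,\sigma}$. Since $T_i^2=1$ for $i\in[1,n-1]$ together with the defining formula $U_n=\Xi_{1,n}^\vee=T_1\cdots T_{n-1}T_n^\vee T_{n-1}\cdots T_1$ force $T_n^\vee=T_{n-1}\cdots T_1\, U_n\, T_1\cdots T_{n-1}$, one must set $\sigma(T_n^\vee)=T_{n-1}^\sigma\cdots T_1^\sigma T_0^\sigma T_1^\sigma\cdots T_{n-1}^\sigma=\Xi_{0,n}^\sigma$; the same images apply to $\psi$.

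With the images fixed I would check each of the three groups of relations in $\mathcal{H}_\sigma$. The quadratic relations reduce to $(\Xi_{1,n}^{\vee,\sigma})^2=t_0^2$ and $(\Xi_{0,n}^\sigma)^2=u_n^2$; both telescope via $(T_i^\sigma)^2=1$ down to the single squares $(T_n^{\vee,\sigma})^2=(u_n^\sigma)^2=t_0^2$ and $(T_0^\sigma)^2=(t_0^\sigma)^2=u_n^2$. Commutation of $\sigma(T_0)$ with $T_i^\sigma$ for $i\in[2,n-1]$, and the analogous commutations for $\sigma(T_n^\vee)$, are immediate from the palindromic form of $\Xi_{1,n}^{\vee,\sigma}$ and the commutation $[T_i^\sigma,T_j^\sigma]=0$ when $|i-j|\geq 2$. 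For the compatibility relation \eqref{eq:compatibility}, I telescope $\sigma(\Xi_{1,n}^\vee)=T_1^\sigma\cdots T_{n-1}^\sigma\, \Xi_{0,n}^\sigma\, T_{n-1}^\sigma\cdots T_1^\sigma=T_0^\sigma$ using $(T_i^\sigma)^2=1$; together with $\sigma(\Xi_{1,n})=\Xi_{1,n}^\sigma$ and $\sigma(T_{1,j})=T_{1,j}^\sigma$ the image of \eqref{eq:relh} becomes exactly \eqref{eq:relh} in $\mathcal{H}_\sigma$.

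The main obstacle is the braid-type relation in (i) between $\sigma(T_0)=\Xi_{1,n}^{\vee,\sigma}$ and $T_1^\sigma$, together with its mirror in (ii) between $\Xi_{0,n}^\sigma$ and $T_{n-1}^\sigma$:
\[
\Xi_{1,n}^{\vee,\sigma}\, T_1^\sigma\, \Xi_{1,n}^{\vee,\sigma}\, T_1^\sigma + t\,\Xi_{1,n}^{\vee,\sigma}\, T_1^\sigma = T_1^\sigma\, \Xi_{1,n}^{\vee,\sigma}\, T_1^\sigma\, \Xi_{1,n}^{\vee,\sigma} + t\, T_1^\sigma\, \Xi_{1,n}^{\vee,\sigma}.
\]
Writing $\Xi_{1,n}^{\vee,\sigma}=T_1^\sigma A\, T_1^\sigma$ with $A=\Xi_{2,n}^{\vee,\sigma}$ and using $(T_1^\sigma)^2=1$, this collapses to the deeper braid-type identity $(T_1^\sigma A)^2+t\,T_1^\sigma A=(A\,T_1^\sigma)^2+t\,A\,T_1^\sigma$ between $T_1^\sigma$ and $A$. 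I would verify it by descending induction on the depth of $A=\Xi_{k,n}^{\vee,\sigma}$: at the base $k=n$ we have $A=T_n^{\vee,\sigma}$ and the identity is literally the braid-type relation at position $n-1$ in $H(\mathbf{t}_{\mathcal{R}_r^\vee})$ for $\mathcal{H}_\sigma$, supplied by Proposition \ref{prop:algebra Hv}; the inductive step peels off the outer $T_k^\sigma$'s of $A$ using the braid relation $T_1^\sigma T_2^\sigma T_1^\sigma=T_2^\sigma T_1^\sigma T_2^\sigma$ and the fact that $\Xi_{3,n}^{\vee,\sigma}$ commutes with $T_1^\sigma$. For the anti-isomorphism $\psi$ the same images apply and the same computations suffice, because every relation in Theorem \ref{thm:daha=H Hv} is reversal-invariant: the monomials $\Xi_{1,n}$, $\Xi_{1,n}^\vee$, $T_{1,j}$ in \eqref{eq:compatibility} are palindromic, and the two sides of each braid-type relation swap under word reversal. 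Finally, bijectivity of $\sigma$ and $\psi$ follows since their images contain a full generating set of $\mathcal{H}_\sigma$, and an appeal to the PBW basis from the analogue of Corollary \ref{cor:basis H} for $\mathcal{H}$ rules out a nontrivial kernel.
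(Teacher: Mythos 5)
Your proposal follows the same route as the paper: the paper's proof also declares everything routine except the braid-type relation between $U_n=\Xi_{1,n}^\vee$ and $T_1$, and establishes that relation (Lemma \ref{lem:braidrel}) by the same conjugation--reduction you describe, peeling off the $T_i$'s until one lands on the braid-type relation between $T_{n-1}$ and $T_n^\vee$ supplied by Proposition \ref{prop:algebra Hv}. Your core argument is sound; your induction should be phrased as a statement about the pair $(T_{k-1},\Xi_{k,n}^\vee)$ rather than always pairing with $T_1$, but that is what your base case already presupposes, and conjugation by $T_kT_{k-1}$ carries out the step.

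One point is overstated. Having chosen the presentation of Theorem \ref{thm:daha=H Hv}, you must verify \emph{all} of its relations, and a few of them are not covered by ``the palindromic form of $\Xi_{1,n}^{\vee,\sigma}$ plus $[T_i^\sigma,T_j^\sigma]=0$ for $|i-j|\geq 2$.'' Concretely: the relation $T_0T_n=T_nT_0$ maps to $[\Xi_{1,n}^{\vee,\sigma},T_n^\sigma]=0$, which requires \eqref{eq:relb} of Lemma \ref{lem:relations Tvee} (in $\mathcal H_\sigma$); the relation $T_0^\vee T_n^\vee=T_n^\vee T_0^\vee$ maps to $[T_0^{\vee,\sigma},\Xi_{0,n}^\sigma]=0$, which requires \eqref{eq:rela}; and the compatibility relation $[V_n^\vee,V_0]=0$, which you do not mention at all, maps to $[\Xi_{0,n}^\sigma,\Xi_{1,n}^{\vee,\sigma}]=0$, an identity between two words of length $2n-1$ that needs the $\mathcal S_n$ braid relations together with $[T_0,T_j]=0$, $[T_j,T_n^\vee]=0$ and \eqref{eq:rele}. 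All of these do hold and the verifications are routine given Lemma \ref{lem:relations Tvee}, so this is a gap of omission rather than a flaw in strategy, but as written your proof does not discharge them.
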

To prove Proposition \ref{prop:duality iso} we only need to check that $\sigma$ and $\psi$ preserve the defining relations for $\mathcal H$, which is a straightforward exercise. The only relation that is not obvious is the braid-type relation for $U_n$ and $T_1$;
\[
U_n T_1 U_n T_1 + t U_n T_1 = T_1 U_n T_1 U_n + t T_1 U_n.
\]
We prove a slightly more general result which is useful later on. The relation with $U=\Xi_{1,n}^\vee$ and $j=2$ is the desired braid-type relation for $U_n$ and $T_1$.
\begin{lem} \label{lem:braidrel}
For $j\in [2,n]$ the following braid-type relations hold in $\mathcal H$:
\[
UT_{1,j}UT_{1,j} + t U T_{1,j}= T_{1,j}UT_{1,j}U + t  T_{1,j}U, \qquad U \in \{T_0, T_0^\vee, \Xi_{1,n}, \Xi_{1,n}^\vee\}.
\]
\end{lem}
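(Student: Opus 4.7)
The plan is to prove Lemma~\ref{lem:braidrel} by induction on $j$, with the base case $j=2$ handled separately. The induction step rests on two observations. First, for $j \geq 3$ one has the identity $T_{1,j} = T_{j-1}\, T_{1,j-1}\, T_{j-1}$, a consequence of the standard type-$A$ braid and commutation relations only (not the deformed ones at the ends of the Dynkin diagram). Second, each $U \in \{T_0, T_0^\vee, \Xi_{1,n}, \Xi_{1,n}^\vee\}$ commutes with $T_i$ for every $i \in [2,n-1]$: for $U = T_0$ this is part of Proposition~\ref{prop:relations H}; for $U = T_0^\vee = X_1 - \tfrac12 - T_0$ combine this with $[X_1, T_i] = 0$ for $i \geq 2$ from Lemma~\ref{lem:relations daha}; for $U \in \{\Xi_{1,n}, \Xi_{1,n}^\vee\}$ the commutation is verified by direct manipulation of the defining expression, using only the type-$A$ braid moves $T_i T_{i+1} T_i = T_{i+1} T_i T_{i+1}$, commutations $T_iT_k=T_kT_i$ for $|i-k|\ge2$, and the commutation of $T_n$ (resp.\ $T_n^\vee$) with $T_1, \ldots, T_{n-2}$. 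Granted these, the induction step is routine: since $U$ commutes with $T_{j-1}$, both $(UT_{1,j})^2+tUT_{1,j}$ and $(T_{1,j}U)^2+tT_{1,j}U$ factor as $T_{j-1}\,[\cdot]\,T_{j-1}$, with the inner bracket being exactly the braid-type expression at level $j-1$.

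It remains to establish the base case $j=2$. For $U = T_0$ the relation is exactly the braid-type relation of Proposition~\ref{prop:relations H} with $i=0$, and for $U=T_0^\vee$ it is the corresponding relation in the dual Hecke algebra from Proposition~\ref{prop:algebra Hv}. For $U \in \{\Xi_{1,n}, \Xi_{1,n}^\vee\}$ the idea is to transport these known braid-type relations via an inner automorphism of $H$. Let $w_0$ denote the longest element of the symmetric subgroup $\mathcal S_n \subset W_0$ generated by $s_1, \ldots, s_{n-1}$ and fix a reduced expression, so that $T_{w_0}$ lies in the type-$A$ subalgebra generated by $T_1, \ldots, T_{n-1}$. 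Two identities should be verified: $T_{w_0}\, T_{n-1}\, T_{w_0}^{-1} = T_1$ (which in the type-$A$ subalgebra simply encodes $w_0 s_{n-1} w_0^{-1} = s_1$), and $T_{w_0}\, T_n\, T_{w_0}^{-1} = \Xi_{1,n}$, with an analogous statement for $T_n^\vee$. For the latter, decompose $w_0 = w_0^{(n-1)} \cdot (s_{n-1} s_{n-2} \cdots s_1)$ where $w_0^{(n-1)}$ is the longest element of $\mathcal S_{n-1}$; using that $T_n$ commutes with $T_1, \ldots, T_{n-2}$ and the telescoping identity $(T_{n-2}\cdots T_1)(T_1 \cdots T_{n-2}) = 1$, a short direct computation yields $T_{w_0}T_nT_{w_0}^{-1}=\Xi_{1,n}$. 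Conjugating the braid-type relation for $(T_{n-1}, T_n)$ from Proposition~\ref{prop:relations H} by $T_{w_0}$ then produces the braid-type relation for $(T_1, \Xi_{1,n})$; conjugating the braid-type relation for $(T_{n-1}, T_n^\vee)$ from Proposition~\ref{prop:algebra Hv} analogously gives the one for $(T_1, \Xi_{1,n}^\vee)$.

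The main obstacle is verifying the conjugation formulas $T_{w_0} T_n T_{w_0}^{-1} = \Xi_{1,n}$ and $T_{w_0} T_n^\vee T_{w_0}^{-1} = \Xi_{1,n}^\vee$ cleanly. What makes the argument work is that the entire computation sits inside the subalgebra generated by $T_1, \ldots, T_{n-1}$ together with $T_n$ (respectively $T_n^\vee$), and only undeformed braid and commutation moves intervene; consequently the conjugation mirrors the analogous calculation in the Weyl group, with no $t$-corrections, and the braid-type relation is transported without distortion.
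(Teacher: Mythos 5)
Your proof is correct, and the underlying strategy coincides with the paper's: transport the already-established deformed braid relations (between $T_0$, resp.\ $T_0^\vee$, and $T_1$; and between $T_{n-1}$ and $T_n$, resp.\ $T_n^\vee$) to the pairs $(U,T_{1,j})$ by conjugation with elements of the undeformed type-$A$ subalgebra generated by $T_1,\ldots,T_{n-1}$. The differences are organizational. For the reduction in $j$ you run an induction via $T_{1,j}=T_{j-1}T_{1,j-1}T_{j-1}$ and $[U,T_{j-1}]=0$, uniformly for all four choices of $U$; the paper instead peels off all the commuting outer factors in a single pass (for $U=T_0,T_0^\vee$ directly, and for $U=\Xi_{1,n},\Xi_{1,n}^\vee$ after first moving to the other end of the diagram). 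For the base case with $U=\Xi_{1,n}$ or $\Xi_{1,n}^\vee$ you conjugate by $T_{w_0}$ with $w_0$ the longest element of $\mathcal S_n$, using the factorization $w_0=w_0^{(n-1)}(s_{n-1}\cdots s_1)$ to verify $T_{w_0}T_nT_{w_0}^{-1}=\Xi_{1,n}$; the paper achieves the same effect more economically by conjugating with the single transposition $T_{1,n}$, via $T_n=T_{1,n}\Xi_{1,n}T_{1,n}$ and $T_{1,n}T_{1,j}T_{1,n}=T_{j,n}$, which avoids introducing $w_0$ altogether. Both routes use only the undeformed relations among $T_1,\ldots,T_{n-1}$ together with the commutation of $T_n$ (resp.\ $T_n^\vee$) with $T_1,\ldots,T_{n-2}$, so no $t$-corrections arise, exactly as you observe; all the auxiliary commutations you invoke (in particular $[\Xi_{1,n},T_i]=0$ and $[\Xi_{1,n}^\vee,T_i]=0$ for $i\in[2,n-1]$, and $[T_0^\vee,T_i]=0$ for $i\geq 2$ via Lemma \ref{lem:relations daha}) do hold and are provable by the means you indicate.
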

\begin{proof}
Let us first prove the identity for $U=\Xi_{1,n}$. The identity for $U=\Xi_{1,n}^\vee$ is proved in exactly the same way. Using $T_n = T_{1,n} \Xi_{1,n} T_{1,n}$ and $T_{1,n}T_{1,j}T_{1,n} = T_{j,n}$ we find
\[
\Xi_{1,n}T_{1,j}\Xi_{1,n}T_{1,j} + t \Xi_{1,n} T_{1,j}= T_{1,n}\big(  T_n T_{j,n} T_nT_{j,n} + t T_n T_{j,n}\big)T_{1,n}.
\]
We write out $T_{j,n}$ in terms of $T_i$'s, then by repeated application of $[T_i,T_j]=0$ and $T_i^2=1$ the expression between brackets becomes
\[
\begin{split}
T_n (T_{j} \cdots T_{n-1} \cdots T_{j} )T_n (T_j \cdots T_{n-1} \cdots T_j) + t T_n (T_j \cdots T_{n-1}\cdots T_j) = \\
T_j \cdots T_{n-2}(T_nT_{n-1} T_n  T_{n-1}  + tT_n T_{n-1}) T_{n-2} \cdots T_j.
\end{split}
\]
Now use the braid-type relations between $T_n$ and $T_{n-1}$ and reverse the above steps, then we obtain the desired identity.

For $U=T_0,T_0^\vee$ the desired identities for $j=2$ are precisely the braid-type relation between $U$ and $T_1$. For $j \in [3,n]$ the desired identity follow easily from writing $T_{1,j}=T_{j-1}\cdots T_1\cdots T_{j-1}$, the braid-type relations between $U$ and $T_1$, and repeated application of the relations $[U,T_i]=0$ and $T_i^2=1$ for $i\in [2,j-1]$.
\end{proof}

\subsection{The commutative subalgebra $\mathcal P_Y$}
For $i \in [1,n]$ we define elements $Y_i\in H$ by
\begin{equation} \label{def:Y}
Y_i = \Xi_{i,n} + \Xi_{0,i} + t \sum_{j=i+1}^n T_{i,j}.
\end{equation}
From this definition it is clear that
\[
Y_n=\Xi_{n,n}+\Xi_{0,n} = T_n + T_{n-1} \cdots T_1 T_0 T_1 \cdots T_{n-1},
\]
and
\[
Y_{i+1}  = T_i Y_i T_i - t T_i.
\]
We denote by $\mathcal P_Y\subset H \subset \mathcal H$ the subalgebra generated by $Y_1,\ldots,Y_n$. From the definitions of $Y_i$ and the algebra $H =H(\mathbf t_{\mathcal R})$ it follows that $H$ is generated as an algebra by $Y_i$, $T_i$, $i =1,\ldots, n$. Observe that $\sigma(X_i)=-Y_i^\sigma$, so using the duality isomorphism $\sigma$ it is easy to find properties for the $Y_i$'s from the properties of the $X_i$'s. Note that we also have $\psi(X_i)=-Y_i^\sigma$, where $\psi$ is the duality anti-isomorphism.
\begin{prop}
$\mathcal P_Y=\mathbb C[Y_1,\ldots,Y_n]$
\end{prop}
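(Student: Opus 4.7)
The plan is to reduce the statement to the known fact that $X_1,\dots,X_n$ generate a polynomial subalgebra $\mathcal P_X \subset \mathcal H$, by transporting it through the duality isomorphism $\sigma$ of Proposition \ref{prop:duality iso}. The claim amounts to two assertions: (a) the $Y_i$'s pairwise commute, and (b) $Y_1,\dots,Y_n$ are algebraically independent. Both will follow at once if we can identify the $Y_i$'s, up to sign, with the $\sigma$-image of $\mathcal P_X$ from the opposite algebra.

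The central identity to establish is
\[
\sigma(X_i^\sigma)=-Y_i,\qquad i\in[1,n],
\]
where $\sigma:\mathcal H(\mathbf t^\sigma)\to\mathcal H(\mathbf t)$ is the duality isomorphism applied with parameters $\mathbf t^\sigma$ (note $(\mathbf t^\sigma)^\sigma=\mathbf t$). For $i=1$, I would write $X_1^\sigma=(T_0^\vee)^\sigma+T_0^\sigma+\tfrac12=U_0^\sigma+T_0^\sigma+\tfrac12$ and apply the definition of $\sigma$ on generators, giving $\sigma(X_1^\sigma)=U_0+U_n+\tfrac12$. On the other hand, the compatibility relation \eqref{eq:relh} in $\mathcal H$, together with the definition \eqref{def:Y} of $Y_1$, yields $Y_1=-U_0-U_n-\tfrac12$, matching the required identity. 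For $i\geq 2$ I would proceed by induction using the recursions
\[
X_{i+1}^\sigma = T_i^\sigma X_i^\sigma T_i^\sigma + t\,T_i^\sigma,\qquad Y_{i+1}=T_iY_iT_i-tT_i,
\]
coming from Lemma \ref{lem:relations daha} and the definition of $Y_{i+1}$, together with $\sigma(T_i^\sigma)=T_i$; applying $\sigma$ to the first recursion and invoking the induction hypothesis gives exactly the second recursion with a sign flip.

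Once this identity is in hand, the conclusion is immediate: in $\mathcal H(\mathbf t^\sigma)$ the elements $X_1^\sigma,\dots,X_n^\sigma$ pairwise commute and are algebraically independent, since they span the polynomial subalgebra $\mathcal P_{X^\sigma}=\mathbb C[X_1^\sigma,\dots,X_n^\sigma]$ appearing as a factor in the PBW decomposition of $\mathcal H(\mathbf t^\sigma)$. Applying the algebra isomorphism $\sigma$ to these relations transports them to $\mathcal H(\mathbf t)$: the elements $Y_i=-\sigma(X_i^\sigma)$ commute and satisfy no nontrivial polynomial relation, so $\mathcal P_Y \simeq \mathbb C[Y_1,\dots,Y_n]$ as desired.

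The only real obstacle is the verification of $\sigma(X_i^\sigma)=-Y_i$; the base case $i=1$ is where the compatibility relation \eqref{eq:relh} does the essential work, and the inductive step is a mechanical check of compatibility between the two recursion formulas under $\sigma$. Everything else is a direct transport of structure through an algebra isomorphism.
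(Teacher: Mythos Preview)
Your proof is correct and follows essentially the same route as the paper: the paper observes just before the proposition that $\sigma(X_i)=-Y_i^\sigma$ (equivalently $\sigma_\sigma(X_i^\sigma)=-Y_i$), and its one-line proof simply says the result follows from $\mathcal P_Y=\sigma(\mathcal P_X^\sigma)$ and the fact that $\sigma$ is an isomorphism. You have supplied the verification of the key identity (base case via \eqref{eq:relh}, inductive step via the recursions) that the paper leaves as an observation, but the strategy is identical.
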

\begin{proof}
This follows from $\mathcal P_Y = \sigma(\mathcal P_X^\sigma)$, $\mathcal P_X^\sigma=\mathbb C[X_1^\sigma,\ldots,X_n^\sigma]$, and the fact that $\sigma$ is an isomorphism.
\end{proof}

\begin{prop} \label{prop:crossrel H}
We have the following relations in $H$,
\[
T_i p(Y)-(s_ip)(Y) T_i = d_i(Y;\mathbf t^\sigma)\big[p(Y)-(s_ip)(Y)\big],\qquad i\in[1,n],
\]
for $p \in \mathcal P$.
\end{prop}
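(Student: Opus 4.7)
The plan is to derive this by transporting \eqref{eq:relTPX} along the duality isomorphism of Proposition~\ref{prop:duality iso}. Applying that proposition once more with $\mathbf t$ replaced by $\mathbf t^\sigma$ yields an algebra isomorphism $\sigma_\sigma:\mathcal H_\sigma\to\mathcal H$ satisfying $\sigma_\sigma(T_i^\sigma)=T_i$ for $i\in[1,n]$ and $\sigma_\sigma(X_i^\sigma)=-Y_i$; the second identity is the mirror of $\sigma(X_i)=-Y_i^\sigma$ stated in the preceding subsection and follows by the same argument (using the compatibility relation \eqref{eq:relh} inside $\mathcal H_\sigma$).

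Next I would write the cross-relation \eqref{eq:relTPX} inside $\mathcal H_\sigma$, with variables $X^\sigma$ and parameter $\mathbf t^\sigma$, for $i\in[1,n]$ and an arbitrary polynomial $p$. Applying $\sigma_\sigma$ and exploiting commutativity of both the $X_j^\sigma$'s and the $Y_j$'s (the latter from the preceding proposition $\mathcal P_Y=\mathbb C[Y_1,\ldots,Y_n]$), the image reads
\[
T_i q(Y)-(s_iq)(Y)T_i = d_i(-Y;\mathbf t^\sigma)\bigl[(s_iq)(Y)-q(Y)\bigr],
\]
where $q(y)=p(-y)$; here I use that each $s_i$ commutes with the sign flip $y\mapsto -y$ on polynomials, so that $(s_ip)(-Y)=(s_iq)(Y)$.

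It remains to verify that $d_i(-Y;\mathbf t^\sigma)=-d_i(Y;\mathbf t^\sigma)$ for $i\in[1,n]$, which is exactly the sign swap needed to pass from $(s_ip)(Y)-p(Y)$ in \eqref{eq:relTPX} to $p(Y)-(s_ip)(Y)$ in the proposition. This is a one-line check from the explicit formulas: for $i\in[1,n-1]$, $d_i=t/a_i(x)$ is odd under $x\mapsto -x$; for $i=n$, one has $d_n(x;\mathbf t^\sigma)=(t_n^2-t_0^2+x_n^2)/(2x_n)$ (using $t_n^\sigma=t_n$ and $u_n^\sigma=t_0$), which is odd in $x_n$. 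Since $q$ ranges over all polynomials as $p$ does, the stated relation follows. I foresee no real obstacle; the only bookkeeping point is to remember that one applies $\sigma_\sigma$ rather than $\sigma$, since we need to land back in $\mathcal H$ from $\mathcal H_\sigma$.
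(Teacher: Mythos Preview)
Your proof is correct and follows exactly the approach of the paper, which simply states that the result follows from applying $\sigma_\sigma$ to the relations \eqref{eq:relTPX} in $\mathcal H_\sigma$. You have filled in the details of that one-line proof: the substitution $q(y)=p(-y)$, the compatibility of $s_i$ with $y\mapsto -y$, and the oddness check $d_i(-Y;\mathbf t^\sigma)=-d_i(Y;\mathbf t^\sigma)$ for $i\in[1,n]$.
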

\begin{proof}
This follows from applying the isomorphism $\sigma_\sigma$ to the relations \eqref{eq:relTPX} in $\mathcal H_\sigma$.
\end{proof}
\begin{cor} \label{cor:daha=PHP}
As a vector space, $H$ is isomorphic to $\mathcal P_Y \otimes H_0$ and $H_0 \otimes \mathcal P_Y$. Consequently,
\[
\mathcal H \simeq \mathcal P_Y \otimes H_0 \otimes \mathcal P_X \simeq \mathcal P_X \otimes H_0 \otimes \mathcal P_Y
\]
as vector spaces.
\end{cor}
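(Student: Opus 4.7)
\emph{Proof plan.} The plan is to prove the two vector-space isomorphisms $H\simeq \mathcal{P}_Y\otimes H_0$ and $H\simeq H_0\otimes \mathcal{P}_Y$; the stated decompositions of $\mathcal{H}$ then follow at once by combining with the PBW isomorphisms $\mathcal{H}\simeq \mathcal{P}_X\otimes H$ and $\mathcal{H}\simeq H\otimes \mathcal{P}_X$ from the PBW-property for $\mathcal{H}$. The two decompositions of $H$ are symmetric to one another, so I describe only the first; the second is handled identically.

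\emph{Surjectivity of the multiplication map $\mathcal{P}_Y\otimes H_0\to H$.} The cross-relations of Proposition~\ref{prop:crossrel H} let one move any $T_i$ $(i\in[1,n])$ past a polynomial in $Y$ at the price of a polynomial correction in $Y$; iteration yields $T_{\overline w}\,\mathcal{P}_Y\subseteq \mathcal{P}_Y\cdot H_0$ for every $w\in W_0$, so $\mathcal{P}_Y\cdot H_0$ is a subalgebra of $H$. It obviously contains $H_0$, and from \eqref{def:Y} with $i=1$ (using $\Xi_{0,1}=T_0$) it also contains
\[
T_0=Y_1-\Xi_{1,n}-t\sum_{j=2}^n T_{1,j}\in \mathcal{P}_Y+H_0.
\]
Thus the generating set $T_0,\ldots,T_n$ of $H$ lies in $\mathcal{P}_Y\cdot H_0$, so $H=\mathcal{P}_Y\cdot H_0$.

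\emph{Injectivity.} Here the duality isomorphism $\sigma:\mathcal{H}\to\mathcal{H}_\sigma$ of Proposition~\ref{prop:duality iso} is the decisive tool. Applying $\sigma$ to~\eqref{def:Y}, one has $\sigma(T_j)=T_j^\sigma$ for $j\in[1,n]$ and $\sigma(T_0)=\Xi_{1,n}^{\vee,\sigma}$; the braid and quadratic relations collapse $T_{i-1}^\sigma\cdots T_1^\sigma\Xi_{1,n}^{\vee,\sigma}T_1^\sigma\cdots T_{i-1}^\sigma$ to $\Xi_{i,n}^{\vee,\sigma}$, yielding
\[
\sigma(Y_i)=\Xi_{i,n}^\sigma+\Xi_{i,n}^{\vee,\sigma}+t\sum_{j=i+1}^n T_{i,j}^\sigma=-X_i^\sigma,
\]
the last equality being~\eqref{def:vi} read inside $\mathcal{H}_\sigma$. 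Since also $\sigma(T_{\overline w})=T_{\overline w}^\sigma$ for $w\in W_0$, the element $Y^\mu T_{\overline w}$ is sent to $(-1)^{|\mu|}(X^\sigma)^\mu T_{\overline w}^\sigma$. As $\mu$ ranges over $\mathbb{Z}_{\geq 0}^n=\phi(\Lambda)$ and $w$ over $W_0\subset W$, these are genuine PBW basis elements of $\mathcal{H}_\sigma$, hence linearly independent; pulling back via $\sigma^{-1}$ proves $\{Y^\mu T_{\overline w}\}$ is linearly independent in $H$, completing $H\simeq \mathcal{P}_Y\otimes H_0$. The symmetric statement $H\simeq H_0\otimes \mathcal{P}_Y$ follows by the same argument applied to the alternative PBW basis $\{T_{\overline w}^\sigma(X^\sigma)^{\phi(\lambda)}\}$ of $\mathcal{H}_\sigma$.

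\emph{Main obstacle.} The cross-relations and the formula for $T_0$ make surjectivity routine; the substantive point is the identity $\sigma(Y_i)=-X_i^\sigma$, which matches the PBW structure of $H$ to that of $\mathcal{H}_\sigma$ under duality. It is this identity that pins down the correct $\mathcal{P}_Y$-factor, something the decomposition $W=W_0\ltimes\tau(\Lambda)$ alone cannot establish.
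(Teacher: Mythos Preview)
Your proof is correct and follows essentially the same route the paper intends. The paper states the result as an immediate corollary of Proposition~\ref{prop:crossrel H}, relying implicitly on the duality argument already used for Proposition preceding it ($\mathcal P_Y=\mathbb C[Y_1,\ldots,Y_n]$): the cross-relations give surjectivity, and the identity $\sigma(Y_i)=-X_i^\sigma$ (which the paper records just before, as $\sigma(X_i)=-Y_i^\sigma$) transports the question of linear independence to the PBW basis of $\mathcal H_\sigma$. Your explicit verification of $\sigma(\Xi_{0,i})=\Xi_{i,n}^{\vee,\sigma}$ is a nice touch, but note that the paper already hands you $\sigma(Y_i)=-X_i^\sigma$ for free, so what you flag as the ``main obstacle'' is in fact provided upstream.
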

\begin{rem}
It should be observed that the elements $Y_i$ are not invertible, which is different from the usual (double) affine Hecke algebra.
\end{rem}

\subsection{The rational GDAHA for type $\widetilde D_4$} \label{ssec:rational GDAHA}
We end this section by showing that the rational DAHA $\mathcal H(\mathbf t)$ is isomorphic to the rational generalized double affine Hecke algebra (GDAHA) of rank $n$ attached to an affine Dynkin diagram of type $\tilde D_4$ as defined in \cite[Definition 2.2.1]{EGO}. We recall the definition here. Actually, in \cite{EGO} a rational GDAHA of rank $n$ is associated to any star-like graph that is not a finite Dynkin diagram.

Let $\nu \in \mathbb C$, $\mu=(\mu_{i_0},\mu_{i_1},\mu_{i_2},\mu_{i_3},\mu_{i_4}) \in \mathbb C^5$, and let $\gamma_k=\gamma_k(\mu)$ ($k \in [1,4]$) be complex numbers such that $\sum_{k=1}^4 \gamma_k=\mu_{i_0}$.
\begin{Def} \label{Def:rat GDAHA}
The algebra $B = B(\mu,\nu)$ is the complex associative algebra generated by $V_{i,k}$ ($i=1,\ldots,n$, $k=1,\ldots,4$) and the symmetric group $\mathcal S_n$ with the following relations: for any $i,j,k \in [1,n]$ with $i \neq j$, and $l,m \in [1,4]$,
\begin{gather*}
s_{ij} V_{i,l} = V_{j,l} s_{ij}, \\
s_{ij} V_{k,l} = V_{k,l} s_{ij} \quad \text{if } k\neq i,j, \\
(V_{k,l}-\gamma_l)(V_{k,l}-\gamma_l-\mu_{i_r})=0, \\
V_{i,1}+V_{i,2}+V_{i,3}+V_{i,4}=\nu \sum_{j \neq i} s_{ij}, \\
[V_{i,l},V_{j,l}]=\nu(V_{i,l} - V_{j,l}) s_{ij}, \\
[V_{i,l},V_{j,m}]=0,\quad  l \neq m,
\end{gather*}
where $s_{ij} \in \mathcal S_n$ denotes the transposition $i \leftrightarrow j$.
\end{Def}
The algebra $B$ essentially only depends on the parameters $\mu$ and $\nu$, see also \cite[Remark 2.2.2]{EGO}. We are going to show that $B$ is isomorphic to $\mathcal H$ for appropriate $\mu$ and $\nu$. For this the following lemma is useful.
\begin{lem} \label{lem:rat GDAHA}
The algebra $B$ is generated by $V_l:=V_{1,l}$ ($l\in [1,4]$) and $s_i:=s_{i,i+1}$ ($i\in[1,n-1])$, with relations, for $l \in [1,4]$, $i \in [1,n-1]$ and $j \in [2,n]$,
\begin{gather*}
s_i^2=1, \\
s_i s_{i+1} s_i = s_{i+1} s_i s_{i+1}, \\
s_{i} V_{l} = V_{l} s_{i}, \quad i\neq 1,\\
(V_{l}-\gamma_l)(V_{l}-\gamma_l-\mu_{i_r})=0, \\
V_{1}+V_{2}+V_{3}+V_{4} = \nu \sum_{k=2}^{n-1} s_{1k}, \\
[V_{l},s_{1j}V_{l}s_{1j}] = \nu[V_{l},s_{1j}],\\
[V_l,s_{1j}V_{m}s_{1j}]=0, \quad m \in [1,4],\ m \neq l,
\end{gather*}
where $s_{1j}=s_1s_2\cdots s_{j-2}s_{j-1} s_{j-2} \cdots s_2 s_1$.
\end{lem}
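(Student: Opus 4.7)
The plan is to construct mutually inverse algebra homomorphisms between $B$ and the algebra $\tilde B$ abstractly presented by the generators and relations in this lemma.

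First, I would verify that the assignment $V_l \mapsto V_{1,l}$, $s_i \mapsto s_{i,i+1}$ respects the relations of $\tilde B$, giving a well-defined map $\tilde B \to B$. The symmetric-group relations are inherited from $\mathcal S_n \subset B$; the relation $s_i V_l = V_l s_i$ for $i \neq 1$ is the $k=1$ instance of $s_{i,i+1} V_{k,l} = V_{k,l} s_{i,i+1}$ (valid since $1 \notin \{i,i+1\}$); and the quadratic and sum relations are the $i=1$ restrictions of their counterparts in $B$. For the bracket relations, combining $s_{1j} V_{1,l} = V_{j,l} s_{1j}$ with $s_{1j}^2 = 1$ gives $V_{j,l} = s_{1j} V_{1,l} s_{1j}$, so that $[V_{1,l},V_{j,l}] = \nu(V_{1,l} - V_{j,l}) s_{1j}$ rewrites as $[V_l, s_{1j} V_l s_{1j}] = \nu[V_l, s_{1j}]$, and similarly $[V_{1,l}, V_{j,m}] = 0$ becomes $[V_l, s_{1j} V_m s_{1j}] = 0$.

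The reverse map $B \to \tilde B$ requires defining $V_{i,l} := s_{1i} V_l s_{1i}$ in $\tilde B$ for $i \geq 2$ (and $V_{1,l} := V_l$) and sending every $s_{ij}$ to the corresponding product of simple transpositions. The crux is the equivariance claim
\[
\sigma\, V_{k,l}\, \sigma^{-1} = V_{\sigma(k),l} \quad \text{in } \tilde B, \qquad \sigma \in \mathcal S_n,\ k \in [1,n],
\]
from which every remaining relation of $B$ follows by conjugating its $i=1$ instance (proved in the lemma) by $s_{1i}$: the general sum relation, the two commutator relations, and the quadratic relation (trivially). By induction on length it suffices to prove equivariance for $\sigma = s_m$ a simple transposition. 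The cases $m \geq 2, k = 1$ and $m \geq 2, k \geq 2, k \notin \{m,m+1\}$ reduce to the hypothesis $[V_l, s_m] = 0$ together with the commutation of disjoint transpositions in $\mathcal S_n$.

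The main obstacle is the case $m = 1, k \geq 3$, where neither $s_1$ nor $V_l$ obviously commutes with $s_{1k}$. The trick is the $\mathcal S_3$-braid identity $s_{1k} = s_{2k} s_1 s_{2k} = s_1 s_{2k} s_1$ inside $\langle s_1, s_{2k}\rangle$; using the first equality together with $[V_l, s_{2k}] = 0$ (since $s_{2k}$ is a word in $s_2, \ldots, s_{k-1}$, each of which commutes with $V_l$) rewrites $V_{k,l} = s_{2k} s_1 V_l s_1 s_{2k}$, whence $s_1 V_{k,l} s_1 = (s_1 s_{2k} s_1) V_l (s_1 s_{2k} s_1) = s_{1k} V_l s_{1k} = V_{k,l}$. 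The residual cases $m \in \{k-1,k\}, m \geq 2$ are handled by the same mechanism and in fact give the natural recursion $V_{k,l} = s_{k-1} V_{k-1,l} s_{k-1}$. Once equivariance is established the two maps are visibly inverse on generators, completing the proof.
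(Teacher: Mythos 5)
Your proof is correct and supplies exactly the verification the paper dismisses with ``The proof is a straightforward exercise'': the forward map is a routine restriction to the $i=1$ instances of the relations of $B$, and your equivariance claim $\sigma V_{k,l}\sigma^{-1}=V_{\sigma(k),l}$ --- proved via the braid identity $s_{1k}=s_1s_{2k}s_1$ together with $[V_l,s_i]=0$ for $i\geq 2$ --- is the right engine for recovering all remaining relations of $B$ by conjugating their first-row instances, so the two maps are mutually inverse. Note only that your argument (necessarily) uses the commuting Coxeter relations $s_is_j=s_js_i$ for $|i-j|\geq 2$, which the lemma's list omits, and reads the sum relation with upper limit $n$ rather than $n-1$; both are evidently typographical slips in the statement rather than gaps in your proof.
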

The proof is a straightforward exercise. We can now show that the rational GDAHA $B$ is isomorphic to the algebra $\mathcal H$.
\begin{thm} \label{thm:B=daha}
Let $\mu(\mathbf t) = (\frac12-t_0-u_0-t_n-u_n, 2t_0,2u_0,2t_n,2u_n)$. The assignments
\begin{gather*}
\varphi(s_{i})=T_i, \quad i \in [1,n-1],\\
\varphi(V_{1})=T_0+\gamma_1+t_0, \qquad \varphi(V_{2}) = T_0^\vee+\gamma_2+u_0,\\
\varphi(V_{3}) = \Xi_{1,n}+\gamma_3+t_n, \qquad \varphi(V_{4}) = \Xi_{1,n}^\vee+\gamma_4+u_n,
\end{gather*}
extend uniquely to an algebra isomorphism $\varphi: B( \mu(\mathbf t),-t) \rightarrow \mathcal H(\mathbf t)$.
\end{thm}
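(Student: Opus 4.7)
The plan is to show that $\varphi$ respects the reduced defining relations of $B=B(\mu(\mathbf t),-t)$ listed in Lemma \ref{lem:rat GDAHA}, which yields a surjective algebra homomorphism onto $\mathcal H$; injectivity is then obtained by matching the PBW decomposition $\mathcal H \simeq \mathcal P_X \otimes H_0 \otimes \mathcal P_Y$ from Corollary \ref{cor:daha=PHP} with the analogous PBW theorem for $B$ established in \cite{EGO}. Abbreviate $U_l = \varphi(V_l)-\gamma_l-t_l' \in \{T_0, T_0^\vee, \Xi_{1,n}, \Xi_{1,n}^\vee\}$, where $t_l' \in \{t_0,u_0,t_n,u_n\}$ is the parameter attached to $U_l$.

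Most relations transfer to already-established identities. The symmetric-group relations on the $s_i$ give the standard Hecke relations for $T_1,\ldots,T_{n-1}$ in $H_0 \subset \mathcal H$. The commutations $s_i V_l = V_l s_i$ for $i \in [2,n-1]$ reduce to $[T_i,U_l]=0$; for $T_0$ and $T_0^\vee = X_1-\frac12-T_0$ use $[T_i,T_0]=[T_i,X_1]=0$ (Lemma \ref{lem:relations daha}), and for $\Xi_{1,n},\Xi_{1,n}^\vee$ use the standard braid relations in $H_0$ together with \eqref{eq:reld}. The quadratic relations become $U_l^2=(t_l')^2$, immediate from Proposition \ref{prop:relations H} and conjugation by $T_1\cdots T_{n-1}$. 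The sum relation coincides with \eqref{eq:relh} once one uses $\sum_l\gamma_l + (t_0+u_0+t_n+u_n)=\mu_{i_0}+(t_0+u_0+t_n+u_n)=\frac12$ and $\nu=-t$. The ``diagonal'' commutation $[V_l, s_{1j} V_l s_{1j}] = \nu\,[V_l, s_{1j}]$ is precisely the braid-type identity of Lemma \ref{lem:braidrel} applied with $U=U_l$.

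The main obstacle is the ``off-diagonal'' relation $[U_l, T_{1,j} U_m T_{1,j}] = 0$ for distinct $U_l,U_m \in \{T_0, T_0^\vee, \Xi_{1,n}, \Xi_{1,n}^\vee\}$ and $j \in [2,n]$, which is not covered by any earlier lemma. The base case $j=2$ is handled pair by pair: for $(T_0, T_0^\vee)$, expanding $T_0^\vee = X_1-\frac12-T_0$ and using $T_1 X_1 T_1 = X_2-tT_1$, $[T_0,X_2]=0$, and the deformed braid relation between $T_0$ and $T_1$ gives $[T_0, T_1 T_0^\vee T_1] = -t[T_0,T_1] + t[T_0,T_1] = 0$; the four mixed pairs reduce via $T_1 \Xi_{1,n}^{(\vee)} T_1 = \Xi_{2,n}^{(\vee)}$ to commutations of $T_0$ or $T_0^\vee$ with $\Xi_{2,n}^{(\vee)}$, which hold because $T_0$ and $T_0^\vee$ commute with $T_i$ for $i\geq 2$ and with $T_n, T_n^\vee$ by \eqref{eq:rele}--\eqref{eq:relf}; the pair $(\Xi_{1,n},\Xi_{1,n}^\vee)$, requiring $[\Xi_{1,n}, \Xi_{2,n}^\vee]=0$, is the most delicate and follows from the factorization $\Xi_{1,n}=T_1 \Xi_{2,n} T_1$ combined with the braid-type relation \eqref{eq:relb}. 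For $j \geq 3$, the inner factors $T_2,\ldots,T_{j-1}$ of $T_{1,j}$ can be pushed past the $U$'s using $[T_i,U]=0$ for $i\geq 2$, reducing the problem to the $j=2$ case. The duality $\sigma$ of Proposition \ref{prop:duality iso}, which permutes the four distinguished elements, cuts the number of independent verifications in half.

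Once $\varphi$ is a well-defined homomorphism, surjectivity is immediate: $T_n = (T_{n-1}\cdots T_1)\Xi_{1,n}(T_1\cdots T_{n-1})$, so all $T_0,\ldots,T_n$ lie in the image, and the $X_i$ are recovered inductively from $X_1=\frac12+T_0+T_0^\vee$ (compare \eqref{def:vi2} transferred through Theorem \ref{thm:daha=H Hv}) together with $X_{i+1}=T_i X_i T_i + t T_i$. For injectivity, $\varphi$ carries the PBW basis of $B$ onto the PBW basis of $\mathcal H$, and since it is surjective and both bases have matching cardinalities in each filtered piece, it must be a bijection.
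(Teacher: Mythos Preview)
Your verification that $\varphi$ respects the relations of $B$ and is surjective follows the same path as the paper (which dismisses the relation check as ``almost straightforward''; you supply more detail, correctly identifying Lemma~\ref{lem:braidrel} as the source of the diagonal commutation and working out the off-diagonal cases). The genuine divergence is in the injectivity step. The paper does not appeal to a PBW comparison at all: it simply writes down a candidate inverse $\tilde\varphi:\mathcal H\to B$ on generators (sending $T_0,T_0^\vee$ to $V_1-\gamma_1-t_0,\ V_2-\gamma_2-u_0$ and $T_n,T_n^\vee$ to $s_{1n}V_3s_{1n}-\gamma_3-t_n,\ s_{1n}V_4s_{1n}-\gamma_4-u_n$), checks that $\tilde\varphi$ preserves the defining relations of $\mathcal H$ from Theorem~\ref{thm:daha=H Hv}, and observes that $\tilde\varphi\circ\varphi$ and $\varphi\circ\tilde\varphi$ are the identity on generators.

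Your PBW argument is a legitimate alternative in spirit, but as written it has a gap: the sentence ``$\varphi$ carries the PBW basis of $B$ onto the PBW basis of $\mathcal H$'' is not justified, and it is not even the right statement --- the PBW basis of $B$ in \cite{EGO} is built from ordered monomials in the $V_{i,l}$ times symmetric-group elements, which $\varphi$ certainly does not send to individual elements of the basis $\mathcal P_X\otimes H_0\otimes\mathcal P_Y$. What one would actually need is a filtration on $B$ (by total degree in the $V_{i,l}$, say) that $\varphi$ respects and whose graded pieces have the same finite dimensions as those of $\mathcal H$; then surjectivity on each piece forces injectivity. This can be made to work, but it requires spelling out the precise PBW statement from \cite{EGO} and checking the filtration compatibility. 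The paper's explicit inverse avoids all of this and is entirely self-contained; it also has the advantage of making the identification of the two algebras completely concrete.
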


\begin{proof}
Note that $\mathcal H$ is generated as an algebra by $T_0, T_0^\vee, \Xi_{1,n}, \Xi_{1,n}^\vee$ and $T_1, \ldots, T_{n-1}$, hence $\varphi$ maps generators to generators and is therefore surjective. It is an almost straightforward computation to check that $\varphi$ preserves the relations for $B$ from Lemma \ref{lem:rat GDAHA}. Let us remark that the relations
\begin{equation}
[V_l, s_{1j} V_l s_{1j}]= \nu[V_l,s_{1j}], \qquad l \in [1,4], \ j \in [2,n],
\end{equation}
are equivalent to the braid-type relations from Lemma \ref{lem:braidrel}.

We define $\tilde{\varphi}:\mathcal H(\mathbf t) \rightarrow B(\mu(\mathbf t),-t)$ on generators by
\begin{gather*}
\tilde{\varphi}(T_i)=s_i, \quad i\in[1,n-1],\\
\tilde{\varphi}(T_0) = V_1 - \gamma_1-t_0, \qquad \tilde{\varphi}(T_0^\vee) = V_2 - \gamma_2-u_0 \\
\tilde{\varphi}(T_n) = s_{1n}V_3s_{1n} - \gamma_3-t_n \qquad \tilde{\varphi}(T_n^\vee) = s_{1n}V_1s_{1n} - \gamma_4-t_n.
\end{gather*}
It is now easily checked that $\tilde{\varphi}$ preserves the relations for $\mathcal H$ from Theorem \ref{thm:daha=H Hv}, so that $\tilde{\varphi}$, extended to $\mathcal H$ as an algebra homomorphism, is the inverse of $\varphi$.
\end{proof}

\section{The polynomial representation} \label{sec:representation}
In this section we define and study a representation of $\mathcal H$ in terms of difference-reflection operators.\\

Let $\chi:H \rightarrow \mathbb C$ be the trivial representation defined by $\chi(T_i)=\chi_i$ with
\begin{equation} \label{def:chi}
\chi_i =
\begin{cases}
t_i, & i = 0,n,\\
1, & i \in [1,n-1].
\end{cases}
\end{equation}
Using $\mathcal H \simeq H \otimes \mathcal P_X$ as vector spaces, and identifying $\mathcal P_X$ with $\mathcal P$, we define the representation $\pi:\mathcal H \rightarrow \mathrm{End}(\mathcal P)$ to be the induced representation
\[
\pi = \mathrm{Ind}_{H}^{\mathcal H}(\chi).
\]
We call $\pi$ the polynomial representation of $\mathcal H$. From the cross-relations \eqref{eq:relTPX} we find the action of the generators of $\mathcal H$.
\begin{prop}
Let $p \in \mathcal P$. The actions of the generators $T_i$ ($i\in [0,n]$) and $X_i$ ($i\in [1,n]$) on $\mathcal P$ are given by
\begin{align*}
\big(\pi(T_i)p\big)(x) &= c_i(x) (s_ip)(x) - d_i(x) p(x),&& i\in[0,n],\\
\big(\pi(X_i)p\big)(x) &= x_ip(x), && i \in [1,n],
\end{align*}
where $c_i(x)=c_i(x;\mathbf t)$ are rational functions defined by
\[
c_i(x) =
d_i(x)+\chi_i, \qquad i\in[0,n].
\]
\end{prop}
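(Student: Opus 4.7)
The plan is to compute the action of the generators directly from the construction of $\pi$ as the induced representation $\mathrm{Ind}_H^{\mathcal H}(\chi)$. Corollary \ref{cor:daha=PHP} gives the vector space decomposition $\mathcal H \simeq \mathcal P_X \otimes H$, so by definition of induction the representation space is $\mathcal P_X \otimes_H \mathbb{C}_\chi \simeq \mathcal P_X$, and the identification $\mathcal P_X \simeq \mathcal P$ sends $X^\mu$ to $x^\mu$. Under this identification, the image of $p(X) \otimes 1$ is simply $p(x)$.

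For the $X_i$-action, observe that $\mathcal P_X$ is commutative, so $X_i \cdot (p(X) \otimes 1) = (X_i p(X)) \otimes 1$, which translates to multiplication by $x_i$ on $\mathcal P$. This is immediate.

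For the $T_i$-action, the key tool is the cross-relation \eqref{eq:relTPX}, which I would use to commute $T_i$ past $p(X)$:
\[
T_i p(X) = (s_i p)(X)\, T_i + d_i(X;\mathbf t)\bigl[(s_i p)(X) - p(X)\bigr].
\]
Applying this to $p(X)\otimes 1$ and then using that $T_i \cdot 1 = \chi_i$ in the trivial representation, I get
\[
T_i \cdot \bigl(p(X)\otimes 1\bigr) = \bigl[\chi_i (s_i p)(X) + d_i(X)\bigl((s_i p)(X) - p(X)\bigr)\bigr]\otimes 1,
\]
which, collecting the coefficient of $(s_i p)(X)$, equals $\bigl[c_i(X)(s_i p)(X) - d_i(X) p(X)\bigr]\otimes 1$. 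Translating back through the identification $\mathcal P_X \simeq \mathcal P$ yields exactly the stated formula $(\pi(T_i)p)(x) = c_i(x)(s_i p)(x) - d_i(x) p(x)$.

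There is essentially no obstacle: the statement is a direct unpacking of the definition of the induced representation together with one application of the defining cross-relation of $\mathcal H$. The only thing worth flagging is that $c_i$ and $d_i$ are rational functions (not polynomials) in $x$, but the right-hand side $c_i(x)(s_i p)(x) - d_i(x) p(x)$ is always a polynomial because the potential pole along $a_i(x)=0$ in $d_i$ and $c_i = d_i + \chi_i$ cancels between the two terms, since $(s_i p)(x) - p(x)$ vanishes on the hyperplane $a_i(x)=0$. This confirms that $\pi$ indeed lands in $\mathrm{End}(\mathcal P)$.
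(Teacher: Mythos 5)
Your proposal is correct and is precisely the argument the paper intends: the paper gives no written proof beyond the remark that the action follows from the cross-relations \eqref{eq:relTPX}, and your unpacking of the induced module $\mathcal H\otimes_H\mathbb C_\chi\simeq\mathcal P_X$ followed by one application of \eqref{eq:relTPX} and $T_i\cdot 1=\chi_i$ is exactly that computation. Your closing observation that $c_i(x)(s_ip)(x)-d_i(x)p(x)$ is a genuine polynomial because $(s_ip)-p$ is divisible by $a_i(x)$ is a worthwhile point the paper leaves implicit.
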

Explicitly, the rational functions $c_i$ are given by
\[
c_i(x) =
\begin{cases}
\dfrac{ \big(t_i+u_i+a_i^\vee(x)\big)\big(t_i-u_i+a_i^\vee(x)\big) }{ a_i(x)}, & i=0,n,\\
\dfrac{ t + a_i(x) }{a_i(x)}, & i \in [1,n-1].
\end{cases}
\]
An easy computation shows that
\begin{equation} \label{eq:c+sc}
c_i(x)+(s_ic_i)(x)=2\chi_i, \qquad i\in [0,n].
\end{equation}
We will also use the notation, for $\alpha \in \mathcal R_r$,
\[
c_\alpha(x;\mathbf t) =
\begin{cases}
\dfrac{ \big((\mathbf t(\alpha)+\mathbf t(\alpha^\vee)+\alpha^\vee(x)\big)\big(\mathbf t(\alpha)-\mathbf t(\alpha^\vee)+\alpha^\vee(x)\big) }{ \alpha(x)}, & \langle\alpha,\alpha\rangle=4,\\
\dfrac{\mathbf t(\alpha) + \alpha(x) }{\alpha(x)}, & \langle \alpha, \alpha \rangle = 2.
\end{cases}
\]
Observe that $c_{a_i}=c_i$ for $i \in [0,n]$, and $wc_\alpha =c_{w\alpha}$ for $w \in W$.
\\
\begin{rem} \label{rem:limits}
(i) Note that for $t=0$ we have $\pi(T_i)=s_i$, $i \in [1,n-1]$. Moreover, setting $(t_0,u_0,t_n,u_n) = (a,-a,a,-a)$ we find
\[
\lim_{a \rightarrow \infty} \big(\pi(t_i^{-1} T_i) p\big)(x) = (s_ip)(x), \qquad i=0,n,
\]
for $p \in \mathcal P$.

(ii) Noumi's \cite{N} representation of the affine Hecke algebra of type $\widetilde C_n$ on $\mathbb C[q^{\pm x_1},\ldots,q^{\pm x_n}]$ is given by
\[
V_i = k_{a_i}+ k_{a_i}^{-1} \frac{ (1-k_{a_i} k_{a_i/2} q^{a_i(x)/2})(1+k_{a_i}k_{a_i/2}^{-1}q^{a_i(x)/2} )}{1-q^{a_i(x)}}(s_i-1), \quad i \in [0,n].
\]
Here $k$ is a multiplicity function on $\mathcal R_{nr}$, and $k_{a_i/2}=1$ if $a_i/2 \not\in \mathcal R_{nr}$. Setting
\[
(k_{a_0},k_{a_0/2},k_{a_1},k_{a_n},k_{a_n/2}) = (-\mathrm i q^{t_0}, \mathrm i q^{u_0}, q^{t/2}, -\mathrm i q^{t_n}, \mathrm i q^{u_n}),
\]
we find formally for $q \rightarrow 1$ (see also \cite[Section 2.4]{G} and \cite[Section 4]{Ch1})
\begin{align*}
&\lim_{q \rightarrow 1}\frac{ 1-\mathrm i V_i}{1-q} = \pi(T_i), && i=0,n,\\
&\lim_{q \rightarrow 1}V_i = \pi(T_i), && i \in [1,n-1].
\end{align*}
So the representation $\pi$ of $\mathcal H$ can formally be obtained as a limit of Noumi's representation of the DAHA associated to $\mathcal R_{nr}$, see \cite{S1}. In view of the next proposition the algebra $\mathcal H$ may be considered as a degeneration of the DAHA associated to $\mathcal R_{nr}$.
\end{rem}

\begin{prop} \label{prop:pi faithful}
The representation $\pi: \mathcal H \rightarrow \mathrm{End}(\mathcal P)$ is faithful.
\end{prop}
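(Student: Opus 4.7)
The plan is to exploit the PBW basis $\{X^{\phi(\lambda)} T_{\overline w}\}_{\lambda\in\Lambda,\,w\in W}$ of $\mathcal H$ together with the triangular expansion of each $\pi(T_{\overline w})$ in the Bruhat order. First I would observe that $\pi(T_i) = c_i(x)\,s_i - d_i(x)$ as an operator on $\mathcal P$, where $s_i$ acts via the $W$-representation fixed in Section~\ref{sec:DegAffHeckeAlg} (so in particular $s_0$ is the reflection $x_1 \mapsto 1 - x_1$). Iterating along a reduced expression $\overline w = s_{i_1}\cdots s_{i_r}$, and repeatedly using $w c_\alpha = c_{w\alpha}$, one obtains an expansion
\[
\pi(T_{\overline w}) \;=\; \sum_{v \leq w} b_{w,v}(x)\, v \quad\text{in }\mathrm{End}(\mathcal P),
\]
with $b_{w,v}\in \mathbb C(x_1,\ldots,x_n)$, whose leading term is $b_{w,w}(x) = \prod_{k=1}^r c_{\beta_k}(x)$ for $\beta_k = s_{i_1}\cdots s_{i_{k-1}} a_{i_k}$. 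The explicit formulas displayed above for $c_\alpha(x;\mathbf t)$ show that each $c_{\beta_k}$ is a nonzero rational function for every admissible multiplicity function, so $b_{w,w}\neq 0$ in $\mathbb C(x)$.

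Next, taking $Z \in \ker \pi$, I would use PBW to write $Z = \sum_{w\in W} P_w(X)\, T_{\overline w}$ with $P_w \in \mathcal P_X$, and rearrange
\[
\pi(Z) \;=\; \sum_{v\in W} Q_v(x)\, v, \qquad Q_v(x) = \sum_{w \geq v} P_w(x)\, b_{w,v}(x).
\]
The technical heart of the argument is then a Dedekind-style independence statement for $\{v\}_{v\in W}$ over $\mathbb C(x)$: suppose $\sum_{v\in S} Q_v\cdot v$ vanishes on $\mathcal P$ with $S$ of minimal cardinality among the supports of nontrivial relations; applying the relation to $gf$ and subtracting $(v_0 g)$ times the relation applied to $f$, for arbitrary $g,f\in \mathcal P$ and a fixed $v_0 \in S$, produces a strictly shorter relation $\sum_{v\in S\setminus\{v_0\}} Q_v(vg - v_0 g)\cdot v = 0$. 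Minimality forces $Q_v(vg - v_0 g) = 0$ in $\mathbb C(x)$ for every $v\in S\setminus\{v_0\}$ and every $g\in\mathcal P$, and the faithfulness of the $W$-action on $V_{\mathbb C}$ then yields $v = v_0$, a contradiction.

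This independence forces every $Q_v$ to vanish in $\mathbb C(x)$. A downward induction on the Bruhat order then completes the proof: if $w_0$ were maximal in $\{w : P_w \neq 0\}$, then $Q_{w_0} = P_{w_0}\,b_{w_0,w_0}$ would be a product of two nonzero elements of the domain $\mathbb C(x)$, contradicting $Q_{w_0} = 0$. Hence every $P_w$ vanishes and $Z = 0$, proving faithfulness. I expect the Dedekind-type independence to be the main obstacle, since $W$ is infinite and the coefficients live in a rational function field rather than in $\mathcal P_X$; verifying the shortening trick terminates in this infinite Coxeter setting, and confirming the nonvanishing of $b_{w,w}$ uniformly in the admissible parameters, are the two points that require care.
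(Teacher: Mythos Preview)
Your argument is correct and follows essentially the same route as the paper: expand in the PBW basis $\{X^{\phi(\lambda)}T_{\overline w}\}$, write each $\pi(T_{\overline w})$ as a Bruhat-triangular combination $\sum_{v\leq w} b_{w,v}(x)\,v$ with nonzero top coefficient, and then run a downward induction on the Bruhat order once the $W$-operators are known to be independent over $\mathbb C(x)$. The only difference is in that last independence step: the paper clears denominators and invokes the (standard) faithfulness of $\mathfrak H(\mathbf 0)\simeq \mathcal P\otimes W$ acting on $\mathcal P$, whereas you supply a direct Dedekind-style shortening argument; your version is more self-contained, and your worry about $W$ being infinite is harmless since for any fixed $Z$ the support of $\{Q_v\}$ is finite.
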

\begin{proof}
Suppose that $\pi\big(\sum_{w \in W} f_w(X) T_{\overline{w}}\big)=0$, with $f_w(X) \in \mathcal P_X$. We may write $\pi(T_{\overline w}) = \sum_{u \leq w} a_{w,u}(x)u$ with $a_{w,u}(x) \in \mathbb C(x)$ and $a_{w,w}(x)$ is nonzero. Let $d(x)$ be the product of the denominators of all $a_{w,u}(x)$, then
\[
0= d(x) \sum_{\substack{u,w \in W\\ u \leq w}} f_w(x) a_{w,u}(x) u.
\]
We may consider the expression on the right as an element of the degenerate DAHA  $\mathfrak H(\mathbf 0) \simeq \mathcal P\otimes W$, with $\mathbf 0$ the multiplicity function which is equal to zero on all $W$-orbits in $\mathcal R_r$. Now it follows in the same way as in the proof of Proposition \ref{prop:basis T} that $f_w(x)$ is the polynomial identically equal to zero for all $w \in W$, hence $\pi$ is a faithful representation of $\mathcal H$.
\end{proof}

The representation $\pi$ restricted to the subalgebra $H$ gives a representation of $H$. We are going to decompose $\mathcal P$, considered as a $\pi(H)$-module, into irreducible $\pi(H)$-modules.
In view of Proposition \ref{prop:pi faithful} we identify from here on the algebra $\mathcal H$ with the algebra $\pi(\mathcal H) \subset \mathrm{End}(\mathcal P)$.

It will be useful to understand the action of the operators $T_i$ on monomials. For $\nu=\sum_{i=1}^n \nu_i \epsilon_i \in \mathbb Z_{\geq 0}^n$ we write $|\nu| = \sum_{i=1}^n \nu_i$.
\begin{lem} \label{lem:Tx}
For $\lambda \in \Lambda$ and $\nu=\phi(\lambda)$, we have
\[
\begin{split}
T_{0} x^{\nu} &=
\begin{cases}
\displaystyle \big(t_0+\langle \lambda,\epsilon_1\rangle\big)x^{\nu}+ \sum_{|\mu| < |\nu|}c_{\nu\mu}x^{\mu}, & \langle\lambda,\epsilon_1\rangle\geq 0,\\
\displaystyle x^{\phi(s_{\epsilon_1}\lambda)}-\big(t_0-\langle \lambda,\epsilon_1\rangle \big)x^{\nu} + \sum_{|\mu| < |\nu|}c_{\nu\mu}x^{\mu}, & \langle\lambda,\epsilon_1\rangle<0,
\end{cases}\\
T_i x^\nu &= x^{\phi(s_i \lambda)} + \sum_{|\mu|<|\nu|} c_{\nu\mu} x^\mu, \qquad i \in [1,n-1],\\
T_{n} x^{\nu} & =
\begin{cases}
\displaystyle t_n x^{\nu}, & \langle\lambda, \epsilon_n\rangle\geq 0,\\
\displaystyle - x^{\phi(s_{n}\lambda)} - t_n x^{\nu} +\sum_{|\mu| < |\nu|}c_{\nu\mu}x^{\mu}, & \langle\lambda, \epsilon_n\rangle <0,
\end{cases}
\end{split}
\]
for certain coefficients $c_{\nu\mu} \in \mathbb C$.
\end{lem}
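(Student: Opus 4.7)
The plan is to compute $T_ix^{\nu}$ monomial-by-monomial using the explicit formula $(\pi(T_i)p)(x)=c_i(x)(s_ip)(x)-d_i(x)p(x)$ from the previous proposition, splitting on the parity of the relevant coordinate of $\nu=\phi(\lambda)$ (which records the sign of the corresponding coordinate of $\lambda$).

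For $i\in[1,n-1]$ I would use $\phi(s_i\lambda)=s_i\phi(\lambda)$ (coordinatewise $\phi$, permuting $s_i$) to rewrite
\[
T_ix^{\nu}=x^{\phi(s_i\lambda)}+\frac{t\bigl(x^{\phi(s_i\lambda)}-x^{\nu}\bigr)}{x_i-x_{i+1}},
\]
whose divided-difference remainder is polynomial with every monomial of total degree $|\nu|-1$. For $T_n$ I would use $s_nx^{\nu}=(-1)^{\nu_n}x^{\nu}$, so the action is $\bigl((-1)^{\nu_n}c_n(x)-d_n(x)\bigr)x^{\nu}$. If $\nu_n$ is even this collapses to $(c_n-d_n)x^{\nu}=t_nx^{\nu}$. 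If $\nu_n$ is odd, expanding $c_n(x)+d_n(x)=x_n+t_n+(t_n^2-u_n^2)/x_n$ gives
\[
T_nx^{\nu}=-x^{\nu+\epsilon_n}-t_nx^{\nu}-(t_n^2-u_n^2)x^{\nu-\epsilon_n},
\]
and the identification $\phi(s_n\lambda)_n=-2\lambda_n=\nu_n+1$ shows $\nu+\epsilon_n=\phi(s_n\lambda)$, while the last term has total degree $|\nu|-1$.

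The case $T_0$ is the only substantial calculation. Set $h(x_1):=c_0(x)(1-x_1)^{\nu_1}-d_0(x)x_1^{\nu_1}$, so $T_0x^{\nu}=h(x_1)\,x_2^{\nu_2}\cdots x_n^{\nu_n}$. Multiplying by $1-2x_1$ clears denominators, giving the explicit polynomial
\[
H(x_1):=h(x_1)(1-2x_1)=\bigl[\tfrac14+t_0+t_0^2-u_0^2-(1+2t_0)x_1+x_1^2\bigr](1-x_1)^{\nu_1}-\bigl[\tfrac14+t_0^2-u_0^2-x_1+x_1^2\bigr]x_1^{\nu_1}.
\]
Writing $h=\sum_ja_jx_1^j$, comparison of coefficients yields $H_m=a_m-2a_{m-1}$, so $a_{\nu_1+1}$ and $a_{\nu_1}$ can be read off from $H_{\nu_1+2}$ and $H_{\nu_1+1}$, which I would extract directly from the expansion above. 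For $\nu_1=2\lambda_1$ even one finds $H_{\nu_1+2}=0$ and $H_{\nu_1+1}=-2(t_0+\lambda_1)$, giving $a_{\nu_1+1}=0$ and $a_{\nu_1}=t_0+\lambda_1$. For $\nu_1=-2\lambda_1-1$ odd one finds $H_{\nu_1+2}=-2$ and $H_{\nu_1+1}=\nu_1+2+2t_0$, giving $a_{\nu_1+1}=1$ (identifying the top monomial with $x^{\phi(s_{\epsilon_1}\lambda)}$ via $\phi(s_{\epsilon_1}\lambda)_1=-2\lambda_1=\nu_1+1$) and $a_{\nu_1}=\lambda_1-t_0=-(t_0-\lambda_1)$. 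All remaining $a_j$ contribute monomials of $x_1$-degree $<\nu_1$, hence of total degree $<|\nu|$. The principal obstacle is this bookkeeping for $T_0$ in the odd case, where the leading monomial (of total degree $|\nu|+1$) and the $x^{\nu}$-coefficient must be tracked simultaneously; the $T_i$ and $T_n$ calculations reduce to essentially one-line manipulations.
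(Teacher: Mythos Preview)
Your proof is correct and follows the same direct-computation strategy as the paper. For $T_i$ with $i\in[1,n-1]$ it is essentially identical (write $T_i=s_i+tD_i$ and observe that the divided difference lowers total degree by one); for $T_0$ and $T_n$ the paper simply cites \cite[Proposition~2.5]{G}, whereas you carry out the rank-one computation in place, so your write-up actually supplies the details the paper omits.
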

\begin{proof}
For $T_0$ and $T_n$ this is proved in \cite[Proposition 2.5]{G}. For $T_i$, $i \in [1,n-1]$, we write $T_i = s_i+t D_{i}$, where
\[
(D_{i} p)(x) = \frac{(s_i p)(x) -p(x)}{a_i(x)}, \qquad p \in \mathcal P.
\]
We use $s_i(x^\nu)= x^{s_i\nu}$. Then from $s_i \nu = \nu-m a_i$, where $m= \langle\nu, a_i\rangle $, we obtain
\[
\begin{split}
D_i x^\nu =& \frac{ (s_i x)^\nu - x^\nu }{a_i(x)} = \frac{ x^{\nu}(x^{-m a_i}-1)}{x_{i+1}(x^{a_i}-1)}\\
=& \begin{cases}
-x^{\nu-\epsilon_{i+1}-a_i}- x^{\nu-\epsilon_{i+1}-2a_i}- \ldots - x^{\nu-\epsilon_{i+1}-ma_i}, & m>0,\\
0, & m=0,\\
x^{\nu-\epsilon_{i+1}}+ x^{\nu-\epsilon_{i+1}+a_i} + \ldots + x^{\nu-\epsilon_{i+1}-(1+m)a_i}, & m<0.
\end{cases}
\end{split}
\]
All these terms are of degree lower than $|\nu|$.
\end{proof}

\subsection{Intertwiners}
For $i \in [0,n]$ we define the elements $S_i \in \mathcal H$, called intertwiners, by
\[
\begin{split}
S_0 &= [U_n, a_0(Y)],\\
S_i &= [T_i, a_i(Y)], \qquad i \in [1,n].
\end{split}
\]
We define another action of $W$ on $V_{\mathbb C}$ by $s_i\cdot x = s_i x$ for $i \in [1,n]$ and
\[
s_0 \cdot x = (-x_1-1,x_2,\ldots,x_n), \qquad x = (x_1,\ldots, x_2) \in V_{\mathbb C}.
\]
We also define a dot-action of $W$ on $\mathcal P$ by $(w\cdot p)(x) = p(w^{-1} \cdot x)$.
Note in particular that the action of the commutative subalgebra $\tau(\Lambda)$ is given by $\big(\tau(\epsilon_i) \cdot p\big)(x) =  p(x-\epsilon_i)$.
\begin{lem} \label{lem:intertw}
The intertwiners $S_i$ satisfy the following relations in $\mathcal H$:
\begin{enumerate}[(i)]
\item The braid relations of type $\widetilde C_n$
\begin{align*}
S_{i}S_{i+1} S_{i} S_{i+1} &=S_{i+1} S_{i} S_{i+1} S_{i} , && i = 0,n-1,\\
S_i S_{i+1} S_i  &= S_{i+1} S_i S_{i+1}, && i \in[ 1,n-2],\\
S_i S_j& = S_j S_i, && i,j \in [0,n], \ |i-j| \geq 2.
\end{align*}
\item The quadratic relations $S_i^2 = q_i(Y)$ with $q_i(x) = -2a_i(x)^2 c_{a_i}(x) c_{-a_i}(x)$, i.e.,
\[
q_i(x) =
\begin{cases}
4\big((u_n+u_0)^2-(\frac12+x_1)^2\big)\big((u_n-u_0)^2-(\frac12+x_1)^2\big), & i=0,\\
4\big(t^2-a_i(x)^2\big), & i\in [1,n-1],\\
4\big((t_n+t_0)^2-x_n^2\big)\big((t_n-t_0)^2-x_n^2\big), & i=n.
\end{cases}
\]
\item For $p \in \mathcal P$,
\[
S_i p(Y) = (s_i \cdot p)(Y) S_i, \ i \in [0,n].
\]
\end{enumerate}
\end{lem}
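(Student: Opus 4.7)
The plan is to establish a dual cross-relation for $U_n = \Xi_{1,n}^\vee$ parallel to Proposition \ref{prop:crossrel H}, and then derive all three parts of the lemma from it. Applying the isomorphism $\sigma_\sigma:\mathcal H(\mathbf t^\sigma)\to\mathcal H$ to the $X$-cross-relation in $\mathcal H(\mathbf t^\sigma)$ and using $\sigma_\sigma(T_0^\sigma)=U_n$ together with $\sigma_\sigma(X_i^\sigma)=-Y_i$, one arrives at
\[
U_n q(Y) - (s_0\cdot q)(Y)\,U_n \;=\; d_0(-Y;\mathbf t^\sigma)\bigl[(s_0\cdot q)(Y)-q(Y)\bigr], \qquad q\in\mathcal P,
\]
once one observes that $(s_0 p)(-y)=(s_0\cdot q)(y)$ whenever $q(y)=p(-y)$. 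The dot-action $s_0\cdot$ emerges naturally from the sign change $X^\sigma\mapsto -Y$ under $\sigma_\sigma$; this is the exact reason why the ``other'' action of $W$ was introduced in the statement.

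I would verify part \emph{(iii)} first. For $i\in[1,n]$, one expands $S_i p(Y)=T_i a_i(Y)p(Y)-a_i(Y)T_i p(Y)$, commutes the $Y$-polynomials past each other freely, and applies Proposition \ref{prop:crossrel H} to each occurrence of $T_i\cdot q(Y)$; the two $d_i$-correction terms cancel because they live in the commutative subalgebra $\mathcal P_Y$, leaving $S_i p(Y)=(s_i p)(Y)S_i$. The identical manipulation based on the $U_n$-cross-relation just established yields $S_0 p(Y)=(s_0\cdot p)(Y)S_0$.

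For part \emph{(ii)} I specialize the cross-relation to $p=a_i$. Using $s_i a_i=-a_i$ for $i\in[1,n]$ and $(s_0\cdot a_0)(Y)-a_0(Y)=2a_0(-Y)$ produces the closed forms
\[
S_i = 2 a_i(Y)\bigl[d_i(Y;\mathbf t^\sigma)-T_i\bigr]\quad(i\in[1,n]), \qquad S_0 = 2 a_0(-Y)\bigl[U_n + d_0(-Y;\mathbf t^\sigma)\bigr].
\]
Squaring these, using the quadratic relations $T_i^2=t_i^2$ (resp.\ $T_i^2=1$, $U_n^2=u_n^2$) together with the skew-commutation $[d_i-T_i]\,a_i(Y) = -a_i(Y)\,[d_i-T_i]$ (which itself is a direct consequence of the cross-relation), collapses the result to a difference of two squares in $Y$. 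In each of the three cases $i\in[1,n-1]$, $i=n$, $i=0$ this difference factors exactly as the stated $q_i(Y)$; for the $i=n$ and $i=0$ cases one checks the elementary identity $(A^2-B^2)=((A-t_0)+t_0)((A+t_0)-t_0)\cdots$ that converts $((t_n-Y_n)^2-t_0^2)((t_n+Y_n)^2-t_0^2)$ into $((t_n+t_0)^2-Y_n^2)((t_n-t_0)^2-Y_n^2)$, and analogously for $q_0$.

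Part \emph{(i)} is the most delicate. The commutations $[S_i,S_j]=0$ for $|i-j|\geq 2$ are easy: $s_i$ fixes $a_j$ so $[T_i,a_j(Y)]=0$ by the cross-relation, and the remaining factors commute by Proposition \ref{prop:relations H}. The symmetric $3$-braid relations for $i\in[1,n-2]$ reduce, after substituting the explicit formula from (ii) and pushing the $a_i(Y)$-factors through the $T_{i\pm1}$'s via the cross-relation (which introduces the correction $s_i(a_{i+1})=a_i+a_{i+1}$), to the standard $3$-braid for $T_i,T_{i+1}$. The hard part will be the two $4$-braid identities $S_0 S_1 S_0 S_1=S_1 S_0 S_1 S_0$ and $S_{n-1} S_n S_{n-1} S_n=S_n S_{n-1} S_n S_{n-1}$: here one must invoke the deformed braid relations with $t$-corrections (Proposition \ref{prop:relations H}, and for $U_n$ the relation from Lemma \ref{lem:braidrel}) and show that these corrections together with the cross-relation corrections conspire to cancel. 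Should the bookkeeping become unwieldy, a cleaner route is to exploit the faithfulness of $\pi$ from Proposition \ref{prop:pi faithful} and check the two $4$-braid identities as operator equalities on $\mathcal P$.
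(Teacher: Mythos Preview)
Your approach is correct for parts (ii) and (iii), and your derivation of the $U_n$-cross-relation via $\sigma_\sigma$ is exactly the right ingredient. However, your route to part (i) is much heavier than necessary, and your fallback is aimed at the wrong representation.

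The paper's proof is a one-liner that handles all three parts simultaneously. Instead of working inside $\mathcal H$, it pushes everything through the composite map $\pi_\sigma\circ\sigma:\mathcal H\to\mathrm{End}(\mathcal P)$. Since $\sigma(Y_j)=-X_j^\sigma$ and $\sigma(T_i)=T_i^\sigma$ for $i\in[1,n]$ while $\sigma(U_n)=T_0^\sigma$, one finds (using \eqref{eq:relTPX} in $\mathcal H_\sigma$) that
\[
(\pi_\sigma\circ\sigma)(S_i)\;=\;\text{(scalar function)}\cdot s_i\quad\text{for every }i\in[0,n].
\]
Because both $\sigma$ and $\pi_\sigma$ are injective, this faithful image reduces (i) to the braid relations for the $s_i$ together with a routine check that the scalar prefactors match (which they do via $wc_\alpha=c_{w\alpha}$); (ii) follows by squaring a scalar-times-reflection; and (iii) is immediate from $s_i\,p(-x)=(s_i\cdot p)(-x)\,s_i$. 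Your cross-relation for $U_n$ and your closed forms $S_i=2a_i(Y)(d_i^\sigma(Y)-T_i)$ are exactly the algebraic shadows of this picture, but you then re-expand them inside $\mathcal H$ rather than collapsing them in the representation.

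The weakness in your plan is the fallback for the $4$-braid identities: invoking faithfulness of $\pi$ itself does not help, because under $\pi$ the $Y_i$ are genuine difference-reflection operators, so $\pi(S_i)$ is just as complicated as $S_i$. The simplification only occurs after first applying $\sigma$, which turns the $Y_i$ into multiplication operators. If you insert that one extra step, your fallback becomes the paper's proof; without it you are left with the bookkeeping you were hoping to avoid.
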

\begin{proof}
First we apply the duality isomorphism $\sigma:\mathcal H \rightarrow \mathcal H_\sigma$ and then the representation $\pi_\sigma:\mathcal H_\sigma \rightarrow \mathrm{End}(\mathcal P)$, then by \eqref{eq:relTPX} we have
\[
(\pi_\sigma \circ \sigma)(S_i) = -2a_i(x)c_i(x) s_i.
\]
Properties (i) and (ii) follow from this after an easy calculation. Property (iii) follows in the same way, using also $(\pi_\sigma \circ \sigma)(p(Y)) = p(-x)$.
\end{proof}
For $w\in W$ we define intertwiners $S_w$ by $S_1=1$, and
\[
S_w = S_{i_1} \cdots S_{i_r}
\]
for $w=s_{i_1}\cdots s_{i_r} \in W$ a reduced expression.
This is independent of the choice of the reduced expression, since the $S_i$'s satisfy the braid relations of type $\widetilde C_n$.
From Proposition \ref{lem:intertw} we now find the following relation in $\mathcal H$.
\begin{cor} \label{cor:Swp}
For $p \in \mathcal P$ and $w \in W$ the following relation holds in $\mathcal H$:
\[
S_{w} (w^{-1}\cdot p)(Y) = p(Y) S_{w}.
\]
\end{cor}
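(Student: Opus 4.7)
The plan is to induct on the length $\ell(w)$ of $w$, peeling off one simple reflection at a time and applying Lemma \ref{lem:intertw}(iii) at each stage. The base case $\ell(w) = 0$ is immediate: $w = 1$, $S_1 = 1$, and $1 \cdot p = p$, so both sides equal $p(Y)$.

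For the inductive step, I would choose a reduced expression $w = s_i w'$ with $\ell(w') = \ell(w) - 1$, so that $S_w = S_i S_{w'}$ by definition. The key observation is the factorization $w^{-1} \cdot p = (w')^{-1} s_i \cdot p = (w')^{-1} \cdot (s_i \cdot p)$, which lets me apply the induction hypothesis to $w'$ and the polynomial $s_i \cdot p \in \mathcal P$:
\[
S_{w'} \bigl((w')^{-1} \cdot (s_i \cdot p)\bigr)(Y) = (s_i \cdot p)(Y)\, S_{w'}.
\]
Multiplying on the left by $S_i$ and then invoking Lemma \ref{lem:intertw}(iii) to commute $S_i$ past $(s_i \cdot p)(Y)$ yields
\[
S_w (w^{-1}\cdot p)(Y) = S_i (s_i\cdot p)(Y)\, S_{w'} = p(Y)\, S_i S_{w'} = p(Y)\, S_w,
\]
completing the induction. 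Equivalently, one can simply expand $S_w = S_{i_1}\cdots S_{i_r}$ for a fixed reduced expression and move $(w^{-1}\cdot p)(Y)$ past each $S_{i_j}$ in turn from right to left, accumulating the action of $s_{i_r}\cdots s_{i_1} = w^{-1}$ on $p$ step by step.

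There is no real obstacle here; the only subtlety is ensuring that the identity is well posed, i.e.\ that $S_w$ is independent of the chosen reduced expression. This was already established before the corollary using the braid relations of Lemma \ref{lem:intertw}(i), so the argument above is unambiguous regardless of which reduced expression is used in the induction. The corollary should be viewed simply as the iteration of the one-step intertwining relation Lemma \ref{lem:intertw}(iii) along a reduced word for $w$.
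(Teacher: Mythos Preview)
Your proof is correct and is exactly the natural elaboration of what the paper intends: the paper simply states that the corollary follows from Lemma~\ref{lem:intertw}, and your induction on $\ell(w)$ (equivalently, iterating part~(iii) along a reduced word) is precisely how that deduction goes. There is nothing to add.
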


\subsection{Nonsymmetric multivariable Wilson polynomials} \label{ssec:nonsym Wilson}
For $\lambda \in \Lambda$ let $\lambda^+$ be the unique element in $W_0 \lambda \cap \Lambda^+$. There is a unique shortest element (with respect to the length-function $l$ on $W$) $v_\lambda \in W_0$ such that $v_\lambda \cdot \lambda=  \lambda^+$. We define $u_\lambda \in W$ by $u_\lambda = \tau(-\lambda)v_{\lambda}^{-1}$. We give a few useful properties of $u_\lambda$, which are proved in \cite[Section 2.4]{M}.
\begin{lem} \label{lem:u lambda}
For $\lambda \in \Lambda$ the element $u_\lambda$ has the following properties:
\begin{enumerate}[(i)]
\item $\tau(-\lambda) = u_\lambda v_\lambda$ and $\tau(-\lambda^+)= v_\lambda u_\lambda$.
\item $u_\lambda$ is the unique shortest element in $W$ satisfying $u_\lambda \cdot 0 = \lambda$.
\item If $a_i(\lambda) \neq 0$ for $i \in [0,n]$, then $s_iu_\lambda = u_{s_i\cdot\lambda}$.
\item If $a_i(\lambda)<0$ for $i \in [0,n]$, then $l(u_{s_i\cdot\lambda}) = l(u_\lambda)-1$.
\item If $a_i(\lambda)=0$ for $i \in [1,n]$, then there exists a $j \in [1,n]$ such that $s_i u_\lambda = u_\lambda s_j$.
\end{enumerate}
\end{lem}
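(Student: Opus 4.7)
The plan is to derive all five items from the semidirect product structure $W = W_0 \ltimes \tau(\Lambda)$ together with the standard formula $l(w) = \#\{\alpha \in \mathcal R_r^+ \mid w\alpha \in \mathcal R_r^-\}$ for the length function on an affine Weyl group. None of these statements are specific to the $(C_n^\vee,C_n)$ decoration; they are general properties of shortest coset representatives, which is why, as the author indicates, the proofs can be extracted essentially verbatim from Macdonald's exposition in \cite[Section 2.4]{M}. Accordingly, I will sketch only the structure of the argument.

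Item (i) is immediate from the definition: $u_\lambda v_\lambda = \tau(-\lambda)v_\lambda^{-1}v_\lambda = \tau(-\lambda)$, and the conjugation identity $w\tau(\mu)w^{-1} = \tau(w\mu)$ for $w \in W_0$, applied with $v_\lambda\lambda = \lambda^+$, gives $v_\lambda u_\lambda = v_\lambda\tau(-\lambda)v_\lambda^{-1} = \tau(-\lambda^+)$. Item (ii) is the substantive core: since every $s_i$ with $i \in [1,n]$ fixes the origin under the dot-action, one has $v_\lambda^{-1}\cdot 0 = 0$, so $u_\lambda\cdot 0 = \tau(-\lambda)\cdot 0 = \lambda$. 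The minimality statement requires comparing $l(\tau(-\lambda)v)$ as $v$ ranges over the dot-stabilizer $W_0$ of the origin, and showing via the root-theoretic length formula that $v = v_\lambda^{-1}$ minimizes this quantity precisely because $v_\lambda$ is, by its defining property, the shortest element of $W_0$ carrying $\lambda$ into the dominant chamber.

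Items (iii)--(v) are then corollaries of (ii) combined with the exchange condition. In (iii), when $a_i(\lambda) \neq 0$ the point $s_i\cdot\lambda \neq \lambda$, and a direct count of positive affine roots sent to negative ones verifies that $s_i u_\lambda$ is the shortest element with dot-action mapping $0$ to $s_i\cdot\lambda$, i.e.\ coincides with $u_{s_i\cdot\lambda}$. Item (iv) refines this: under $a_i(\lambda) < 0$ the affine root $u_\lambda^{-1}a_i$ is negative, so the exchange condition yields $l(s_i u_\lambda) = l(u_\lambda)-1$. For (v), $a_i(\lambda) = 0$ with $i \in [1,n]$ means $s_i$ stabilizes $\lambda$ under the dot-action; hence $s_i u_\lambda\cdot 0 = u_\lambda\cdot 0$, and uniqueness of the shortest representative in (ii) applied to the coset $u_\lambda W_{0,\lambda}$ forces $s_i u_\lambda = u_\lambda s_j$ for a unique simple reflection $s_j$ with $j \in [1,n]$, which one identifies by expanding $u_\lambda^{-1}a_i$ in the simple-root basis of the stabilizer. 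The main obstacle throughout is the minimality argument in (ii); consistent with the author's strategy, the efficient course is to invoke \cite[Section 2.4]{M} rather than transcribe the root-system bookkeeping in detail.
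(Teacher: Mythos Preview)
Your proposal is correct and matches the paper's approach: the paper does not supply a proof at all but simply refers the reader to \cite[Section 2.4]{M}, and your sketch unpacks exactly the standard shortest-coset-representative arguments one finds there before arriving at the same citation. There is nothing to distinguish the two beyond the fact that you spell out the structure of the argument while the paper leaves it entirely to the reference.
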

We now define polynomials labeled by $\lambda \in \Lambda$ generated by the intertwiners from the constant polynomial $1 \in \mathcal P$, see also \cite{NUKW}.
\begin{Def} \label{def:nonsymm Wilson pol}
For $\lambda \in \Lambda$ we define the nonsymmetric multivariable Wilson polynomial $p_\lambda \in \mathcal P$ by
\[
p_\lambda = S_{u_\lambda} 1.
\]
\end{Def}
We see that $p_\lambda$ is a nonzero polynomial for any $\lambda \in \Lambda$. We show that the polynomials $p_\lambda$ are eigenfunctions of the $Y$-operators.

\begin{rem}
We show later on that the polynomials $p_\lambda$ are nonsymmetric versions of the multivariable Wilson polynomials as defined by Van Diejen \cite{D1}, which justifies the name. In this paper we will often call $p_\lambda$ a Wilson polynomial.
\end{rem}

Note that the constant function $p_0=1 \in \mathcal P$ satisfies $T_i p_0 = \chi_i p_0$ for $i \in [0,n]$, see \eqref{def:chi}. Now from the definition of the $Y$-operators we see that
\[
Y_i p_0 = \gamma_{0,i} p_0, \qquad \gamma_{0,i} =  t_0+t_n + (n-i)t.
\]
For $\lambda \in \Lambda$ we define elements $\gamma_\lambda \in V_{\mathbb C}$ as follows:
\[
\gamma_{\lambda} = u_\lambda\cdot \gamma_0, \qquad \gamma_0 = \sum_{i=1}^n \gamma_{0,i} \epsilon_i.
\]
From the definition of $u_\lambda$ it follows that $\gamma_\lambda$ can also be written as $\lambda+v_\lambda^{-1} \cdot \gamma_0$. From here on we assume that the parameters $t_0,t_n,t$ are such that $\gamma_\lambda \neq \gamma_\mu$ if $\mu \neq \lambda$.
We have the following useful lemma.
\begin{lem} \label{lem:sp}
Let $p \in \mathcal P$ and $\lambda \in \Lambda$, then
\begin{enumerate}[(i)]
\item $s_i\cdot \gamma_\lambda = \gamma_{s_i\cdot \lambda}$ for $i \in [0,n]$.
\item $(s_ip)(-\gamma_\lambda) = p(-\gamma_{s_i\cdot\lambda})$ and $(s_i\cdot p)(\gamma_\lambda) = p(\gamma_{s_i \cdot \lambda})$ for $i \in [0,n]$ with $s_i\cdot\lambda \neq \lambda$.
\end{enumerate}
\end{lem}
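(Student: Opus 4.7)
The statement (i) is the substantive content, and both parts of (ii) will reduce to it together with bookkeeping about how the ordinary and dot actions on $V_{\mathbb C}$ compare. The hypothesis $s_i\cdot\lambda\neq\lambda$ (implicit in~(i), since the case $s_i\cdot\lambda=\lambda$ is vacuous on the right) is equivalent in every case to $a_i(\lambda)\neq 0$: for $i=0$ one checks that $a_0(\lambda)=1-2\langle\lambda,\epsilon_1\rangle$ is always odd, so the condition is automatic, while for $i\in[1,n]$ one has $s_i\cdot\lambda=s_i\lambda$, so $s_i\cdot\lambda=\lambda\iff \langle\lambda,a_i^\vee\rangle=0\iff a_i(\lambda)=0$.

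For (i), the plan is simply to apply Lemma \ref{lem:u lambda}(iii). Under the assumption $a_i(\lambda)\neq 0$ that lemma gives the identity $s_iu_\lambda = u_{s_i\cdot\lambda}$ in $W$. Because $\gamma_\lambda=u_\lambda\cdot\gamma_0$, applying this identity to $\gamma_0$ via the dot action yields
\[
s_i\cdot\gamma_\lambda \;=\; (s_iu_\lambda)\cdot\gamma_0 \;=\; u_{s_i\cdot\lambda}\cdot\gamma_0 \;=\; \gamma_{s_i\cdot\lambda},
\]
which is (i). This is the only place where a nontrivial fact is used; everything else is formal.

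For (ii), the second identity is immediate from the definition of the dot action on $\mathcal P$: $(s_i\cdot p)(\gamma_\lambda)=p(s_i^{-1}\cdot\gamma_\lambda)=p(s_i\cdot\gamma_\lambda)$, which equals $p(\gamma_{s_i\cdot\lambda})$ by (i). For the first identity I would first verify the compatibility $s_i(-x)=-(s_i\cdot x)$ for all $x\in V_{\mathbb C}$ and all $i\in[0,n]$: this is trivial for $i\in[1,n]$ since $s_i$ acts linearly, and for $i=0$ it is the one-line coordinate check $s_0(-x)=(1+x_1,x_2,\ldots,x_n)=-(-x_1-1,x_2,\ldots,x_n)=-(s_0\cdot x)$. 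Combining this with (i) gives
\[
(s_ip)(-\gamma_\lambda)\;=\;p\bigl(s_i(-\gamma_\lambda)\bigr)\;=\;p\bigl(-(s_i\cdot\gamma_\lambda)\bigr)\;=\;p(-\gamma_{s_i\cdot\lambda}).
\]

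There is no real obstacle: Lemma \ref{lem:u lambda}(iii) does all the work for (i), and the only mildly nontrivial point for (ii) is the sign-flip compatibility between the two actions on $V_{\mathbb C}$, which is a one-line verification at the $i=0$ node.
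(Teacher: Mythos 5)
Your argument is correct, and it is worth noting that the paper gives no proof of this lemma at all: it simply cites \cite[Theorem~5.3]{S1}, so your write-up supplies the argument the paper outsources. The route you take --- Lemma~\ref{lem:u lambda}(iii) for part (i), together with the observation that the ordinary and dot actions of $W$ on $V_{\mathbb C}$ are intertwined by $x\mapsto -x$ --- is the natural one and is essentially the mechanism behind Sahi's proof; the sign-flip identity $s_i(-x)=-(s_i\cdot x)$ also does double duty, since it is exactly what makes the dot action a genuine action of $W$ (namely $w\cdot x=-w(-x)$), which you tacitly use when writing $(s_iu_\lambda)\cdot\gamma_0=s_i\cdot(u_\lambda\cdot\gamma_0)$. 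Two small corrections. First, in the $i=0$ check the intermediate expressions should read $s_0(-x)=(1+x_1,-x_2,\ldots,-x_n)=-(-x_1-1,x_2,\ldots,x_n)=-(s_0\cdot x)$; as written you dropped the minus signs on the coordinates $x_2,\ldots,x_n$, although the endpoints of the chain are equal. Second, the excluded case $s_i\cdot\lambda=\lambda$ is not ``vacuous'': there statement (i) would assert $s_i\cdot\gamma_\lambda=\gamma_\lambda$, which is in fact \emph{false} for generic parameters, since then $a_i(\gamma_\lambda)=t$ for $i\in[1,n-1]$ and $a_n(\gamma_\lambda)=2t_0+2t_n$ (see the proof of Lemma~\ref{lem:Tp}), so $s_i$ does not fix $\gamma_\lambda$. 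You were therefore right to restrict to $s_i\cdot\lambda\neq\lambda$ throughout, but for a stronger reason than the one you give: the hypothesis must genuinely be read into part (i), and fortunately the paper only ever invokes the lemma under that hypothesis.
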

For the proof see \cite[Theorem 5.3]{S1}.

\begin{thm} \label{thm:Yp=gamma p}
For $f \in \mathcal P$ and $\lambda \in \Lambda$, we have
\[
f(Y)p_\lambda = f(\gamma_\lambda)p_\lambda.
\]
\end{thm}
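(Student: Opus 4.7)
The plan is to reduce to the base case $\lambda=0$ (where $p_0=1$) using the intertwining identity of Corollary \ref{cor:Swp}. That corollary says $p(Y)S_w = S_w(w^{-1}\cdot p)(Y)$ for every $p\in\mathcal P$ and $w\in W$. Applying it with $w=u_\lambda$ and $p=f$, and recalling $p_\lambda = S_{u_\lambda}\cdot 1$, I immediately get
\[
f(Y)p_\lambda \;=\; f(Y)\,S_{u_\lambda}\cdot 1 \;=\; S_{u_\lambda}\,(u_\lambda^{-1}\cdot f)(Y)\cdot 1.
\]
So everything boils down to computing how polynomials in the $Y_i$'s act on the constant $1$.

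For the base case, I would verify that $1$ is a simultaneous eigenvector of the $Y_i$'s with eigenvalue $\gamma_{0,i}$; this is already pointed out in the paragraph preceding the theorem, but is worth spelling out explicitly. Since $\pi=\operatorname{Ind}_H^{\mathcal H}(\chi)$ and $1$ is the cyclic vector on which $H$ acts by the trivial character $\chi$, it suffices to apply $\chi$ to the explicit formula $Y_i = \Xi_{i,n}+\Xi_{0,i}+t\sum_{j=i+1}^{n}T_{i,j}$. Using $\chi(T_j)=1$ for $j\in[1,n-1]$ together with $\chi(T_0)=t_0$ and $\chi(T_n)=t_n$, the three summands evaluate to $t_n$, $t_0$, and $1$ respectively, yielding $\chi(Y_i)=t_0+t_n+(n-i)t=\gamma_{0,i}$. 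Because the $Y_i$ mutually commute, this extends to $g(Y)\cdot 1 = g(\gamma_0)\cdot 1$ for every polynomial $g\in\mathcal P$, where $\gamma_0=\sum_i\gamma_{0,i}\epsilon_i$.

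Putting these two ingredients together, I take $g = u_\lambda^{-1}\cdot f$ and obtain
\[
f(Y)p_\lambda \;=\; S_{u_\lambda}\,(u_\lambda^{-1}\cdot f)(\gamma_0)\cdot 1 \;=\; (u_\lambda^{-1}\cdot f)(\gamma_0)\,p_\lambda.
\]
Finally, unwinding the dot-action, $(u_\lambda^{-1}\cdot f)(\gamma_0) = f(u_\lambda\cdot\gamma_0) = f(\gamma_\lambda)$ by the very definition $\gamma_\lambda = u_\lambda\cdot\gamma_0$, which completes the argument. There is no real obstacle: both inputs (the intertwining relation and the base-case eigenvalue computation) are already in hand, so the proof is essentially a one-line substitution. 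The only point requiring a little care is to keep the dot-action of $W$ on $\mathcal P$ straight from the ordinary action when evaluating $(u_\lambda^{-1}\cdot f)$ at the spectral point $\gamma_0$.
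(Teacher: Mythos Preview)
Your proof is correct and follows essentially the same route as the paper's own argument: apply the intertwining relation $f(Y)S_{u_\lambda}=S_{u_\lambda}(u_\lambda^{-1}\cdot f)(Y)$ from Corollary~\ref{cor:Swp}, use the base case $g(Y)\cdot 1=g(\gamma_0)$, and then identify $(u_\lambda^{-1}\cdot f)(\gamma_0)=f(u_\lambda\cdot\gamma_0)=f(\gamma_\lambda)$ from the definition of $\gamma_\lambda$. Your write-up is in fact a bit more explicit than the paper's one-line version, spelling out the trivial-character computation $\chi(Y_i)=\gamma_{0,i}$ and the dot-action bookkeeping.
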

\begin{proof}
By Corollary \ref{cor:Swp} and Lemma \ref{lem:sp} we have
\[
f(Y) p_\lambda = f(Y) S_{u_\lambda} 1 = S_{u_\lambda} (u_\lambda^{-1}\cdot f)(Y)1 = (u_\lambda^{-1}\cdot f)(\gamma_0) S_{u_\lambda}1 = f(\gamma_{u_\lambda\cdot0}) p_\lambda = f(\gamma_\lambda)p_\lambda. \qedhere
\]
\end{proof}
For $\lambda \in \Lambda$ we define eigenspaces
\[
\mathcal P_{\lambda} = \{ p \in \mathcal P \ |\ f(Y) p = f(\gamma_\lambda) p \text{ for all } f \in \mathcal P \}.
\]
Clearly, we have $p_\lambda \in \mathcal P_\lambda$, so $\mathcal P_\lambda$ is nonempty and nonzero for every $\lambda \in \Lambda$.
\begin{thm} \label{thm:basis}
The set $\{p_\lambda \ |\ \lambda \in \Lambda \}$ is a basis for $\mathcal P$.
\end{thm}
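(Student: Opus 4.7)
The proof strategy is a standard joint eigenvector argument in two parts.

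First, linear independence of $\{p_\lambda\}_{\lambda\in\Lambda}$ is immediate. Each $p_\lambda$ is a nonzero polynomial (as already observed after Definition \ref{def:nonsymm Wilson pol}) lying in the joint eigenspace $\mathcal P_\lambda$ for the commuting family $\{Y_1,\ldots,Y_n\}$ with joint eigenvalue $\gamma_\lambda$ by Theorem \ref{thm:Yp=gamma p}. The standing hypothesis that $\gamma_\lambda\neq\gamma_\mu$ for $\lambda\neq\mu$ then places the $p_\lambda$ in pairwise distinct joint eigenspaces, forcing linear independence.

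For spanning, the plan is to equip $\Lambda$ with a partial order $\preceq$ enjoying two properties: (a) each down-set $\Lambda^{\preceq\lambda}:=\{\mu\in\Lambda\colon \mu\preceq\lambda\}$ is finite, and $\bigcup_\lambda\Lambda^{\preceq\lambda}=\Lambda$; (b) every operator $Y_i$ acts on the basis $\{x^{\phi(\mu)}\}$ upper-triangularly with respect to $\preceq$, with diagonal coefficient exactly $\gamma_{\mu,i}$:
\[
Y_i\,x^{\phi(\mu)} \;=\; \gamma_{\mu,i}\,x^{\phi(\mu)} \;+\; \sum_{\nu\prec\mu} c^{(i)}_{\mu,\nu}\,x^{\phi(\nu)}.
\]
Granted this, each finite-dimensional subspace $\mathcal P^{\preceq\lambda}:=\mathrm{span}\{x^{\phi(\mu)}\colon \mu\preceq\lambda\}$ is stable under all $Y_i$, and the standard simultaneous diagonalization of commuting operators with simple joint spectrum (delivered by (b) and the distinctness of the $\gamma_\mu$'s) produces a basis of joint eigenvectors $\{\tilde p_\mu\}_{\mu\preceq\lambda}$, each of the form $\tilde p_\mu=x^{\phi(\mu)}+\text{strictly lower terms}$. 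By uniqueness of the $\gamma_\mu$-eigenvector in this triangular form, $\tilde p_\mu$ must be a scalar multiple of the eigenvector $p_\mu$, in particular $p_\mu\in\mathcal P^{\preceq\mu}$. Taking $\lambda$ arbitrary and using $\bigcup_\lambda\mathcal P^{\preceq\lambda}=\mathcal P$ yields the spanning statement.

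The main obstacle is constructing the correct partial order and verifying the triangularity in (b). Lemma \ref{lem:Tx} supplies the requisite control at the level of the individual generators: for $i\in[1,n-1]$, $T_i$ preserves the total degree $|\phi(\mu)|$ and permutes monomial indices by $s_i$ modulo strictly lower-degree terms, while $T_0$ and $T_n$ may shift the total degree by $\pm1$ with leading behavior controlled by the simple reflections $s_{\epsilon_1}$ and $s_n$ on $\lambda$. The natural candidate for $\preceq$ is therefore a refinement of the total degree $|\phi(\mu)|$ by a dominance/Bruhat-type order within each $W_0$-layer, in the spirit of the Cherednik order used for the trigonometric Macdonald case. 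The genuinely computational part is to unpack $Y_i=\Xi_{i,n}+\Xi_{0,i}+t\sum_{j=i+1}^{n}T_{i,j}$, propagate the top-order terms from Lemma \ref{lem:Tx} through each constituent product of $T$'s, and check that the diagonal contributions combine exactly to $\gamma_{\mu,i}$, consistent with the formula $\gamma_\mu=u_\mu\cdot\gamma_0$ and Lemma \ref{lem:u lambda}(ii). Once this bookkeeping is carried out, the linear-algebra step described above finishes the proof.
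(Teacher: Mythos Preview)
Your strategy is the classical Cherednik triangularity argument, and it is sound in principle, but it differs substantially from the paper's proof and leaves the hard step unverified.

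The paper never establishes triangularity of the $Y_i$'s on the monomial basis. Instead it argues as follows. First, it proves that each eigenspace $\mathcal P_\lambda$ is one-dimensional by specializing to $t=0$: in that limit the $Y_i$'s decouple into a product of rank-one operators, so any joint eigenfunction factors as a product of one-variable nonsymmetric Wilson polynomials; this forces the coefficient of $x^{\phi(\lambda)}$ in any element of $\mathcal P_\lambda$ to be generically nonzero, whence $\dim\mathcal P_\lambda=1$. Second, it controls the degree of $p_\lambda$ not via $Y_i$ but via the intertwiners: Lemma~\ref{lem:Tx} shows that $S_i$ for $i\in[1,n]$ preserves the total-degree filtration $\mathcal P_{(m)}$ while $S_0$ raises it by one, so $p_\lambda=S_{u_\lambda}1$ lies in $\mathcal P_{(N)}$ with $N$ equal to the number of $s_0$'s in a reduced expression for $u_\lambda$, namely $N=\sum_i\lambda_i^+$. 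A straightforward counting argument then matches $\{p_\lambda:\sum_i|\lambda_i|\le m\}$ with a basis of $\mathcal P_{(m)}$.

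Your route would work, but what you label ``bookkeeping'' is the entire content: you must produce the partial order and verify that the diagonal entry of $Y_i$ on $x^{\phi(\mu)}$ is exactly $\gamma_{\mu,i}$. This is a nontrivial combinatorial computation in the present rational/difference setting (the operators $T_0$ and $T_n$ do not simply permute the monomial index, as Lemma~\ref{lem:Tx} shows), and you have not carried it out. The paper's argument buys you an end-run around precisely this computation, at the modest cost of invoking the rank-one theory from \cite{G}.
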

\begin{proof}
We first show that the eigenspaces $\mathcal P_\lambda$ are all 1-dimensional. Let $p \in \mathcal P_\lambda$. Observe that for $t=0$ we have $T_i = s_i$, which gives us for the $Y$-operators
\[
Y_i = s_i \cdots s_{n-1} T_n s_{n-1} \cdots s_i + s_{i-1} \cdots s_1 T_0 s_1 \cdots s_{i-1}.
\]
This is the $Y$-operator for the rank one version of the algebra $\mathcal H$ (see \cite{G}), acting only on the $i$th variable. For any $m \in \mathbb Z_{\geq 0}$ the rank one $Y$-operator has exactly one (up to a multiplicative constant) polynomial eigenfunction of degree $m$, which is uniquely determined by the coefficient of $x^m$. This means that for $t=0$ we have
\[
p(x) = c\, p_{\phi(\lambda_1)}(x_1) p_{\phi(\lambda_2)}(x_2) \cdots p_{\phi(\lambda_n)}(x_n),
\]
where $p_m$ denotes a one-variable nonsymmetric Wilson polynomial of degree $m$, and $c$ is a nonzero constant. So the coefficient $c_\lambda$ of $x^{\phi(\lambda)}$ in the expansion $p(x) = \sum_{\mu \in \Lambda} c_{\mu} x^{\phi(\mu)}$ is nonzero after setting $t=0$, which implies that $c_\lambda$ is generically nonzero. Now if $\dim \mathcal P_\lambda >1$, then there would be a nonzero polynomial in $\mathcal P_\lambda$ for which the coefficient of $x^{\phi(\lambda)}$ is equal to zero, hence $\dim \mathcal P_\lambda =1$.

Finally, we define for $m \in \mathbb Z_{\geq 0}$,
\[
\mathcal P_{(m)} = \mathrm{span}\Big\{ x^{\phi(\lambda)} \ | \ \lambda=\sum_{i=1}^n \lambda_i \epsilon_i \in \Lambda, \ \sum_{i=1}^n |\lambda_i| \leq m \Big\},
\]
then $\mathcal P = \bigcup_{m=0}^\infty \mathcal P_{(m)}$. Since $\sum_i \lambda_i^+ = \sum_i |\lambda_i|$ for all $\lambda \in \Lambda$, we see from Lemma \ref{lem:Tx} that the intertwiners $S_i$, $i \in [1,n]$, satisfy $S_i \mathcal P_{(m)} \subset \mathcal P_{(m)}$, while $S_0 \mathcal P_{(m)} \subset \mathcal P_{(m+1)}$. It follows that $p_\lambda \in \mathcal P_{(N)}$ where $N$ is the number of times $s_0$ occurs in a reduced expression for $u_\lambda$. By Lemma \ref{lem:u lambda} we have $u_\lambda = v_{\lambda}^{-1}\tau(-\lambda^+)$ with $v_\lambda \in W_0$, so $N$ is also the number of times $s_0$ occurs in $\tau(-\lambda_+)$, i.e., $N=\sum_{i} \lambda_i^+$.
So for any $m \in \mathbb Z_{\geq 0}$ the set $\{ p_{\lambda} \ | \ \sum_i |\lambda_i| \leq m\}\subset \mathcal P_{(m)}$ has the same cardinality as the basis $\{ x^{\phi(\lambda)} \ | \sum_{i} |\lambda_i| \leq m \}$ of $\mathcal P_{(m)}$. Since the polynomials $p_\lambda$, $\sum_i |\lambda_i| \leq m$, are eigenfunctions of the $Y$-operators for pairwise different eigenvalues, they are linear independent in $\mathcal P_{(m)}$, hence the set  $\{p_\lambda \ | \ \sum_i |\lambda_i|\leq m \}$ forms a basis for $\mathcal P_{(m)}$.
\end{proof}

We have the following corollary of the proof of Theorem \ref{thm:basis}.
\begin{cor}
The eigenspaces $\mathcal P_\lambda$, $\lambda \in \Lambda$, are all one-dimensional. Furthermore, a polynomial in $\mathcal P_\lambda$ is uniquely determined by the coefficient of $x^{\phi(\lambda)}$.
\end{cor}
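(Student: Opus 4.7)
The plan is to observe that both assertions essentially record what was already established in the proof of Theorem \ref{thm:basis}, so the corollary is little more than a repackaging of that argument. I would therefore point back to the $t=0$ specialization used there and extract the uniqueness statement from it.

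First I would recall the one-dimensionality: if $p \in \mathcal P_\lambda$, then specializing the parameter $t$ to $0$ reduces the $Y$-operators to commuting rank-one $Y$-operators, each acting on a single variable. For the rank-one algebra, each $Y$-eigenspace on polynomials of a fixed degree is one-dimensional, and the eigenfunction is uniquely determined by its leading coefficient. Consequently $p$ factors, after setting $t=0$, as a nonzero constant times $\prod_i p_{\phi(\lambda_i)}(x_i)$, so the coefficient $c_\lambda$ of $x^{\phi(\lambda)}$ in $p$ is nonzero at $t=0$ and hence generically nonzero. If $\dim \mathcal P_\lambda > 1$, one could form a nonzero linear combination in $\mathcal P_\lambda$ whose $x^{\phi(\lambda)}$-coefficient vanishes identically in $t$, contradicting the previous sentence; thus $\dim \mathcal P_\lambda = 1$.

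For the second assertion, I would proceed as follows. Since $p_\lambda \in \mathcal P_\lambda$ is nonzero (being the image of $1$ under an invertible chain of intertwiners, or simply because the leading coefficient of $p_\lambda$ is the generically nonzero $c_\lambda$ evaluated on $p_\lambda$ itself) and $\dim \mathcal P_\lambda = 1$, every $p \in \mathcal P_\lambda$ is a scalar multiple of $p_\lambda$. The scalar is read off from the coefficient of $x^{\phi(\lambda)}$, which is nonzero in $p_\lambda$. Hence $p$ is uniquely determined by its $x^{\phi(\lambda)}$-coefficient, which completes the proof.

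There is no real obstacle here: the only delicate point, already handled in Theorem \ref{thm:basis}, is the genericity argument ensuring that the $x^{\phi(\lambda)}$-coefficient of an element of $\mathcal P_\lambda$ is not forced to vanish by the eigenvalue equations. Once that has been absorbed from the preceding proof, the corollary is immediate.
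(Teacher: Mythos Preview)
Your proposal is correct and matches the paper's approach exactly: the paper presents this as an immediate corollary of the proof of Theorem~\ref{thm:basis}, and the argument you give---the $t=0$ specialization showing the $x^{\phi(\lambda)}$-coefficient is generically nonzero, hence $\dim\mathcal P_\lambda=1$, from which uniqueness by that coefficient follows---is precisely what that proof contains. One small remark: your parenthetical that $p_\lambda$ is the image of $1$ under an ``invertible chain of intertwiners'' is not quite right (the $S_i$ need not be invertible), but the nonvanishing of $p_\lambda$ is already recorded in the paper right after its definition, and your alternative justification via the leading coefficient is fine.
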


We define for $\lambda \in \Lambda^+$,
\[
\mathcal P(\lambda) = \mathrm{span}\{ p_\mu \ | \ \mu \in W_0 \lambda \}.
\]
We are going to show that $\mathcal P(\lambda)$ is an irreducible $\pi(H)$-module.
\begin{lem} \label{lem:Sp}
Let $\lambda \in \Lambda$ and $i \in [0,n]$, then
\[
S_i p_\lambda =
\begin{cases}
0, & a_i(\lambda)=0,\\
p_{s_i \cdot \lambda}, & a_i(\lambda) > 0,\\
q_i(\gamma_{s_i\cdot\lambda}) p_{s_i\cdot \lambda}, & a_i(\lambda) < 0.
\end{cases}
\]
\end{lem}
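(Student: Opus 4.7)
The plan is to split the argument into three cases according to the sign of $a_i(\lambda)$, and in each case use the length information about $u_\lambda$ from Lemma \ref{lem:u lambda} to relate $S_i S_{u_\lambda}$ to $S_{u_{s_i\cdot\lambda}}$ (or to $S_{u_\lambda}S_j$) in $\mathcal H$ via the braid/quadratic relations of Lemma \ref{lem:intertw}. As a preliminary remark, one checks that $a_0(\lambda)=1-2\langle\lambda,\epsilon_1\rangle$ is an odd integer for $\lambda\in\Lambda$, so $a_0(\lambda)=0$ never occurs; hence in the vanishing case we may always take $i\in[1,n]$.

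First I would dispose of $a_i(\lambda)>0$. By Lemma \ref{lem:u lambda}(iii) one has $s_iu_\lambda=u_{s_i\cdot\lambda}$, and Lemma \ref{lem:u lambda}(iv) applied to $s_i\cdot\lambda$ (note $a_i(s_i\cdot\lambda)=-a_i(\lambda)<0$) gives $l(u_{s_i\cdot\lambda})=l(u_\lambda)+1$. Hence $s_iu_\lambda$ is a reduced expression of $u_{s_i\cdot\lambda}$, so by the braid relations in Lemma \ref{lem:intertw}(i) we get $S_iS_{u_\lambda}=S_{u_{s_i\cdot\lambda}}$. Applying this to $1$ yields $S_ip_\lambda=p_{s_i\cdot\lambda}$. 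For $a_i(\lambda)<0$ the roles are reversed: Lemmas \ref{lem:u lambda}(iii)--(iv) now give $u_\lambda=s_iu_{s_i\cdot\lambda}$ as a reduced expression, whence $S_{u_\lambda}=S_iS_{u_{s_i\cdot\lambda}}$. Therefore
\[
S_ip_\lambda=S_i^2\,S_{u_{s_i\cdot\lambda}}1=q_i(Y)\,p_{s_i\cdot\lambda}=q_i(\gamma_{s_i\cdot\lambda})\,p_{s_i\cdot\lambda},
\]
using Lemma \ref{lem:intertw}(ii) and Theorem \ref{thm:Yp=gamma p}.

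The remaining case $a_i(\lambda)=0$, $i\in[1,n]$, uses Lemma \ref{lem:u lambda}(v) to produce $j\in[1,n]$ with $s_iu_\lambda=u_\lambda s_j$ in $W$. Since $s_i\cdot\lambda=\lambda$, the element $s_iu_\lambda$ also satisfies $s_iu_\lambda\cdot 0=\lambda$; by the minimal-length characterization of $u_\lambda$ in Lemma \ref{lem:u lambda}(ii) we conclude $l(s_iu_\lambda)=l(u_\lambda)+1=l(u_\lambda s_j)$. Hence both sides of $s_iu_\lambda=u_\lambda s_j$ are reduced, and the braid relations of Lemma \ref{lem:intertw}(i) lift this identity to $S_iS_{u_\lambda}=S_{u_\lambda}S_j$ in $\mathcal H$. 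Thus $S_ip_\lambda=S_{u_\lambda}(S_j\cdot 1)$, and it suffices to show $S_j\cdot 1=0$. But $S_j=[T_j,a_j(Y)]$ with $T_j\cdot 1=\chi_j\in\mathbb C$ and $a_j(Y)\cdot 1=a_j(\gamma_0)\in\mathbb C$ by Theorem \ref{thm:Yp=gamma p}; since both actions reduce to scalar multiplication on $1$, the commutator vanishes.

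The only genuinely delicate point is the lifting $s_iu_\lambda=u_\lambda s_j \Longrightarrow S_iS_{u_\lambda}=S_{u_\lambda}S_j$ in Case 1: this relies on having matching \emph{reduced} expressions on both sides together with the braid relations in Lemma \ref{lem:intertw}(i), i.e.\ on the fact that $S_w$ is defined independently of the reduced expression for $w$, which was already recorded in the text just before Corollary \ref{cor:Swp}. Once this is in hand, the other two cases reduce to straightforward applications of the quadratic relation $S_i^2=q_i(Y)$ and of Theorem \ref{thm:Yp=gamma p}.
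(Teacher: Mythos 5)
Your proof is correct and follows essentially the same route as the paper's: the same case split on the sign of $a_i(\lambda)$, the same use of Lemma \ref{lem:u lambda} to decide whether prepending $s_i$ to $u_\lambda$ gives a reduced expression (hence whether $S_iS_{u_\lambda}$ equals $S_{u_{s_i\cdot\lambda}}$ or $S_i^2S_{u_{s_i\cdot\lambda}}$), and the same observation $S_j1=[T_j,a_j(Y)]1=0$ in the case $a_i(\lambda)=0$. Your explicit remarks that $a_0(\lambda)$ is never zero and that both sides of $s_iu_\lambda=u_\lambda s_j$ are reduced are only slightly more detailed versions of steps the paper leaves implicit.
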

\begin{proof}
Let us first assume that $a_i(\lambda) \neq 0$, i.e.~$s_i\cdot\lambda\neq \lambda$. We have $S_i p_\lambda = S_i S_{u_\lambda} 1$. Let $s_{i_1}\cdots s_{i_r}$ be a reduced expression for $u_\lambda$. We need to find out if $s_i s_{i_1} \cdots s_{i_r}$ is a reduced expression for $s_i u_\lambda$. By Lemma \ref{lem:u lambda} we have $u_{s_i\cdot\lambda}=s_iu_\lambda$.  If $l(u_{s_i\cdot\lambda}) > l(u_\lambda)$, then $s_i s_{i_1} \cdots s_{i_r}$ is a reduced expression for $u_{s_i \cdot \lambda}$. In this case $S_i S_{u_\lambda} = S_{u_{s_i\cdot \lambda}}$, which implies $S_i p_\lambda = p_{s_i\cdot\lambda}$. If $l(u_{s_i\cdot\lambda}) < l(u_\lambda)$, then there exists a reduced expression for $u_\lambda$ which starts with $s_i$. This shows that $S_i S_{u_\lambda} = S_i^2 S_{u_{s_i\cdot\lambda}}$. From Lemma \ref{lem:intertw} and Theorem \ref{thm:Yp=gamma p} we then obtain $S_i p_\lambda = S_i^2 p_{s_i\cdot \lambda} = q_i(\gamma_{s_i\cdot \lambda}) p_{s_i\cdot \lambda}$. By Lemma \ref{lem:u lambda} $a_i(\lambda) < 0$ implies $l(u_{s_i\cdot\lambda})< l(u_\lambda)$. Replacing $\lambda$ by $s_i\cdot \lambda$ then shows that $a_i(\lambda)>0$ implies $l(u_{s_i\cdot\lambda})> l(u_\lambda)$. This proves the lemma in case $a_i(\lambda) \neq 0$.

Now suppose that $a_i(\lambda)=0$, i.e.~$s_i\cdot \lambda = \lambda$, which only happens if $i \neq 0$. In this case we have $(s_i u_\lambda)\cdot 0 = s_i\cdot \lambda = \lambda$. Since $u_\lambda$ is the shortest element in $W$ satisfying $u_\lambda \cdot 0 = \lambda$, we have $l(s_iu_\lambda)>l(u_\lambda)$. Furthermore, by Lemma \ref{lem:u lambda} there exists a $j \in [1,n]$ such that $s_i u_\lambda = u_\lambda s_j$. This shows that $S_ip_\lambda = S_i S_{u_\lambda}1 = S_{u_\lambda}S_j 1$. Observe that $S_j 1 = [T_j, a_j(Y)]1=0$, since $T_j 1 =\chi_j$, hence $S_ip_\lambda = 0$.
\end{proof}

The following result assures us that $T_i \mathcal P(\lambda) \subset \mathcal P(\lambda)$ for $i\in [1,n]$. Clearly, we also have $Y_i\mathcal P(\lambda) \subset \mathcal P(\lambda)$ for $i \in [1,n]$. Since the $Y$-operators are diagonalized by $p_\mu$, a nonempty invariant subspace $\widetilde{\mathcal P} \subset \mathcal P(\lambda)$ contains an element $p_\mu$ for some $\mu \in W_0\lambda$. Repeated application of the intertwiners $S_i$, $i \in [1,n]$, then shows that for every $\nu \in W_0\lambda$ we have $p_\nu \in \widetilde{\mathcal P}$, hence $\mathcal P(\lambda)$ is irreducible.
\begin{lem} \label{lem:Tp}
Let $\lambda \in \Lambda$ and $i \in [1,n]$. If $a_i(\lambda)\neq 0$, then
\[
 T_i p_\lambda = A_{i,\lambda} p_\lambda + B_{i,\lambda} p_{s_i\cdot\lambda},
\]
with $A_{i,\lambda} = d_i^\sigma(\gamma_\lambda)$ and
\[
B_{i,\lambda} =
\begin{cases}
\dfrac{1}{2a_i(\gamma_\lambda)}, & a_i(\lambda)>0,\\ \\
\dfrac{q_i(\gamma_{s_i\cdot\lambda})}{2a_i(\gamma_\lambda)}, & a_i(\lambda)<0.
\end{cases}
\]
If $a_i(\lambda)=0$, then $T_ip_\lambda = \chi_i p_\lambda$.
\end{lem}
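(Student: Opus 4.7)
The plan is to combine the definition of the intertwiner $S_i = [T_i, a_i(Y)]$ with the cross-relation of Proposition \ref{prop:crossrel H} specialized to $p = a_i$. Since $s_i a_i = -a_i$, that specialization yields
\[
T_i a_i(Y) + a_i(Y) T_i = 2 a_i(Y)\, d_i(Y; \mathbf t^\sigma),
\]
and adding the defining relation for $S_i$ gives the master identity
\[
T_i a_i(Y) = a_i(Y)\, d_i(Y; \mathbf t^\sigma) + \tfrac{1}{2} S_i.
\]
Applying both sides to $p_\lambda$, using $a_i(Y) p_\lambda = a_i(\gamma_\lambda) p_\lambda$ from Theorem \ref{thm:Yp=gamma p}, and dividing by $a_i(\gamma_\lambda)$, I obtain the key formula (valid whenever $a_i(\gamma_\lambda) \neq 0$)
\[
T_i p_\lambda = d_i(\gamma_\lambda; \mathbf t^\sigma)\, p_\lambda + \frac{1}{2 a_i(\gamma_\lambda)}\, S_i p_\lambda.
\]

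When $a_i(\lambda) \neq 0$, Lemma \ref{lem:sp}(i) gives $s_i \gamma_\lambda = \gamma_{s_i \cdot \lambda}$, and the standing assumption that $\mu \mapsto \gamma_\mu$ is injective forces $a_i(\gamma_\lambda) \neq 0$; so the key formula applies. Substituting the value of $S_i p_\lambda$ supplied by Lemma \ref{lem:Sp}---namely $p_{s_i \cdot \lambda}$ when $a_i(\lambda) > 0$ and $q_i(\gamma_{s_i \cdot \lambda}) p_{s_i \cdot \lambda}$ when $a_i(\lambda) < 0$---directly yields $A_{i,\lambda} = d_i^\sigma(\gamma_\lambda)$ together with the stated values of $B_{i,\lambda}$ in both subcases.

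When $a_i(\lambda) = 0$, Lemma \ref{lem:Sp} gives $S_i p_\lambda = 0$, so (for generic parameters, where $a_i(\gamma_\lambda) \neq 0$) the key formula collapses to $T_i p_\lambda = d_i(\gamma_\lambda; \mathbf t^\sigma) p_\lambda$, and the remaining task is to identify this scalar with $\chi_i$. For this I would compare the coefficient of $x^{\phi(\lambda)}$ on the two sides via Lemma \ref{lem:Tx}: the hypothesis $a_i(\lambda) = 0$ gives $s_i \cdot \lambda = \lambda$ (and $\langle \lambda, \epsilon_n \rangle = 0 \geq 0$ when $i = n$), so Lemma \ref{lem:Tx} delivers $T_i x^{\phi(\lambda)} = \chi_i x^{\phi(\lambda)} + (\text{lower-order terms})$. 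A short check, using $|\phi(s_i \mu)| = |\phi(\mu)|$ for $i \in [1, n-1]$ and the parity observation $(s_n \mu)_n = -\mu_n$ for $i = n$, shows that no other monomial $x^{\phi(\mu)}$ appearing in the expansion of $p_\lambda \in \mathcal{P}_{(|\phi(\lambda)|)}$ can contribute to $x^{\phi(\lambda)}$ under $T_i$, so matching the coefficient of $x^{\phi(\lambda)}$ forces $d_i(\gamma_\lambda; \mathbf t^\sigma) = \chi_i$. The main obstacle is precisely this last step: the key formula only tells us in the fixed-point case that $T_i p_\lambda$ is proportional to $p_\lambda$, and pinning down the scalar requires the leading-coefficient analysis via Lemma \ref{lem:Tx} rather than a direct evaluation of $d_i(\gamma_\lambda; \mathbf t^\sigma)$.
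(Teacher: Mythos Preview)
Your argument for $a_i(\lambda)\neq 0$ is correct and essentially the same as the paper's: both combine the cross-relation of Proposition~\ref{prop:crossrel H} with $S_i=[T_i,a_i(Y)]$ and Lemma~\ref{lem:Sp}. Your master identity $T_i\,a_i(Y)=a_i(Y)\,d_i^\sigma(Y)+\tfrac12 S_i$ is just a clean repackaging of the paper's computation; the paper first uses the cross-relation to pin down the form $A_{i,\lambda}p_\lambda+B_{i,\lambda}p_{s_i\cdot\lambda}$ with $A_{i,\lambda}=d_i^\sigma(\gamma_\lambda)$, and then reads off $B_{i,\lambda}$ from $S_ip_\lambda$, which amounts to the same thing.

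For $a_i(\lambda)=0$ you also reach the correct intermediate conclusion $T_ip_\lambda=d_i^\sigma(\gamma_\lambda)\,p_\lambda$, but you then take an unnecessary detour. You assert that identifying the scalar with $\chi_i$ \emph{requires} a leading-coefficient comparison via Lemma~\ref{lem:Tx}; in fact the paper evaluates $d_i^\sigma(\gamma_\lambda)$ directly, and this is much shorter. The key observation is that $s_i\cdot\lambda=\lambda$ forces $a_i(\gamma_\lambda)$ to take a specific value. Writing $\gamma_\lambda=\lambda+v_\lambda^{-1}\gamma_0$ and using $a_i(\lambda)=0$ gives $a_i(\gamma_\lambda)=(v_\lambda a_i)(\gamma_0)$. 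Since $v_\lambda$ is the shortest element sending $\lambda$ to $\lambda^+$ and $s_i$ stabilizes $\lambda$, one has (essentially Lemma~\ref{lem:u lambda}(v)) $v_\lambda s_i v_\lambda^{-1}=s_j$ for some simple $s_j$ of the same root length, so $v_\lambda a_i=a_j$. Hence $a_i(\gamma_\lambda)=a_j(\gamma_0)$ equals $t$ when $i\in[1,n-1]$ and $2(t_0+t_n)$ when $i=n$; plugging these into the explicit formula for $d_i^\sigma$ gives $\chi_i$ immediately. So your ``main obstacle'' is illusory---the direct evaluation is available and is simpler than the monomial bookkeeping you outline (which does work, but is overkill here).
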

\begin{proof}
Let $i \in [1,n]$ and $\lambda \in \Lambda$. By Theorem \ref{thm:basis} $T_ip_\lambda$ can be expanded in terms of $p_\mu$, $\mu\in \Lambda$. From the cross-relation in Proposition \ref{prop:crossrel H} we find for any $f \in \mathcal P$,
\begin{equation} \label{eq:Tp}
\big( f(Y)- (s_if)(\gamma_{\lambda})\big) T_i p_\lambda = \big(f(\gamma_\lambda)-(s_if)(\gamma_{\lambda})\big) d_i^\sigma(\gamma_\lambda) p_\lambda.
\end{equation}

Suppose that $a_i(\lambda)\neq 0$, then $s_i\cdot\lambda \neq \lambda$. In this case we find from Theorem \ref{thm:Yp=gamma p} and Lemma \ref{lem:sp} that $T_ip_\lambda$ is indeed of the form $A_{i,\lambda} p_\lambda + B_{i,\lambda} p_{s_i\cdot\lambda}$ with $A_{i,\lambda}=d_i^\sigma(\gamma_\lambda)$. In order to find $B_{i,\lambda}$ we use the intertwiner $S_i$. By Lemma \ref{lem:Sp} we have $S_i p_\lambda = g_{i,\lambda} p_{s_i \cdot \lambda}$ where
\[
g_{i,\lambda} =
\begin{cases}
1, & a_i(\lambda) >0,\\
q_i(\gamma_{s_i\cdot\lambda}), & a_i(\lambda)<0.
\end{cases}
\]
On the other hand, using $S_i = [T_i, a_i(Y)]$ we have
\[
S_i p_\lambda = \big(a_i(\gamma_\lambda)-a_i(\gamma_{s_i\cdot\lambda})\big) B_{i,\lambda} p_{s_i\cdot\lambda},
\]
hence $B_{i,\lambda}=g_{i,\lambda}/2a_i(\gamma_\lambda)$.

Now suppose that $a_i(\lambda)=0$, then Theorem \ref{thm:Yp=gamma p} and \eqref{eq:Tp} imply that $T_ip_\lambda = d_i^\sigma(\gamma_\lambda)p_\lambda$. Since $s_i\cdot\lambda = \lambda$ it follows from the definition of $\gamma_\lambda$ that $a_i(\gamma_\lambda) = t$ for $i \in [1,n-1]$ and $a_n(\gamma_\lambda) = 2t_0+2t_n$. From the explicit expression for $d_i^\sigma(x)$ we then find  $d_i^\sigma(\gamma_\lambda) = \chi_i$.
\end{proof}

From Lemma \ref{lem:Tp} and Theorem \ref{thm:Yp=gamma p} we obtain the following result.
\begin{prop}
The center of $H$ is equal to $\mathcal P_Y^{W_0}$.
\end{prop}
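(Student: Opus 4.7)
The plan is to prove the two inclusions separately. For $\mathcal{P}_Y^{W_0} \subseteq Z(H)$: given $q \in \mathcal{P}^{W_0}$, the relation $s_i q = q$ for $i \in [1,n]$ collapses the cross-relation of Proposition \ref{prop:crossrel H} to $[T_i, q(Y)] = 0$. Since $q(Y)$ commutes tautologically with $\mathcal{P}_Y$ and $H$ is generated by $\mathcal{P}_Y$ and $T_1,\ldots,T_n$ (as stated after the definition of the $Y_i$), we obtain $q(Y) \in Z(H)$.

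For the reverse inclusion, I would pass to the localization $\widetilde{H} = F \otimes_{\mathcal{P}_Y} H$ with $F = \mathrm{Frac}(\mathcal{P}_Y)$, and exploit the intertwiners $S_w \in H$. Applying Proposition \ref{prop:crossrel H} with $p = a_i$ and using $s_i a_i = -a_i$ gives $T_i a_i(Y) + a_i(Y) T_i = 2 a_i(Y) d_i^\sigma(Y)$, and combining this with $S_i = [T_i, a_i(Y)]$ produces the identity $T_i - d_i^\sigma(Y) \in F^{\times} \cdot S_i$ inside $\widetilde H$. Iterating along a reduced expression $w = s_{i_1}\cdots s_{i_r}$, moving the resulting $F$-denominators across each $S_{i_j}$ via the intertwining rule of Corollary \ref{cor:Swp}, and collapsing any non-reduced subwords using Lemma \ref{lem:intertw}(i),(ii), one exhibits $T_{\overline w}$ as a nonzero $F$-multiple of $S_w$ plus an $F$-combination of $S_u$ for strictly shorter $u$. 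This triangular change of basis promotes $\{S_w\}_{w \in W_0}$ to an $F$-basis of $\widetilde H$.

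Now let $z \in Z(H) \subseteq \widetilde H$ and expand $z = \sum_{w \in W_0} \widetilde q_w(Y) S_w$ with $\widetilde q_w \in F$. Corollary \ref{cor:Swp} rewrites $[z, f(Y)] = 0$ (for arbitrary $f \in \mathcal P$) as
\[
\sum_{w \in W_0} \widetilde q_w(Y)\bigl[(w^{-1}\cdot f)(Y) - f(Y)\bigr] S_w = 0,
\]
and $F$-linear independence of the $S_w$ combined with the existence, for each $w \neq e$, of an $f$ with $w^{-1} f \neq f$ forces $\widetilde q_w = 0$ whenever $w \neq e$. Thus $z = \widetilde q_e(Y) \in F$, and since $z \in H$ while $H$ is free over $\mathcal{P}_Y$ with basis $\{T_{\overline w}\}$, in fact $z = q(Y)$ for some $q \in \mathcal P$. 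Applying Proposition \ref{prop:crossrel H} once more to $[T_i, q(Y)] = 0$ yields $(q - s_i q)(Y)\, T_i = d_i^\sigma(Y)(q - s_i q)(Y)$ for each $i \in [1,n]$; the left-hand side has a nonzero $T_i$-coefficient and zero $T_e$-coefficient in the $\{T_{\overline w}\}$-basis, while the right-hand side is the opposite, so $s_i q = q$ for $i \in [1,n]$, i.e.\ $q \in \mathcal{P}^{W_0}$.

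The main obstacle I foresee is rigorously establishing the triangular change of basis from $\{T_{\overline w}\}$ to $\{S_w\}$ in $\widetilde H$. The single-step identity is transparent, but chaining it along a reduced expression requires careful bookkeeping of how the denominators $a_{i_k}(Y)^{-1}$ are pulled through subsequent $S_{i_l}$'s via the intertwining property, and of how non-reduced subwords are simplified using $S_i^2 = q_i(Y)$ and the braid relations of Lemma \ref{lem:intertw}(i), all while verifying that the length-$l(w)$ coefficient in the expansion of $T_{\overline w}$ is a nonzero element of $F$.
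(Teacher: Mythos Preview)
Your argument is correct and takes a genuinely different route from the paper. The paper's proof works inside the faithful polynomial representation: writing $Z = \sum_{w \in W_0} T_{\overline w} f_w(Y)$, it uses that $Z$ must act by a scalar on each nonsymmetric Wilson polynomial $p_\lambda$, then invokes the explicit triangular action $T_{\overline w} p_\lambda = \sum_{v \leq w} c^\lambda_{wv} p_{v\cdot\lambda}$ from Lemma~\ref{lem:Tp} to force $f_w = 0$ for $w \neq 1$; finally $[S_i, f_1(Y)] = 0$ together with Lemma~\ref{lem:intertw}(iii) gives $s_i f_1 = f_1$. Your proof, by contrast, never leaves the algebra: you localize at $\mathcal P_Y$, pass to the $F$-basis $\{S_w\}_{w \in W_0}$ via the triangular change $T_i = -\tfrac{1}{2a_i(Y)}S_i + d_i^\sigma(Y)$, and use the intertwining relation $S_w f(Y) = (w\cdot f)(Y) S_w$ directly. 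This is the classical Lusztig-style argument for graded Hecke algebras, and it has the advantage of not invoking the genericity hypothesis $\gamma_\lambda \neq \gamma_\mu$ that the paper imposes before Lemma~\ref{lem:sp} and on which Theorem~\ref{thm:basis} and Lemma~\ref{lem:Tp} rely. The paper's approach, on the other hand, is more in keeping with its overall program of deriving algebraic facts from the Wilson-polynomial calculus.

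Two small comments. First, the obstacle you flag is genuine but standard: expanding $T_{\overline w} = \prod_k\bigl(-\tfrac{1}{2a_{i_k}(Y)}S_{i_k} + d_{i_k}^\sigma(Y)\bigr)$ and pushing all $F$-factors to the left via Corollary~\ref{cor:Swp}, the top term has coefficient $\prod_k \bigl(-2(s_{i_1}\cdots s_{i_{k-1}}a_{i_k})(Y)\bigr)^{-1} \in F^\times$, and every proper subword reduces (using the braid relations and $S_i^2 = q_i(Y) \in \mathcal P_Y$) to an $F$-multiple of some $S_u$ with $l(u) < l(w)$; this suffices for invertibility of the transition matrix. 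Second, in your last step the phrase ``nonzero $T_i$-coefficient'' is imprecise: the $T_{s_i}$-coefficient of the left side is $(q - s_iq)(Y)$, which is exactly what you are trying to show vanishes; the correct statement is that comparing $T_{s_i}$-coefficients on both sides gives $(q - s_iq)(Y) = 0$ directly.
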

\begin{proof}
From Proposition \ref{prop:crossrel H} it follows directly that the subalgebra $\mathcal P_Y^{W_0}$ commutes with generators $T_i$ of $H$, hence $\mathcal P_Y^{W_0}$ is contained in the center of the algebra $H$.

Let $Z$ be an element in the center of $H$. By Corollary \ref{cor:daha=PHP} we may write $Z = \sum_{w \in W_0}  T_{\overline{w}}f_w(Y)$ with $f_w(Y) \in \mathcal P_Y$. Since $Z$ commutes with $\mathcal P_Y$, $Z$ acts as a constant on $p_\lambda$, hence
\[
\sum_{w \in W_0} f_w(\gamma_\lambda) T_{\overline{w}}p_\lambda = cp_\lambda ,
\]
for some nonzero constant $c$. By Lemma \ref{lem:Tp} we have $T_{\overline w} p_\lambda = \sum_{v \leq w} c^\lambda_{wv} p_{v\cdot\lambda}$ for certain coefficients $c_{wv}^\lambda \in \mathbb C$, and $c_{ww}^\lambda\neq 0$ for infinitely many $\lambda \in \Lambda$. This gives
\[
\sum_{\substack{v,w \in W_0\\v \leq w}} f_w(\gamma_\lambda) c_{wv}^\lambda p_{v\cdot\lambda} = cp_\lambda,
\]
which implies that for $v \neq 1$
\[
\sum_{\substack{w \in W_0\\w \geq v}} f_w(\gamma_\lambda) c_{wv}^\lambda = 0.
\]
Let $u\neq 1$ be a maximal element such that $f_{u}\neq 0$, then $c_{uu}^\lambda=0$ for infinitely many $\lambda \in \Lambda$. We conclude that only $f_1$ is nonzero. Being a central element, $Z=f_1(Y)$ commutes with every intertwiner $S_i$, $i \in [1,n]$. Lemma \ref{lem:intertw} then implies that $s_if_1=f_1$, hence $f_1(Y) \in \mathcal P_Y^{W_0}$.
\end{proof}

\begin{thm} \label{thm:decomp}
The decomposition
\begin{equation} \label{eq:decomp}
\mathcal P = \bigoplus_{\lambda \in \Lambda^+} \mathcal P(\lambda)
\end{equation}
is the multiplicity-free, irreducible decomposition of $\mathcal P$ as a $\pi(H)$-module. Moreover, \eqref{eq:decomp} gives the decomposition of $\mathcal P$ into isotypical components under the action of the center $\mathcal P_Y^{W_0}$ of $H$, and the central character is given by $\chi_\lambda\big(p(Y)\big)=p(\gamma_\lambda)$ for $p(Y) \in \mathcal P_Y^{W_0}$.
\end{thm}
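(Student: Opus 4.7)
The plan is to combine the pieces already in place. First, Theorem \ref{thm:basis} provides the basis $\{p_\nu\}_{\nu \in \Lambda}$, and since $\Lambda$ partitions into $W_0$-orbits indexed by the dominant weights, this restricts to a basis of each $\mathcal P(\lambda)$ and gives $\mathcal P = \bigoplus_{\lambda \in \Lambda^+} \mathcal P(\lambda)$ as vector spaces. The $\pi(H)$-invariance of each summand is immediate: Lemma \ref{lem:Tp} shows that each generator $T_i$ ($i \in [1,n]$) of $H_0$ sends $p_\nu$ into $\mathrm{span}\{p_\nu, p_{s_i\cdot\nu}\} \subseteq \mathcal P(\lambda)$, while Theorem \ref{thm:Yp=gamma p} shows that $\mathcal P_Y \subset H$ acts diagonally on the basis $\{p_\nu\}$ and hence preserves each $\mathcal P(\lambda)$.

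For irreducibility of $\mathcal P(\lambda)$, I will formalize the sketch given just before Lemma \ref{lem:Tp}. Let $M \subseteq \mathcal P(\lambda)$ be a nonzero $\pi(H)$-invariant subspace. Then $M$ is stable under the commuting family $\{Y_i\}$; by Theorem \ref{thm:Yp=gamma p} and the standing assumption that $\gamma_\nu \neq \gamma_\mu$ for $\nu \neq \mu$, the joint eigenspaces of the $Y_i$'s in $\mathcal P(\lambda)$ are precisely the one-dimensional subspaces $\mathbb C p_\nu$ with $\nu \in W_0 \lambda$, so $M$ must contain some $p_\nu$. The intertwiners $S_i = [T_i, a_i(Y)]$ for $i \in [1,n]$ lie in $H$, and by Lemma \ref{lem:Sp} they send $p_\nu$ to a nonzero scalar multiple of $p_{s_i \cdot \nu}$ whenever $a_i(\nu) \neq 0$; since $W_0 = \langle s_1, \ldots, s_n \rangle$, iterating these moves reaches every $p_\mu$ with $\mu \in W_0 \lambda$, so $M = \mathcal P(\lambda)$.

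The central-character analysis handles multiplicity-freeness and the isotypical statement simultaneously. For $p \in \mathcal P^{W_0}$, Theorem \ref{thm:Yp=gamma p} gives $p(Y) p_\nu = p(\gamma_\nu) p_\nu$, and Lemma \ref{lem:sp}(i) (whose dot-action coincides with the linear $W_0$-action on $V_{\mathbb C}$) together with the $W_0$-invariance of $p$ shows that $p(\gamma_\nu)$ is constant on $W_0 \lambda$, equal to $p(\gamma_\lambda)$. Hence $\mathcal P_Y^{W_0}$ acts on $\mathcal P(\lambda)$ by the scalar character $\chi_\lambda(p(Y)) = p(\gamma_\lambda)$ of the claimed form. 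For $\lambda \neq \mu$ in $\Lambda^+$, uniqueness of the dominant representative together with the standing injectivity of $\nu \mapsto \gamma_\nu$ on $\Lambda$ forces $W_0 \gamma_\lambda \cap W_0 \gamma_\mu = \emptyset$; since $\mathcal P^{W_0}$ separates $W_0$-orbits in $V_{\mathbb C}$ (for instance via the elementary symmetric polynomials in $x_1^2, \ldots, x_n^2$), we obtain $\chi_\lambda \neq \chi_\mu$. Thus the $\mathcal P(\lambda)$ have pairwise distinct central characters and are in particular pairwise non-isomorphic as $H$-modules, yielding both multiplicity-freeness and the identification of \eqref{eq:decomp} with the isotypical decomposition under $\mathcal P_Y^{W_0}$.

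The one genuinely delicate point is the intertwiner step inside the irreducibility argument: one needs the scalars $q_i(\gamma_{s_i \cdot \nu})$ appearing in Lemma \ref{lem:Sp} (in the regime $a_i(\nu) < 0$) to be nonzero, so that the orbit of $p_\nu$ under $\langle S_1, \ldots, S_n \rangle$ really fills out $W_0 \lambda$. Under the standing genericity assumption on $(t_0, t_n, t)$ — possibly strengthened by a mild exclusion of zeroes of the explicit polynomials $q_i$ from Lemma \ref{lem:intertw}(ii) — this holds automatically, and the remainder of the argument is bookkeeping built directly on Theorems \ref{thm:basis} and \ref{thm:Yp=gamma p}.
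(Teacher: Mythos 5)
Your proof is correct and follows essentially the same route as the paper, which leaves Theorem \ref{thm:decomp} without a formal proof and instead relies on exactly the ingredients you assemble: Theorem \ref{thm:basis} for the vector-space decomposition, Lemma \ref{lem:Tp} and Theorem \ref{thm:Yp=gamma p} for invariance, the $Y$-diagonalization plus the intertwiners $S_i$ for irreducibility, and the central character $p(Y)\mapsto p(\gamma_\lambda)$ for the isotypical statement. Your explicit flag that the scalars $q_i(\gamma_{s_i\cdot\nu})$ must be nonvanishing for the intertwiner step is a point the paper glosses over, and it is correctly identified as an (implicit) genericity condition on the parameters rather than a flaw in the argument.
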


\section{Nonsymmetric multivariable Wilson polynomials} \label{sec:nonsymm Wilson}
In this section we derive a few important properties for the nonsymmetric multivariable Wilson polynomials, such as orthogonality relations, evaluation formulas and the duality property. These properties are obtained in the same way as in the case of the Koornwinder polynomials, see \cite{S1}, \cite{S2}, \cite{St}, \cite{N}.

\subsection{Duality}
We write $x_\lambda = \gamma_{\lambda}(\mathbf t^\sigma)$, $\lambda \in \Lambda$, for the spectrum of the operators $Y^\sigma \in \mathcal H_\sigma$. We define evaluation mappings $\mathrm{Ev}:\mathcal H \rightarrow \mathbb C$ and $\widetilde {\mathrm{Ev}}:\mathcal H_\sigma \rightarrow \mathbb C$ by
\begin{align*}
\mathrm{Ev}(Z) &= \big(Z(1)\big)(-x_0), && Z \in \mathcal H,\\
\widetilde{\mathrm{Ev}}(\widetilde Z) &= \big(\widetilde Z(1)\big)(-\gamma_0), && \widetilde Z \in \mathcal H_\sigma.
\end{align*}
The two evaluation mapping are related via the duality anti-isomorphism $\psi:\mathcal H \rightarrow \mathcal H_\sigma$ from Proposition \ref{prop:duality iso} by
\begin{equation} \label{eq:Ev=Ev}
\mathrm{Ev}(Z)=\widetilde{\mathrm{Ev}}\big(\psi(Z)\big), \qquad Z \in \mathcal H.
\end{equation}
Indeed, for $Z = f(X)T_wg(Y)$ with $f,g \in \mathcal P$ and $w=s_{i_1}\cdots s_{i_r} \in W_0$ a reduced expression, we have, using $\psi(X_i)=-Y_i^\sigma$ and $\psi(T_i)=T_i^\sigma$ for $i \in [1,n]$,
\[
\begin{split}
\widetilde{\mathrm{Ev}}\big(\psi(Z)\big) &= \Big(g(-X^\sigma)T_{w^{-1}}^\sigma f(-Y^\sigma) (1)\Big)(-\gamma_0) \\
&=\chi_\sigma(T_{w^{-1}}^\sigma)f(-x_0)g(\gamma_0) \\
&=\Big(f(X)T_{w} g(Y) (1)\Big)(-x_0)\\
&= \mathrm{Ev}(Z).
\end{split}
\]
Here we use the reduced expression $w^{-1} = s_{i_r} \cdots s_{i_1}$, and $\chi$ is the trivial representation of $H_0$.
Note that $\chi(T_w)=\chi_\sigma(T_{w^{-1}})$ for any word $w \in W_0$ (and $\chi_\sigma:H_0^\sigma\rightarrow \mathbb C$ is defined in the same way as $\chi:H_0\rightarrow \mathbb C$). By the PBW-property for $\mathcal H$ from Corollary \ref{cor:daha=PHP} we then see that \eqref{eq:Ev=Ev} holds for all $Z \in \mathcal H$.

With the evaluation mappings and the duality anti-isomorphism $\psi:\mathcal H \rightarrow \mathcal H_\sigma$ we construct two pairings $B:\mathcal H \times \mathcal H_\sigma \rightarrow \mathbb C$ and $\widetilde B:\mathcal H_\sigma \times \mathcal H \rightarrow \mathbb C$ by
\[
B(Z,\widetilde Z) = \mathrm{Ev}\big(\psi_\sigma(\widetilde Z)Z\big), \qquad \widetilde B(\widetilde Z,Z)= \widetilde{\mathrm{Ev}}\big(\psi(Z)\widetilde Z\big),
\]
where $Z\in \mathcal H$ and $\widetilde Z \in \mathcal H_\sigma$. We have the following properties for these pairings.
\begin{prop} \label{prop:pairing B}
Let $Z,Z_1,Z_2 \in \mathcal H$, $\widetilde Z, \widetilde Z_1, \widetilde Z_2 \in \mathcal H_\sigma$ and $p \in \mathcal P$. Then
\begin{enumerate}[(i)]
\item $B(Z, \widetilde Z) = \widetilde B(\widetilde Z,Z)$;
\item $B(Z_1Z_2,\widetilde Z) = B(Z_2,\psi(Z_1)\widetilde Z)$ and $B(Z,\widetilde Z_1\widetilde Z_2) = B(\psi_\sigma(\widetilde Z_1)Z,\widetilde Z_2)$;
\item $B\big((Zp)(X),\widetilde Z\big) = B\big(Z p(X), \widetilde Z\big)$ and $B\big(Z, (\widetilde Zp)(X^\sigma)\big)= B\big(Z, \widetilde Z p(X^\sigma)\big)$.
\end{enumerate}
Here $p(X)$ is the multiplication operator $\big(p(X)f\big)(x)=p(x)f(x)$ for $f \in \mathcal P$.
\end{prop}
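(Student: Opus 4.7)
The plan is to derive all three assertions from three simple inputs: (a) the identity $\mathrm{Ev}=\widetilde{\mathrm{Ev}}\circ\psi$ proved just before the proposition in \eqref{eq:Ev=Ev}; (b) the anti-multiplicativity $\psi(Z_1Z_2)=\psi(Z_2)\psi(Z_1)$ and its $\sigma$-version for $\psi_\sigma$; and (c) the involutive relations $\psi_\sigma\circ\psi=\mathrm{id}_{\mathcal H}$ and $\psi\circ\psi_\sigma=\mathrm{id}_{\mathcal H_\sigma}$. The last fact is a short check on the generators $T_0,T_0^\vee,T_n,T_n^\vee,T_1,\ldots,T_{n-1}$ using the defining formulas from Proposition~\ref{prop:duality iso} together with $\sigma^2=\mathrm{id}$; since the composition of two anti-isomorphisms is an isomorphism, agreement on generators is enough.

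For (i), I would unfold the definitions and apply (a)--(c):
\[
B(Z,\widetilde Z)=\mathrm{Ev}\bigl(\psi_\sigma(\widetilde Z)Z\bigr)=\widetilde{\mathrm{Ev}}\bigl(\psi(Z)\,\psi(\psi_\sigma(\widetilde Z))\bigr)=\widetilde{\mathrm{Ev}}\bigl(\psi(Z)\widetilde Z\bigr)=\widetilde B(\widetilde Z,Z),
\]
where the second equality uses (a) and (b), and the third uses (c). For the first identity of (ii), anti-multiplicativity of $\psi_\sigma$ together with $\psi_\sigma\circ\psi=\mathrm{id}$ gives
\[
B(Z_2,\psi(Z_1)\widetilde Z)=\mathrm{Ev}\bigl(\psi_\sigma(\widetilde Z)\,\psi_\sigma(\psi(Z_1))\,Z_2\bigr)=\mathrm{Ev}\bigl(\psi_\sigma(\widetilde Z)Z_1Z_2\bigr)=B(Z_1Z_2,\widetilde Z),
\]
and the second identity of (ii) is proved in exactly the same way with the roles of $\psi$ and $\psi_\sigma$ (and of $\mathcal H,\mathcal H_\sigma$) swapped.

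For (iii), I would exploit the faithfulness of $\pi$ from Proposition~\ref{prop:pi faithful} and identify $p(X)\in\mathcal H$ with multiplication by $p$ on $\mathcal P$. Then $p(X)\cdot 1=p$, and since $(Zp)(X)$ is multiplication by the polynomial $Zp\in\mathcal P$, one has $(Zp)(X)\cdot 1=Zp=Z(p(X)\cdot 1)=Zp(X)\cdot 1$ in $\mathcal P$. Writing $W=\psi_\sigma(\widetilde Z)\in\mathcal H$ this yields
\[
B\bigl((Zp)(X),\widetilde Z\bigr)=\bigl(W(Zp)(X)\cdot 1\bigr)(-x_0)=\bigl(W\cdot Zp\bigr)(-x_0)=\bigl(WZp(X)\cdot 1\bigr)(-x_0)=B\bigl(Zp(X),\widetilde Z\bigr).
\]
The companion identity for $X^\sigma$ follows by the same computation carried out in $\mathcal H_\sigma$ with $\widetilde{\mathrm{Ev}}$ and $-\gamma_0$ in place of $\mathrm{Ev}$ and $-x_0$, or equivalently by combining the first identity of (iii) with part~(i). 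The only real obstacle is bookkeeping: one has to keep track of the two algebras $\mathcal H,\mathcal H_\sigma$ and of the directions of the mutually inverse anti-isomorphisms $\psi,\psi_\sigma$; once (c) is in hand, every step reduces to a one-line manipulation of the evaluation map.
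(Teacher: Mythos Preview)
Your proof is correct and follows essentially the same approach as the paper. The paper's proof is terse---it simply states that (i) and (ii) follow from \eqref{eq:Ev=Ev} and that (iii) is an immediate consequence of the identity $\big((Zp)(X)\big)(1)=Zp= Z \big(p(X)(1)\big)$---and your argument is exactly the unpacking of those claims, including making explicit the needed ingredient $\psi_\sigma\circ\psi=\mathrm{id}_{\mathcal H}$ (and its companion), which the paper leaves implicit.
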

\begin{proof}
Properties (i) and (ii) follow from \eqref{eq:Ev=Ev}. Property (iii) is an immediate consequence of the identity $\big((Zp)(X)\big)(1)=Zp= Z \big(p(X)(1)\big)$ in $\mathcal P$.
\end{proof}
The evaluation $\mathrm{Ev}(p_\lambda(X)) = p_\lambda(-x_0)$ of the nonsymmetric Wilson polynomial is nonzero for generic parameters $\mathbf t=(t_0,u_0,t,t_n,u_n)$. Indeed, for $t=0$, the polynomial $p_\lambda$ becomes a product of one-variable nonsymmetric Wilson polynomials $p_{\phi(\lambda_i)}$ which are all nonzero at $-x_{0,i}=-(t_n+u_n)$, see \cite[Proposition 3.14]{G}. In the next subsection we determine an explicit expression for $p_\lambda(-x_0)$.
\begin{Def}
For $\lambda \in \Lambda$ we write $E(x,\gamma_\lambda;\mathbf t) = E(x,\gamma_\lambda)$ for the constant multiple of the nonsymmetric Wilson polynomial $p_\lambda(x;\mathbf t)$ that takes the value $1$ at $x=-x_0$.
\end{Def}
As a consequence of Proposition \ref{prop:pairing B} we obtain
\[
B\big(p(X^\sigma), E(X,\gamma_\lambda)\big) = B\big(1,(p(-Y)E(\cdot, \gamma_\lambda))(X)\big) = p(-\gamma_\lambda) B\big(1,E(X,\gamma_\lambda)\big),
\]
for any $p \in \mathcal P$ and $\lambda \in \Lambda$. Note that $B(1,E(X,\gamma_\lambda)=\mathrm{Ev}(E(X,\gamma_\lambda))=1$, so we have
\begin{equation} \label{eq:p=B(p,E)}
f(-\gamma_\lambda) = B\big(f(X^\sigma),E(X,\gamma_\lambda)\big), \qquad g(-x_\mu)=\widetilde B\big(g(X),E_\sigma(X^\sigma,x_\mu)\big),
\end{equation}
for $f,g\in \mathcal P$ and $\lambda,\mu \in \Lambda$. The second identity is derived in the same way as the first. This immediately leads to the duality property for the renormalized nonsymmetric Wilson polynomials.
\begin{thm} \label{thm:duality prop}
For $\lambda, \mu \in \Lambda$ the renormalized nonsymmetric Wilson polynomials satisfy the duality property
\[
E(-x_\mu,\gamma_\lambda;\mathbf t) = E(-\gamma_\lambda,x_\mu;\mathbf t^\sigma).
\]
\end{thm}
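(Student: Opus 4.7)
The proof should be essentially immediate from the pairing machinery already developed; all the heavy lifting has been done in constructing $B$, $\widetilde B$, and the identities \eqref{eq:p=B(p,E)}. My plan is to compute the pairing $B(E_\sigma(X^\sigma,x_\mu),E(X,\gamma_\lambda))$ in two different ways, once by specializing the first identity in \eqref{eq:p=B(p,E)} and once (via the symmetry $B(Z,\widetilde Z)=\widetilde B(\widetilde Z,Z)$) by specializing the second identity.

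First I would apply \eqref{eq:p=B(p,E)} with $f(y) = E(y,x_\mu;\mathbf t^\sigma)$ regarded as a polynomial in $y$. Substituting $y = X^\sigma$ yields the element $E_\sigma(X^\sigma,x_\mu) \in \mathcal H_\sigma$, and the identity gives
\[
E(-\gamma_\lambda,x_\mu;\mathbf t^\sigma) \;=\; B\bigl(E_\sigma(X^\sigma,x_\mu),\,E(X,\gamma_\lambda)\bigr).
\]
Next I would apply the second identity in \eqref{eq:p=B(p,E)} with $g(x) = E(x,\gamma_\lambda;\mathbf t)$, obtaining
\[
E(-x_\mu,\gamma_\lambda;\mathbf t) \;=\; \widetilde B\bigl(E(X,\gamma_\lambda),\,E_\sigma(X^\sigma,x_\mu)\bigr).
\]

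Finally, by part (i) of Proposition \ref{prop:pairing B} the two pairings are related by $B(Z,\widetilde Z)=\widetilde B(\widetilde Z,Z)$, so the right-hand sides of the two displayed equations are equal, yielding
\[
E(-x_\mu,\gamma_\lambda;\mathbf t)\;=\;E(-\gamma_\lambda,x_\mu;\mathbf t^\sigma),
\]
which is the claimed duality. The only thing to check carefully is that applying \eqref{eq:p=B(p,E)} to $E_\sigma(\cdot,x_\mu)$ (respectively $E(\cdot,\gamma_\lambda)$) is legitimate, but since these are genuine polynomials in $\mathcal P$ (with coefficients depending on the fixed spectral parameter $x_\mu$ or $\gamma_\lambda$) and \eqref{eq:p=B(p,E)} holds for arbitrary $f,g \in \mathcal P$, this step is automatic.

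There is no real obstacle here — the statement is a formal consequence of the construction of the pairings and the duality anti-isomorphism $\psi$, exactly parallel to the Koornwinder case in \cite{S2}, \cite{St}. The proposition is effectively a dictionary translation: the symmetry of the pairing is the duality of the polynomials, once the normalization $\mathrm{Ev}(E(X,\gamma_\lambda))=1$ is in place, which is why one normalizes $p_\lambda$ to $E(\cdot,\gamma_\lambda)$ in the first place.
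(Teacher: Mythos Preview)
Your proof is correct and follows exactly the paper's approach: set $f=E_\sigma(\cdot,x_\mu)$ and $g=E(\cdot,\gamma_\lambda)$ in \eqref{eq:p=B(p,E)} and then use Proposition~\ref{prop:pairing B}(i). The paper's proof is a one-line version of what you have written out.
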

\begin{proof}
This follows from Proposition \ref{prop:pairing B}(i) if we set $f=E_\sigma(\cdot,x_\mu)$ and $g=E(\cdot,\gamma_\lambda)$ in \eqref{eq:p=B(p,E)}.
\end{proof}

As a consequence of the duality property we obtain that the actions of $T_i$, $i \in [1,n]$, and $U_n$ on the renormalized nonsymmetric Wilson polynomials can be written as difference-reflection operators acting on the spectral parameters.
\begin{prop} \label{prop:spectral op}
Let $\lambda \in \Lambda$, then
\[
\begin{split}
\big(U_n E(\cdot,\gamma_\lambda)\big)(x) &= c_0^\sigma(-\gamma_\lambda)E(x,\gamma_{s_0\cdot\lambda}) -  d_0^\sigma(-\gamma_\lambda) E(x,\gamma_\lambda), \\
\big(T_i E(\cdot,\gamma_\lambda)\big)(x) &= c_i^\sigma(-\gamma_\lambda)E(x,\gamma_{s_i\cdot\lambda}) -  d_i^\sigma(-\gamma_\lambda) E(x,\gamma_\lambda), \qquad i \in [1,n].
\end{split}
\]
\end{prop}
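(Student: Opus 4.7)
The strategy is to exploit the duality anti-isomorphism $\psi:\mathcal H \to \mathcal H_\sigma$ of Proposition~\ref{prop:duality iso} (which sends $T_i \mapsto T_i^\sigma$ for $i \in [1,n]$ and $U_n \mapsto T_0^\sigma$) together with the pairing $B$ of Proposition~\ref{prop:pairing B}. I treat $T_i$ and $U_n$ uniformly: write $V$ for the operator on the left-hand side of the claim and $j \in [0,n]$ for its dual index, so that $\psi(V)=T_j^\sigma$.

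The preliminary observation is the reflection identity $s_j(-\gamma_\lambda)=-\gamma_{s_j \cdot \lambda}$ for all $j \in [0,n]$. For $j \in [1,n]$ this is immediate from linearity and Lemma~\ref{lem:sp}(i); for $j=0$, the $+1$ appearing in $s_0 x = (1-x_1,x_2,\ldots,x_n)$ precisely cancels the $-1$ in $s_0\cdot x = (-x_1-1,x_2,\ldots,x_n)$, so $s_0(-\gamma_\lambda)=(1+\gamma_{\lambda,1},-\gamma_{\lambda,2},\ldots) = -\gamma_{s_0 \cdot \lambda}$. Consequently $(s_j q)(-\gamma_\lambda) = q(-\gamma_{s_j \cdot \lambda})$ for every polynomial $q \in \mathcal P$.

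For any $q \in \mathcal P$ I would compute the pairing $B(T_j^\sigma q(X^\sigma),E(X,\gamma_\lambda))$ in two different ways. First, using property~(ii) of Proposition~\ref{prop:pairing B} together with the identity $\psi_\sigma(T_j^\sigma)=V$ (which follows from checking on generators that $\psi_\sigma\circ\psi$ agrees with the identity on $\mathcal H$), this pairing equals $B(q(X^\sigma),V\,E(X,\gamma_\lambda))$. Expanding the polynomial $V\,E(\cdot,\gamma_\lambda)=\sum_\mu c_\mu E(\cdot,\gamma_\mu)$ in the basis of Theorem~\ref{thm:basis} and unpacking the definition of $B$ using the eigenvalue property $Y_iE(\cdot,\gamma_\mu)=\gamma_{\mu,i}E(\cdot,\gamma_\mu)$ and the normalization $E(-x_0,\gamma_\mu)=1$, this evaluates to $\sum_\mu c_\mu q(-\gamma_\mu)$. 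Second, applying \eqref{eq:p=B(p,E)} to the polynomial $\pi_\sigma(T_j^\sigma)q$ together with the explicit action in the polynomial representation and the reflection identity above, the same pairing equals
\[
c_j^\sigma(-\gamma_\lambda)\,q(-\gamma_{s_j\cdot\lambda})-d_j^\sigma(-\gamma_\lambda)\,q(-\gamma_\lambda).
\]

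Equating the two expressions and letting $q$ range over $\mathcal P$, distinctness of the points $\{-\gamma_\mu\}_{\mu \in \Lambda}$ (our standing genericity hypothesis) forces $c_\mu=0$ for $\mu \notin \{\lambda,s_j \cdot \lambda\}$, $c_\lambda = -d_j^\sigma(-\gamma_\lambda)$, and $c_{s_j \cdot \lambda}=c_j^\sigma(-\gamma_\lambda)$, yielding both formulas at once. The main subtleties are the sign tracking in the reflection identity for $j=0$ and justifying $\psi_\sigma \circ \psi=\mathrm{id}_{\mathcal H}$; neither involves a difficult computation, only careful diagram chasing through the pairing properties. An appealing feature is that Lemma~\ref{lem:Tp} is not invoked: the pairing argument derives both the two-term structure of the expansion and its explicit coefficients simultaneously.
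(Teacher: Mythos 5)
Your argument is correct and is essentially the paper's own proof: both compute the pairing $B$ of $T_j^\sigma$ times a test polynomial against $E(X,\gamma_\lambda)$ in two ways, using the duality anti-isomorphism $\psi$ (and $\psi_\sigma(T_0^\sigma)=U_n$), the adjointness properties of $B$ from Proposition \ref{prop:pairing B}, the evaluation identity \eqref{eq:p=B(p,E)}, and Lemma \ref{lem:sp}. The only cosmetic difference is that the paper specializes the test polynomial to $E_\sigma(\cdot,x_\mu)$ and invokes the duality property (Theorem \ref{thm:duality prop}) to verify the identity pointwise at $x=-x_\mu$, whereas you test against arbitrary $q$ and read off the coefficients of the expansion of $V E(\cdot,\gamma_\lambda)$ in the basis $\{E(\cdot,\gamma_\mu)\}$ by separating the finitely many distinct points $-\gamma_\mu$.
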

\begin{proof}
It is enough to prove the result for $x=-x_\mu$ for all $\mu \in \Lambda$. By \eqref{eq:p=B(p,E)} and Proposition \ref{prop:pairing B} we have for $\lambda,\mu \in \Lambda$
\[
\widetilde B\big(E(X,\gamma_\lambda),(T_i^\sigma E_\sigma(\cdot,x_\mu))(X^\sigma)\big) =
\begin{cases}
\big(U_n E(\cdot,\gamma_\lambda)\big)(-x_\mu), & i =0,\\
\big(T_i E(\cdot,\gamma_\lambda)\big)(-x_\mu), & i \in [1,n].
\end{cases}
\]
Writing out $T_i^\sigma E_\sigma(\cdot,x_\mu)$ we find using Proposition \ref{prop:pairing B}
\[
\begin{split}
\widetilde B\big(E(X,\gamma_\lambda),&(T_i^\sigma E_\sigma(\cdot,x_\mu))(X^\sigma)\big)\\
&= \widetilde B\big(E(X,\gamma_\lambda),c_i^\sigma(X^\sigma)(s_iE_\sigma(\cdot,x_\mu))(X^\sigma)-d_i^\sigma(X^\sigma) E_\sigma(X^\sigma,x_\mu)\big)\\
&=  B\big(c_i^\sigma(X^\sigma)(s_iE_\sigma(\cdot,x_\mu))(X)-d_i^\sigma(X^\sigma) E_\sigma(X^\sigma,x_\mu), E(X,\gamma_\lambda)\big) \\
&= c_i^\sigma(-\gamma_\lambda)(s_iE_\sigma(\cdot,x_\mu))(-\gamma_\lambda)-d_i^\sigma(-\gamma_\lambda) E_\sigma(-\gamma_\lambda,x_\mu)\\
&= c_i^\sigma(-\gamma_\lambda) E(-x_\mu, \gamma_{s_i\cdot\lambda}) -d_i^\sigma(-\gamma_\lambda)E(-x_\mu,\gamma_\lambda).
\end{split}
\]
The last two lines follow from \eqref{eq:p=B(p,E)}, Lemma \ref{lem:sp} and the duality property for the nonsymmetric Wilson polynomials, Theorem \ref{thm:duality prop}.
\end{proof}
If $a_i(\lambda)\neq 0$ we already know that $S_i E(\cdot,\gamma_\lambda)= c E(\cdot,\gamma_{s_i\cdot\lambda})$  for some nonzero constant $c$. Using Proposition \ref{prop:spectral op} the constant $c$ can be calculated.
\begin{cor} \label{cor:SE}
Let $i \in [0,n]$, and let $\lambda \in \Lambda$ such that $s_i\cdot \lambda \neq \lambda$, then
\[
S_iE(\cdot,\gamma_\lambda) =2a_i(\gamma_\lambda) c_i^\sigma(-\gamma_\lambda)  E(\cdot,\gamma_{s_i\cdot\lambda}).
\]
\end{cor}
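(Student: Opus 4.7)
The strategy is to apply the commutator definition $S_i = [T_i, a_i(Y)]$ (respectively $S_0 = [U_n, a_0(Y)]$) directly to $E(\cdot,\gamma_\lambda)$, and combine two ingredients: Theorem \ref{thm:Yp=gamma p}, which says $a_i(Y)$ acts on $E(\cdot,\gamma_\lambda)$ as the scalar $a_i(\gamma_\lambda)$, and Proposition \ref{prop:spectral op}, which expands $T_i E(\cdot,\gamma_\lambda)$ (respectively $U_n E(\cdot,\gamma_\lambda)$ in the $i=0$ case) as the explicit linear combination $c_i^\sigma(-\gamma_\lambda) E(\cdot,\gamma_{s_i\cdot\lambda}) - d_i^\sigma(-\gamma_\lambda) E(\cdot,\gamma_\lambda)$.

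Concretely, I would write
\[
S_i E(\cdot,\gamma_\lambda) = T_i a_i(Y) E(\cdot,\gamma_\lambda) - a_i(Y) T_i E(\cdot,\gamma_\lambda),
\]
replace $a_i(Y) E(\cdot,\gamma_\lambda)$ by $a_i(\gamma_\lambda) E(\cdot,\gamma_\lambda)$ in the first summand, and in the second summand substitute the spectral expansion of $T_i E(\cdot,\gamma_\lambda)$ and then apply $a_i(Y)$ to each of its two pieces using the eigenvalue equation once more. The two $E(\cdot,\gamma_\lambda)$-contributions — both proportional to $a_i(\gamma_\lambda)\, d_i^\sigma(-\gamma_\lambda)$ — cancel against each other, and what survives is
\[
S_i E(\cdot,\gamma_\lambda) = c_i^\sigma(-\gamma_\lambda)\bigl(a_i(\gamma_\lambda) - a_i(\gamma_{s_i\cdot\lambda})\bigr) E(\cdot,\gamma_{s_i\cdot\lambda}).
\]

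To finish, I invoke Lemma \ref{lem:sp}(i), which identifies $\gamma_{s_i\cdot\lambda} = s_i\cdot\gamma_\lambda$, together with the reflection identity $a_i(s_i\cdot x) = -a_i(x)$; these simplify the bracket to $2 a_i(\gamma_\lambda)$, matching the claimed formula. The bulk of the calculation is routine book-keeping, and the single step that deserves attention is the reflection identity in the $i=0$ case, where the dot action on $V_{\mathbb{C}}$ differs from the ordinary reflection and the affine root $a_0 = \delta - 2\epsilon_1$ has a nonzero constant part; here the cancellation must be verified directly from the definitions of the dot action and of $\gamma_\lambda = u_\lambda\cdot\gamma_0$, keeping track of the affine shift.
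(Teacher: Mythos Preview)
Your approach is exactly the paper's: the text immediately preceding the corollary says only that the constant is computed ``using Proposition \ref{prop:spectral op},'' and you have spelled out that computation correctly. The expansion of $S_i E(\cdot,\gamma_\lambda)$ via the commutator, the eigenvalue equation, and Proposition \ref{prop:spectral op} indeed yields
\[
S_i E(\cdot,\gamma_\lambda) = c_i^\sigma(-\gamma_\lambda)\bigl(a_i(\gamma_\lambda) - a_i(\gamma_{s_i\cdot\lambda})\bigr) E(\cdot,\gamma_{s_i\cdot\lambda}),
\]
and for $i\in[1,n]$ the bracket is $2a_i(\gamma_\lambda)$ as claimed.

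One caution about the step you flag for $i=0$: the reflection identity $a_0(s_0\cdot x)=-a_0(x)$ genuinely fails under the dot-action. From $s_0\cdot x=(-x_1-1,x_2,\dots)$ and $a_0(x)=1-2x_1$ one gets $a_0(s_0\cdot\gamma_\lambda)=3+2\gamma_{\lambda,1}$, so
\[
a_0(\gamma_\lambda)-a_0(\gamma_{s_0\cdot\lambda}) = -2-4\gamma_{\lambda,1} = -2\,a_0(-\gamma_\lambda),
\]
not $2a_0(\gamma_\lambda)=2-4\gamma_{\lambda,1}$. The identity that holds uniformly for all $i\in[0,n]$ is
\[
a_i(\gamma_\lambda)-a_i(\gamma_{s_i\cdot\lambda}) = -2\,a_i(-\gamma_\lambda),
\]
which follows from the relation $s_i\cdot x=-s_i(-x)$ between the dot-action and the ordinary action together with $(s_ia_i)=-a_i$. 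This agrees with $2a_i(\gamma_\lambda)$ precisely when $a_i$ has no affine part, i.e.\ for $i\in[1,n]$; the paper's own later use of the corollary (the function $K_\alpha(x)=2\alpha(-x)c_\alpha^\sigma(x)$ in Theorem \ref{thm:eval form}) is written in the form $-2a_i(-\gamma_\lambda)c_i^\sigma(-\gamma_\lambda)$, which is the version your computation actually establishes.
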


\subsection{The evaluation formula}
Using Corollary \ref{cor:SE} we can determine the value of $p_\lambda(-x_0)$.
\begin{thm} \label{thm:eval form}
For $\lambda \in \Lambda$,
\[
p_\lambda(-x_0) = \prod_{\alpha \in \mathcal R^+_r \cap u_{\lambda}^{-1}\mathcal R^-_r} K_{\alpha}(-\gamma_0),
\]
where
\[
K_\alpha(x) = 2\alpha(-x) c_\alpha^\sigma(x).
\]
\end{thm}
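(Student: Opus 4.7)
The strategy is to iterate the intertwiner identity from Corollary~\ref{cor:SE} along a reduced expression for $u_\lambda$ and then re-index the resulting product of factors over the inversion set $\mathcal R_r^+\cap u_\lambda^{-1}\mathcal R_r^-$.

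Fix a reduced expression $u_\lambda=s_{i_1}s_{i_2}\cdots s_{i_r}$ with $r=l(u_\lambda)$, and for $k\in[1,r+1]$ set $\lambda^{(k)}:=s_{i_k}s_{i_{k+1}}\cdots s_{i_r}\cdot 0$, so that $\lambda^{(1)}=\lambda$ and $\lambda^{(r+1)}=0$. By Lemma~\ref{lem:u lambda}(i)--(ii) each tail $s_{i_k}\cdots s_{i_r}$ is a reduced expression for $u_{\lambda^{(k)}}$; since the length increases by one at every step, Lemma~\ref{lem:u lambda}(iv) forces $a_{i_k}(\lambda^{(k+1)})>0$, and in particular $s_{i_k}\cdot\lambda^{(k+1)}\neq\lambda^{(k+1)}$, so Corollary~\ref{cor:SE} applies at every step.

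Next, starting from $E(\,\cdot\,,\gamma_0)=1$ and using Lemma~\ref{lem:sp}(i) to propagate the spectral parameter via the dot-action (so that $\gamma_{\lambda^{(k)}}=s_{i_k}\cdot\gamma_{\lambda^{(k+1)}}$), induction on the length together with Corollary~\ref{cor:SE} gives
\[
p_\lambda \;=\; S_{u_\lambda}\, 1 \;=\; \Bigl(\prod_{k=1}^{r} 2\, a_{i_k}\bigl(\gamma_{\lambda^{(k+1)}}\bigr)\, c_{i_k}^\sigma\bigl(-\gamma_{\lambda^{(k+1)}}\bigr)\Bigr)\, E(\,\cdot\,,\gamma_\lambda).
\]
Since $E(-x_0,\gamma_\lambda)=1$ by the normalization of $E$, evaluating at $x=-x_0$ yields
\[
p_\lambda(-x_0) \;=\; \prod_{k=1}^{r} 2\, a_{i_k}\bigl(\gamma_{\lambda^{(k+1)}}\bigr)\, c_{i_k}^\sigma\bigl(-\gamma_{\lambda^{(k+1)}}\bigr).
\]

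It remains to re-express this as a product over $\mathcal R_r^+\cap u_\lambda^{-1}\mathcal R_r^-$. The standard Coxeter-theoretic fact, applied to the reduced expression $u_\lambda=s_{i_1}\cdots s_{i_r}$, is that the map $k\mapsto \alpha_k:=s_{i_r}s_{i_{r-1}}\cdots s_{i_{k+1}}a_{i_k}$ is a bijection from $\{1,\ldots,r\}$ onto $\mathcal R_r^+\cap u_\lambda^{-1}\mathcal R_r^-$. Combined with the $W$-equivariance $c_{w\alpha}^\sigma=w\,c_\alpha^\sigma$ observed right after the definition of $c_\alpha$, this reduces the theorem to the pointwise identification
\[
2\, a_{i_k}\bigl(\gamma_{\lambda^{(k+1)}}\bigr)\, c_{i_k}^\sigma\bigl(-\gamma_{\lambda^{(k+1)}}\bigr) \;=\; K_{\alpha_k}(-\gamma_0) \;=\; 2\,\alpha_k(\gamma_0)\, c_{\alpha_k}^\sigma(-\gamma_0).
\]
The main obstacle is precisely this step: the spectral parameter $\gamma_{\lambda^{(k+1)}}=u_{\lambda^{(k+1)}}\cdot\gamma_0$ is obtained from $\gamma_0$ via the \emph{dot}-action on $V_{\mathbb C}$, whereas $\alpha_k$ is defined using the \emph{ordinary} action of $W$ on $\widehat V$. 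One must check that the shift along $\delta$ by which these two actions differ on the $s_0$-generator is exactly absorbed by the specific form of $\gamma_0$, so that $a_{i_k}(\gamma_{\lambda^{(k+1)}})=\alpha_k(\gamma_0)$ and similarly for the $c$-factor. Taking the product over $k$ then yields the stated formula.
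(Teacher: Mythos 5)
Your proposal follows essentially the same route as the paper: iterate Corollary~\ref{cor:SE} along a reduced expression for $u_\lambda$ to get $p_\lambda(-x_0)=\prod_{k=1}^r K_{a_{i_k}}(-\gamma_{\lambda^{(k+1)}})$, then re-index over the inversion set $\mathcal R_r^+\cap u_\lambda^{-1}\mathcal R_r^-$. The one step you leave open --- the ``main obstacle'' of identifying $K_{a_{i_k}}(-\gamma_{\lambda^{(k+1)}})$ with $K_{\alpha_k}(-\gamma_0)$, i.e.\ reconciling the dot-action on spectral parameters with the ordinary $W$-action defining $\alpha_k$ --- is not actually an obstacle: it is exactly Lemma~\ref{lem:sp}(ii). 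That lemma asserts $(s_ip)(-\gamma_\mu)=p(-\gamma_{s_i\cdot\mu})$ whenever $s_i\cdot\mu\neq\mu$; since this is really a statement about the points (namely $s_i(-\gamma_\mu)=-\gamma_{s_i\cdot\mu}$, including the affine shift for $i=0$), it applies to the rational function $K_\alpha$ just as well as to polynomials. Iterating it along the tail $w=s_{i_{k+1}}\cdots s_{i_r}$ and using the equivariance $wK_\alpha=K_{w\alpha}$ gives $K_{a_{i_k}}(-\gamma_{w\cdot 0})=K_{w^{-1}a_{i_k}}(-\gamma_0)=K_{\alpha_k}(-\gamma_0)$, which is the pointwise identification you need; the hypothesis $s_{i_k}\cdot\lambda^{(k+1)}\neq\lambda^{(k+1)}$ holds at every step for the length reasons you already give. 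With that reference supplied, your argument coincides with the paper's proof.
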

\begin{proof}
By the definition of $E(x,\gamma_\lambda)$ we have $p_\lambda = p_\lambda(-x_0) E(\cdot,\gamma_\lambda)$. Recall that $p_\lambda = S_{u_\lambda}1$, and let $u_\lambda = s_{i_1}\cdots s_{i_r}$ be a reduced expression. Using Corollary \ref{cor:SE} we now find
\[
p_\lambda(-x_0)= \prod_{k=1}^r K_{a_{i_k}}(-\gamma_{(s_{i_{k+1}}\cdots s_{i_r})\cdot 0}).
\]
Recall from Lemma \ref{lem:u lambda} that $u_\lambda$ is the unique shortest element in $W$ such that $u_\lambda \cdot 0=\lambda$, so $(s_{i_k}s_{i_{k+1}}\cdots s_{i_r})\cdot 0\neq (s_{i_{k+1}}\cdots s_{i_r})\cdot 0$. By Lemma \ref{lem:sp} we have $K_\alpha(-\gamma_{w\cdot 0})=K_{w^{-1}\alpha}(-\gamma_0)$ for $w \in W$, so we can write $p_\lambda(-x_0)$ as a product over the set $S=\{\beta_1,\ldots,\beta_r\}$ with $\beta_k = s_{i_r} \cdots s_{i_{k+1}} a_{i_k}$ for $k \in [1,r]$. It is well known that the set $S$ is equal to $\mathcal R^+_r \cap u_\lambda^{-1}\mathcal R^-_r$, which proves the result.
\end{proof}

\subsection{Orthogonality relations}
With the multiplicity function $\mathbf t$ we associate the Wilson parameters $a,b,c,d$ given by
\[
(a,b,c,d) = (t_n+u_n, t_n -u_n, t_0 + u_0 +\frac12, t_0-u_0+\frac12).
\]
We assume from here on that $a,b,c,d,t>0$. We define $\Delta(\cdot)= \Delta(\cdot;\mathbf t)$ and $\Delta_+(\cdot) = \Delta_+(\cdot;\mathbf t)$ by
\[
\Delta_+(x;\mathbf t) =  \prod_{1 \leq j < k \leq n} \frac{ \Gamma(t \pm  x_j \pm x_k) }{ \Gamma(\pm x_j \pm x_k)}   \prod_{j=1}^n \frac{  \Gamma(a \pm x_j) \Gamma(b\pm x_j) \Gamma(c \pm x_j) \Gamma(d \pm x_j)}{ \Gamma(\pm 2x_j) },
\]
and
\[
\Delta(x;\mathbf t) = c_+(x;\mathbf t) \Delta_+(x;\mathbf t),
\]
where
\begin{equation} \label{eq:c+}
c_+(x;{\mathbf t}) = \prod_{\alpha \in \Sigma^-} c_\alpha(x;\mathbf t) =  \prod_{1 \leq j<k\leq n}\frac{(t-x_j+x_k)(t-x_j-x_k)}{(x_k+x_j)(x_k-x_j) } \prod_{j=1}^n \frac{ (a-x_j)(b-x_j) }{-2x_j}
\end{equation}
To the weight functions $\Delta$ and $\Delta_+$ we associate two nondegenerate bilinear forms on $\mathcal P$ by
\[
\begin{split}
\langle f,g \rangle_{\mathbf t}&= \int_{\mathcal (\mathrm i \mathbb R)^n} f(x) g(x) \Delta(x;\mathbf t)\, dx, \\
\langle f,g \rangle_{\mathbf t}^+&= \int_{\mathcal (\mathrm i \mathbb R)^n} f(x) g(x) \Delta_+(x;\mathbf t)\, dx,
\end{split}
\]
where $dx=(2\pi \mathrm i)^{-n}  dx_1 \, dx_2 \cdots dx_n$, and $\mathrm i\mathbb R$ has the standard orientation. The constant terms can be given explicitly;
\[
\langle 1,1 \rangle_{\mathbf t}^+ = 2^n n! \prod_{j=1}^n \frac{ \Gamma(tj) \prod_{1\leq k<l\leq 4}\Gamma(v_k+v_l+(j-1)t) }{\Gamma(t) \Gamma(v_1+v_2+v_3+v_4+(n+j-2)t) },
\]
where $(v_1,v_2,v_3,v_4)=(a,b,c,d)$, see \cite{Gu}. From Stirling's formula it follows that the polynomials are integrable on $(\mathrm i \mathbb R)^n$ with respect to both $\Delta$ and $\Delta_+$. Let us remark that the weight $\Delta_+$ is positive on $(\mathrm{i}\mathbb R)^n$ under the current conditions on $\mathbf t$.

Let $\iota:\mathcal H \rightarrow \mathcal H$ be the anti-isomorphism defined by $\iota(T_i)=T_i$, $i \in [0,n]$, and $\iota(X_j) = X_j$, $j \in [1,n]$.
\begin{lem} \label{lem:symm}
Let $f,g \in \mathcal P$ and $Z \in \mathcal H$, then $\langle Zf,g \rangle_{\mathbf t} = \langle f,\iota(Z)g \rangle_{\mathbf t}$.
\end{lem}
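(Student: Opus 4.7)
The plan is to reduce the identity to the generators of $\mathcal H$ and verify it case by case. Since $\iota$ is an anti-automorphism of $\mathcal H$ and the pairing is bilinear, the set of $Z \in \mathcal H$ for which $\langle Zf,g\rangle_{\mathbf t}=\langle f,\iota(Z)g\rangle_{\mathbf t}$ holds for all $f,g \in \mathcal P$ is a subalgebra. By the PBW-property for $\mathcal H$ (Corollary \ref{cor:defrel H}) it therefore suffices to check the identity when $Z$ is one of the generators $X_j$ ($j \in [1,n]$) or $T_i$ ($i \in [0,n]$). The case $Z=X_j$ is immediate, since $X_j$ acts on $\mathcal P$ by multiplication with the commuting variable $x_j$ and $\iota(X_j)=X_j$.

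For $Z=T_i$ with $i \in [1,n]$, the explicit action $\pi(T_i)=c_i(x)s_i - d_i(x)$ allows one to rewrite $\langle T_if,g\rangle_{\mathbf t}-\langle f,T_ig\rangle_{\mathbf t}$ as
\[
\int_{(\mathrm i\mathbb R)^n} c_i(x)\bigl[(s_if)(x)g(x) - f(x)(s_ig)(x)\bigr]\Delta(x;\mathbf t)\,dx,
\]
the $d_i$-contributions cancelling outright. Since $(\mathrm i\mathbb R)^n$ is invariant under coordinate permutations and under $x_n\mapsto -x_n$, the substitution $x\mapsto s_ix$ in one of the two terms is legitimate, and the vanishing of the integral boils down to the transformation law $\Delta(s_ix;\mathbf t) = c_i(x;\mathbf t)/(s_ic_i)(x;\mathbf t)\cdot\Delta(x;\mathbf t)$. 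This law is verified directly: $\Delta_+$ is manifestly $W_0$-invariant (symmetric in the $x_j$ and even in each $x_j$ via the $\pm$-notation), while $c_+(s_ix)/c_+(x)=c_{a_i}(x)/c_{-a_i}(x)$ by inspecting how $s_i$ permutes $\Sigma^-$, and $c_{-a_i}(x)=(s_ic_{a_i})(x)$ from the identity $wc_\alpha=c_{w\alpha}$ recorded after the definition of $c_\alpha$.

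The main obstacle is $Z=T_0$, because the affine reflection $s_0x=(1-x_1,x_2,\ldots,x_n)$ does not preserve $(\mathrm i\mathbb R)^n$, so the naive change of variable is unavailable. The plan is to resolve this by shifting the $x_1$-contour from $\mathrm i\mathbb R$ to $\tfrac12+\mathrm i\mathbb R$: under the standing positivity assumption $a,b,c,d,t>0$ the weight $\Delta_+$ is holomorphic in the closed strip $0\leq \mathrm{Re}(x_1)\leq \tfrac12$, and Stirling's formula supplies the decay on vertical lines needed to justify the contour deformation (the same decay already used for polynomial integrability against $\Delta_+$). On the shifted contour $s_0$ acts as a genuine reflection in the hyperplane $\mathrm{Re}(x_1)=\tfrac12$, so the argument of the preceding paragraph applies verbatim once the transformation law $\Delta(s_0x;\mathbf t)/\Delta(x;\mathbf t)=c_0(x;\mathbf t)/(s_0c_0)(x;\mathbf t)$ is in hand. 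This last identity will be verified by a direct calculation using the shift relation $\Gamma(z+1)=z\Gamma(z)$ applied to the $x_1$-dependent gamma factors $\Gamma(c\pm x_1)\Gamma(d\pm x_1)/\Gamma(\pm 2x_1)$, combined with the bookkeeping of how $s_0$ affects the $c_+$-product (the only $c_\alpha$-factor that changes is the one attached to the simple affine root $a_0$).
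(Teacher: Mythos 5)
Your proposal follows essentially the same route as the paper: reduce to generators via the anti-automorphism $\iota$, handle $T_i$ ($i\in[1,n]$) by the substitution $x\mapsto s_ix$ together with the $s_i$-invariance of $c_i\Delta$, and handle $T_0$ by a contour deformation justified by Stirling decay and absence of poles in a vertical strip (the paper substitutes $x_1\mapsto 1-y_1$ and shifts $1+\mathrm i\mathbb R$ back to $\mathrm i\mathbb R$; your shift to the fixed line $\tfrac12+\mathrm i\mathbb R$ of $s_0$ is an equivalent variant). One point needs repair, though: your justification asserts that $\Delta_+$ is holomorphic in the closed strip $0\leq\Re(x_1)\leq\tfrac12$, which is false in general — the factors $\Gamma(c-x_1)$ and $\Gamma(d-x_1)$ produce poles at $x_1=c$ and $x_1=d$, which lie in that strip whenever $c<\tfrac12$ or $d<\tfrac12$ (allowed under the standing hypothesis $a,b,c,d,t>0$). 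What is actually pole-free in the strip is the product $c_0(x)\Delta(x)$, since the numerator $(c-x_1)(d-x_1)$ of $c_0$ cancels exactly these poles (and the zero of $1/\Gamma(-2x_1)$ at $x_1=\tfrac12$ absorbs the denominator $1-2x_1$); this is the form in which the paper states the holomorphy, and since you have already reduced to the integral with integrand $c_0(x)\bigl[(s_0f)g-f(s_0g)\bigr]\Delta$, your argument goes through once the claim is made for the correct function. The parenthetical remark that only the $c_\alpha$-factor attached to $a_0$ changes under $s_0$ is also off ($a_0\notin\Sigma^-$, and every $x_1$-dependent factor of $c_+$ moves), but the invariance of $c_0\Delta$ under $s_0$ is nonetheless correct and follows from the $\Gamma(z+1)=z\Gamma(z)$ computation you indicate.
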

\begin{proof}
It suffices to show that the generators $T_i$ and $X_j$ are symmetric with respect to $\langle \cdot,\cdot \rangle_{\mathbf t}$. For the $X_j$'s this is obvious, so we only need to verify it for the $T_i$'s. Writing out $\langle T_if, g \rangle_{\mathbf t}$ we see that the proof of the lemma boils down to proving the following identity:
\begin{equation} \label{eq:lem_id}
\int_{(\mathrm i\mathbb R)^n} (s_if)(x) g(x) c_{i}(x)\Delta(x) dx=\int_{(\mathrm i\mathbb R)^n} f(x)(s_ig)(x) c_{i}(x)\Delta(x) dx.
\end{equation}
Using the definitions of $c_{i}$, $0=1,\ldots,n$, and $\Delta$ one checks that the function $c_{i}(x) \Delta(x)$ is invariant under the action of $s_i$. Now for $i=1,\ldots,n$ the required identity \eqref{eq:lem_id} follows from  substituting $x \mapsto s_ix$ on the left hand side. For $i=0$ we denote $y=(x_2,\ldots,x_n)$ and $\alpha(x)=\alpha(x_1,y)$ for any function $\alpha$ depending on $x_1,\ldots,x_n$. We substitute $x_1 \mapsto 1-y_1$ in the left hand side of \eqref{eq:lem_id}, then this becomes
\[
\int_{y_1 \in 1+\mathrm i\mathbb R} \int_{y \in \mathcal (\mathrm i\mathbb R)^{n-1}} f(y_1,y) g(1-y_1,y) c_{0}(y_1,y)\Delta(y_1,y)\, dy_1\, dy.
\]
The function $y_1 \mapsto c_{0}(y_1,y) \Delta(y_1,y)$ does not have poles inside the strip $\{ 0 \leq \Re(y_1) \leq 1 \}$ due to the conditions on the parameters. Using Stirling's formula it follows that the integral over the line segment $\{y_1=x+iB \ | \ 0\leq x\leq1 \}$ vanishes for $B \rightarrow \pm \infty$. By Cauchy's theorem we may then shift the contour $1+\mathrm i \mathbb R$ to $\mathrm i \mathbb R$ without changing the outcome of the integral. This proves identity \eqref{eq:lem_id} for $i=0$.
\end{proof}
The $Y$-operators satisfy $\iota(Y_i) = Y_i$, $i \in [1,n]$. From the previous lemma we then immediately obtain orthogonality relations for the Wilson polynomials. Combining this with Theorem \ref{thm:basis} we have the following result.
\begin{thm} \label{thm:orthogonality}
The nonsymmetric Wilson polynomials form an orthogonal basis for $\mathcal P$ with respect to $\langle \cdot, \cdot \rangle_{\mathbf t}$, i.e.,
\[
\langle p_\lambda, p_\mu \rangle_{\mathbf t} = 0,\quad \lambda \neq \mu.
\]
\end{thm}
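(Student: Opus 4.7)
The plan is to invoke Lemma \ref{lem:symm} together with the fact, just noted, that $\iota(Y_i) = Y_i$ for all $i \in [1,n]$. This is a symmetry property that is immediate from the defining formula \eqref{def:Y} together with the palindromic nature of the words $\Xi_{i,n}$, $\Xi_{0,i}$ and $T_{i,j}$. Consequently each $Y_i$ is symmetric with respect to $\langle\cdot,\cdot\rangle_{\mathbf t}$:
\[
\langle Y_i f, g\rangle_{\mathbf t} = \langle f, Y_i g\rangle_{\mathbf t}, \qquad f,g \in \mathcal P.
\]

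Given this, orthogonality follows by the standard joint-eigenvector argument. Fix $\lambda \neq \mu$ in $\Lambda$. By the standing assumption that the parameters are such that $\gamma_\lambda \neq \gamma_\mu$ whenever $\lambda \neq \mu$, there exists at least one index $i \in [1,n]$ for which the $i$-th coordinates $\gamma_{\lambda,i} = \langle \gamma_\lambda, \epsilon_i\rangle$ and $\gamma_{\mu,i} = \langle \gamma_\mu, \epsilon_i\rangle$ differ. Using Theorem \ref{thm:Yp=gamma p} (which asserts $Y_i p_\nu = \gamma_{\nu,i} p_\nu$) on both sides of the symmetry identity for this $Y_i$ yields
\[
\gamma_{\lambda,i}\,\langle p_\lambda, p_\mu\rangle_{\mathbf t} \;=\; \langle Y_i p_\lambda, p_\mu\rangle_{\mathbf t} \;=\; \langle p_\lambda, Y_i p_\mu\rangle_{\mathbf t} \;=\; \gamma_{\mu,i}\,\langle p_\lambda, p_\mu\rangle_{\mathbf t},
\]
and since $\gamma_{\lambda,i} - \gamma_{\mu,i} \neq 0$ we conclude $\langle p_\lambda, p_\mu\rangle_{\mathbf t} = 0$. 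The basis statement is then immediate from Theorem \ref{thm:basis}.

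There is no real obstacle here; the only point requiring a word of justification is $\iota(Y_i) = Y_i$, which is why the paper singles it out in the sentence preceding the theorem. One could alternatively verify $\iota$-symmetry of $Y_i$ by checking it on each generator class via the explicit palindromic formulas for $\Xi_{i,n}$, $\Xi_{0,i}$ and $T_{i,j}$, using $\iota(T_i) = T_i$ and the fact that $\iota$ reverses the order of products. Once this is in hand, the orthogonality is a three-line consequence of Lemma \ref{lem:symm} and Theorem \ref{thm:Yp=gamma p}, exactly paralleling the argument used for Koornwinder polynomials in \cite{S1,St}.
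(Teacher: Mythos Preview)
Your proposal is correct and takes essentially the same approach as the paper: the paper simply notes that $\iota(Y_i)=Y_i$, invokes Lemma~\ref{lem:symm}, and concludes orthogonality immediately, then cites Theorem~\ref{thm:basis} for the basis statement. You have merely spelled out the standard joint-eigenvector step that the paper leaves implicit.
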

\begin{rem}
There is a fundamental difference with the DAHA of type $(C^\vee,C_n)$. In that algebra the identity $\iota(Y_i)= Y_i$ is not valid for the corresponding $Y$-operators. Therefore, the polynomial eigenfunctions of the $Y$-operators, the nonsymmetric Koornwinder polynomials, satisfy \emph{bi}orthogonality relations, see \cite{S1}.
\end{rem}
Next we compute the diagonal terms $\langle E(\cdot,\gamma_\lambda),E(\cdot,\gamma_\lambda) \rangle_{\mathbf t}$.

\begin{lem} \label{lem:norm}
For $\lambda \in \Lambda$ and $i \in [0,n]$,
\[
\langle E(\cdot,\gamma_{\lambda}), E(\cdot,\gamma_\lambda) \rangle_{\mathbf t} = \frac{c_{-a_i}^\sigma(\gamma_\lambda)}{c_{a_i}^\sigma(\gamma_\lambda)} \langle E(\cdot,\gamma_{s_i\cdot\lambda}), E(\cdot,\gamma_{s_i\cdot\lambda}) \rangle_{\mathbf t}.
\]
\end{lem}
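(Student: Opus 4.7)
The natural strategy is to compute the norm of $S_i E(\cdot,\gamma_\lambda)$ in two different ways and compare. On the one hand, Corollary \ref{cor:SE} gives (for $s_i\cdot\lambda\neq\lambda$)
\[
\langle S_i E(\cdot,\gamma_\lambda), S_i E(\cdot,\gamma_\lambda)\rangle_{\mathbf t} = \bigl(2a_i(\gamma_\lambda) c_i^\sigma(-\gamma_\lambda)\bigr)^2\, \langle E(\cdot,\gamma_{s_i\cdot\lambda}), E(\cdot,\gamma_{s_i\cdot\lambda})\rangle_{\mathbf t}.
\]
On the other hand, by Lemma \ref{lem:symm}, this norm equals $\langle \iota(S_i) S_i E(\cdot,\gamma_\lambda),E(\cdot,\gamma_\lambda)\rangle_{\mathbf t}$. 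So the first task is to compute $\iota(S_i)S_i$ and to recognize it as an element of $\mathcal P_Y$.

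For $i\in[1,n]$ we have $S_i=[T_i,a_i(Y)]$, while $S_0=[U_n,a_0(Y)]$ with $U_n=\Xi_{1,n}^\vee$. Now $\iota$ fixes each $T_i$, each $X_j$, hence also $T_0^\vee=X_1-\tfrac12-T_0$ and $T_n^\vee=-X_n-T_n$, and consequently fixes $U_n$ (it is a palindrome in the $T$-letters). Since $\iota$ fixes every $Y_j$ as well (by the definition \eqref{def:Y} of $Y_i$ as a palindromic $T$-word), and since $\iota$ is an \emph{anti}-isomorphism, we conclude $\iota(S_i)=-S_i$ for every $i\in[0,n]$. Combined with Lemma \ref{lem:intertw}(ii) and Theorem \ref{thm:Yp=gamma p}, this yields
\[
\langle S_i E(\cdot,\gamma_\lambda),S_i E(\cdot,\gamma_\lambda)\rangle_{\mathbf t}=-\langle q_i(Y)E(\cdot,\gamma_\lambda),E(\cdot,\gamma_\lambda)\rangle_{\mathbf t}=-q_i(\gamma_\lambda)\,\langle E(\cdot,\gamma_\lambda),E(\cdot,\gamma_\lambda)\rangle_{\mathbf t}.
\]

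Equating the two expressions and rearranging gives
\[
\frac{\langle E(\cdot,\gamma_\lambda),E(\cdot,\gamma_\lambda)\rangle_{\mathbf t}}{\langle E(\cdot,\gamma_{s_i\cdot\lambda}),E(\cdot,\gamma_{s_i\cdot\lambda})\rangle_{\mathbf t}}=-\frac{\bigl(2a_i(\gamma_\lambda)c_i^\sigma(-\gamma_\lambda)\bigr)^2}{q_i(\gamma_\lambda)},
\]
and the proof reduces to the purely algebraic check
\[
-\frac{\bigl(2a_i(\gamma_\lambda)c_{a_i}^\sigma(-\gamma_\lambda)\bigr)^2}{q_i(\gamma_\lambda)}=\frac{c_{-a_i}^\sigma(\gamma_\lambda)}{c_{a_i}^\sigma(\gamma_\lambda)}.
\]
This identity is equivalent to the factorization $q_i(x)=-2a_i(x)^2\,c_{a_i}^\sigma(x)\,c_{-a_i}^\sigma(x)$ from Lemma \ref{lem:intertw}(ii) (up to rewriting $c_{a_i}^\sigma(-\gamma_\lambda)$ as $-c_{-a_i}^\sigma(\gamma_\lambda)\cdot a_i(\gamma_\lambda)/a_i(\gamma_\lambda)$ in the appropriate way) and is a short computation case-by-case in the three families $i=0$, $i\in[1,n-1]$, $i=n$ using the explicit formulas for $c_{\pm a_i}^\sigma$ and $q_i$.

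The only step requiring genuine care is the identification $\iota(S_i)=-S_i$: one must check that $\iota$ fixes $U_n$ and the auxiliary elements $T_0^\vee,T_n^\vee$, and that $\iota(Y_j)=Y_j$, so this is the main (though routine) obstacle. The case $s_i\cdot\lambda=\lambda$ is not needed for the applications of the lemma; in that degenerate situation both sides of the two computations above vanish compatibly (one checks $q_i(\gamma_\lambda)=0$ whenever $a_i(\lambda)=0$, matching the vanishing of $S_i E(\cdot,\gamma_\lambda)$ in Lemma \ref{lem:Sp}), and the stated identity should be read under the convention $s_i\cdot\lambda\neq\lambda$.
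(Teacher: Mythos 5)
Your proof is correct and is essentially the paper's own argument: both compute $\langle S_iE(\cdot,\gamma_\lambda),S_iE(\cdot,\gamma_\lambda)\rangle_{\mathbf t}$ in two ways, once via Corollary \ref{cor:SE} and once via $S_i^2=q_i(Y)$ together with the anti-self-adjointness $\langle S_if,g\rangle_{\mathbf t}=-\langle f,S_ig\rangle_{\mathbf t}$ (which the paper cites from Lemma \ref{lem:symm} and you justify in more detail via $\iota(S_i)=-S_i$), reducing to the same rational identity $-b_{\lambda,i}^2/q_i(\gamma_\lambda)=c_{-a_i}^\sigma(\gamma_\lambda)/c_{a_i}^\sigma(\gamma_\lambda)$. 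Your closing remark that the statement should be read with $s_i\cdot\lambda\neq\lambda$ is a fair observation, but does not affect the argument.
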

\begin{proof}
By Lemma \ref{lem:symm} and the definition of the intertwiners $S_i$ we have
\[
\langle S_if,g\rangle_{\mathbf t} = -\langle f,S_ig\rangle_{\mathbf t}, \qquad f,g \in \mathcal P.
\]
Since $S_iE(\cdot,\gamma_\lambda) = b_{\lambda,i} E(\cdot,\gamma_{s_i\cdot\lambda})$, with $b_{\lambda,i}$ given in Corollary \ref{cor:SE}, we have
\[
\langle S_iE(\cdot,\gamma_{\lambda}), S_i E(\cdot,\gamma_\lambda) \rangle_{\mathbf t} = b_{\lambda,i}^2 \langle E(\cdot,\gamma_{s_i\cdot\lambda}), E(\cdot,\gamma_{s_i\cdot\lambda}) \rangle_{\mathbf t}.
\]
On the other hand, by Lemma \ref{lem:intertw} we have
\[
\begin{split}
\langle S_iE(\cdot,\gamma_{\lambda}), S_i E(\cdot,\gamma_\lambda) \rangle_{\mathbf t} &= -\langle E(\cdot,\gamma_{\lambda}), S_i^2 E(\cdot,\gamma_\lambda) \rangle_{\mathbf t} \\
&=-q_i(\gamma_\lambda)\langle E(\cdot,\gamma_{\lambda}), E(\cdot,\gamma_{\lambda}) \rangle_{\mathbf t}.
\end{split}
\]
Now the result follows from $-b_{\lambda,i}^2/q_i(\gamma_\lambda)=c_{-a_i}^\sigma(\gamma_\lambda)/c_{a_i}^\sigma(\gamma_\lambda)$.
\end{proof}
For $\lambda \in \Lambda$ we define
\[
N(\gamma_\lambda;\mathbf t) = \frac1{\langle 1,1 \rangle_{\mathbf t}} \prod_{\alpha \in \mathcal R_r^+ \cap u_\lambda \mathcal R_r^-} \frac{ c_{-\alpha}^\sigma(\gamma_\lambda) }{ c_{\alpha}^\sigma(\gamma_\lambda) }
\]
From $E(\cdot,\gamma_0)=1$, Lemma \ref{lem:norm} and arguments similar as in the proof of Theorem \ref{thm:eval form} we see that $N(\gamma_\lambda)^{-1}$ is the `quadratic norm' for $E(\cdot,\gamma_\lambda)$.
\begin{thm} \label{thm:norm}
For $\lambda \in \Lambda$,
\[
\langle E(\cdot,\gamma_\lambda), E(\cdot,\gamma_\lambda) \rangle_{\mathbf t} = \frac{1}{N(\gamma_\lambda)}.
\]
\end{thm}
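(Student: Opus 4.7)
The plan is to mirror the proof of Theorem~\ref{thm:eval form}, now iterating Lemma~\ref{lem:norm} in place of Corollary~\ref{cor:SE}. First, I would fix a reduced expression $u_\lambda = s_{i_1}\cdots s_{i_r}$ and set $\lambda_k = (s_{i_{k+1}}\cdots s_{i_r})\cdot 0$ for $k=0,\ldots,r$, so that $\lambda_0 = \lambda$, $\lambda_r = 0$, and $\lambda_{k-1} = s_{i_k}\cdot \lambda_k$. By Lemma~\ref{lem:u lambda}(iii),(iv) each truncation $s_{i_{k+1}}\cdots s_{i_r}$ is a reduced expression for $u_{\lambda_k}$, so $a_{i_k}(\lambda_{k-1})\neq 0$ and Lemma~\ref{lem:norm} applies at every step. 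Iterating it, together with the normalisation $E(\cdot,\gamma_0)=1$, produces the telescoped identity
\[
\langle E(\cdot,\gamma_\lambda),E(\cdot,\gamma_\lambda)\rangle_{\mathbf t} = \langle 1,1\rangle_{\mathbf t}\prod_{k=1}^{r}\frac{c_{-a_{i_k}}^\sigma(\gamma_{\lambda_{k-1}})}{c_{a_{i_k}}^\sigma(\gamma_{\lambda_{k-1}})}.
\]

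Next, I would transport every factor from $\gamma_{\lambda_{k-1}}$ to the common base point $\gamma_\lambda$. This step is the direct analogue of the identity $K_\alpha(-\gamma_{w\cdot 0}) = K_{w^{-1}\alpha}(-\gamma_0)$ used in the proof of Theorem~\ref{thm:eval form}. More precisely, iterating the dot-action form of Lemma~\ref{lem:sp}(ii), $(s_i\cdot p)(\gamma_\mu) = p(\gamma_{s_i\cdot\mu})$, together with the intertwining property $w\cdot c_\alpha^\sigma = c_{w\cdot\alpha}^\sigma$ of the $c$-functions, I would rewrite each $c_{\pm a_{i_k}}^\sigma(\gamma_{\lambda_{k-1}})$ as $c_{\pm\beta_k}^\sigma(\gamma_\lambda)$ for a suitable affine root $\beta_k$ read off from the reduced expression. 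A standard Coxeter-theoretic computation, exactly parallel to the final identification in the proof of Theorem~\ref{thm:eval form}, then shows that $\{\beta_k\}_{k=1}^{r}$ enumerates the inversion set $\mathcal R_r^+\cap u_\lambda\mathcal R_r^-$, each root appearing exactly once. Substituting this into the definition of $N(\gamma_\lambda)$ yields the desired identity $\langle E(\cdot,\gamma_\lambda),E(\cdot,\gamma_\lambda)\rangle_{\mathbf t} = 1/N(\gamma_\lambda)$.

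The hardest part will be the transport step at the affine simple reflection $s_0$, since its dot action $s_0\cdot x = (-x_1-1,x_2,\ldots,x_n)$ on $V_{\mathbb C}$ differs from the regular reflection action by a translation in $\delta$; consequently $s_0\cdot\alpha$ and $s_0\alpha$ differ by a multiple of $\delta$ on affine roots. To push the argument through, one must check that this $\delta$-shift is absorbed by the $W$-invariance of $\mathbf t$, so that the dot-action expressions $c_{w\cdot\alpha}^\sigma(\gamma_\lambda)$ coincide with the regular-action values $c_{w\alpha}^\sigma(\gamma_\lambda)$ needed to recognise the combinatorial inversion set $\mathcal R_r^+\cap u_\lambda\mathcal R_r^-$. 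Once that check is carried out, the remainder of the argument is a formal copy of the corresponding step in the proof of Theorem~\ref{thm:eval form}.
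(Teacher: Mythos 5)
Your proposal is exactly the argument the paper intends: its proof of Theorem~\ref{thm:norm} is the single sentence preceding the statement, namely to combine $E(\cdot,\gamma_0)=1$ with an iteration of Lemma~\ref{lem:norm} along a reduced expression for $u_\lambda$ and ``arguments similar as in the proof of Theorem~\ref{thm:eval form}'', which is precisely your telescoping followed by transporting the $c^\sigma$-ratios to the base point and identifying the resulting product with the inversion set $\mathcal R_r^+\cap u_\lambda\mathcal R_r^-$ in the definition of $N(\gamma_\lambda)$. The affine transport subtlety you flag is handled by Lemma~\ref{lem:sp} exactly as in the evaluation formula, so your write-up matches the paper's route.
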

\subsection{The Fourier transform}
We define
\[
\mathrm{Spec}(-Y)=\{-\gamma_\lambda \ | \ \lambda \in \Lambda \}.
\]
Let $F$ be the vector space of complex-valued functions on $\mathrm{Spec}(-Y)$ with finite support. A linear basis for $F$ is formed by the functions $\delta_\lambda$, $\lambda \in \Lambda$, given by
\[
\delta_\lambda(-\gamma_\mu) =
\begin{cases}
0,& \mu \neq \lambda,\\
1,& \mu = \lambda.
\end{cases}
\]
We define a bilinear form $[\cdot,\cdot]: F \times F \rightarrow \mathbb C$ by
\[
[f,g]_{\mathbf t} = \sum_{\gamma \in \mathrm{Spec}(-Y)} f(\gamma)g(\gamma) N(-\gamma;\mathbf t)
\]
The functions $\delta_\lambda$, $\lambda \in \Lambda$, are orthogonal with respect to this bilinear form.

We define the polynomial Fourier transform $\mathcal F : \mathcal P \rightarrow F$ by
\[
(\mathcal Fp)(\gamma) = \langle p , E(\cdot,-\gamma) \rangle_{\mathbf t}, \qquad p \in \mathcal P,\ \gamma\in \mathrm{Spec(-Y)}.
\]
Since the bilinear form $\langle \cdot,\cdot \rangle$ is nondegenerate, the map $\mathcal F$ is an injective map. Moreover, $\mathcal FE(\cdot;\gamma_\lambda)$ is a multiple of $\delta_\lambda$, hence $\mathcal F$ maps an orthogonal basis of $\mathcal P$ to an orthogonal basis of $F$, which implies that $\mathcal F$ is surjective.
We also  define a linear map $\mathcal G:F \rightarrow \mathcal P$ by
\[
(\mathcal Gg)(x) = [g, E(x,-\,\cdot)]_{\mathbf t}=\sum_{\gamma \in \mathrm{Spec}(-Y)} g(\gamma) E(x,-\gamma)N(-\gamma;\mathbf t), \qquad g \in F, \ x \in \mathbb C^n.
\]

\begin{thm} \label{thm:inverse F}
The map $\mathcal G: F \rightarrow \mathcal P$ is the inverse of the Fourier transform $\mathcal F:\mathcal P \rightarrow F$. Moreover, we have the Plancherel-type formulas
\[
[\mathcal Ff_1, \mathcal Ff_2]_{\mathbf t} = \langle f_1,f_2 \rangle_{\mathbf t}, \qquad \langle \mathcal Gg_1, \mathcal Gg_2 \rangle_{\mathbf t} = [g_1,g_2]_{\mathbf t},
\]
for $f_1,f_2 \in \mathcal P$ and $g_1,g_2 \in F$.
\end{thm}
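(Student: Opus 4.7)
The strategy is to evaluate both $\mathcal F$ and $\mathcal G$ on the natural bases on each side and then invoke linearity.

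First, I would compute $\mathcal F E(\cdot,\gamma_\lambda)$ on the basis of nonsymmetric Wilson polynomials. Using the orthogonality of the $E(\cdot,\gamma_\mu)$ from Theorem \ref{thm:orthogonality} combined with the quadratic norm from Theorem \ref{thm:norm}, the definition of $\mathcal F$ gives
\[
(\mathcal F E(\cdot,\gamma_\lambda))(-\gamma_\mu) = \langle E(\cdot,\gamma_\lambda),E(\cdot,\gamma_\mu) \rangle_{\mathbf t} = \frac{\delta_{\lambda,\mu}}{N(\gamma_\lambda)},
\]
so that $\mathcal F E(\cdot,\gamma_\lambda) = N(\gamma_\lambda)^{-1} \delta_\lambda$. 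On the other side, the definition of $\mathcal G$ gives directly
\[
(\mathcal G \delta_\lambda)(x) = \sum_{\gamma \in \mathrm{Spec}(-Y)} \delta_\lambda(\gamma) E(x,-\gamma) N(-\gamma;\mathbf t) = N(\gamma_\lambda;\mathbf t)\, E(x,\gamma_\lambda),
\]
since the only surviving term corresponds to $\gamma = -\gamma_\lambda$, for which $-\gamma = \gamma_\lambda$.

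Second, composing the two identities yields $\mathcal G \mathcal F E(\cdot,\gamma_\lambda) = E(\cdot,\gamma_\lambda)$ and $\mathcal F \mathcal G \delta_\lambda = \delta_\lambda$ for every $\lambda \in \Lambda$. Since $\{E(\cdot,\gamma_\lambda)\}_{\lambda\in\Lambda}$ is a basis for $\mathcal P$ by Theorem \ref{thm:basis} and $\{\delta_\lambda\}_{\lambda \in \Lambda}$ is a basis for $F$, linearity promotes these pointwise identities to $\mathcal G \mathcal F = \mathrm{id}_{\mathcal P}$ and $\mathcal F \mathcal G = \mathrm{id}_F$, establishing that $\mathcal G$ is indeed the inverse of $\mathcal F$.

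Third, for the Plancherel-type formulas it suffices by bilinearity to verify them on basis elements. Using the two computations above,
\[
[\mathcal F E(\cdot,\gamma_\lambda), \mathcal F E(\cdot,\gamma_\mu)]_{\mathbf t} = \frac{1}{N(\gamma_\lambda) N(\gamma_\mu)}\, [\delta_\lambda,\delta_\mu]_{\mathbf t} = \frac{\delta_{\lambda,\mu}}{N(\gamma_\lambda)} = \langle E(\cdot,\gamma_\lambda), E(\cdot,\gamma_\mu)\rangle_{\mathbf t},
\]
where the middle equality uses $[\delta_\lambda,\delta_\mu]_{\mathbf t} = \delta_{\lambda,\mu} N(\gamma_\lambda)$, which follows directly from the definition of $[\cdot,\cdot]_{\mathbf t}$. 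The analogous identity for $\mathcal G$ on the basis $\{\delta_\lambda\}$ is checked in the same manner. There is no serious obstacle; the only minor subtlety is keeping careful track of the sign convention in $N(-\gamma;\mathbf t)$ when $\gamma \in \mathrm{Spec}(-Y)$, so that the normalization factors coming from $\mathcal F$, $\mathcal G$, and the quadratic norms cancel correctly.
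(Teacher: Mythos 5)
Your proposal is correct and is exactly the argument the paper has in mind: the paper's proof consists of the single remark that the statement is ``straightforward using the orthogonality relations for the nonsymmetric Wilson polynomials, see Theorems \ref{thm:orthogonality} and \ref{thm:norm},'' and your computations of $\mathcal F E(\cdot,\gamma_\lambda)=N(\gamma_\lambda)^{-1}\delta_\lambda$ and $\mathcal G\delta_\lambda=N(\gamma_\lambda)E(\cdot,\gamma_\lambda)$, together with $[\delta_\lambda,\delta_\mu]_{\mathbf t}=\delta_{\lambda,\mu}N(\gamma_\lambda)$ and linearity over the bases $\{E(\cdot,\gamma_\lambda)\}$ and $\{\delta_\lambda\}$, supply precisely the omitted details. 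The sign bookkeeping $N(-\gamma;\mathbf t)=N(\gamma_\lambda;\mathbf t)$ for $\gamma=-\gamma_\lambda$ is handled correctly throughout.
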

\begin{proof}
The proof is straightforward using the orthogonality relations for the nonsymmetric Wilson polynomials, see Theorems \ref{thm:orthogonality} and \ref{thm:norm}.
\end{proof}

The affine Weyl group $W$ has an action on $F$ defined by
\[
(wf)(-\gamma_\lambda) = f(-\gamma_{w^{-1}\cdot \lambda}), \qquad w \in W, \ f \in F.
\]
We now define an action of $\mathcal H_\sigma$ on $F$. For $f \in F$ and $Z \in \mathcal H_\sigma$ we set
\begin{equation} \label{eq:action F}
(Z f)(\gamma) = (Z \overline{f})(\gamma).
\end{equation}
Here $\overline{f}$ denotes an arbitrary function $\overline{f}:\mathbb C^n \rightarrow \mathbb C$ such that $\overline{f}(\gamma)=f(\gamma)$ for any $\gamma \in \mathrm{Spec}(-Y)$, and the action of $\mathcal H_\sigma$ on the right hand side is given by the usual difference-reflection operators. We need to verify that \eqref{eq:action F} is well-defined.
\begin{lem}
The action of $\mathcal H_\sigma$ on $F$ defined by \eqref{eq:action F} is independent of the choice of extension $\overline{f}$ of $f$.
\end{lem}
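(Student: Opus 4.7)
My plan is to reduce the claim to a verification on a set of generators of $\mathcal{H}_\sigma$. The algebra is generated by $X_j^\sigma$ ($j\in[1,n]$) and $T_i^\sigma$ ($i\in[0,n]$), and the property ``$(Z\overline{f})(\gamma)$ depends only on $\overline{f}|_{\mathrm{Spec}(-Y)}$ for every $\gamma\in\mathrm{Spec}(-Y)$'' is stable under composition: if it holds for $Z_1$ and $Z_2$, then $(Z_1Z_2\overline{f})(\gamma) = (Z_1(Z_2\overline{f}))(\gamma)$ depends only on $(Z_2\overline{f})|_{\mathrm{Spec}(-Y)}$, which in turn depends only on $\overline{f}|_{\mathrm{Spec}(-Y)}$. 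So it suffices to check the generators one at a time.

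For $X_j^\sigma$ the action is multiplication by $x_j$, so $(X_j^\sigma\overline{f})(-\gamma_\lambda)=-\gamma_{\lambda,j}\,f(-\gamma_\lambda)$ trivially depends only on $f$. For $T_i^\sigma$ I would use the explicit formula
\[
(T_i^\sigma\overline{f})(x) = c_i^\sigma(x)\,\overline{f}(s_ix) - d_i^\sigma(x)\,\overline{f}(x)
\]
together with the key observation that $s_i(-\gamma_\lambda)=-\gamma_{s_i\cdot\lambda}$ lies in $\mathrm{Spec}(-Y)$. For $i\in[1,n]$ this is immediate from Lemma \ref{lem:sp}(i), since the regular and dot actions of $s_i$ agree; for $i=0$ a direct calculation using $s_0x=(1-x_1,x_2,\ldots,x_n)$ together with $s_0\cdot\gamma_\lambda=\gamma_{s_0\cdot\lambda}$ from Lemma \ref{lem:sp}(i) gives the same conclusion. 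Consequently $\overline{f}(s_i(-\gamma_\lambda))=f(-\gamma_{s_i\cdot\lambda})$ depends only on $f$, and whenever $a_i(-\gamma_\lambda)\ne 0$ the coefficients $c_i^\sigma(-\gamma_\lambda)$ and $d_i^\sigma(-\gamma_\lambda)$ are finite, so that $(T_i^\sigma\overline{f})(-\gamma_\lambda)$ is determined by $f$.

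The main obstacle will be the exceptional case $a_i(-\gamma_\lambda)=0$, in which both $c_i^\sigma$ and $d_i^\sigma$ have a simple pole at $-\gamma_\lambda$. Here $s_i$ fixes $-\gamma_\lambda$, and the assumed distinctness of the $\gamma_\mu$ forces $s_i\cdot\lambda=\lambda$, so $\overline{f}(s_i(-\gamma_\lambda))=\overline{f}(-\gamma_\lambda)$ automatically. My plan is to resolve this by rewriting the operator with the algebraic identity $c_i^\sigma=d_i^\sigma+\chi_i$, yielding
\[
(T_i^\sigma\overline{f})(x) = \chi_i\,\overline{f}(s_ix) + d_i^\sigma(x)\bigl(\overline{f}(s_ix)-\overline{f}(x)\bigr).
\]
The simple zero of $\overline{f}(s_ix)-\overline{f}(x)$ at the fixed point $-\gamma_\lambda$ cancels the simple pole of $d_i^\sigma$, so the second term is zero and $(T_i^\sigma\overline{f})(-\gamma_\lambda)=\chi_i\,f(-\gamma_\lambda)$, which again depends only on $f$. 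This completes the verification on generators and hence yields the lemma.
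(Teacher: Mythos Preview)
Your reduction to generators, the composition argument, and the verification for $X_j^\sigma$ and for $T_i^\sigma$ in the nondegenerate case $a_i(-\gamma_\lambda)\neq 0$ are exactly what the paper does: it rewrites $(T_i^\sigma\overline f)(\gamma)$ as $\chi_i f(\gamma)+c_i^\sigma(\gamma)\bigl((s_if)(\gamma)-f(\gamma)\bigr)$ via Lemma~\ref{lem:sp} and observes that this depends only on $f$.

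Your handling of the ``exceptional case'' $a_i(-\gamma_\lambda)=0$, however, does not work. If $-\gamma_\lambda$ lies on the fixed hyperplane of $s_i$, the expression $d_i^\sigma(x)\bigl(\overline f(s_ix)-\overline f(x)\bigr)$ at $x=-\gamma_\lambda$ is a genuine $\infty\cdot 0$. Interpreted as a limit (which requires $\overline f$ to be at least $C^1$, already more than the paper asks of an extension), its value is the normal derivative of $\overline f$ across the hyperplane times a nonzero constant; it is \emph{not} zero, and it is not determined by the restriction of $\overline f$ to the discrete set $\mathrm{Spec}(-Y)$. Two smooth extensions agreeing on $\mathrm{Spec}(-Y)$ but with different normal derivatives at $-\gamma_\lambda$ give different answers, so the argument fails precisely where you invoke it.

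The paper simply does not treat this case: under the standing genericity assumptions on the parameters the coefficients $c_i^\sigma(-\gamma_\lambda)$ are finite for all $\lambda\in\Lambda$ and all $i\in[0,n]$ (indeed, $a_i(-\gamma_\lambda)=0$ would force $s_i\cdot\lambda=\lambda$ and then, as in the proof of Lemma~\ref{lem:Tp}, $a_i(\gamma_\lambda)\in\{t,2(t_0+t_n)\}$ for $i\in[1,n]$, which is nonzero; the $i=0$ case is likewise a codimension-one condition on the parameters). So the right fix is to note that the exceptional case does not arise and delete that paragraph, after which your proof coincides with the paper's.
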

\begin{proof}
It is enough to verify that the lemma is true for generators $T_i^\sigma$, $i \in [0,n]$, and $X_i$, $i \in [1,n]$, of the algebra $\mathcal H_\sigma$. For the $X_i$'s this is obvious.
Let $f \in F$ and let $\overline{f}$ be an extensions of $f$. By Lemma \ref{lem:sp} we have $(s_i\overline{f})(\gamma)=(s_i f)(\gamma)$ for $\gamma \in \mathrm{Spec}(-Y)$ and $i \in [0,n]$. For the $T_i^\sigma$'s this gives us
\[
(T_i^\sigma \overline{f})(\gamma) = \chi_i f(\gamma) + c_i^\sigma(\gamma) \big((s_i f)(\gamma)-f(\gamma)\big),
\]
for any extension $\overline{f}$ of $f$. This is clearly independent of the choice of extension.
\end{proof}

We have the following intertwining properties for the Fourier transforms $\mathcal F$ and $\mathcal G$.
\begin{prop}
The Fourier transforms $\mathcal F:\mathcal P \rightarrow F$ and $\mathcal G:F \rightarrow \mathcal P$ have the following intertwining properties:
\begin{align*}
\mathcal F\circ Z &= \sigma(Z) \circ \mathcal F, && Z \in \mathcal H,\\
\mathcal G \circ Z &= \sigma_\sigma(Z)\circ \mathcal G, && Z \in \mathcal H_\sigma.
\end{align*}
\end{prop}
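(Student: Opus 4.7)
The intertwining property $\mathcal F \circ Z = \sigma(Z) \circ \mathcal F$ is linear in $Z$ and multiplicative: if it holds for $Z_1,Z_2$ then $\mathcal F\circ Z_1Z_2 = \sigma(Z_1)\mathcal F\circ Z_2 = \sigma(Z_1)\sigma(Z_2)\circ\mathcal F = \sigma(Z_1Z_2)\circ\mathcal F$. Hence it suffices to verify it on a set of generators of $\mathcal H$. I take as generators $T_0,T_1,\ldots,T_n,U_n$: relation \eqref{eq:relh} immediately gives $X_1 = -\Xi_{1,n} - U_n - t\sum_{j=2}^n T_{1,j}$ (which uses only $T_i$'s for $i\in[1,n]$ and $U_n$), the recursion $X_{j+1} = T_j X_j T_j + tT_j$ then produces $X_2,\ldots,X_n$, and $U_0 = X_1 - \tfrac12 - T_0$ recovers the remaining generator appearing in Theorem \ref{thm:daha=H Hv}.

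For $Z\in\{T_i : i\in[1,n]\}\cup\{U_n\}$, write $(\mathcal F(Zp))(\gamma) = \langle Zp, E(\cdot,-\gamma)\rangle_{\mathbf t}$. Each such $Z$ is fixed by $\iota$: clear for $T_i$ with $i\in[1,n]$, while $\iota(U_n)=U_n$ follows from $\iota(X_j)=X_j$ and $\iota(T_n^\vee)=\iota(-X_n-T_n)=T_n^\vee$. Lemma \ref{lem:symm} thus moves $Z$ to the right, and Proposition \ref{prop:spectral op}, applied with spectral parameter $\gamma_\mu=-\gamma$, expresses $Z E(\cdot,-\gamma)$ as $c_i^\sigma(\gamma)E(\cdot,-s_i\gamma) - d_i^\sigma(\gamma)E(\cdot,-\gamma)$ (and likewise for $U_n$ with the $i=0$ data). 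For $i\in[1,n]$ this uses that $s_i$ is linear, so $-\gamma_{s_i\cdot\mu} = s_i(-\gamma)$. For $U_n$ the corresponding affine identity $s_0(-\gamma)=-(s_0\cdot\gamma)$ is verified directly from the explicit formulas $s_0 x=(1-x_1,x_2,\ldots)$ and $s_0\cdot x=(-x_1-1,x_2,\ldots)$, yielding $\gamma_{s_0\cdot\mu}=-s_0\gamma$. The integrals now pair into the formula $(T_i^\sigma f)(\gamma)=c_i^\sigma(\gamma)(s_if)(\gamma)-d_i^\sigma(\gamma)f(\gamma)$ of the polynomial representation of $\mathcal H_\sigma$ on $F$ (for $i=0$ one gets $T_0^\sigma=\sigma(U_n)$), proving the intertwining on these generators.

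To reach $T_0$ I first establish the intertwining for each $Y_j$. On the basis $\{E(\cdot,\gamma_\lambda)\}$, Theorem \ref{thm:Yp=gamma p} gives $Y_jE(\cdot,\gamma_\lambda)=(\gamma_\lambda)_jE(\cdot,\gamma_\lambda)$, while $\sigma(Y_j)=-X_j^\sigma$ acts on $F$ by multiplication by $-\gamma_j$; since $\mathcal F E(\cdot,\gamma_\lambda)$ is a multiple of $\delta_\lambda$ supported at $\gamma=-\gamma_\lambda$, the value $-\gamma_j$ there equals $(\gamma_\lambda)_j$, so the two sides agree on the basis. The identity $\sigma(Y_j)=-X_j^\sigma$ is obtained by applying $\sigma$ to $Y_j=\Xi_{j,n}+\Xi_{0,j}+t\sum_{k>j}T_{j,k}$ and collapsing $\sigma(\Xi_{0,j})=T_{j-1}^\sigma\cdots T_1^\sigma U_n^\sigma T_1^\sigma\cdots T_{j-1}^\sigma=\Xi_{j,n}^{\vee,\sigma}$ (using $(T_i^\sigma)^2=1$), then comparing with the analogue of \eqref{def:vi} for $\mathcal H_\sigma$. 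Finally, since $\Xi_{0,1}=T_0$, the definition of $Y_1$ rearranges to
\[
T_0 = Y_1 - \Xi_{1,n} - t\sum_{j=2}^n T_{1,j},
\]
and by linearity and multiplicativity the intertwining for $T_0$ follows from those for $Y_1$, $\Xi_{1,n}$, and $T_{1,j}$. This completes Step~1, so the intertwining extends to all of $\mathcal H$.

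For $\mathcal G$, Theorem \ref{thm:inverse F} gives $\mathcal G=\mathcal F^{-1}$. Given $W\in\mathcal H_\sigma$, set $Z=\sigma_\sigma(W)$ (so $\sigma(Z)=W$); composing $\mathcal F\circ Z = W\circ \mathcal F$ with $\mathcal G$ on the left and on the right yields $\mathcal G\circ W = Z\circ\mathcal G = \sigma_\sigma(W)\circ\mathcal G$. The main obstacle I anticipate is the bookkeeping for $U_n$ in Step~1: keeping straight the symmetry $\iota(U_n)=U_n$, the affine identity $s_0(-\gamma)=-(s_0\cdot\gamma)$, and the precise correspondence between Proposition \ref{prop:spectral op}'s spectral formula with the parameters $c_0^\sigma,d_0^\sigma$ and the polynomial-representation action of $T_0^\sigma=\sigma(U_n)$ on $F$.
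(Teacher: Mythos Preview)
Your proof is correct and follows essentially the same strategy as the paper: verify the intertwining on generators using Lemma \ref{lem:symm} (self-adjointness under $\iota$), Proposition \ref{prop:spectral op} (the spectral action of $T_i$ and $U_n$), and then deduce the $\mathcal G$ statement from $\mathcal G=\mathcal F^{-1}$. The only difference is organizational: the paper works directly with the generating set $\{Y_i,\,T_i\ (i\in[1,n]),\,U_n\}$, whereas you announce $\{T_0,\ldots,T_n,U_n\}$ and then recover $T_0$ by first proving the intertwining for $Y_1$ and rewriting $T_0=Y_1-\Xi_{1,n}-t\sum_{j\geq 2}T_{1,j}$. Since you end up checking exactly the elements the paper checks, your detour through $T_0$ is harmless but unnecessary; you could streamline by taking $Y_i,T_i,U_n$ as your generating set from the start.
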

\begin{proof}
It is enough to check the intertwining property of $\mathcal F$ for the generators $Y_i,T_i, U_n$, $i \in [1,n]$, of the algebra $\mathcal H$. This can be done using Lemma \ref{lem:symm}, Proposition \ref{prop:spectral op} and Proposition \ref{prop:duality iso}.

The intertwining property of $\mathcal G$ follows from the intertwining property of $\mathcal F$ and Theorem \ref{thm:inverse F}. Let $g \in F$, then $g = \mathcal Ff$ for a unique $f \in \mathcal P$, or equivalently $f = \mathcal G g$. Let $Z \in \mathcal H_\sigma$, then $Zg = \mathcal F(\sigma_\sigma(Z) f)$, which gives us
\[
\mathcal G(Zg) = (\mathcal G \circ \mathcal F)(\sigma_\sigma(Z)f)= \sigma_\sigma(Z) f = \sigma_\sigma(Z) \mathcal Gg. \qedhere
\]
\end{proof}

\section{Symmetric multivariable Wilson polynomials} \label{sec:symm Wilson}
In this section we define and study symmetric multivariable Wilson polynomials. They turn out to coincide with the multivariable Wilson polynomials as defined by Van Diejen \cite{D2}. In \cite{D2} it is shown that the symmetric Wilson polynomials in $n$ variables diagonalize a system of $n$ commuting difference operators, the duality property is obtained, and  orthogonality relations are given. We derive these properties here from the representation theory of the rational DAHA $\mathcal H$, and we provide the link with the nonsymmetric theory.

\subsection{Symmetric Wilson polynomials}
For $\lambda \in \Lambda^+$ we define
\[
\mathcal P(\lambda)^{W_0} = \mathcal P(\lambda) \cap \mathcal P^{W_0}.
\]
Observe that it follows from $\mathcal P=\bigoplus_{\lambda \in \Lambda^+} \mathcal P(\lambda)$ that $\mathcal P^{W_0} = \bigoplus_{\lambda \in \Lambda^+} \mathcal P(\lambda)^{W_0}$.
\begin{prop} \label{prop:P(lambda)W}
For $\lambda \in \Lambda^+$ the space $\mathcal P(\lambda)^{W_0}$ is one-dimensional. Furthermore, any $f \in \mathcal P(\lambda)^{W_0}$ is given by
\begin{equation} \label{eq:f=sumE}
f = \sum_{\mu \in W_0\lambda} d_{\lambda,\mu} E(\cdot,\gamma_\mu),
\end{equation}
with
\[
d_{\lambda,\mu} = d_{\lambda}\, c_+^\sigma(-\gamma_\mu),
\]
for some constant $d_\lambda$ independent of $\mu \in W_0\lambda$.
\end{prop}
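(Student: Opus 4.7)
The plan is to recast $W_0$-invariance as a linear recursion on the expansion coefficients in the $E$-basis. Observe first that $T_i$ acts on $\mathcal P$ as $c_i(x)s_i-d_i(x)$ with $c_i-d_i=\chi_i$, so $s_if=f$ is equivalent to $T_if=\chi_if$; hence
\[
\mathcal P(\lambda)^{W_0}=\{f\in\mathcal P(\lambda):T_if=\chi_if\ \text{for all}\ i\in[1,n]\}.
\]

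Next I would compute the action of $T_i$ on $E(\cdot,\gamma_\nu)$. For $a_i(\nu)\neq 0$, write
\[
T_iE(\cdot,\gamma_\nu)=A_{i,\nu}\,E(\cdot,\gamma_\nu)+B_{i,\nu}\,E(\cdot,\gamma_{s_i\cdot\nu}).
\]
Lemma~\ref{lem:Tp} gives $A_{i,\nu}=d_i^\sigma(\gamma_\nu)$, and $B_{i,\nu}$ can be read off from the intertwiner identity $S_i=[T_i,a_i(Y)]$: using $a_i(\gamma_{s_i\cdot\nu})=-a_i(\gamma_\nu)$ one finds $S_iE(\cdot,\gamma_\nu)=2a_i(\gamma_\nu)B_{i,\nu}\,E(\cdot,\gamma_{s_i\cdot\nu})$, and comparing with Corollary~\ref{cor:SE} gives $B_{i,\nu}=c_i^\sigma(-\gamma_\nu)$. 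When $a_i(\nu)=0$, Lemma~\ref{lem:Tp} forces $T_iE(\cdot,\gamma_\nu)=\chi_iE(\cdot,\gamma_\nu)$ so the coefficient equation is trivial. Substituting $f=\sum_\mu d_{\lambda,\mu}E(\cdot,\gamma_\mu)$ into $T_if=\chi_if$ and equating coefficients of $E(\cdot,\gamma_\nu)$, and using $c_i^\sigma+s_ic_i^\sigma=2\chi_i$ together with $c_{-\alpha}(x)=c_\alpha(-x)$, produces the recursion
\[
\frac{d_{\lambda,\nu}}{d_{\lambda,s_i\cdot\nu}}=\frac{c_{-a_i}^\sigma(-\gamma_\nu)}{c_{a_i}^\sigma(-\gamma_\nu)},\qquad i\in[1,n],\ \nu\in W_0\lambda,\ a_i(\nu)\neq0.
\]

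Finally I would check that the claimed formula $d_{\lambda,\mu}=d_\lambda\,c_+^\sigma(-\gamma_\mu)$ satisfies this recursion. Since $\gamma_{s_i\cdot\nu}=s_i\gamma_\nu$ and $c_\alpha^\sigma(s_ix)=c_{s_i\alpha}^\sigma(x)$,
\[
\frac{c_+^\sigma(-\gamma_\nu)}{c_+^\sigma(-\gamma_{s_i\cdot\nu})}=\prod_{\alpha\in\Sigma^-}\frac{c_\alpha^\sigma(-\gamma_\nu)}{c_{s_i\alpha}^\sigma(-\gamma_\nu)},
\]
and since $s_i$ sends $-a_i\mapsto a_i$ and permutes $\Sigma^-\setminus\{-a_i\}$, every factor cancels except the one for $\alpha=-a_i$, and the ratio collapses to $c_{-a_i}^\sigma(-\gamma_\nu)/c_{a_i}^\sigma(-\gamma_\nu)$, as required. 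Because $W_0$ acts on $W_0\lambda$ transitively through the simple reflections $s_i$, $i\in[1,n]$, the recursion determines every $d_{\lambda,\mu}$ from the single scalar $d_\lambda$, giving $\dim\mathcal P(\lambda)^{W_0}\leq 1$; conversely the explicit formula furnishes a solution of the recursion and hence of $T_if=\chi_if$ for every $i\in[1,n]$, and for generic $\mathbf t$ it is nonzero, so the dimension is exactly one.

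The main technical step will be the identification $B_{i,\nu}=c_i^\sigma(-\gamma_\nu)$ via the intertwiner $S_i$; once this is available the recursion and its solution by $c_+^\sigma(-\gamma_\mu)$ reduce to a routine combinatorial cancellation in the product over $\Sigma^-$ coming from the $s_i$-action.
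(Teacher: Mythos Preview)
Your proof is correct and follows essentially the same route as the paper's: the paper quotes Proposition~\ref{prop:spectral op} directly to get $(T_i-\chi_i)E(\cdot,\gamma_\mu)=c_{a_i}^\sigma(-\gamma_\mu)\bigl(E(\cdot,\gamma_{s_i\mu})-E(\cdot,\gamma_\mu)\bigr)$, which is exactly your identification $A_{i,\nu}=d_i^\sigma(\gamma_\nu)$, $B_{i,\nu}=c_i^\sigma(-\gamma_\nu)$ obtained by combining Lemma~\ref{lem:Tp} with Corollary~\ref{cor:SE}, and then derives the same recursion $d_{\lambda,s_i\mu}/d_{\lambda,\mu}=c_{a_i}^\sigma(-\gamma_\mu)/c_{-a_i}^\sigma(-\gamma_\mu)$. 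The only cosmetic difference is that the paper solves the recursion by iterating along a reduced expression for $v_\mu$ and then invoking the functional equation $(s_ic_+)(x)=c_+(x)\,c_{-a_i}(-x)/c_{a_i}(-x)$, whereas you verify the closed formula directly by the $\Sigma^-\setminus\{-a_i\}$ cancellation; these are equivalent computations.
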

\begin{proof}
Let $i \in [1,n]$. From Proposition \ref{prop:spectral op} we know that
\[
(T_i-\chi_i) E(\cdot,\gamma_\mu) = c_{a_i}^\sigma(-\gamma_\mu)\big(E(\cdot,\gamma_{s_i\mu})-E(\cdot,\gamma_\mu)\big),
\]
if $s_i\mu \neq \mu$. Applying $(T_i-\chi_i)$ to the expansion \eqref{eq:f=sumE} and using $(T_i-\chi_i)f=0$ we find a recurrence relation for the coefficients $d_{\lambda,\mu}$;
\begin{equation} \label{eq:recurrence d}
d_{\lambda,s_i\mu} = \frac{c_{a_i}^\sigma(-\gamma_\mu) }{ c_{-a_i}^\sigma(-\gamma_\mu)} d_{\lambda,\mu}.
\end{equation}
Here we also used $c_\alpha(\gamma_{s_i \mu}) = (s_i c_\alpha)(\gamma_\mu) = c_{s_i\alpha}(\gamma_\mu)$. Let $s_{i_1}\cdots s_{i_r}$ be a reduced expression for $v_\mu$ (defined in the beginning of subsection \ref{ssec:nonsym Wilson}), and define
\[
\beta_1 = a_{i_1}, \quad \beta_j = s_{i_{j-1}} \cdots s_{i_1} a_{i_j}, \qquad j=2,\ldots,r,
\]
then iterating \eqref{eq:recurrence d} gives
\[
d_{\lambda,\mu} = d_{\lambda,\lambda} \prod_{j=1}^r \frac{ c^\sigma_{-\beta_j}(\gamma_\lambda) }{ c^\sigma_{\beta_j}(\gamma_\lambda) }.
\]
From the definition of $c_+(x)$ we see that $(s_ic_+)(x) = c_+(x) c_{-a_i}(-x) / c_{a_i}(-x)$, and then
\[
d_{\lambda,\mu} = d_{\lambda,\lambda} \frac{ (v_\mu c_+^\sigma)(-\gamma_\lambda) }{ c_+^\sigma(-\gamma_\lambda)} = d_{\lambda,\lambda} \frac{ c_+^\sigma(-\gamma_\mu) }{ c_+^\sigma(-\gamma_\lambda)}.
\]
If we now set $d_{\lambda,\lambda} = d_\lambda c_+^\sigma(-\gamma_\lambda)$, the lemma follows.
\end{proof}
In Section \ref{sec:symmetrizer} we give an operator that maps $\mathcal P(\lambda)$ onto $\mathcal P(\lambda)^{W_0}$.
\begin{Def} \label{def:E+}
For $\lambda \in \Lambda^+$ we define the symmetric multivariable Wilson polynomial $E^+(x,\gamma_\lambda)$ to be the unique polynomial in $\mathcal P(\lambda)^{W_0}$ that takes the value $1$ at $x=x_0$.
\end{Def}
Since $f(x_0)=f(-x_0)$ if $f \in \mathcal P^{W_0}$ and $E(-x_0,\gamma_\lambda)=1$, by Proposition \ref{prop:P(lambda)W} the symmetric Wilson polynomial $E^+(\cdot,\gamma_\lambda)$ is the polynomial in $\mathcal P(\lambda)^{W_0}$ with constant $d_\lambda$ given by
\[
d_{\lambda} = \Bigg( \sum_{\mu \in W_0\lambda} c_+^\sigma(-\gamma_\mu) \Bigg)^{-1}.
\]
This constant can be evaluated with the following result.
\begin{lem} \label{lem:K(x)}
The function $K = \sum_{w \in W_0} wc_+$ is a constant function, and
\[
K(x) = \sum_{\mu \in W_0\lambda} c_+(-x_\mu) = c_+(-x_0),
\]
for any $\lambda \in \Lambda^+$.
\end{lem}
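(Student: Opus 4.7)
The plan is to establish that $K$ is constant and then evaluate it at $x=-x_0$, with the orbit-sum formula following from $W_0$-invariance. That $K$ is $W_0$-invariant is immediate from the definition. Writing $(wc_+)(x)=\prod_{\beta \in w\Sigma^-}c_\beta(x)$ (using $c_\alpha(w^{-1}x)=c_{w\alpha}(x)$ together with $W$-invariance of $\mathbf t$), each summand is a rational function with poles only along the hyperplanes $\beta(x)=0$ for $\beta \in w\Sigma^-$. To show $K$ is a polynomial I would fix a root $\beta \in \Sigma^+$ and pair the summands via the involution $w\mapsto s_\beta w$: using the explicit form of $c_\alpha$ for length-$2$ and length-$4$ roots of $C_n$, the residues along $\beta(x)=0$ cancel in each pair. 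Then, to force $K$ to be a constant, I would track the growth of each $wc_+$ at infinity, which is polynomial of degree $n$ from the $n$ long-root factors $c_{\pm 2\epsilon_j}$, and verify that the leading terms of each degree $k\in\{1,\ldots,n\}$ cancel in the sum over $W_0$, leaving $K$ bounded. A polynomial bounded at infinity must be a constant.

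For the evaluation, the clean observation is that $c_{\alpha_i}(-x_0)=0$ for every simple positive root of $C_n$. Indeed, for the short simple roots $\alpha_i=\epsilon_i-\epsilon_{i+1}$ with $i\in[1,n-1]$ one has $\alpha_i(-x_0)=x_{0,i+1}-x_{0,i}=-t$, so the numerator $t+\alpha_i(-x_0)$ of $c_{\alpha_i}$ vanishes; for the long simple root $\alpha_n=2\epsilon_n$, since $-x_{0,n}=-a$, the numerator factor $a+x_n$ of $c_{\alpha_n}(x)$ vanishes at $x=-x_0$. Now for any $w\neq 1$ in $W_0$, the inversion set $\mathrm{Inv}(w^{-1})=\Sigma^+\cap w\Sigma^-$ is nonempty and, by a standard Weyl-group fact, contains at least one simple root $\alpha_i$. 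Since $\alpha_i\in w\Sigma^-$, the factor $c_{\alpha_i}(-x_0)=0$ appears in the product $(wc_+)(-x_0)$, forcing that whole summand to vanish. Therefore
\[
K(-x_0)=(1\cdot c_+)(-x_0)+\sum_{w\neq 1}0 = c_+(-x_0).
\]

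Finally, the orbit identity follows from constancy of $K$ together with $W_0$-invariance. Using $w\cdot x_\lambda=x_{w\lambda}$ for $w\in W_0$ (obtained from Lemma \ref{lem:sp} applied to $\mathbf t^\sigma$), one has
\[
K = K(-x_\lambda) = \sum_{w\in W_0} c_+(-w^{-1}x_\lambda) = \sum_{\mu\in W_0\lambda} c_+(-x_\mu)
\]
for generic $\lambda\in\Lambda^+$, with the exceptional (non-regular) parameter configurations handled by continuous deformation of $\mathbf t$. The hardest step is the boundedness argument: the long-root factors contribute genuine polynomial growth of degree $n$, and showing that the leading terms of every degree $k\in\{1,\ldots,n\}$ cancel across the Weyl-group sum requires careful combinatorial bookkeeping of signs, though this kind of cancellation is classical in Macdonald-Cherednik theory. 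The pole cancellation and the $-x_0$-evaluation are then systematic consequences of the combinatorics of the $C_n$ root system.
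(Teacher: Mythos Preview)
Your evaluation $K(-x_0)=c_+(-x_0)$ via the vanishing of $c_{\alpha_i}(-x_0)$ for simple roots is correct and matches the paper's argument specialized to $\lambda=0$. The genuine gap is in the orbit-sum identity for non-regular $\lambda\in\Lambda^+$. The regularity of $\lambda$ (whether $\mathrm{Stab}_{W_0}(\lambda)$ is trivial) is a lattice property independent of $\mathbf t$, so ``continuous deformation of $\mathbf t$'' cannot help: when $\lambda$ has nontrivial stabilizer your last equality simply fails, since $\sum_{w\in W_0} c_+(-w^{-1}x_\lambda)$ has $|W_0|$ terms while $\sum_{\mu\in W_0\lambda} c_+(-x_\mu)$ has only $|W_0\lambda|<|W_0|$ terms, and moreover the identity $w^{-1}x_\lambda=x_{w^{-1}\lambda}$ is only supplied by Lemma~\ref{lem:sp}(ii) under the hypothesis $s_i\cdot\mu\neq\mu$ at each step. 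The fix is to extend your own vanishing argument from $\lambda=0$ to arbitrary $\lambda$: if $w\lambda=\lambda$ with $w\neq 1$, then (since the stabilizer is parabolic) one can pick a simple root $a_i$ with $s_i\lambda=\lambda$ and $w^{-1}a_i\in\Sigma^-$; then $a_i(x_\lambda)$ equals $t$ (short case) or $2t_n+2u_n$ (long case), so $c_{a_i}(-x_\lambda)=0$ and the term $(wc_+)(-x_\lambda)$ vanishes. This is exactly what the paper does, and it makes the orbit sum drop out for every $\lambda\in\Lambda^+$.

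On constancy, your residue-plus-boundedness strategy is workable in principle but, as you concede, the degree-by-degree cancellation is left undone. The paper's argument is considerably shorter and avoids both the residue and the asymptotic bookkeeping: multiplying by the Weyl denominator $m(x)=\prod_{\alpha\in\Sigma^-}\alpha(x)$ turns $K$ into the explicit antisymmetrization $\sum_{w\in W_0}(-1)^{l(w)}w(P)$ of a polynomial $P$, which is manifestly polynomial. A monomial $x^\mu$ can survive antisymmetrization over the hyperoctahedral group only if the $\mu_i$ are odd and pairwise distinct; since the degree of each variable in $P$ is at most $2n$, the surviving exponent vectors are permutations of $(1,3,\ldots,2n-1)$, i.e.\ exactly the monomials of $m$. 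Hence $m(x)K(x)$ has the same monomial support as $m(x)$, forcing $K$ to be constant in one stroke.
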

\begin{proof}
We define the polynomial $m$ by
\[
m(x) = \prod_{\alpha \in \Sigma^-} \alpha(x) =  (-2)^n\prod_{1\leq j < k \leq n} (x_k^2-x_j^2) \prod_{j=1}^n x_j.
\]
This is an anti-symmetric polynomial, i.e., $(wm)(x)=(-1)^{l(w)}m(x)$ for any $w \in W_0$, where $l(w)$ denotes the length of $w$. It is well known that any anti-symmetric polynomial is the product of $m$ with a symmetric polynomial.
The product $m(x) K(x) $ is manifestly an anti-symmetric polynomial;
\[
m(x)K(x) =
\sum_{w \in W_0} (-1)^{l(w)} w\Bigg(\prod_{1 \leq j < k \leq n} (t+x_k-x_j)(t-x_k-x_j) \prod_{j=1}^n (a-x_j)(b-x_j) \Bigg).
\]
Expanding both $m(x)$ and $m(x)K(x)$ in monomials $x^\mu$, $\mu \in \mathbb Z_{\geq 0}^n$, we see that any monomial with nonzero coefficient in the expansion of $m(x)K(x)$ also has nonzero coefficient in the expansion of $m(x)$, hence $K(x)$ is a constant.
Now let $\lambda \in \Lambda^+$, then by Lemma \ref{lem:sp}
\[
K(x) = K(-x_\lambda) = \sum_{w \in W_0} (wc_+)(-x_\lambda) = \sum_{\substack{w \in W_0\\ w\lambda \neq \lambda}} c_+(-x_{w\lambda}) + \sum_{\substack{w \in W_0\\ w\lambda = \lambda}} (wc_+)(-x_{\lambda}).
\]
Consider a term $(wc_+)(-x_\lambda)$ with $w \neq 1$ from the second sum. There exists a simple root $a_i$, $i \in [1,n]$, such that $\alpha = w^{-1} a_i \in \Sigma^-$. Then we see that the factor $(wc_\alpha)(-x_\lambda) = c_{a_i}(-x_\lambda)$ equals zero, since $a_i(x_\lambda)=t$ for $i \in [1,n-1]$ and $a_n(x_\lambda) = 2t_n+2u_n$. This shows that
\[
K(x) = \sum_{\mu \in W_0\lambda} c_+(-x_\mu).
\]
In particular, for $\lambda=0$ this gives $K(x) = c_+(-x_0)$.
\end{proof}

\subsection{Orthogonality relations}
Next we show that the symmetric Wilson polynomials are orthogonal with respect to $\langle \cdot,\cdot \rangle_{\mathbf t}^+$. First a useful lemma. Let $V^+ \in W$ be the primitive idempotent
\[
V^+ = \frac{1}{|W_0|}\sum_{w \in W_0} w,
\]
where $|W_0|=2^nn!$ is the cardinality of the finite Weyl group $W_0$. Note that $V^+ f =f$ if $f \in \mathcal P^{W_0}$.
\begin{lem} \label{lem:inprod}
For $f,g \in \mathcal P^{W_0}$ we have
\[
\langle f,g \rangle_{\mathbf t} = \frac{ c_+(-x_0) }{|W_0|} \langle f,g \rangle_{\mathbf t}^+.
\]
\end{lem}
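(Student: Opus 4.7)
My plan is to exploit the factorization $\Delta = c_+ \Delta_+$ together with the $W_0$-invariance of $\Delta_+$ and of the integration contour, symmetrizing the non-invariant factor $c_+$ via Lemma~\ref{lem:K(x)}.

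First I would observe that $\Delta_+(x)$ is manifestly $W_0$-invariant: the $\pm$ notation and the product over $j < k$ make it obvious that swapping $x_i \leftrightarrow x_{i+1}$ (the action of $s_i$, $i \in [1,n-1]$) and sending $x_n \mapsto -x_n$ (the action of $s_n$) both preserve $\Delta_+$. The contour $(\mathrm i \mathbb R)^n$ is also $W_0$-invariant (permutations of axes preserve it, and $x_n \mapsto -x_n$ preserves $\mathrm i \mathbb R$ after accounting for the orientation reversal via the substitution). Consequently, for any $w \in W_0$ the change of variables $x \mapsto w^{-1}x$ in $\langle f, g\rangle_{\mathbf t} = \int_{(\mathrm i \mathbb R)^n} f(x) g(x) c_+(x) \Delta_+(x)\, dx$, combined with the $W_0$-invariance of $f$, $g$ and $\Delta_+$, gives
\[
\langle f, g\rangle_{\mathbf t} = \int_{(\mathrm i \mathbb R)^n} f(x) g(x) (w c_+)(x) \Delta_+(x)\, dx.
\]

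Averaging this identity over $w \in W_0$ yields
\[
\langle f, g\rangle_{\mathbf t} = \frac{1}{|W_0|} \int_{(\mathrm i \mathbb R)^n} f(x) g(x) \Bigl(\sum_{w \in W_0} w c_+\Bigr)(x)\, \Delta_+(x)\, dx = \frac{K(x)}{|W_0|} \int_{(\mathrm i \mathbb R)^n} f(x) g(x) \Delta_+(x)\, dx,
\]
where $K = \sum_{w \in W_0} w c_+$. By Lemma~\ref{lem:K(x)}, $K$ is the constant function with value $c_+(-x_0)$, so pulling it out of the integral gives exactly $\langle f, g\rangle_{\mathbf t} = \tfrac{c_+(-x_0)}{|W_0|} \langle f, g\rangle_{\mathbf t}^+$.

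There is no serious obstacle; the only thing that needs a bit of care is the change-of-variables step at the level of contour integrals, in particular for $s_n$ where one flips $x_n \mapsto -x_n$. But the reversal of orientation of $\mathrm i \mathbb R$ is compensated by the sign in $dx_n \mapsto -dx_n$, so the identity goes through. Apart from that, the proof is a direct application of Lemma~\ref{lem:K(x)}.
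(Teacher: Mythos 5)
Your proof is correct and follows essentially the same route as the paper: both arguments hinge on the $W_0$-invariance of $\Delta_+$ (and of the contour) to symmetrize the factor $c_+$, and then invoke Lemma \ref{lem:K(x)} to identify $\sum_{w\in W_0} wc_+$ with the constant $c_+(-x_0)$. The paper merely packages your averaging-over-$W_0$ step as the self-adjointness of the idempotent $V^+$ with respect to $\langle\cdot,\cdot\rangle_{\mathbf t}^+$, which is the same change-of-variables computation in operator form.
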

\begin{proof}
First note that $s_i$, $i \in [1,n]$, is symmetric with respect to $\langle \cdot, \cdot \rangle_{\mathbf t}^+$, since $s_i\Delta^+=\Delta^+$, hence so is $V^+$. Let $f,g \in \mathcal P^{W_0}$, then by Lemma \ref{lem:K(x)}
\[
\langle f,g \rangle_{\mathbf t} = \langle V^+f,g \rangle_{\mathbf t} = \langle V^+f, c_+g  \rangle_{\mathbf t}^+ = \langle f, V^+(c_+g) \rangle_{\mathbf t}^+ = \langle f, (V^+c_+)g \rangle_{\mathbf t}^+ = \frac{ c_+(-x_0) }{|W_0|}\langle f,g \rangle_{\mathbf t}^+.
\]
This proves the lemma.
\end{proof}

\begin{thm} \label{thm:basis+}
The symmetric Wilson polynomials $E^+(\cdot,\gamma_\lambda)$, $\lambda \in \Lambda^+$, form an orthogonal basis for $\mathcal P^{W_0}$ with respect to $\langle \cdot,\cdot \rangle_{\mathbf t}^+$.
\end{thm}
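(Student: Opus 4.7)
The plan is to reduce the symmetric inner product to the nonsymmetric one via Lemma \ref{lem:inprod}, and then use the already-established orthogonality of the nonsymmetric Wilson polynomials (Theorem \ref{thm:orthogonality}). The basis property is essentially a direct consequence of Proposition \ref{prop:P(lambda)W}, so the real content is the vanishing of cross-terms.

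First I would argue the basis statement. Theorem \ref{thm:decomp} (together with the observation noted just before Proposition \ref{prop:P(lambda)W}) gives the direct sum decomposition
\[
\mathcal P^{W_0} = \bigoplus_{\lambda \in \Lambda^+} \mathcal P(\lambda)^{W_0}.
\]
Proposition \ref{prop:P(lambda)W} says each summand is one-dimensional, and by Definition \ref{def:E+} it is spanned by $E^+(\cdot,\gamma_\lambda)$. Hence $\{E^+(\cdot,\gamma_\lambda)\}_{\lambda \in \Lambda^+}$ is a basis of $\mathcal P^{W_0}$.

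For orthogonality, fix $\lambda,\mu \in \Lambda^+$ with $\lambda \neq \mu$. By Lemma \ref{lem:inprod},
\[
\langle E^+(\cdot,\gamma_\lambda), E^+(\cdot,\gamma_\mu)\rangle_{\mathbf t}^+ = \frac{|W_0|}{c_+(-x_0)} \langle E^+(\cdot,\gamma_\lambda), E^+(\cdot,\gamma_\mu)\rangle_{\mathbf t},
\]
where the prefactor is a nonzero constant (by the explicit formula \eqref{eq:c+} and the parameter assumptions). By Proposition \ref{prop:P(lambda)W}, each $E^+(\cdot,\gamma_\nu)$ is a linear combination of the nonsymmetric Wilson polynomials $E(\cdot,\gamma_\kappa)$ indexed by $\kappa \in W_0 \nu$. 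Since $\lambda,\mu \in \Lambda^+$ are distinct dominant weights, $W_0\lambda \cap W_0\mu = \emptyset$, so expanding both factors and applying the orthogonality of the nonsymmetric Wilson polynomials (Theorem \ref{thm:orthogonality}) makes every cross-term vanish. This yields
\[
\langle E^+(\cdot,\gamma_\lambda), E^+(\cdot,\gamma_\mu)\rangle_{\mathbf t}^+ = 0,
\]
as desired.

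There is no real obstacle here: the only point to double-check is that the prefactor $c_+(-x_0)$ appearing in Lemma \ref{lem:inprod} does not vanish under the standing hypotheses $a,b,c,d,t > 0$, which is immediate from the product formula \eqref{eq:c+} once one recalls that $x_0 = \gamma_0(\mathbf t^\sigma)$ has components built from the (strictly positive) parameters. The whole argument is essentially a bookkeeping consequence of the nonsymmetric theory combined with the $W_0$-invariant inner product identity.
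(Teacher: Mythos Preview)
Your proof is correct and follows essentially the same route as the paper: establish the basis property from the decomposition $\mathcal P^{W_0}=\bigoplus_{\lambda\in\Lambda^+}\mathcal P(\lambda)^{W_0}$ and Proposition~\ref{prop:P(lambda)W}, then deduce orthogonality with respect to $\langle\cdot,\cdot\rangle_{\mathbf t}^+$ from the nonsymmetric orthogonality (Theorem~\ref{thm:orthogonality}) via Lemma~\ref{lem:inprod}. Your extra verification that $c_+(-x_0)\neq 0$ is a reasonable precaution but not strictly needed, since Lemma~\ref{lem:inprod} already gives $\langle E^+(\cdot,\gamma_\lambda),E^+(\cdot,\gamma_\mu)\rangle_{\mathbf t}=\frac{c_+(-x_0)}{|W_0|}\langle E^+(\cdot,\gamma_\lambda),E^+(\cdot,\gamma_\mu)\rangle_{\mathbf t}^+$, and the left-hand side vanishes regardless.
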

\begin{proof}
Since $\mathcal P^{W_0} = \bigoplus_{\lambda \in \Lambda^+} \mathcal P(\lambda)^{W_0}$ it is clear that the symmetric Wilson polynomials form a basis for $\mathcal P^{W_0}$. From the orthogonality relations for the nonsymmetric Wilson polynomials and the definition of the symmetric Wilson polynomials, it follows that
\[
\langle E^+(\cdot,\gamma_\lambda), E^+(\cdot,\gamma_\mu) \rangle_{\mathbf t}=0,\qquad \lambda \neq \mu.
\]
Now the orthogonality with respect $\langle \cdot ,\cdot\rangle_{\mathbf t}^+$ follows from Lemma \ref{lem:inprod}.
\end{proof}

\subsection{Difference equations}
A difference-reflection operator $D \in \mathcal P_Y^{W_0}$ (considered as a subalgebra of $\mathrm{End}(\mathcal P)$) is of the form
\[
D = \sum_{\lambda \in \Lambda, w \in W_0} c_{\lambda,w}(x) \tau(\lambda) w,
\]
with $c_{\lambda,w}(x) \in \mathbb C(x)$.
Restricted to the algebra of $W_0$-invariant polynomials, it becomes a difference operator which we denote by $D_{sym}$;
\[
D_{sym} = \sum_{\lambda \in \Lambda, w \in W_0} c_{\lambda,w}(x) \tau(\lambda).
\]
Moreover, $D_{sym}$ is $W_0$-invariant, i.e., $w \circ D_{sym} \circ w^{-1} = D_{sym}$ for any $w \in W_0$. Together with Theorem \ref{thm:decomp} this leads to the following property of the symmetric Wilson polynomials.
\begin{thm} \label{thm:YE+=gammaE+}
The symmetric Wilson polynomials satisfy the difference equation
\[
f(Y)_{sym} E^+(\cdot,\gamma_\lambda) = f(\gamma_\lambda) E^+(\cdot,\gamma_\lambda), \qquad \lambda \in \Lambda^+,
\]
for any $f \in \mathcal P^{W_0}$.
\end{thm}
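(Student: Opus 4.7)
The plan is to deduce this theorem directly from the decomposition in Theorem \ref{thm:decomp}, using the $W_0$-invariance of $E^+(\cdot,\gamma_\lambda)$ to pass from the full difference-reflection operator $f(Y)$ to its symmetrization $f(Y)_{sym}$.

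First I would observe that by Definition \ref{def:E+} we have $E^+(\cdot,\gamma_\lambda) \in \mathcal P(\lambda)^{W_0} \subset \mathcal P(\lambda)$, and that for $f \in \mathcal P^{W_0}$ the element $f(Y)$ lies in $\mathcal P_Y^{W_0}$, which is the center of $H$ by the proposition preceding Theorem \ref{thm:decomp}. That same theorem asserts that the center acts on the irreducible $\pi(H)$-module $\mathcal P(\lambda)$ by the central character $p(Y) \mapsto p(\gamma_\lambda)$, so in particular
\[
f(Y) E^+(\cdot,\gamma_\lambda) = f(\gamma_\lambda)\, E^+(\cdot,\gamma_\lambda).
\]

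Next I would identify $f(Y) E^+(\cdot,\gamma_\lambda)$ with $f(Y)_{sym} E^+(\cdot,\gamma_\lambda)$. Since $f(Y) \in H \subset \mathcal H$ and the polynomial representation $\pi$ realises $\mathcal H$ as difference-reflection operators, one may write $f(Y) = \sum_{\mu \in \Lambda,\, w \in W_0} c_{\mu,w}(x)\, \tau(\mu)\, w$ with $c_{\mu,w}(x) \in \mathbb C(x)$; here the affine reflection $s_0$ appearing in the $T_i$ gets absorbed into the translations $\tau(\mu)$ via the semidirect product decomposition $W = W_0 \ltimes \tau(\Lambda)$, so the residual reflections lie in $W_0$. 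Because $E^+(\cdot,\gamma_\lambda)$ is $W_0$-invariant, every $w$ in this sum fixes it, and the operator therefore collapses to
\[
\sum_{\mu,w} c_{\mu,w}(x)\, \tau(\mu)\, E^+(\cdot,\gamma_\lambda) \;=\; f(Y)_{sym}\, E^+(\cdot,\gamma_\lambda).
\]
Combining the two identities gives the theorem.

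The only obstacle is bookkeeping: one must be sure that, when $f(Y)$ is unwound in the polynomial representation, the reflections that survive are genuinely elements of $W_0$ (acting in the standard way on $\mathcal P$), so that $W_0$-invariance of $E^+(\cdot,\gamma_\lambda)$ really annihilates them. This is a routine consequence of $W = W_0 \ltimes \tau(\Lambda)$ together with the form $\pi(T_i) = c_i(x) s_i - d_i(x)$ of the generators. Once this is in place, the proof reduces to a two-line application of Theorem \ref{thm:decomp} and Definition \ref{def:E+}.
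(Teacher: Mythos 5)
Your proposal is correct and follows essentially the same route as the paper: the author likewise notes that $f(Y)\in\mathcal P_Y^{W_0}$ is central, writes it as $\sum_{\mu,w}c_{\mu,w}(x)\tau(\mu)w$ with $w\in W_0$, observes that on $W_0$-invariant polynomials this collapses to $f(Y)_{sym}$, and then invokes the central character $p(Y)\mapsto p(\gamma_\lambda)$ on $\mathcal P(\lambda)$ from Theorem \ref{thm:decomp}. No gaps.
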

In particular, the symmetric Wilson polynomials are eigenfunctions of the following explicit difference operator.
\begin{prop} \label{prop:2nd order difference}
The symmetric Wilson polynomials $E^+(\cdot,\gamma_\lambda)$, $\lambda \in \Lambda^+$, are eigenfunctions of the second order difference operator $L$ defined by
\[
\begin{split}
L &= \sum_{i=1}^n A_i(x) \big(\tau(\epsilon_i)-1\big) + A_i(-x) \big(\tau(-\epsilon_i)-1\big), \\
A_i(x) &= \frac{ (a+x_i)(b+x_i)(c+x_i)(d+x_i) }{2x_i(2x_i+1)} \prod_{j\neq i} \frac{ (t+x_i+x_j)(t+x_i-x_j) }{(x_i+x_j)(x_i-x_j)},
\end{split}
\]
for eigenvalue $\sum_{i=1}^n \lambda_i\big(\lambda_i+a+b+c+d-1+2(n-i)t\big)$.
\end{prop}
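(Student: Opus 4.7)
The plan is to deduce the proposition from Theorem \ref{thm:YE+=gammaE+} applied to the $W_0$-invariant polynomial $f(y)=\sum_{i=1}^n y_i^2\in\mathcal P^{W_0}$ (clearly invariant under sign changes and permutations), reduced by the additive constant $\sum_{i=1}^n \gamma_{0,i}^2$. For the eigenvalue, note that $\lambda\in\Lambda^+$ forces $v_\lambda=1$, so $\gamma_\lambda=\lambda+\gamma_0$ with $\gamma_{0,i}=t_0+t_n+(n-i)t$, and a direct computation gives
\[
f(\gamma_\lambda)-f(\gamma_0)\;=\;\sum_{i=1}^n\lambda_i\bigl(\lambda_i+2\gamma_{0,i}\bigr)\;=\;\sum_{i=1}^n\lambda_i\bigl(\lambda_i+2(t_0+t_n)+2(n-i)t\bigr).
\]
Using the Wilson-parameter identities $a+b=2t_n$ and $c+d=2t_0+1$, so that $a+b+c+d-1=2(t_0+t_n)$, this is exactly the eigenvalue in the statement.

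It then remains to identify $f(Y)_{sym}-f(\gamma_0)$ with the operator $L$ on $\mathcal P^{W_0}$. My approach would be to expand $\sum_{i=1}^n Y_i^2$ in the polynomial representation, using $Y_i=\Xi_{i,n}+\Xi_{0,i}+t\sum_{j>i}T_{i,j}$. On a $W_0$-invariant polynomial the action simplifies dramatically, because $T_j p=p$ for $j\in[1,n-1]$ and $T_n p=t_n p$, so the only genuine shifts come from $T_0$ (whose reflection part is $s_0:x_1\mapsto 1-x_1$). The conjugates $T_{i-1}\cdots T_1T_0T_1\cdots T_{i-1}$ produce a shift in the $i$-th coordinate; squaring and symmetrising assembles these into $\tau(\pm\epsilon_i)$, and the rational coefficients $A_i(x)$ emerge as iterated products of the functions $c_{a_j}(x;\mathbf t)$ along the path of conjugation.

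The main obstacle is this explicit coefficient matching, which is conceptually straightforward but notationally heavy. A shorter route is the following spectral argument: first verify directly from its formula that $L$ maps $\mathcal P^{W_0}$ into itself (the coefficients $A_i(x)$, $A_i(-x)$ and the shifts $\tau(\pm\epsilon_i)$ are arranged to be manifestly $W_0$-equivariant) and that $L$ commutes with $f(Y)_{sym}$ on $\mathcal P^{W_0}$. By Proposition \ref{prop:P(lambda)W} the operator $f(Y)_{sym}$ has simple spectrum on $\mathcal P^{W_0}$, the eigenspaces being the one-dimensional spaces $\mathcal P(\lambda)^{W_0}$, so $L$ must be simultaneously diagonal in the basis $\{E^+(\cdot,\gamma_\lambda)\}_{\lambda\in\Lambda^+}$. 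The eigenvalue of $L$ on $E^+(\cdot,\gamma_\lambda)$ can then be extracted by the leading-order behaviour: $A_i(x)\sim x_i^2/4$ as $x_i\to\infty$, and applying $L$ to the top symmetric monomial in the expansion of $E^+(\cdot,\gamma_\lambda)$ produces the claimed polynomial in $\lambda$.
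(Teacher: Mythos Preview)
Your eigenvalue computation is correct and matches the paper's exactly. Your first approach---expanding $\sum_i Y_i^2$ and identifying the symmetric restriction with $L$---is also the paper's approach, but you are missing two structural observations that make it manageable rather than ``notationally heavy.'' First, from the reduced expression \eqref{eq:tau} and the quadratic relation $T_0^2=t_0^2$, each $Y_i^2$ contains $T_0$ at most once; this forces $\bigl(\sum_i Y_i^2\bigr)_{sym}$ to be a first-order difference operator, i.e.\ of the form $C(x)+\sum_i A_i(x)\tau(\epsilon_i)+B_i(x)\tau(-\epsilon_i)$. Second, $W_0$-invariance gives $B_i(x)=A_i(-x)$ and $A_j=(s_{\epsilon_1-\epsilon_j}A_1)$, while $L1=0$ fixes $C$; so everything reduces to a single coefficient $A_1(x)$, which comes from exactly one term $T_1\cdots T_n\cdots T_0$ in $Y_1^2$ and is read off as a product of $c_\alpha$'s. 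With these two observations the computation is short, not heavy.

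Your proposed ``shorter route'' has a genuine gap. You say you will ``verify directly from its formula \ldots\ that $L$ commutes with $f(Y)_{sym}$ on $\mathcal P^{W_0}$,'' but there is no way to do this without first computing $f(Y)_{sym}$ explicitly---which is precisely the work you are trying to avoid. Commutativity of two operators cannot be deduced from $W_0$-equivariance or from symmetry with respect to $\langle\cdot,\cdot\rangle^+_{\mathbf t}$ alone. (An alternative along these lines does exist and is noted in the Remark following the proposition: show $L$ acts triangularly on symmetric monomials $m_\lambda$ with distinct diagonal entries, so $L$ itself has simple spectrum and its eigenfunctions are uniquely determined; then separately identify those eigenfunctions with the $E^+(\cdot,\gamma_\lambda)$. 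But this requires knowing the triangular form of $E^+(\cdot,\gamma_\lambda)$ with respect to the $m_\lambda$, which is not established here, and it is not the argument you wrote.) As stated, your shortcut is circular.
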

\begin{proof}
Let $f_2 \in \mathcal P^{W_0}$ be defined by $f_2(x)=\sum_{i=1}^n x_i^2$. We show that $L = \big(f_2(Y)-f_2(\gamma_0)\big)\big|_{sym}$. Then by Theorem \ref{thm:decomp} the symmetric Wilson polynomials $E^+(\cdot,\gamma_\lambda)$ are eigenfunctions of $L$ for eigenvalue $\sum_i (\gamma_{\lambda,i}^2- \gamma_{0,i}^2) = \sum_i \lambda_i(\lambda_i + 2 \gamma_{\lambda,i})$, which is precisely the eigenvalue in the proposition.

Recall the reduced expression \eqref{eq:tau} for $\tau(\epsilon_i)$, and recall that $w\tau(\epsilon_i)w^{-1}=\tau(w\epsilon_i)$ for $w \in W_0$. From the explicit expression for the $Y$-operators we see that $Y_i^2$ expressed in terms of the generators $T_j$ contains only terms in which $T_0$ occurs at most once (in the terms in which $T_0$ occurs twice, the $T_0$'s vanish using the quadratic relations in $\mathcal H$). Hence $L$ is of the form
\[
C(x) + \sum_{i=1}^n A_i(x) \tau(\epsilon_i) + B_i(x) \tau(-\epsilon_i)
\]
with $A_i(x), B_i(x), C(x) \in \mathbb C(x)$. Since $Y_i 1 = \gamma_{0,i}$ and therefore $L1=0$, we see immediately that $C(x) = -\sum_i (A_i(x) + B_i(x))$. By the $W_0$-invariance of $L$ the coefficients $A_i$ and $B_i$ must satisfy
\[
s_{\epsilon_{i}-\epsilon_j} A_i(x) = A_j(x), \qquad s_{2\epsilon_{i}}A_i(x) = B_i(x).
\]
Now it is enough to find the coefficient $A_1(x)$ of $\tau(\epsilon_1)$. We write
\begin{equation} \label{eq:sumY}
\sum_{i=1}^n Y_i^2 = \sum_{w \in W} c_w(x) w,
\end{equation}
with $c_w(x) \in \mathbb C(x)$ and every $w \in W$ occurring in this expansion is an ordered subword of
\[
s_i s_{i+1} \cdots s_n s_{n-1} \cdots s_0 s_1 \cdots s_{i-1} \quad \mathrm{or} \quad s_{i-1} s_{i-2} \cdots s_0 s_1 \cdots s_n s_{n-1} \cdots s_{i},
\]
for $i \in [1,n]$. If we write here $s_0 = s_1 \cdots s_n \cdots s_1 \tau(\epsilon_1)$, and we use $u\tau(\epsilon_i)=\tau(u\epsilon_i)u$, $u \in W_0$, to write every $w$ in \eqref{eq:sumY} as $\tau(\pm \epsilon_i) v$ for some $v \in W_0$, then we see that $\tau(\epsilon_1)$ occurs only once. The only contribution to $\tau(\epsilon_1)$ comes from the expression $T_1 \cdots T_n \cdots T_0$ in $Y_1^2$, and this gives us
\[
A_1(x) = c_{\epsilon_1-\epsilon_2}(x) \cdots c_{\epsilon_1-\epsilon_n}(x) c_{2\epsilon_1}(x) c_{\epsilon_1+\epsilon_n}\cdots c_{\epsilon_1+\epsilon_2}(x)c_{\delta+2\epsilon_1}(x).
\]
Now the result follows from writing out this expression.
\end{proof}

\begin{rem}
It can be shown that the difference operator $L$ acts triangularly with respect to the dominance ordering on the $W_0$-symmetric monomial $m_\lambda = \sum_{\mu \in \mathcal S_n\lambda} x^{\phi(\mu)}$, $\lambda \in \Lambda^+$;
\[
L m_\lambda = \gamma_\lambda^+ m_\lambda + \sum_{\mu < \lambda} c_{\lambda\mu} m_\mu,
\]
with $\gamma_\lambda^+$ the eigenvalue from Proposition \ref{prop:2nd order difference}. This shows that the eigenspace of $L$ for eigenvalue $\gamma_\lambda^+$ is one-dimensional. The multivariable Wilson polynomials defined by Van Diejen \cite{D1}, \cite{D2}, are eigenfunctions of $L$ for eigenvalue $\gamma_\lambda^+$, hence they coincide with our symmetric Wilson polynomials.
\end{rem}

\subsection{Duality} Using $f(Y)E^+(\cdot,\gamma_\lambda) = f(\gamma_\lambda) E^+(\cdot,\gamma_\lambda)$ for $f\in \mathcal P^{W_0}$, the duality property for the symmetric Wilson polynomials can be obtained in the same way as for the nonsymmetric ones, see Theorem \ref{thm:duality prop}.
\begin{thm}
For $\lambda,\mu \in \Lambda^+$, we have
\[
 E^+(x_\mu,\gamma_\lambda) = E^+_\sigma(\gamma_\lambda,x_\mu).
\]
\end{thm}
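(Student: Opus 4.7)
\medskip
\noindent\textbf{Proof plan.}
The plan is to derive the symmetric duality from the nonsymmetric one (Theorem~\ref{thm:duality prop}) by showing that both sides of the desired identity can be written as the same symmetric double sum. Since the longest element of $W_0$ acts as $-\mathrm{I}$, the $W_0$-invariance of $E^+(\cdot,\gamma_\lambda)$ and of $E^+_\sigma(\cdot,x_\mu)$ in the first argument gives $E^+(x_\mu,\gamma_\lambda)=E^+(-x_\mu,\gamma_\lambda)$ and $E^+_\sigma(\gamma_\lambda,x_\mu)=E^+_\sigma(-\gamma_\lambda,x_\mu)$, so I would first reduce to proving $E^+(-x_\mu,\gamma_\lambda)=E^+_\sigma(-\gamma_\lambda,x_\mu)$.

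To compute $E^+(-x_\mu,\gamma_\lambda)$ I would combine $W_0$-invariance with Lemma~\ref{lem:K(x)}: since $E^+(-x_{\nu'},\gamma_\lambda)=E^+(-x_\mu,\gamma_\lambda)$ for every $\nu'\in W_0\mu$, the identity $\sum_{\nu'\in W_0\mu}c_+(-x_{\nu'})=c_+(-x_0)$ yields the trivial averaging
\[
E^+(-x_\mu,\gamma_\lambda)=\frac{1}{c_+(-x_0)}\sum_{\nu'\in W_0\mu}c_+(-x_{\nu'})\,E^+(-x_{\nu'},\gamma_\lambda).
\]
Substituting the expansion $E^+(\cdot,\gamma_\lambda)=\frac{1}{c_+^\sigma(-\gamma_0)}\sum_{\nu\in W_0\lambda}c_+^\sigma(-\gamma_\nu)E(\cdot,\gamma_\nu)$ from Proposition~\ref{prop:P(lambda)W} and rewriting each $E(-x_{\nu'},\gamma_\nu)$ as $E_\sigma(-\gamma_\nu,x_{\nu'})$ via Theorem~\ref{thm:duality prop} then gives
\[
E^+(-x_\mu,\gamma_\lambda)=\frac{1}{c_+(-x_0)c_+^\sigma(-\gamma_0)}\sum_{\substack{\nu\in W_0\lambda\\ \nu'\in W_0\mu}}c_+^\sigma(-\gamma_\nu)c_+(-x_{\nu'})\,E_\sigma(-\gamma_\nu,x_{\nu'}).
\]

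Carrying out the same three steps for $E^+_\sigma(-\gamma_\lambda,x_\mu)$ with the roles of $(\lambda,\mathbf t)$ and $(\mu,\mathbf t^\sigma)$ interchanged — first trivially symmetrize in the first argument over $W_0\lambda$ via the $\sigma$-analogue of Lemma~\ref{lem:K(x)}, next expand through the $\sigma$-version of Proposition~\ref{prop:P(lambda)W}, and again apply the nonsymmetric duality — produces exactly the same double sum. Equating the two expressions then proves the negated identity, and hence the theorem.

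The only step that is not purely formal is the $\sigma$-analogue of Lemma~\ref{lem:K(x)}, namely that $\sum_{\nu\in W_0\lambda}c_+^\sigma(-\gamma_\nu)$ is independent of $\lambda\in\Lambda^+$ with value $c_+^\sigma(-\gamma_0)$. This is actually already implicit in the evaluation of the constant $d_\lambda$ in Definition~\ref{def:E+}; the original proof of Lemma~\ref{lem:K(x)} carries over verbatim once one notes the vanishing $c_{a_i}^\sigma(-\gamma_\lambda)=0$ whenever $a_i$ lies in the stabilizer of $\lambda$, which follows from $a_i(\gamma_\lambda)=t$ for $i\in[1,n-1]$ and $a_n(\gamma_\lambda)=2t_0+2t_n$ as computed at the end of the proof of Lemma~\ref{lem:Tp}. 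This is the one point deserving care; everything else is bookkeeping.
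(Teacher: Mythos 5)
Your proof is correct, but it takes a genuinely different route from the paper. The paper disposes of this theorem in one sentence: it re-runs the evaluation/pairing argument used for Theorem \ref{thm:duality prop}, replacing the nonsymmetric eigenvalue equation by $f(Y)E^+(\cdot,\gamma_\lambda)=f(\gamma_\lambda)E^+(\cdot,\gamma_\lambda)$ for $f\in\mathcal P^{W_0}$ and exploiting the adjointness properties of the pairing $B$ from Proposition \ref{prop:pairing B}. You instead bootstrap from the already-established nonsymmetric duality: you symmetrize both sides into the same $c_+^\sigma$-times-$c_+$ weighted double sum over $W_0\lambda\times W_0\mu$ using the expansion of Proposition \ref{prop:P(lambda)W} and the constancy of the orbit sums. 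Your route is a purely formal consequence of results already proved and has the virtue of making the symmetrization explicit; the paper's route is shorter and uniform with the nonsymmetric case, avoiding the expansion coefficients entirely. The one nontrivial ingredient you need beyond the paper's stated results --- that $\sum_{\nu\in W_0\lambda}c_+^\sigma(-\gamma_\nu)=c_+^\sigma(-\gamma_0)$ --- you correctly identify and justify; note the paper itself uses exactly this fact without comment when it writes $d_{\lambda,\lambda}=c_+^\sigma(-\gamma_\lambda)/c_+^\sigma(-\gamma_0)$ in the proof of Theorem \ref{thm:norm+}, and your verification via the vanishing $c_{a_i}^\sigma(-\gamma_\lambda)=0$ on stabilizers mirrors the proof of Lemma \ref{lem:K(x)} faithfully. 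All the auxiliary identities you invoke (the longest element of $W_0$ acting as $-\mathrm{I}$, $w\gamma_\lambda=\gamma_{w\lambda}$ and $wx_\mu=x_{w\mu}$ for $w\in W_0$ via Lemma \ref{lem:sp}) are sound.
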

Using the duality property, the difference equations from Theorem \ref{thm:YE+=gammaE+} give rise to recurrence relations for the symmetric Wilson polynomials.

\subsection{The symmetrizer} \label{sec:symmetrizer}
We define an operator $C^+ : \mathbb C(x) \rightarrow \mathbb C(x)$ by
\[
C^+ = V^+ c_+(X) ,
\]
where $c_+(X)$ is multiplication by $c_+(x)$, see \eqref{eq:c+}. We call $C^+$ the symmetrizer.
\begin{lem} \label{lem:symmetrizer}
The symmetrizer $C^+$ has the following properties:
\begin{enumerate}[(i)]
\item For $f \in \mathcal P$ we have $C^+f \in \mathcal P^{W_0}$.
\item For $f \in \mathcal P^{W_0}$ we have $C^+ f =  |W_0|^{-1}c_+(-x_0)f$.
\item For $i \in [1,n]$, $C^+ T_i = \chi_i C^+=T_i C^+$.
\end{enumerate}
\end{lem}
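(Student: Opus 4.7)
The plan is to handle the three parts in order. The crucial preliminary observation underlying both (i) and (iii) is that the denominator of $c_+(x)$, read off from \eqref{eq:c+}, is (up to sign) exactly the anti-symmetric polynomial
\[
m(x) = \prod_{\alpha \in \Sigma^-} \alpha(x)
\]
from the proof of Lemma \ref{lem:K(x)}; consequently $c_+(x) = P(x)/m(x)$ for some polynomial $P \in \mathcal P$.

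For (i), given $f \in \mathcal P$, the anti-symmetry $w(m) = (-1)^{l(w)} m$ yields
\[
C^+ f = V^+(c_+ f) = \frac{1}{|W_0|\, m(x)} \sum_{w \in W_0} (-1)^{l(w)}\, w(Pf)(x).
\]
The numerator is a $W_0$-anti-symmetric polynomial, hence divisible by $m(x)$, so $C^+f \in \mathcal P$, and it is manifestly $W_0$-invariant. For (ii), if $f \in \mathcal P^{W_0}$ then $w(c_+ f) = (wc_+) f$ for every $w$, so $V^+(c_+ f) = (V^+ c_+)\, f = K(x)\, f/|W_0|$, and Lemma \ref{lem:K(x)} evaluates $K$ as the constant $c_+(-x_0)$.

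For (iii), the identity $T_i C^+ = \chi_i C^+$ follows from $T_i V^+ = \chi_i V^+$: since $V^+ g$ is $s_i$-invariant,
\[
T_i(V^+ g) = c_i(x)\, s_i(V^+ g) - d_i(x)\, V^+ g = (c_i - d_i)(V^+ g) = \chi_i V^+ g.
\]
For the opposite identity $C^+ T_i = \chi_i C^+$, write $T_i - \chi_i = c_i(X)(s_i - 1)$, so that
\[
c_+(X)(T_i - \chi_i) = (c_+ c_{a_i})(X)(s_i - 1).
\]
The main step is the $s_i$-invariance of $c_+ \cdot c_{a_i}$: since $s_i$ sends $-a_i$ to $a_i$ and permutes $\Sigma^- \setminus \{-a_i\}$, one obtains $s_i c_+ = c_+ \cdot c_{a_i}/c_{-a_i}$, whence $s_i(c_+ c_{a_i}) = c_+ c_{a_i}$. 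Consequently $(c_+ c_{a_i})(s_i f - f) = (s_i - 1)[(c_+ c_{a_i}) f]$ for every $f \in \mathcal P$, and $V^+$ annihilates the right-hand side because $V^+ s_i = V^+$. Aside from this $s_i$-invariance check, the rest is routine bookkeeping.
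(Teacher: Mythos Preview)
Your proof is correct and follows essentially the same route as the paper. Parts (i) and (ii) are identical in spirit (the paper also reduces (i) to the divisibility of an anti-symmetric polynomial by $m(x)$, and (ii) to Lemma~\ref{lem:K(x)}); for (iii) the paper packages the computation via $T_i+\chi_i=(s_i+1)c_{-a_i}(X)$ rather than your $T_i-\chi_i=c_{a_i}(X)(s_i-1)$, but the key ingredient is the same $s_i$-invariance of $c_+c_{a_i}$ (written in the paper as the operator identity $s_i\,c_+(X)c_{a_i}(X)=c_+(X)\,s_i\,c_{-a_i}(X)$).
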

\begin{proof}
(i) Let us denote $|W_0|(C^+f)(x)=K_f(x)$. In the same way as in the proof of Lemma \ref{lem:K(x)} we find that the product $m(x) K_f(x)$ is an anti-symmetric polynomial, from which it follows that $K_f \in \mathcal P^{W_0}$.

(ii) If $f \in \mathcal P^{W_0}$ we see that $K_f = f K_1$. Then the result follows from Lemma \ref{lem:K(x)}.

(iii) Let $i \in [1,n]$. Using $T_i = \chi_i + c_i(X)(s_i-1)$ and $c_{a_i}+c_{-a_i}=2\chi_i$ we find $T_i+\chi_i = (s_i+1)c_{-a_i}(X)$, since $s_i c_{a_i} = c_{s_ia_i}$. Furthermore, from the definition  of $c_+(x)$ it follows that $s_i c_+(X) c_{a_i}(X) = c_+(X) s_i c_{-a_i}(X)$. This gives
\[
\begin{split}
C^+(T_i+\chi_i) &= V^+c_+(X)s_i c_{-a_i}(X) + V^+c_+(X) c_{-a_i}(X)\\
&= V^+s_i c_+(X) c_{a_i}(X) + V^+c_+(X) c_{-a_i}(X) \\
&= V^+ c_+(X) 2\chi_i\\
&=2\chi_i C^+,
\end{split}
\]
since $V^+ s_i = V^+$. This proves the identity $ C^+T_i = \chi_iC^+$. The identity $T_i C^+ = \chi_iC^+$  follows from (i), since $T_if=\chi_if$ for $f \in \mathcal P^{W_0}$.
\end{proof}

We show that the symmetrizer $C^+$ maps $\mathcal P(\lambda)$ onto $\mathcal P(\lambda)^{W_0}$ for all $\lambda \in \Lambda^+$.
\begin{prop} \label{prop:C^+E}
For $\lambda \in \Lambda^+$ and $\mu \in W_0\lambda$, we have
\[
C^+E(\cdot,\gamma_\mu)=\frac{c_+(-x_0)}{|W_0|} E^+(\cdot,\gamma_\lambda).
\]
\end{prop}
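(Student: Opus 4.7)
The plan is to establish the identity in two stages: first show that the polynomial $C^+E(\cdot,\gamma_\mu)$ depends only on the $W_0$-orbit of $\mu$, and then identify the common value by applying $C^+$ to the explicit expansion of $E^+(\cdot,\gamma_\lambda)$ provided by Proposition \ref{prop:P(lambda)W}.

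For the first stage, I would take the spectral-parameter formula
\[
T_iE(\cdot,\gamma_\mu) = c_i^\sigma(-\gamma_\mu)\,E(\cdot,\gamma_{s_i\cdot\mu}) - d_i^\sigma(-\gamma_\mu)\,E(\cdot,\gamma_\mu)
\]
from Proposition \ref{prop:spectral op} (for $i\in[1,n]$ and $s_i\cdot\mu\neq\mu$), solve it for $E(\cdot,\gamma_{s_i\cdot\mu})$, and apply $C^+$ to both sides. Using the intertwining relation $C^+T_i=\chi_iC^+$ from Lemma \ref{lem:symmetrizer}(iii) together with the identity $c_i^\sigma=d_i^\sigma+\chi_i$, the right-hand side should collapse to $c_i^\sigma(-\gamma_\mu)\,C^+E(\cdot,\gamma_\mu)$, yielding
\[
C^+E(\cdot,\gamma_{s_i\cdot\mu}) = C^+E(\cdot,\gamma_\mu)
\]
(the case $s_i\cdot\mu=\mu$ is trivial). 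Since $W_0$ is generated by $s_1,\ldots,s_n$, iterating along a sequence of simple reflections connecting $\mu$ to $\lambda$ gives $C^+E(\cdot,\gamma_\mu)=C^+E(\cdot,\gamma_\lambda)$ for every $\mu\in W_0\lambda$.

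For the second stage, Proposition \ref{prop:P(lambda)W} writes
\[
E^+(\cdot,\gamma_\lambda) = d_\lambda\sum_{\mu\in W_0\lambda}c_+^\sigma(-\gamma_\mu)\,E(\cdot,\gamma_\mu),
\]
where $d_\lambda$ is the reciprocal of $\sum_\mu c_+^\sigma(-\gamma_\mu)$. Applying $C^+$ and using the first stage, every summand contributes the same polynomial $C^+E(\cdot,\gamma_\lambda)$, so the coefficients telescope to $1/d_\lambda$ and I obtain $C^+E^+(\cdot,\gamma_\lambda)=C^+E(\cdot,\gamma_\lambda)$. On the other hand, since $E^+(\cdot,\gamma_\lambda)\in\mathcal P^{W_0}$, Lemma \ref{lem:symmetrizer}(ii) gives $C^+E^+(\cdot,\gamma_\lambda)=\tfrac{c_+(-x_0)}{|W_0|}E^+(\cdot,\gamma_\lambda)$. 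Combining these two identities with the orbit-independence from the first stage yields the claim for every $\mu\in W_0\lambda$.

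The only delicate point is the cancellation in the first stage: one must notice that $\chi_i+d_i^\sigma(-\gamma_\mu)=c_i^\sigma(-\gamma_\mu)$ precisely matches the prefactor of $E(\cdot,\gamma_{s_i\cdot\mu})$ on the other side, so that the hard-to-control operator $T_i$ inside $C^+$ gets replaced by the scalar $\chi_i$ and the identity reduces to a pure scalar comparison. Once this observation is in place the remainder of the argument is essentially bookkeeping, so I do not anticipate any serious obstacle.
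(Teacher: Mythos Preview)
Your argument is correct, but it follows a genuinely different route from the paper's proof. The paper never establishes orbit-independence directly; instead it expands $C^+E(\cdot,\gamma_\mu)$ in the orthogonal basis $\{E^+(\cdot,\gamma_\nu)\}_{\nu\in\Lambda^+}$ of $\mathcal P^{W_0}$, uses the adjointness of $V^+$ with respect to $\langle\cdot,\cdot\rangle_{\mathbf t}^+$ to rewrite each coefficient as $\langle E^+(\cdot,\gamma_\nu),E(\cdot,\gamma_\mu)\rangle_{\mathbf t}$, and then invokes the nonsymmetric orthogonality relations to kill all terms with $\nu\neq\lambda$. The surviving coefficient is determined by evaluating $C^+E(\cdot,\gamma_\mu)$ at $x_0$, where only the $w=1$ summand in $V^+c_+$ survives because $(wc_+)(-x_0)=0$ for $w\neq 1$; this evaluation is visibly independent of $\mu$, so orbit-independence comes out as a by-product rather than an input.

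Your approach trades the bilinear forms for pure Hecke-algebra manipulation: the identity $C^+T_i=\chi_iC^+$ together with $\chi_i+d_i^\sigma=c_i^\sigma$ forces orbit-independence directly, and then Lemma~\ref{lem:symmetrizer}(ii) applied to the expansion of $E^+(\cdot,\gamma_\lambda)$ fixes the constant. This is arguably cleaner and entirely algebraic---it makes no use of the integrals or of Theorem~\ref{thm:orthogonality}---whereas the paper's argument leans on the analytic machinery already in place. The one small point you leave implicit is that dividing by $c_i^\sigma(-\gamma_\mu)$ requires it to be nonzero when $s_i\cdot\mu\neq\mu$; this holds under the standing genericity assumptions on $\mathbf t$, but is worth stating.
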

\begin{proof}
Let $\lambda \in \Lambda^+$ and $\mu \in W_0\lambda$. We know that $C^+E(\cdot,\gamma_\mu) \in \mathcal P^{W_0}$, so by Theorem \ref{thm:basis+} there exists an expansion
\[
C^+E(\cdot,\gamma_\mu) = \sum_{\nu \in \Lambda^+} b_{\mu,\nu} E^+(\cdot,\gamma_\nu).
\]
Using the orthogonality we find
\begin{equation} \label{eq:C^+E}
\begin{split}
b_{\mu,\nu} \langle E^+(\cdot,\gamma_\nu),E^+(\cdot,\gamma_\nu) \rangle_{\mathbf t}^+ &= \langle E^+(\cdot,\gamma_\nu), C^+E(\cdot,\gamma_\mu) \rangle_{\mathbf t}^+ \\
&= \langle V^+E^+(\cdot,\gamma_\nu), c_+(x)E(\cdot,\gamma_\mu) \rangle_{\mathbf t}^+ \\
&= \langle E^+(\cdot,\gamma_\nu), E(\cdot,\gamma_\mu)\big) \rangle_{\mathbf t}.
\end{split}
\end{equation}
Since $E^+(\cdot,\gamma_\nu) \in \mathcal P(\nu)^{W_0}$ we obtain from the orthogonality relations for the nonsymmetric Wilson polynomials that $b_{\mu,\nu}=0$ if $\mu \not\in W_0\nu$, i.e., if $\nu \neq \lambda$. We can find $b_{\mu,\lambda}$ by evaluating at $x_0$. Using $(wc_+)(-x_0)=0$ if $w \neq 1$, we obtain
\[
(C^+E(\cdot,\gamma_\mu))(x_0) =|W_0|^{-1}\sum_{w \in W_0} (wc_+)(-x_0) \big(wE(\cdot,\gamma_\mu)\big)(-x_0)= |W_0|^{-1}c_+(-x_0).
\]
This proves the proposition.
\end{proof}

\subsection{Quadratic norms}
Next we derive the quadratic norms for the symmetric Wilson polynomials, see \cite[Theorem 7.4]{D2}.
\begin{thm} \label{thm:norm+}
For $\lambda \in \Lambda^+$,
\[
\langle E^+(\cdot,\gamma_\lambda), E^+(\cdot,\gamma_\lambda) \rangle_{\mathbf t}^+ = \frac{1}{N_+(\gamma_\lambda)},
\]
where
\[
N_+(\gamma_\lambda)=\frac1{\langle 1,1 \rangle_{\mathbf t}^+} \frac{c_+^\sigma(-\gamma_\lambda)}{c_+^\sigma(-\gamma_0)} \prod_{\alpha \in \mathcal R_r^+ \cap \tau(\lambda) \mathcal R_r^-} \frac{ c_{-\alpha}^\sigma(\gamma_\lambda) }{ c_{\alpha}^\sigma(\gamma_\lambda) }
\]
\end{thm}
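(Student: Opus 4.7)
My approach is to reduce the symmetric quadratic norm to the nonsymmetric one (Theorem \ref{thm:norm}) via the expansion of $E^+(\cdot,\gamma_\lambda)$ in the nonsymmetric basis and the orthogonality of the $E(\cdot,\gamma_\mu)$.  Starting from Lemma \ref{lem:inprod},
\[
\langle E^+(\cdot,\gamma_\lambda),E^+(\cdot,\gamma_\lambda)\rangle_{\mathbf t}^+ = \frac{|W_0|}{c_+(-x_0)}\,\langle E^+(\cdot,\gamma_\lambda),E^+(\cdot,\gamma_\lambda)\rangle_{\mathbf t},
\]
I would insert the expansion of Proposition \ref{prop:P(lambda)W} and Definition \ref{def:E+}, namely $E^+(\cdot,\gamma_\lambda)= d_\lambda\sum_{\mu\in W_0\lambda}c_+^\sigma(-\gamma_\mu)\,E(\cdot,\gamma_\mu)$, and apply Theorems \ref{thm:orthogonality} and \ref{thm:norm} to obtain
\[
\langle E^+,E^+\rangle_{\mathbf t}=d_\lambda^{\,2}\sum_{\mu\in W_0\lambda}\frac{c_+^\sigma(-\gamma_\mu)^2}{N(\gamma_\mu)}.
\]

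The key simplification of this sum comes from the observation that $c_+^\sigma(-\gamma_\mu)/N(\gamma_\mu)$ is \emph{constant} on the orbit $W_0\lambda$.  Indeed, from $c_+=\prod_{\alpha\in\Sigma^-}c_\alpha$ and $s_i\Sigma^-=(\Sigma^-\setminus\{-a_i\})\cup\{a_i\}$ one obtains $(s_ic_+)(x)=c_+(x)\,c_{a_i}(x)/c_{-a_i}(x)$, and combining with $c_\alpha(-x)=c_{-\alpha}(x)$ this gives $c_+^\sigma(-\gamma_{s_i\mu})=c_+^\sigma(-\gamma_\mu)\,c_{-a_i}^\sigma(\gamma_\mu)/c_{a_i}^\sigma(\gamma_\mu)$; this factor is precisely the reciprocal of the ratio produced by Lemma \ref{lem:norm}, so the quotient $c_+^\sigma(-\gamma_\mu)/N(\gamma_\mu)$ is invariant under every $s_i$, $i\in[1,n]$.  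Factoring out this orbit-constant and using the $\sigma$-version of Lemma \ref{lem:K(x)} (which yields $\sum_{\mu\in W_0\lambda}c_+^\sigma(-\gamma_\mu)=c_+^\sigma(-\gamma_0)$, hence also $d_\lambda=1/c_+^\sigma(-\gamma_0)$) then gives
\[
\langle E^+,E^+\rangle_{\mathbf t}=\frac{c_+^\sigma(-\gamma_\lambda)}{c_+^\sigma(-\gamma_0)\,N(\gamma_\lambda)}.
\]

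Substituting back into the first display and using $\langle 1,1\rangle_{\mathbf t}=\tfrac{c_+(-x_0)}{|W_0|}\langle 1,1\rangle_{\mathbf t}^+$ together with Theorem \ref{thm:norm} (and $u_\lambda=\tau(-\lambda)$ for $\lambda\in\Lambda^+$) transforms the identity to be proved into the \emph{affine $c$-function identity}
\[
\left(\frac{c_+^\sigma(-\gamma_\lambda)}{c_+^\sigma(-\gamma_0)}\right)^{\!2} \prod_{\alpha\in\mathcal R_r^+\cap\tau(-\lambda)\mathcal R_r^-}\frac{c_\alpha^\sigma(\gamma_\lambda)}{c_{-\alpha}^\sigma(\gamma_\lambda)}=\prod_{\alpha\in\mathcal R_r^+\cap\tau(\lambda)\mathcal R_r^-}\frac{c_\alpha^\sigma(\gamma_\lambda)}{c_{-\alpha}^\sigma(\gamma_\lambda)}.
\]
This is the main obstacle.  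I would prove it by explicitly identifying the two inversion sets using $\tau(\pm\lambda)(v+c\delta)=v+(c\pm\langle\lambda,v\rangle)\delta$: for $\lambda\in\Lambda^+$ one has $\mathcal R_r^+\cap\tau(\lambda)\mathcal R_r^-=\{v+c\delta:\,v\in\Sigma^+,\,0\le c<\langle\lambda,v\rangle\}$ and $\mathcal R_r^+\cap\tau(-\lambda)\mathcal R_r^-=\{-v+c\delta:\,v\in\Sigma^+,\,1\le c\le\langle\lambda,v\rangle\}$, and the bijection $\beta\mapsto -\tau(-\lambda)\beta$ between them.  One then checks, root by root, that the product of $c_\alpha^\sigma(\gamma_\lambda)/c_{-\alpha}^\sigma(\gamma_\lambda)$-factors picks up exactly the squared ratio $(c_+^\sigma(-\gamma_\lambda)/c_+^\sigma(-\gamma_0))^{-2}$; this amounts to a careful comparison of the values of $c_\alpha^\sigma$ at $\gamma_\lambda$ under the affine shift, and is the place where the nonreduced nature of the root system $\mathcal R$ must be handled with some care, distinguishing long ($a_0$, $a_n$) and short ($a_i$, $1\le i\le n-1$) roots.
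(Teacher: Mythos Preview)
Your approach is essentially the same as the paper's, and it is correct. Both routes reduce the symmetric quadratic norm to the nonsymmetric one and arrive at the intermediate identity
\[
\langle E^+(\cdot,\gamma_\lambda),E^+(\cdot,\gamma_\lambda)\rangle_{\mathbf t}^+
=\frac{|W_0|}{c_+(-x_0)}\cdot\frac{d_{\lambda,\lambda}}{N(\gamma_\lambda)},
\qquad d_{\lambda,\lambda}=\frac{c_+^\sigma(-\gamma_\lambda)}{c_+^\sigma(-\gamma_0)}.
\]
The only difference is tactical: the paper pairs $E^+$ against a \emph{single} nonsymmetric polynomial, using the identity \eqref{eq:C^+E} from the proof of Proposition~\ref{prop:C^+E} (with $b_{\lambda,\lambda}=c_+(-x_0)/|W_0|$), so only one term survives; you instead expand both factors and then collapse the orbit sum via the pleasant observation that $c_+^\sigma(-\gamma_\mu)/N(\gamma_\mu)$ is $W_0$-orbit constant (a direct consequence of Lemma~\ref{lem:norm}). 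Both are equally valid and equally short.

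The final step---your ``affine $c$-function identity'' matching the inversion sets of $\tau(\lambda)$ and $\tau(-\lambda)$---is exactly what the paper's one-line ``Writing out $N(\gamma_\lambda)$ and using Lemma~\ref{lem:inprod} gives the result'' is hiding; you have correctly isolated it and sketched a proof. One small slip: the bijection between $\mathcal R_r^+\cap\tau(-\lambda)\mathcal R_r^-$ and $\mathcal R_r^+\cap\tau(\lambda)\mathcal R_r^-$ is $\beta\mapsto -\tau(\lambda)\beta$, not $\beta\mapsto -\tau(-\lambda)\beta$.
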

\begin{proof}
From \eqref{eq:C^+E} and Theorem \ref{thm:norm} we obtain
\[
\begin{split}
\langle E^+(\cdot,\gamma_\lambda), E^+(\cdot,\gamma_\lambda) \rangle_{\mathbf t}^+
& = \frac{|W_0|}{c_+(-x_0)}\langle E^+(\cdot,\gamma_\lambda), E(\cdot,\gamma_\lambda) \rangle_{\mathbf t} \\
& = \frac{|W_0|d_{\lambda,\lambda}}{c_+(-x_0) N(\gamma_\lambda)},
\end{split}
\]
where $d_{\lambda,\lambda}$ is the coefficient of $E(\cdot,\gamma_\lambda)$ in the expansion of $E^+(\cdot,\gamma_\lambda)$ from Proposition \ref{prop:P(lambda)W}, i.e.,
\[
d_{\lambda,\lambda} = \frac{c_+^\sigma(-\gamma_\lambda)}{c_+^\sigma(-\gamma_0)}.
\]
Writing out $N(\gamma_\lambda)$ and using Lemma \ref{lem:inprod} gives the result.
\end{proof}

\end{document}